\newcommand{\unit}{\text{\textbf{1}}}
\newcommand{\Inv}{\operatorname{Inv}}
\renewcommand{\Vec}{\text{Vec}}
\newcommand{\cC}{{\mathcal C}}
\newcommand{\cZ}{{\mathcal Z}}
\newcommand{\ot}{{\otimes}}
\newcommand{\Maps}{\operatorname{Maps}}
\newcommand{\ku}{{\Bbbk}}
\newcommand{\Z}{{\mathbb Z}}
\newcommand{\Q}{{\mathbb Q}}
\newcommand{\bZ}{\mathbb{Z}}
\newcommand{\id}{\operatorname{id}}
\newcommand{\cB}{\mathcal{B}}
\newcommand{\cP}{\mathcal{P}}
\newcommand{\cD}{\mathcal{D}}
\newcommand{\Rep}{\operatorname{Rep}}
\newcommand{\Irr}{\operatorname{Irr}}
\newcommand{\Hom}{\operatorname{Hom}}
\newcommand{\End}{\operatorname{End}}
\newcommand{\Id}{\operatorname{Id}}
\newcommand{\Aut}{\operatorname{Aut}}
\newcommand{\otlam}{\stackrel{\lambda}{\otimes}}
\theoremstyle{plain}
\numberwithin{equation}{section}
\newtheorem{theorem}{Theorem}[section]
\newtheorem{lemma}[theorem]{Lemma}
\newtheorem{corollary}[theorem]{Corollary}
\newtheorem{proposition}[theorem]{Proposition}
\theoremstyle{definition}
\newtheorem{definition}[theorem]{Definition}
\newtheorem{example}[theorem]{Example}
\theoremstyle{remark}
\newtheorem{remark}[theorem]{Remark}
\theoremstyle{remark}
\DeclareRobustCommand{\hlGreenYellow}[1]{{\sethlcolor{GreenYellow}\hl{#1}}}
\newcounter{commentcounter}
\newcounter{todocounter}
\newcommand{\todo}[1]
{\stepcounter{todocounter}
  \textbf{Todo\arabic{todocounter}}: 
{\hlGreenYellow{#1}} }
\newcommand{\parcen}[1]{\phantom{(}#1\phantom{)}}
\newcommand{\br}{to [out=90,in=-90]}
\newcommand{\onebox}[3]{
\begin{scope}[line width=1]
\draw (0,0) node[below] {$#1$} --(0,2) node[above] {$#2$};
\draw[fill=white] (-.33,.66) rectangle node {$#3$} (.33,1.33);
\end{scope}
}
\newcommand{\onecirc}[3]{
\begin{scope}[line width=1]
\draw (0,0) node[below] {$#1$} --(0,2) node[above] {$#2$};
\draw[fill=white] (0,1) circle (.45) node {$#3$};
\end{scope}
}
\newcommand{\onetri}[3]{
\begin{scope}[line width=1]
\draw (0,0) node[below] {$#1$} --(0,2) node[above] {$#2$};
\draw[fill=white] (-.5,.5) --(.5,.5)--(0,1.5)--(-.5,.5);
\draw (0,.75) node {\small $#3$};
\end{scope}
}
\newcommand{\kink}[2]{
\begin{scope}[line width=1]
\draw (0,0) node[below] {$#1$}-- (0,1) to [out=90,in=180] (.25,1.25) to [out=0,in=90] (.5,1) to [out=-90,in=0] (.25,.75);
\draw[white,line width=10] (.25,.75) to [out=180,in=-90] (0,1)--(0,2);
\draw (.25,.75) to [out=180,in=-90] (0,1)--(0,2) node[above] {$#2$};
\end{scope}
}
\newcommand{\twobox}[5]{
\begin{scope}[line width=1]
\draw (0,0) node[below] {$#1$} --(0,2) node[above] {$#2$};
\draw (1,0) node[below] {$#3$} --(1,2) node[above] {$#4$};
\draw[fill=white] (-.33,.66) rectangle node {$#5$} (1.33,1.33);
\end{scope}
}
\newcommand{\twoboxnotop}[3]{
\begin{scope}[line width=1]
\draw (0,0) node[below] {$#1$} --(0,1);
\draw (1,0) node[below] {$#2$} --(1,1);
\draw[fill=white] (-.33,.66) rectangle node {$#3$} (1.33,1.33);
\end{scope}
}
\newcommand{\twoboxnobottom}[3]{
\begin{scope}[line width=1]
\draw (0,1) --(0,2) node[above] {$#1$};
\draw (1,1) --(1,2) node[above] {$#2$};
\draw[fill=white] (-.33,.66) rectangle node {$#3$} (1.33,1.33);
\end{scope}
}
\newcommand{\twoboxnostrands}[1]{
\begin{scope}[line width=1]
\draw[fill=white] (-.33,.66) rectangle node {$#1$} (1.33,1.33);
\end{scope}
}
\newcommand{\doublebraid}[2]{
   \draw (1,0) node[below] {$\phantom{#1}#2\phantom{#1}$} to [out=90,in=-90] (0,1);
    \draw[white, line width=10] (0,0) to [out=90,in=-90] (1,1);
    \draw (0,0) node[below] {$\phantom{#2}#1\phantom{#2}$} to [out=90,in=-90] (1,1) to [out=90,in=-90] (0,2) node[above] {$\phantom{#2}#1\phantom{#2}$};
    \draw[white, line width=10] (0,1) to [out=90,in=-90] (1,2);
    \draw (0,1) to [out=90,in=-90] (1,2) node[above] {$\phantom{#1}#2\phantom{#1}$};}
\newcommand{\assoc}[4]{
\begin{tikzpicture}[baseline=75*#4,line width=1, scale=#4]

% Draw strands
 \foreach \x in {0,2,4}{
  \draw(\x,0)--(\x,2);
  \draw(\x,5)--(\x,5.5);
  }
  \foreach \y in {-2,-4}
 {
 \draw(\y,0)--(\y,5.5);
 }
  
% Draw crossing
   \begin{scope}[scale=2,xshift=-1cm,yshift=2.5cm]
  \braid[number of strands=3] a_1^{-1};
  \end{scope}
  
 % Draw box
   \draw[fill=white] (1.25,1.25) rectangle node {$#1,#2,#3$} (4.5,2.5);
   
   % Bottom strand labels
     \draw (-4,0) node[below] {$\parcen{#1}$};
     \draw (-2,0) node[below] {$\parcen{#2}$};
   \draw (0,0) node[below] {$\parcen{#3}$};
 \draw (2,0) node[below] {$(#2,#3)$};
\draw (4,0) node[below] {$(#1,#2#3)$};

   % Top strand labels
   \draw (-4,5.5) node[above] {$\parcen{#1}$};
     \draw (-2,5.5) node[above] {$\parcen{#2}$};
   \draw (0,5.5) node[above] {$(#1,#2)$};
 \draw (2,5.5) node[above] {$\parcen{#3}$};
\draw (4,5.5) node[above] {$(#1#2,#3)$};

  \end{tikzpicture}
  }
\newcommand{\assocfull}[4]{
\begin{tikzpicture}[baseline=75*#4,line width=1, scale=#4]

% Draw strands
 \foreach \x in {0,2,4}{
  \draw(\x,0)--(\x,2);
  \draw(\x,5)--(\x,5.5);
  }
  \foreach \y in {-2,-4}
 {
 \draw(\y,0)--(\y,5.5);
 }
  
% Draw crossing
   \begin{scope}[scale=2,xshift=-1cm,yshift=2.5cm]
  \braid[number of strands=3] a_1^{-1};
  \end{scope}
  
 % Draw box
   \draw[fill=white] (1.25,1.25) rectangle node {$\lambda_{#1,#2,#3}$} (4.5,2.5);
   
   % Bottom strand labels
     \draw (-4,0) node[below] {$\parcen{V_{#1}}$};
     \draw (-2,0) node[below] {$\parcen{W_{#2}}$};
   \draw (0,0) node[below] {$\parcen{Z_{#3}}$};
 \draw (2,0) node[below] {$\lambda(#2,#3)$};
\draw (4,0) node[below] {$\lambda(#1,#2#3)$};

   % Top strand labels
   \draw (-4,5.5) node[above] {$\parcen{V_{#1}}$};
     \draw (-2,5.5) node[above] {$\parcen{W_{#2}}$};
   \draw (0,5.5) node[above] {$\lambda(#1,#2)$};
 \draw (2,5.5) node[above] {$\parcen{Z_{#3}}$};
\draw (4,5.5) node[above] {$\lambda(#1#2,#3)$};

  \end{tikzpicture}
  }
\author[C. Delaney]{Colleen Delaney}
\address{Department of Mathematics, Indiana University}
\email{crdelane@iu.edu}
\author[C. Galindo]{C\'esar Galindo}
\address{ Departamento de Matem\'aticas, Universidad de los Andes, Bogot\'a, Colombia}
\email{cn.galindo1116@uniandes.edu.co}
\author[J. Plavnik]{Julia Plavnik}
\address{Department of Mathematics, Indiana University}
\email{jplavnik@iu.edu}
\author[E. Rowell]{Eric C. Rowell}
\address{Department of Mathematics, Texas A\&M University}
\email{rowell@math.tamu.edu}
\author[Q. Zhang]{Qing Zhang}
\address{Department of Mathematics, Texas A\&M University}
\email{zhangqing@math.tamu.edu}
\begin{document}

\title[Braided zesting]{Braided zesting and its applications}

\thanks{The authors gratefully acknowledge the support of the American Institute of Mathematics, where this collaboration was initiated. C.G would like to thank the hospitality and excellent working conditions of the Department of Mathematics at the University of Hamburg, where he carried out this research as a Fellow of the Humboldt Foundation. J.P was partially supported by US NSF Grants DMS-1802503 and DMS-1917319. E.C.R. was partially supported by US NSF Grant DMS-1664359, a Simons Foundation Fellowship, and a Texas A\&M Presidential Impact Fellowship.  Part of this research was carried out while CD, JP, ER  and QZ participated in a semester-long program at MSRI, which is partially supported by NSF grant DMS-1440140. We thank Richard Ng for useful discussions.}

\begin{abstract}
We give a rigorous development of the construction of new braided fusion categories from a given category known as zesting.  This method has been used in the past to provide categorifications of new fusion rule algebras, modular data, and minimal modular extensions of super-modular categories.  Here we provide a complete obstruction theory and parameterization approach to the construction and illustrate its utility with several examples. 
\end{abstract}

\subjclass[2000]{16W30, 18D10, 19D23}

%\date{\today}
\maketitle

\section{Introduction}

Despite recent progress on the classification of braided fusion categories, the general landscape remains largely unexplored.  This is partly due to our lack of well-studied examples.  Most come from a few basic classes of fusion categories: subquotients of representation categories of quantum groups at roots of unity, representations of quasi-Hopf algebras, bimodule categories over finite index finite depth subfactors, planar algebras, and near-group categories.  There are a few inter-related tools for obtaining new fusion categories from old such as the Drinfeld center construction, graded extensions by finite groups $G$, $G$-equivariantization/$G$-de-equivariantization, symmetry gauging, and Deligne products.  In this paper, we develop another recent construction known as \emph{zesting}.

Zesting of braided fusion categories first explicitly appeared as a construction technique in \cite{AIM2012} for the purpose of categorifying a mysterious rank $10$, dimension $36$ fusion rule algebra, with fusion rules  reminiscent of, but distinct from, those of $SU(3)_3$. It was recognized that the fusion rules could be obtained from those of $SU(3)_3$ by rearranging them slightly, via a twisting of the tensor product.  This new (zested) fusion category was expected to admit a modular structure, but no proof was available until now.  

The basic goal of zesting is to define new (possibly braided, ribbon) fusion categories from a given $A$-graded  (braided, ribbon) fusion category $\cC=\bigoplus_{a \in A} \cC_a$ by defining a new tensor product    $X_a\stackrel{\lambda}{\ot} Y_b:=(X_a\,\otimes\, Y_b)\otimes\,\lambda(a,b)$, where $X_a\in\cC_a$ and  $Y_b\in\cC_b$ are simple objects in their corresponding graded components and $ \lambda(a,b)\in\cC_e$ is an invertible object in the trivial component.  

Zesting fits into the more general context of graded extensions found in \cite{ENO3}--as a fusion category a zesting of an $A$-graded braided fusion category $\cC$ is an $A$-graded extension of the trivial component $\cC_e$.
As one expects from the results of \emph{loc. cit.}, there are obstructions to $(\cC,\stackrel{\lambda}{\ot},\unit)$ admitting the structure of a monoidal category for a given choice of $\lambda:A\times A\rightarrow \cC_{pt}\cap \cC_{e}$.  That is, it is not immediate that associativity morphisms satisfying pentagons exist, and when these obstructions vanish there are inequivalent choices of associativity morphisms.  Fixing a particular such \emph{associative zesting} one can further investigate whether the category admits a braiding, which leads to more obstructions and choices.  Such a braided structure is called a \emph{braided zesting}.  Finally, for a fixed braided zesting of a ribbon fusion category we may look for a balancing structure, which we call a \emph{twist zesting} in general and a \emph{ribbon zesting} in the case the twist has the ribbon property.

We hasten to point out that our associativity, braiding and twist choices for a zesting are assumed to only depend on the grading group $A$, the pointed subcategory of the trivial component, and the structures already present in $\cC$.  It follows that the trivial component of $\cC$ and that of any zesting of $\cC$ are equivalent as fusion categories so that a zesting is always an extension (in the sense of \cite{ENO2}) of the trivial component by the group $A$.  Moreover, at each step, some of these extensions may fail to be realized by our construction. For example, the Ising categories and the pointed modular categories with fusion rules like $\Z/4$ are braided $\Z/2$-extensions of the category of super-vector spaces; however it is not possible to use zesting to construct one from the other, since any zesting of a pointed fusion category remains pointed. 
%For example, we cannot in general expect to obtain as a zesting of a braided fusion category $\cC$ the category with reversed braiding $\cC^{rev}$ since these typically have different braidings on the trivial component 
 On the other hand, we can explicitly obtain both new fusion categories that do not admit braidings and new ribbon braided fusion categories with explicit formulas of their modular data from our approach.

There are two related constructions found in the literature that should be mentioned.  The first is gauging \cite{SCJZ}: one begins with a modular category $\cB$ that admits an action of a finite group $G$ by braided tensor  autoquivalences, and constructs, assuming certain cohomological obstructions vanish, new modular categories $\cC$. The first step is to construct the $G$-crossed braided categories $\cD$ with $\cB$ as the trivial component (using \cite{ENO2}), and the second takes the $G$-equivariantization, which will be modular.  In some cases zestings can be placed in this framework.  If $\cC$ is modular and $\cC_{pt}\cap\cC_{e}\cong\Rep(G) $ is Tannakian with $G$ abelian, we may set $\cB=(\cC_e)_G$ the $G$-de-equivariantization of the trivial component.  Then any modular zesting $\tilde{\cC}$ will be a $G$-gauging of $\cB$.  The second construction is related: if $ \cC_{pt}\cap\cC_{e}\cong\Rep(G)$ as above, we may construct new categories as tensor products over $G$ by condensing the diagonal algebra in $\cC\boxtimes\Rep(D^\omega G)$.  

While both of these constructions have the advantage of providing various structures from general arguments, zesting has several key features that these do not: 1) one has the fusion rules at the outset, 2) we produce new categories that are not necessarily modular or even braided,  3) it depends only on essentially cohomological choices, and 4) in the case that the resulting category is modular we have explicit formulas for the modular data.

In a bit more detail, the construction goes as follows. For a fixed $A$-graded ribbon category $\cC$ where $A=U(\cC)$ is the universal grading group:
\begin{enumerate}
    \item Fix a normalized $2$-cocycle $\lambda\in Z^2(A,\Inv(\cC_e))$.
    \item Find a $3$-cochain $\lambda\in C^3(A,\ku^\times)$ satisfying the second associative zesting constraint, if possible (see Figure \ref{axiom:asso}).  The set of all associative zestings form a torsor over $H^3(A,\ku^\times)$ for the chosen $2$-cocycle.
    \item For a fixed associative zesting, a braided zesting is determined by a cochain $c\in C^2(A,\ku^\times)$ such that $t(a,b):=c(a,b)\id_{\lambda(a,b)}$ satisfies the two braided zesting constraints (see Figures \ref{fig:unnormalized-1} and \ref{fig:normalized-2}).  For a fixed such $c(a,b)$ the set of all braided zestings forms a torsor over the group of bicharacters of $A$.
    \item For a fixed braided zesting of a braided fusion category $\cC$ with a twist $\theta$, we may determine all braided twist (ribbon) zestings in terms of a function $f:A\rightarrow\ku^\times$, as in Corollary \ref{lem:twist}, and all twist (ribbon) zestings form a torsor over the characters of $A/2A$. 
\end{enumerate}

More general situations can be considered as well, for example, we may choose a different grading group $B$ for $\cC$ (i.e., a quotient of $U(\cC)$) and the above is still true provided the image of $\lambda\in Z^2(A,\Inv(\cC_e))$ centralizes the trivial component $\cC_e$ with respect to the grading $B$.  Failing this, we may still develop a theory of zesting, but there are several subtleties that must be addressed in the form of additional choices and more elaborate constraints.  Moreover, the general definition of associative zesting does not require a braiding -- one may apply it to any fusion category by passing to the relative centralizer.

Zesting supersedes several known constructions as special cases.  If one chooses the trivial $2$-cocycle in step (1), the second step is the well-known associativity twist (see, e.g. \cite{KazWen}).  If one makes the trivial choice of $2$-cocycle and $3$-cochain  in steps (1) and (2), then the braided zestings are simply modifying the braiding by a bicharacter, which is also well-known.  Finally, if one takes the trivial choice in each of steps (1), (2), and (3) for a braided fusion category with a twist then the last step is a change of pivotal structure on the underlying braided fusion category.  

Here is a more detailed explanation of the contents of this article.  In Section \ref{prelim}, we lay out the basic definitions and general results from the literature that we use in the sequel.  Section \ref{sec:associative zesting} contains the general definition of associative zesting and the rigidity structure, the notational conventions for diagrams and the obstruction theory. Section \ref{section: braided zesting} details braided zesting, in which we study the braided structures on associative zestings and the attendant obstruction theory.  Section \ref{section:twist zesting} studies the twist and ribbon twist structures on a braided zesting and the corresponding categorical trace and modular data.
We illustrate our techniques with several examples coming from quantum groups of type $A$ in  Section \ref{section: applications}.  After submitting our paper a related manuscript \cite{DN} was posted which has some overlap with our results, which we address in Section \ref{section: davydov nikshych}.

\section{Preliminaries}\label{prelim}

\subsection{Group cohomology}
To fix notation, we will recall the basic definitions of the standard cocycle description of group cohomology, for more details, see \cite{weibel_1994}.

Let $G$ be a group and $M$ a $G$-module. We will denote by  $\big (C^n(G,M), \delta \big )$ the cochain complex 
\[0\to C^0(G,M)\to C^1(G,M)\to C^2(G,M)\to \cdots \to C^n(G,M)\to \cdots,\]
where $C^0(G,M)=M$, $C^n(G,M)$ is the abelian group of all maps from  $G^{\times n}$ to $M$  and $\delta:C^n(G,M)\to C^{n+1}(G,M)$ is given by
\begin{align}
\delta^n(f)(g_1,\ldots, g_{n+1})= &\,g_1f(g_2,\ldots,g_{n+1})+ \sum_{i=1}^{n}(-1)^if(g_1,\ldots,g_ig_{i+1},\ldots,g_{n+1})\label{cochains G}\\
&+(-1)^{n+1}f(g_1,\ldots,g_{n}). \notag 
\end{align} 
The group cohomology of $G$ with coefficients in  $M$ is defined as the cohomology of the cochain complex \eqref{cochains G}, that is, $H^n(G,M)=\ker(\delta^n)/\operatorname{Im}(\delta^{n-1})$. As usual, we will denote by $Z^n(G,M)=\ker(\delta^n)$ the group of  $n$-cocycles and by $B^n(G,M)=\operatorname{Im}(\delta^{n-1})$ the $n$-coboundaries.

\subsection{Basic definitions on fusion and modular categories}

In this section, we recall some basic definitions and standard notions from \cite{EGNO}, mainly in order to fix notation.

By a \emph{monoidal category} we mean a tuple $(\cC,\otimes, \alpha, \unit,\lambda,\rho)$, where $\cC$ is a category, $\otimes: \cC\times \cC\to \cC$ is a   bifunctor,  a natural isomorphism $$\alpha_{X,Y,Z}:(X\otimes Y)\otimes Z\to X\otimes (Y\otimes Z),$$ called \emph{the associator}, natural isomorphisms 
\begin{align*}
\bold{\lambda}_X:\unit\otimes X\to X,&& \bold{\rho}_X:X\otimes \unit\to X
\end{align*}
called the right and left \emph{unitors}, respectively. This data must satisfy the well known pentagon and triangle axioms \cite{EGNO}. Hereafter we suppress the associators and unitors and denote a monoidal category by the tuple $(\cC,\otimes, \bold{1})$. Throughout this paper, we will always assume that the monoidal unit is simple (as it is the case for fusion categories).

A monoidal category has \emph{duals} if for every $X\in \cC$ there is an object $X^*\in \cC$ and morphism $\epsilon_X:X^*\otimes X\to \unit,$ $\delta_X:\unit \to X\otimes X^*$ satisfying  the zig-zag axioms:
\begin{align*}
    (\id_{X} \otimes \epsilon_X)\circ \alpha_{X, X^*, X}\circ (\delta_X\otimes \id_X)&=\id_{X}\\
    (\epsilon_{X} \otimes \id_{X^*})\circ \alpha^{-1}_{X^*, X, X^*}\circ (\id_{X^*} \otimes \delta_{X})&=\id_{X^*}.
\end{align*}
A monoidal category with duals is called \emph{rigid} if for every $X\in\cC$ there is $^*X\in \cC$ such that $(^*X)^*\cong X$.

We will denote by $\ku$  an algebraically closed field of characteristic zero.  By a \emph{fusion category}, we mean a semisimple $\ku$-linear abelian rigid monoidal category $(\cC,\otimes,\unit)$  such that the unit object $\unit$ of $\cC$ is simple and there are finite many isomorphism classes of simple objects. The set of isomorphism classes of simple objects of $\cC$ is denoted by $\Irr(\cC)$. By a \emph{fusion subcategory} of a fusion category, we mean a full monoidal abelian subcategory.

For a fusion category $\cC$, we will denote by $\cC_{\operatorname{pt}}$ the full fusion subcategory generated by $\otimes$-invertible objects. We will denote by $\Inv(\cC)$ the group of isomorphism classes of $\otimes$-invertible objects of $\cC$ under the tensor product.  A fusion category is called \emph{pointed} if every simple object is $\ot$-invertible.

\begin{example}[Pointed fusion categories]\label{Ex: pointed fusion cat}
Let $G$ be a finite group. A (normalized)  3-cocycle $\omega \in Z^3(G, \ku^\times)$ is a map $\omega:G\times G\times G\to \ku^{\times}$ such that 
\begin{align*}
\omega(ab,c,d)\omega(a,b,cd)&=
\omega(a,b,c)\omega(a,bc,d)\omega(b,c,d), & \omega(a,1,b)=1,
\end{align*}
for all   $a,b,c,d\in G. $ 

Let us recall the description of the pointed fusion category $\Vec_G^\omega$. The objects of $\Vec_G^\omega$ are $G$-graded finite dimensional vector spaces $V=\bigoplus_{g\in G} V_{g}$. Morphisms are $G$-linear $G$-homogeneous maps. 
The tensor product of $V=\oplus_{g\in G}V_g$ and $W=\oplus_{g\in G}W_g$  is
$V\otimes W$ as vector space, with $G$-grading
\[(V\otimes W)_g=\bigoplus_{h\in G}V_h\otimes W_{h^{-1}g}.\]
For objects $V, W, Z \in \Vec_G^\omega$ the associativity constraint is defined by \begin{align*}
a_{V,W,Z}: (V\otimes W)\otimes Z&\to V\otimes (W\otimes Z)\\ 
(v_g\otimes w_h)\otimes z_k &\mapsto \omega(g,h,k) v_g\ot( w_h\otimes z_k)
\end{align*}
for all $g,h,k \in G, v_g\in V_g, w_h\in W_h, z_k\in Z_k$.  The unit objects is $\ku_e$, the vector space $\ku$ graded only by the identity element $e\in G$.

For  $V \in \Vec_G^\omega$, the dual object is $V^*=\Hom_\ku(V,\ku)$, with $G$-grading  $V^*_g=\Hom_\ku(V_{g^{-1}},\ku)$ and 
\begin{align*}
\epsilon_V: V^*\otimes V &\to \ku_e,  &
\delta_V: \ku_e &\to V\otimes V^*\\
 \alpha_{h}\otimes v_g&\mapsto  \omega(g,g^{-1},g)^{-1}\alpha_{h}(v_g),& 
1 &\mapsto \sum v_i\otimes v^i
\end{align*}
where $g,h \in G$, $v_g\in V^*_g$ and $\alpha_{h}\in V_h^*$,
and $\delta_V$ is the usual coevaluation map of finite dimensional vector spaces.

\end{example}

\subsection{Pivotal and spherical fusion categories}

If $\cC$ is a monoidal category with duals, we can define a monoidal functor $(-)^*:\cC\to \cC^{\operatorname{op}}$, where $\cC^{\operatorname{op}}$ is the opposite category with tensor product $X\ot^{\operatorname{op}} Y:=Y\otimes X$. Here, for a morphism $f:X\to Y$, we have that $f^*:Y^*\to X^*$ is given by $(\epsilon_Y\otimes \id_{X^*})\circ (\id_{Y^*}\otimes f\otimes \id_{X^*})\circ (\id_{Y^*}\otimes \delta_{X})$.

A \emph{pivotal structure} on a rigid monoidal category is monoidal natural isomorphism $\psi:\operatorname{Id}_{\cC}\to (-)^{**}$. The left and right pivotal traces of an endomorphism $f:X\to X$ are given by 
\begin{align*}
\operatorname{Tr}_L(f)&=\epsilon_X\circ (\id_{X^*}\otimes f)\circ (\id_{X^*}\otimes \psi_X^{-1})\circ \delta_{X^*}\\
\operatorname{Tr}_R(f)&=\epsilon_{X^*}\circ (\psi_X\otimes \id_{X^*} )\circ (f\otimes \id_{X^*})\circ \delta_{X}.
\end{align*}  

A rigid monoidal category with a pivotal structure is called \emph{pivotal} monoidal category. 

A \emph{spherical} fusion category is a pivotal fusion category such that the left and right traces of every endomorphism coincide. For spherical fusion categories the left and right trace of an endomorphism $f$ will be denoted simply by $\operatorname{Tr}(f)$. The \emph{quantum dimension} or just the \emph{dimension} of an object $X\in \cC$ is $\dim(X)=\operatorname{Tr}(\id_X)$.

\begin{example}
The pointed fusion category $\Vec_G^\omega$ has a canonical pivotal structure given by 
\[\psi_V=\bigoplus_{g\in G}\omega(g^{-1},g,g^{-1})\id_{V_g} .\]
Any other pivotal structure differs from the canonical one by a linear character $\chi:G\to \ku^\times$, given by
\begin{equation}\label{spherical pointed}
\psi_V^\chi=\bigoplus_{g\in G}\chi(g)\omega(g^{-1},g,g^{-1})\id_{V_g} .
\end{equation}
The pivotal structure $\psi_V^\chi$ is spherical if and only if $\chi(g)\in \{1,-1\}$.
\end{example}

\subsection{Premodular and modular tensor categories}
A \emph{braiding} for a monoidal category $\cB$ is a natural isomorphism 
\begin{align*}
c_{X,Y}:X\otimes Y \to Y\otimes X, && X,Y\in \cB
\end{align*}satisfying the two well known hexagon axioms. A \emph{braided fusion category} is a fusion category with a braiding. A braided category is called \emph{symmetric} if $c_{X,Y}^{-1}=c_{Y,X}$ for all $X,Y\in \cB$. The centralizer of a set of objects $\mathcal{S}$ is the subcategory with objects \[C_{\mathcal{S}}(\cB):=\{X\in \cB: c_{Y,X}\circ c_{X,Y}=\id_{X\otimes Y}, X\in\mathcal{S}\}.\]  An object $Y$ is called \emph{transparent} if $C_{\{Y\}}(\cB)=\cB$, so that every object is transparent in a symmetric category.

%The \emph{Müger'}s center of $\cB$ is the full subcategoryv\[\mathcal{Z}_2(\cB)=\{X\in \cB: c_{Y,X}\circ c_{X,Y}=\id_{X\otimes Y}\}.\]

In \cite{delignetannakian}, Deligne establishes that every symmetric fusion category is braided equivalent to one of the following:

\begin{itemize}
    \item \emph{Tannakian categories.} These take the form $\operatorname{Rep}(G)$ of finite dimensional representations of a finite group $G$, with the standard braiding $c_{X,Y}(x \otimes y) := y \otimes x$.
    \item \emph{Super-Tannakian categories.} These are categories of finite-dimensional representations of finite super-groups, denoted by $\operatorname{Rep}(G,z)$. A finite super-group is a pair $(G, z)$,
where $G$ is a finite group and $z$ is a central element of order two. As fusion categories they can be understood as $\operatorname{Rep}(G)$ but with a non-standard braiding $c^z$:\\
An irreducible representation of $G$ is called odd if $z$ acts as the
scalar $-1$, and even if $z$ acts as the identity. If the degree of a
simple object $X$ is denoted by $|X|\in \{0, 1\}$, then a braiding on $\operatorname{Rep}(G)$ is given by
$c^z_{X,Y}(x \otimes y) := (-1)^{|x||y|}y \otimes x$ for $x \in X$ and $y\in Y$, where $X$ and $Y$ are simple representations. 
\end{itemize}
 
A \emph{twist} for a braided category is a natural isomorphism of the identity $$\theta_X:X\to X$$ such that \[\theta_{X\otimes Y}= c_{Y,X}\circ c_{X,Y}\circ (\theta_{X}\otimes \theta_{Y}),\] for all $X, Y\in \cC$. It is well-known (see e.g. \cite{BNRW}) that pivotal structures on a braided fusion category are in bijective correspondence with twists. A twist on a braided fusion category is called a \emph{ribbon twist} if $\theta_{X^*}=\theta_X^*$ for all $X\in \cC$, and ribbon twists correspond to spherical pivotal structures under the pivotal/twist correspondence. We will recall this correspondence briefly for more details, see \cite[Appendix A.2]{henriques2016a}.

Let $\cC$ be a rigid braided monoidal category. The \emph{Drinfeld isomorphism} is a natural isomorphism $u:\Id\to (-)^{**}$ given by 
\begin{align*}
u_X:=(\epsilon_X\otimes \id_{X^{**}})\circ (\id_{X^*}\otimes c^{-1}_{X^{**},X})\circ (\delta_{X^*}\otimes \id_{X}):X\to X^{**}.
\end{align*}  
\begin{center}
\begin{tikzpicture}[line width=1]
\draw (2,-1) node[below] {$X^{**}$}--(2,0) \br (1,2);
\draw[line width=10, white] (1,0) \br (2,2);
\draw (1,0) \br (2,2) -- (2,3) node[above] {X};
\draw [looseness=2] (1,0) to [out=-90, in=-90] (0,0)--(0,2) to [out=90, in=90] (1,2);
\end{tikzpicture}
\end{center}

Given a twist $\theta$, the natural isomorphism
\begin{align*}
\psi_X=u_X\circ \theta_X, && X\in \cC
\end{align*}
is a pivotal structure. Conversely, if $\psi_X:X\to X^{**}$ is a pivotal structure then \begin{align*}
\theta_X=u_X^{-1}\circ \psi_X, && X\in \cC,
\end{align*}is twist.

A braided fusion category with a spherical pivotal structure (or equivalently a ribbon twist) is called a premodular tensor category.

Following \cite{BK}, we define the \emph{modular data} of a premodular category as the following pair of matrices with respect to the a basis given by a fixed ordering of $\Irr(\cB)$:

\begin{itemize}
    \item[(i)] \emph{$S$-matrix}. $S_{X,Y}=\operatorname{Tr}(c_{Y^*,X}\circ c_{X,Y^*})$,
    \item[(ii)]  \emph{$T$-matrix.} $T_{X,Y}=\theta_X\delta_{X,Y}$.
\end{itemize}

Notice that the categorical dimension of a simple object $X$ is $\dim(X)=S_{X,\unit}$.  
A premodular tensor category is called \emph{modular} if $S$ is invertible. Any modular tensor category defines a projective representation of the modular group $\operatorname{SL}(2,\Z)$ as follows: the matrices
\begin{align*}
 \mathfrak{s}:=\begin{pmatrix}
0 & -1 \\ 1& 0
\end{pmatrix}, &&  \mathfrak{t}:=\begin{pmatrix}
1 & 1 \\ 0& 1
\end{pmatrix}
\end{align*}
generates $\operatorname{SL}(2,\Z)$ and by \cite{BK} the assignment  
\begin{align*}
\mathfrak{s}\mapsto \frac{1}{\sqrt{\dim(\cB)}}S, && \mathfrak{t}\mapsto T
\end{align*}defines a projective representation, where $\dim(\cB)=\sum_{X\in \Irr(\cB)}\dim(X)^2$.
\begin{remark}
In \cite{EGNO} the $S$-matrix and $T$-matrix are defined by 
\begin{align*}
S_{X,Y}'=\operatorname{Tr}(c_{Y,X}\circ c_{X,Y}), && T'_{X,Y}=\theta_X^{-1}\delta_{X,Y}.
\end{align*}
the $(S',T')$ and $(S,T)$ are directly related by $T'=T^{-1}$ and $S'=S^{-1}$, see \cite[Proposition 8.14.2]{EGNO}.
\end{remark} 

\begin{example}[Pointed braided fusion categories]\label{Ex pointed braid}
Let $\Vec_G^\omega$ be a pointed fusion category, with $G$ abelian. A braiding on $\Vec_G^\omega$ is  defined by a function $c:G\times G\to \Bbbk^{\times}$ as
\begin{align*}
c_{V,W}:V\otimes W &\to W\otimes V\\
v_g\otimes w_h &\mapsto c(g,h)w_h\otimes v_g.
\end{align*}
The function $c$ must satisfy the following equations:
\begin{align}\label{eq:abelian-cocycle}
&
\begin{aligned}
\frac{c(g,hk)}{c(g,h)c(g,k)}&=\frac{\omega(g,h,k)\omega(h,k,g)}{\omega(h,g,k)}\\
\frac{c(gh,k)}{c(g,k)c(h,k)}&=\frac{\omega(g,k,h)}{\omega(g,h,k)\omega(k,g,h)},
\end{aligned}
& 
\text{for all } & g,h,k \in G,
\end{align}
These equations correspond to the hexagon axioms.
A pair $(\omega,c)$ satisfying \eqref{eq:abelian-cocycle} is called an \emph{abelian 3-cocycle}.
Following \cite{EM1,EM2} we denote by $Z^3_{ab}(G, \Bbbk^\times)$ the abelian group of all abelian 3-cocycles $(\omega,c)$.

An abelian 3-cocycle $(\omega,c)\in Z_{ab}^3(G,\Bbbk^\times)$ is called an \emph{abelian 3-coboundary} if there is $\alpha:G^{\times 2}\to \Bbbk^\times$, such that
\begin{align}\label{eq:abelian-coboundary}
&
\begin{aligned}
\omega(g,h,k)&=\frac{\alpha(g,h)\alpha(gh,k)}{\alpha(g,hk)\alpha(h,k)}\\
c(g,h)&=\frac{\alpha(g,h)}{\alpha(h,g)},
\end{aligned}
& 
\text{for all } & g,h,k \in G.
\end{align}
$B^3_{ab}(G,\Bbbk^\times)$ denotes the subgroup of $Z_{ab}^3(G,\Bbbk^\times)$ of abelian 3-coboundaries. The quotient group $H^3_{ab}(G,\Bbbk^\times):=Z_{ab}^3(G,\Bbbk^\times)/B^3_{ab}(G,\Bbbk^\times)$ is called the \emph{third group of abelian cohomology} of $G$.

Under the correspondence between pivotal structures and twist, we have that pivotal structure $\psi^\chi$ in \eqref{spherical pointed} corresponds to the twist 
\[
\theta_{V}^\chi=\bigoplus_{g\in G}\chi(g)c(g,g)\id_{V_g}.
\]
\end{example}

\subsection{Graded fusion categories and tensor natural isomorphisms of the identity functor}

Let $G$ be a finite group. A fusion category $\cC$ is $G$-graded if there is a decomposition \[\cC=\bigoplus_{g\in G}\cC_g\]of $\cC$ into a direct sum of full abelian subcategories such that the tensor product of $\cC$ maps $\cC_g\times \cC_h$ to $\cC_{gh}$ for all $g, h\in G$. We will say that the $G$-grading is faithful if $\cC _g\neq 0$ for all $g\in G$.

\begin{example}\label{universal abelian grading}
Let $\cC$ be a fusion category and $A=\widehat{\Aut_\ot(\Id_\cC)}$  the group of linear characters of $\Aut_\ot(\Id_\cC)$,  the abelian group of tensor automorphism of the identity functor.  Then $\cC=\bigoplus_{\gamma \in A} \cC_{\gamma}$ is faithfully $A$-graded, where

\[\cC_\gamma=\{X\in \cC: \rho_X=\gamma(\rho)\id_X, \quad \forall \rho \in \Aut_\ot(\Id_\cC) \}.\]
\end{example}

It was proved in \cite[Theorem 3.5]{GELAKI20081053} that any fusion category
$\cC$ is naturally graded by a group $U(\cC)$, called the \emph{universal grading group} of $\cC$, and the \emph{adjoint fusion subcategory} $\cC_{ad}$ (tensor generated by all subobjects of $ X \otimes X^*$ for all simple objects $X$) is the trivial component of this grading. Additionally, any other faithful grading arises from a quotient of $U(\cC)$ .

\begin{definition}
Let $\cC$ be a faithfully $G$-graded fusion category. We will denote by $\Aut_{\ot}^G(\Id_\cC)$ the abelian group of all tensor natural isomorphisms of the identity $\Phi \in \Aut_{\ot}(\Id_\cC)$ such that $\Phi_X=\id_X$ for all $X\in \cC_e$. 
\end{definition}

Let $\cC$ be a faithfully $G$-graded fusion category.   It follows as in \cite[Proposition 3.9]{GELAKI20081053} that given $\gamma \in \widehat{G}$ the assignment  $\Phi_\gamma\in \Aut_\ot(\Id_\cC)$ given by \[\Phi_\gamma(X_s)=\gamma(s)\id_{X_s}, \quad \quad X_s \in \cC_s,\]defines a group homorphism from $\widehat{G}$ to $\Aut_\ot^G(\Id_{\cC})$.

The following result is a direct consequence of \cite[Proposition 3.9]{GELAKI20081053}.
\begin{proposition}\label{prop: iso id-g}
Let $\cC$ be a faithfully $G$-graded fusion category. The group homomorhism $\Phi:\widehat{G}\to \Aut_{\ot}^G(\Id_\cC)$ is an isomorphism. In particular $\Phi:\widehat{U(\cC})\cong \Aut_{\ot}(\Id_\cC)$ so that any $\psi\in\Aut_\ot(\Id_\cC)$ is constant on $U(\cC)$-graded components.
\end{proposition}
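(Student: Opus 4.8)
The plan is to prove directly that $\Phi$ is a well-defined injective group homomorphism, and then to put the bulk of the work into surjectivity. First I would dispatch the easy assertions: for $\gamma\in\widehat G$ the assignment $X_s\mapsto\gamma(s)\id_{X_s}$ is natural (it sends homogeneous morphisms to homogeneous morphisms), it is monoidal since $X_s\otimes Y_t\in\cC_{st}$ and $\gamma(st)=\gamma(s)\gamma(t)$, and it equals the identity on $\cC_e$ because $\gamma(e)=1$; hence $\Phi_\gamma\in\Aut_\ot^G(\Id_\cC)$, and $\gamma\mapsto\Phi_\gamma$ is visibly a group homomorphism. For injectivity, if $\Phi_\gamma=\id$ then $\gamma(s)=1$ whenever $\cC_s\neq 0$, and faithfulness of the grading forces $\cC_s\neq0$ for every $s\in G$, so $\gamma=1$.

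For surjectivity, take $\Phi\in\Aut_\ot^G(\Id_\cC)$. Since $\End(X)=\ku$ for each simple $X$ by Schur's lemma, $\Phi_X=c(X)\,\id_X$ for a scalar $c(X)\in\ku^\times$, and $c(\unit)=1$ is forced by the unit axiom for monoidal natural transformations. The monoidal condition $\Phi_{X\otimes Y}=\Phi_X\otimes\Phi_Y$ together with naturality of $\Phi$ along an inclusion $\iota\colon Z\hookrightarrow X\otimes Y$ of a simple summand gives $c(X)c(Y)\,\iota=c(Z)\,\iota$, hence $c(Z)=c(X)c(Y)$; applying this to the copy of $\unit$ inside $X\otimes X^*$ yields $c(X^*)=c(X)^{-1}$.

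The crucial step is to show $c$ depends only on the grading degree. If $X,Y$ are simple objects of the same component $\cC_s$, then $X\otimes Y^*\in\cC_e$ is nonzero, so it has a simple summand $W\in\cC_e$; since $\Phi$ restricts to the identity on $\cC_e$ we get $1=c(W)=c(X)c(Y^*)=c(X)c(Y)^{-1}$, so $c(X)=c(Y)$. Hence $\gamma(s):=c(X_s)$ is well defined for any simple $X_s\in\cC_s$, and using faithfulness to pick simple objects in $\cC_s$ and $\cC_t$ and a simple summand of their product in $\cC_{st}$, the multiplicativity of $c$ shows $\gamma(st)=\gamma(s)\gamma(t)$; thus $\gamma\in\widehat G$ and $\Phi=\Phi_\gamma$. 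For the final assertion I would take $G=U(\cC)$: the relation $c(X^*)=c(X)^{-1}$ shows that any $\psi\in\Aut_\ot(\Id_\cC)$ acts trivially on every simple summand of $X\otimes X^*$, hence on all of $\cC_{ad}=\cC_e$, so $\Aut_\ot^{U(\cC)}(\Id_\cC)=\Aut_\ot(\Id_\cC)$ and the isomorphism $\widehat{U(\cC)}\cong\Aut_\ot(\Id_\cC)$ follows; and since every such $\psi$ is some $\Phi_\gamma$, it is constant on $U(\cC)$-graded components.

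I expect the only genuinely non-formal point to be the step showing $c$ is constant on components: everything there is driven by the trick of using $X\otimes Y^*\in\cC_e$ together with the hypothesis that $\Phi$ is the identity on $\cC_e$, and by faithfulness of the grading to guarantee that the components and Hom-spaces invoked are nonzero. The remainder is bookkeeping with Schur's lemma and the coherence of monoidal natural transformations of the identity functor.
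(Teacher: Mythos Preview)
Your argument is correct and self-contained. The paper itself does not give a proof: it simply records the statement as ``a direct consequence of \cite[Proposition 3.9]{GELAKI20081053}'' and moves on. What you have written is essentially a reconstruction of that cited result, carried out by the standard method: extract scalars via Schur's lemma, use monoidality plus naturality along inclusions of simple summands to get multiplicativity, and then exploit $X\otimes Y^*\in\cC_e$ together with the hypothesis $\Phi|_{\cC_e}=\id$ to force constancy on components. Your handling of the ``in particular'' clause---showing that any $\psi\in\Aut_\ot(\Id_\cC)$ kills every simple summand of $X\otimes X^*$ and hence all of $\cC_{ad}$, so that $\Aut_\ot(\Id_\cC)=\Aut_\ot^{U(\cC)}(\Id_\cC)$---is exactly the content of the Gelaki--Nikshych proposition the paper invokes. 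So your route and the paper's are the same in spirit; you have simply unpacked the citation.
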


Let $\cB$ be a braided monoidal category and $a\in\cB$ an \emph{invertible} object. Then we can define a natural isomorphism of the identity functor $\chi_a \in \Aut(\Id_{\cB})$ by the equality  $\chi_a(X)\otimes \id_a= c_{a,X}\circ c_{X,a}$ for  $X\in \operatorname{Obj}(\cB)$, see Figure \ref{fig:chi}. 

\begin{figure}[h]
    \begin{center}
    \begin{tikzpicture}[scale=1.5,xscale=-1,line width=1,baseline=40]
    \draw (1,0) node[below] {$X$} to [out=90,in=-90] (0,1);
    \draw[white, line width=10] (0,0) to [out=90,in=-90] (1,1);
    \draw (0,0) node[below] {$\phantom{X}a\phantom{X}$} to [out=90,in=-90] (1,1) to [out=90,in=-90] (0,2) node[above] {$\phantom{X}a\phantom{X}$};
    \draw[white, line width=10] (0,1) to [out=90,in=-90] (1,2);
    \draw (0,1) to [out=90,in=-90] (1,2) node[above] {$X$};
    \end{tikzpicture}
    \hspace{5pt}:=\hspace{15pt}
    \begin{tikzpicture}[scale=1.5,line width=1,baseline=40]
    \onebox{X}{X}{\chi_a}
    \draw (1,0) node[below]{$\phantom{X}a\phantom{X}$}--(1,2) node[above]{$\phantom{X}a\phantom{X}$};
    \end{tikzpicture}
    \end{center}
    \caption{Definition of $\chi$}
    \label{fig:chi}
\end{figure}
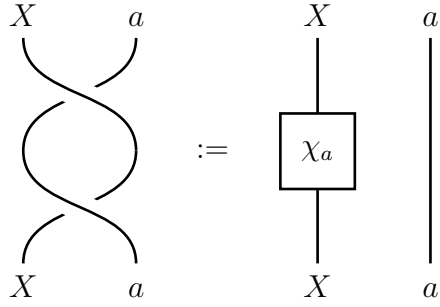

The following result is essentially the same as \cite[Lemma 8.22.9]{EGNO}.  Note that the result does not use semisimplicity or finiteness. 

\begin{proposition}\label{prop:chi-properties}
Let $\cB$ be a braided tensor category and $a\in \cB$ an invertible object. Then
\begin{itemize}
  %  \item[(0)] $\chi_a$ only depends on the isomorphism class of $a$.
    \item[(i)] $\chi_a$ is a monoidal natural isomorphism.
    \item[(ii)] The map $\chi: \Inv(\cB)\to \Aut_\otimes(\Id_{\cB})$ is a group morphism.
    \item[(iii)] The kernel of $\chi$ is the group of transparent invertible objects.
   
\end{itemize}
\end{proposition}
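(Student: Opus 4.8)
\textbf{Proof proposal for Proposition \ref{prop:chi-properties}.}

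The plan is to verify each of the three items by direct diagrammatic manipulation using the defining equation $\chi_a(X)\otimes\id_a = c_{a,X}\circ c_{X,a}$ together with naturality of the braiding and the hexagon axioms; semisimplicity plays no role, so this is purely a calculation in a general braided tensor category. First, for (i), I would check monoidality, i.e. that $\chi_a(X\otimes Y) = \chi_a(X)\otimes\chi_a(Y)$ under the identifications given by the associator. Tensoring both sides with $\id_a$ and using the defining relation, the left side becomes $c_{a,X\otimes Y}\circ c_{X\otimes Y,a}$; expanding this double braiding over the tensor factor $X\otimes Y$ via both hexagon axioms unwinds it into $(c_{a,X}\circ c_{X,a})$ applied on the $X$-strand composed with $(c_{a,Y}\circ c_{Y,a})$ on the $Y$-strand, after sliding strands past one another using naturality. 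The bookkeeping of associators is the only slightly tedious part, but it is exactly the computation in \cite[Lemma 8.22.9]{EGNO}. Naturality of $\chi_a$ in $X$ follows from naturality of $c_{a,-}$ and $c_{-,a}$.

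For (ii), I would show $\chi_{a\otimes b}(X) = \chi_a(X)\circ\chi_b(X)$ (equivalently their tensor product with $\id_{a\otimes b}$ agree). Tensoring $\chi_{a\otimes b}(X)$ with $\id_{a\otimes b}$ gives $c_{a\otimes b,X}\circ c_{X,a\otimes b}$; expanding the double braiding over the tensor object $a\otimes b$ using the hexagons produces, up to associators, the braiding of $X$ past $a$ and then past $b$ and back, which is $(\chi_a(X)\otimes\id_{a})\otimes\id_b$ composed with the analogous expression coming from the $b$-strand. Since $\chi_a(X)$ and $\chi_b(X)$ are scalars-on-$X$ (more precisely, endomorphisms of $X$) they commute, and we get $\chi_a(X)\circ\chi_b(X)$ on the $X$-strand. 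That $\chi$ lands in the abelian group $\Aut_\otimes(\Id_\cB)$ already uses (i). One should also note $\chi_{\unit} = \id$, which is immediate since $c_{\unit,X}$ and $c_{X,\unit}$ are (coherence) identities.

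For (iii), unravelling the definitions: $\chi_a = \id_{\Id_\cB}$ means $\chi_a(X) = \id_X$ for every $X$, which by the defining equation is exactly $c_{a,X}\circ c_{X,a} = \id_{X\otimes a}$ for all $X$, i.e. $a \in C_{\cB}(\cB)$ in the notation for the centralizer; equivalently $a$ is transparent. Conversely a transparent invertible object clearly lies in $\ker\chi$. So $\ker\chi = \{a\in\Inv(\cB): a \text{ transparent}\}$, which is a subgroup by (ii).

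The main obstacle is purely organizational: carefully tracking the associativity constraints when expanding a double braiding $c_{a,X\otimes Y}\circ c_{X\otimes Y,a}$ or $c_{a\otimes b,X}\circ c_{X,a\otimes b}$ via the hexagons, so that the claimed factorization of $\chi$ holds on the nose (not merely up to a coherence isomorphism that must then be checked trivial on the relevant strand). Since $a$, $b$ are invertible this is manageable — one can pick $\delta_a$, $\epsilon_a$ realizing $a^{-1}\cong a^*$ — but it is the only step requiring genuine care; everything else is formal naturality. Because the statement and proof are identical in substance to \cite[Lemma 8.22.9]{EGNO}, I would in the write-up simply indicate the diagrammatic steps and refer there for the full coherence bookkeeping.
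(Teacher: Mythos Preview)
Your proposal is correct and matches the paper's approach: the paper itself gives no proof, merely noting that the result is essentially \cite[Lemma 8.22.9]{EGNO} and that semisimplicity and finiteness are not used. Your sketch is precisely that argument written out, so there is nothing to add.
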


\begin{corollary}\label{corol grading}
Let $\cB$ be a braided fusion category. Then $\cB$ is faithfully graded over the group $\widehat{\Inv(\cB)/\ker(\chi)}$ (linear characters over $\Inv(\cB)/\ker(\chi)$) as follows:

\begin{align*}
    \cB_{\gamma}=\{X\in \cB: \chi_a(X)=\gamma(a)\id_X, \quad \forall a\in \Inv(\cB)\}, && \gamma \in \widehat{\Inv(\cB)/\ker(\chi)}.
\end{align*}
\end{corollary}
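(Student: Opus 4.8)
The plan is to deduce Corollary~\ref{corol grading} directly from Proposition~\ref{prop:chi-properties} together with Example~\ref{universal abelian grading}, so that no new categorical work is needed. By Proposition~\ref{prop:chi-properties}(ii) the assignment $a\mapsto\chi_a$ is a group homomorphism $\chi:\Inv(\cB)\to\Aut_\ot(\Id_\cB)$, and by part (iii) its kernel $\ker(\chi)$ is exactly the subgroup of transparent invertible objects, so $\chi$ factors through an \emph{injective} homomorphism $\bar\chi:\Inv(\cB)/\ker(\chi)\hookrightarrow\Aut_\ot(\Id_\cB)$. First I would note that $\Inv(\cB)$ is abelian (the tensor product of simples is symmetric up to iso on invertibles, or simply because $\Aut_\ot(\Id_\cB)$ is abelian and $\bar\chi$ is injective), so $\Inv(\cB)/\ker(\chi)$ is a finite abelian group $Q$, and Pontryagin duality gives a canonical isomorphism $Q\cong\widehat{\widehat Q}$.

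Next I would invoke Example~\ref{universal abelian grading}: $\cB$ is faithfully graded by $A=\widehat{\Aut_\ot(\Id_\cB)}$ via $\cB_\gamma=\{X:\rho_X=\gamma(\rho)\id_X\ \forall\rho\}$. Composing the grading functor with the dual map $\widehat{\bar\chi}:\widehat{\Aut_\ot(\Id_\cB)}\twoheadrightarrow\widehat Q$ (surjective because $\bar\chi$ is injective) should produce a faithful $\widehat Q$-grading; concretely, for $\eta\in\widehat Q$ one sets $\cB_\eta=\bigoplus_{\gamma\mapsto\eta}\cB_\gamma$. I would then check that under the identification $Q\cong\widehat{\widehat Q}$ this grading is precisely the one written in the statement: $X\in\cB_\eta$ iff for every $a\in\Inv(\cB)$ one has $\chi_a(X)=\eta(\bar a)\,\id_X$ where $\bar a$ is the class of $a$ in $Q$. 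This is essentially unwinding definitions: $X\in\cB_\eta$ means $\rho_X=(\widehat{\bar\chi}(\,\cdot\,)\text{-value})$ for all $\rho$, and specializing $\rho=\chi_a$ gives $\chi_a(X)=\eta(\bar a)\id_X$; conversely that condition pins down the component because $\chi$ is surjective onto its image and that image, together with the grading group it generates, is what Proposition~\ref{prop: iso id-g} controls.

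A cleaner alternative, which I would probably adopt to keep the argument self-contained, is to verify the grading axioms by hand. Set $Q=\Inv(\cB)/\ker(\chi)$ and, for $\gamma\in\widehat Q$, define $\cB_\gamma$ as in the statement. That each $X\in\Irr(\cB)$ lies in exactly one $\cB_\gamma$ follows because $\chi_a(X)$ is a scalar for every invertible $a$ (as $X$ is simple and $\chi_a$ is a natural automorphism of the identity), these scalars multiply correctly in $a$ by Proposition~\ref{prop:chi-properties}(ii), and they depend only on the class of $a$ in $Q$ by part (iii); hence $a\mapsto(\text{scalar})$ is a well-defined character of $Q$. Compatibility with tensor product, $\cB_\gamma\ot\cB_\delta\subseteq\cB_{\gamma\delta}$, is immediate from $\chi_a$ being monoidal (Proposition~\ref{prop:chi-properties}(i)): $\chi_a(X\ot Y)=\chi_a(X)\ot\chi_a(Y)$. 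Faithfulness, i.e.\ $\cB_\gamma\neq 0$ for all $\gamma\in\widehat Q$, is the only nontrivial point: it follows because the grading group of \emph{any} faithful grading of $\cB$ obtained this way must be a quotient of $U(\cB)$ and conversely the characters realized by the $\chi_a$ already separate the components; more precisely, by Proposition~\ref{prop: iso id-g} every tensor automorphism of $\Id_\cB$ is constant on $U(\cB)$-components, so the map $U(\cB)\to Q$ induced by $\chi$ is surjective, and a faithful grading of $\cB$ by a quotient of $U(\cB)$ restricts back to a faithful grading by $Q$.

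The main obstacle is this last faithfulness claim: showing every character $\gamma\in\widehat Q$ actually occurs as the label of some nonzero homogeneous component. I expect to handle it by the duality argument of the second paragraph — the grading by $A=\widehat{\Aut_\ot(\Id_\cB)}$ from Example~\ref{universal abelian grading} is faithful, and pushing it forward along the surjection $A\twoheadrightarrow\widehat Q$ dual to the injection $\bar\chi\colon Q\hookrightarrow\Aut_\ot(\Id_\cB)$ preserves faithfulness of gradings (a surjection of grading groups always does). Everything else is a routine unwinding of the definitions of $\chi_a$ and of graded components, using only parts (i)--(iii) of Proposition~\ref{prop:chi-properties} and Propositions~\ref{prop: iso id-g} and the discussion of Example~\ref{universal abelian grading}.
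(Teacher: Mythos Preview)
Your proposal is correct and follows essentially the same approach as the paper: the paper's proof is precisely your second paragraph, namely that the injective map $\bar\chi:\Inv(\cB)/\ker(\chi)\hookrightarrow\Aut_\ot(\Id_\cB)$ dualizes to a surjection $\widehat{\Aut_\ot(\Id_\cB)}\twoheadrightarrow\widehat{\Inv(\cB)/\ker(\chi)}$, and pushing the faithful grading of Example~\ref{universal abelian grading} along this surjection gives the claimed faithful grading. Your additional ``by hand'' alternative and the extended discussion of faithfulness are unnecessary elaborations on what the paper dispatches in one sentence.
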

\begin{proof}
Since the induced map $\chi:\Inv(\cB)/\ker(\chi)\to \Aut_\ot(\Id_\cB)$ is injective, then the induced surjective map $\chi^*:\widehat{\Aut_\ot(\Id_\cB)}\to \widehat{\Inv(\cB)/\ker(\chi)}$ defines a faithful $\widehat{\Inv(\cB)/\ker(\chi)}$ grading by Example \ref{universal abelian grading}.
\end{proof}

\subsection{Conventions and Graphical calculus}
 In the following sections $\cC$ is a fusion category, which we may assume is strict without loss of generality by MacLane's Strictness and Coherence Theorems. In particular we can ignore associators and draw diagrams modulo isotopy that preserves the order of objects, for example
$$\begin{tikzpicture}[line width=1,scale=.66,baseline=35]
\draw (0,0)--(0,4);
\draw (2,0)--(2,1) \br (1,3) --(1,4);
\draw (3,0) --(3,4);
\end{tikzpicture} 
\hspace{10pt}
=
\hspace{10pt}
\begin{tikzpicture}[line width=1,scale=.66,baseline=35]
\foreach \x in {0,1,2}
\draw (\x,0)--(\x,4);
\end{tikzpicture}
\hspace{10pt}
=
\hspace{10pt}
\begin{tikzpicture}[line width=1,scale=.66,baseline=35]
\begin{scope}[xscale=-1]
\draw (0,0)--(0,4);
\draw (2,0)--(2,1) \br (1,3) --(1,4);
\draw (3,0) --(3,4);
\end{scope}
\end{tikzpicture}
\hspace{5pt}.$$
Our diagrams are oriented top to bottom and left to right. Our convention for braiding diagrams is that for positive braids the $i+1$st strand passes over the $i$th strand.

\section{Associative zestings}

Associative zesting may by regarded as a special case of $G$-graded extension \cite{ENO3}: given a $G$-graded fusion category $\cC$ we construct new $G$-graded fusion categories by twisting the fusion rules on the graded components of $\cC$ by an invertible object in the relative centralizer of $\cC$.  While explicitly constructing all $G$-graded extensions of a given category can be a formidable task, associative zesting takes a particular extension as input. This allows for a precise description of the new fusion categories and simplifies the subsequent analysis of braiding and pivotal structures in terms similar to the obstruction/parameterization approach of \emph{loc. cit.}.

\subsection{Relative centralizer of monoidal subcategories} 
\label{sec:relcen}
Let $\cC$ be a fusion category and $\cD\subset \cC$ a fusion subcategory. The relative centralizer $R_\cD(\cC)$ is the fusion subcategory of the Drinfeld center $\cZ(\cC)$ whose objects are pairs $(X,\sigma_{X,-})$ where $X\in \cD$ and $$\sigma_{X,-}=\{\sigma_{X,V}:X \otimes V \to V\otimes X\}_{V\in \cC}$$ is a family of isomorphisms natural in $V\in \cC$ such that 
\begin{align}
\label{relcen}
\sigma_{X,V\otimes W}= (\id_V \otimes \sigma_{X,W} )\circ(\sigma_{X,V}\otimes \id_W).
\end{align}
for all $V, W\in \cC$.

While $\cC$ is not a priori braided, since $R_{\cD}(\cC)$ is a subcategory of the braided Drinfeld center $\cZ(\cC)$ we can use crossings in our graphical calculus when objects of the relative center are involved. For example, Equation \ref{relcen} becomes
$$
\begin{tikzpicture}[line width=1,scale=.66,baseline=40]

\draw (2,0) node[below] {$X$} \br (0,4)node[above] {$X$};
\draw[line width=10, white] (0,0)  \br (1,4);
\draw[line width=10, white] (1,0)  \br (2,4);
\draw (0,0) node[below] {$V$} \br (1,4)node[above] {$V$};
\draw (1,0) node[below] {$W$}\br (2,4)node[above] {$W$};
\end{tikzpicture}
\hspace{10pt}
=
\hspace{10pt}
\begin{tikzpicture}[line width=1,scale=.66,baseline=40]
\draw (2,0) node[below] {$X$} \br (1,2) \br (0,4)node[above] {$X$};
\draw[line width=10, white] (0,0) -- (0,2) \br (1,4);
\draw[line width=10, white] (1,0)\br (2,2) -- (2,4);
\draw (0,0) node[below] {$V$} --(0,2)  \br (1,4) node[above] {$V$};
\draw (1,0) node[below] {$W$}  \br (2,2)-- (2,4) node[above] {$W$};
\end{tikzpicture}. $$

A morphism $f:(X,\sigma_{X,-})\to (Y,\sigma_{Y,-})$ in $R_\cD(\cC)$ is a morphism $f:X\to Y$ in $\cD$ such that $(\id_V\otimes f)\circ \sigma_{X,V}=\sigma_{Y,V}\circ (f\otimes \id_V)$ for all $V\in \cC$. In pictures, 

$$
\begin{tikzpicture}[line width=1,scale=.75,baseline=40]
\draw (1,0)--(1,2) \br (0,4) node[above] {$X$};
\draw[line width=10, white] (0,0)--(0,2) \br (1,4);
\draw (0,0) node[below] {$V$} -- (0,2) \br (1,4) node[above] {$V$};
\begin{scope}[xshift=1cm]
\onebox{Y}{}{f}
\end{scope}
\end{tikzpicture} 
\hspace{10pt}
=
\hspace{10pt}
\begin{tikzpicture}[line width=1,scale=.75,baseline=40]
\draw (1,0) node[below] {$Y$} \br (0,2); 
\draw [white, line width=10](0,0) \br (1,2) --(1,4);
\draw (0,0) node[below] {$V$}\br(1,2)  -- (1,4) node[above] {$V$};
\begin{scope}[yshift=2cm]
\onebox{}{X}{f}
\end{scope}
\end{tikzpicture}.
$$

The isomorphism $\sigma$ will be called the relative half braiding. The category  $R_\cD(\cC)$ is monoidal with tensor product given by \[(X,\sigma_{X,-})\otimes (Y,\sigma_{Y,-})=(X\otimes Y, (\sigma_{X,-}\otimes \id)\circ(\id \otimes \sigma_{Y,-})),\] and unit object $(\unit,\id)$.

\begin{remark}
The notion of relative center was defined in \cite{GNN}. This concept is closely related to the one of relative centralizer introduced above. If $\cC$ is a fusion category and $\cD\subset \cC$ a fusion subcategory, then the relative centralizer $R_\cD(\cC)$ is a full fusion subcategory of the relative center $\cZ_{\cD}(\cC)$ (see \cite[Definition 2.1]{GNN} for the precise definition). The relative centralizer $R_\cD(\cC)$ is also a full fusion subcategory of the Drinfeld centers $\cZ(\cD)$ and $\cZ(\cC)$. In particular, $R_{\cD}(\cC)$ is braided. In the case that $\cC=\cD$, the fusion category $R_\cC(\cC)$ coincides with the Drinfeld center $\cZ(\cC)$. 
\end{remark}

%Let $G$ be a finite group and $\cC=\oplus_{g\in G}\cC_g$ be a faithfully $G$-graded fusion category. Since $R_{\cC_e}(\cC) \subset \cZ(\cC)$, we can use crossings in our graphical calculus when objects of $R_{\cC_e}(\cC)$ are involved. 

\subsection{Associative zesting}\label{sec:associative zesting}

\begin{definition}
\label{def:assoczesting}
Let $G$ be a group and $\cC=\oplus_{g\in G}\cC_g$ be a faithfully $G$-graded fusion category.

An \emph{associative $G$-zesting} $\lambda$ for $\cC$ consists of the following data:
\end{definition}

\begin{enumerate}
    \item A map \[\lambda: G\times G \to  \left(R_{\cC_e}(\cC)\right)_{pt}, \quad (g,h)\mapsto \lambda(g,h)\] where $\lambda(g,h)$ is simple.
    \item For each $(g_1,g_2,g_2) \in G^{\times 3}$ an isomorphism
    \[\lambda_{g_1,g_2,g_3}:\lambda(g_1,g_2)\otimes \lambda(g_1g_2,g_3)\to \lambda(g_2,g_3)\otimes \lambda(g_1,g_2g_3)\]
  which we represent graphically by
    
    \begin{align*}
  \begin{tikzpicture}[line width=1, scale=1.75,baseline=45]
  \foreach \x in {0,1.5}{
  \draw(\x,0)--(\x,2);
  }
  \draw[fill=white] (-.25,.75) rectangle (1.75,1.25); 
  \draw (0,0) node[below] {$(2,3)$};
  \draw (1.5,0) node[below] {$(1,23)$};
  \draw (0,2) node[above] {$(1,2)$};
  \draw (1.5,2) node[above] {$(12,3)$};
  \draw (.75,1) node {$1,2,3$};
  \end{tikzpicture}
  &\hspace{10pt}:=\hspace{10pt}
  \begin{tikzpicture}[line width=1, scale=1.75,baseline=45]
  \foreach \x in {0,1.5}{
  \draw(\x,0)--(\x,2);
  }
  \draw[fill=white] (-.25,.75) rectangle (1.75,1.25); 
  \draw (0,0) node[below] {$\lambda(g_2,g_3)$};
  \draw (1.5,0) node[below] {$\lambda(g_1,g_2g_3)$};
  \draw (0,2) node[above] {$\lambda(g_1,g_2)$};
  \draw (1.5,2) node[above] {$\lambda(g_1g_2,g_3)$};
  \draw (.75,1) node {$\lambda_{g_1,g_2,g_3}$};
  \end{tikzpicture}
  \end{align*}
   such that for any $(g_1,g_2,g_3,g_4)\in G^{\times 4}$ the equation in Figure \ref{axiom:asso} holds (see Remark \ref{rmk: notation}(1) for notation conventions). 
    
 \end{enumerate}

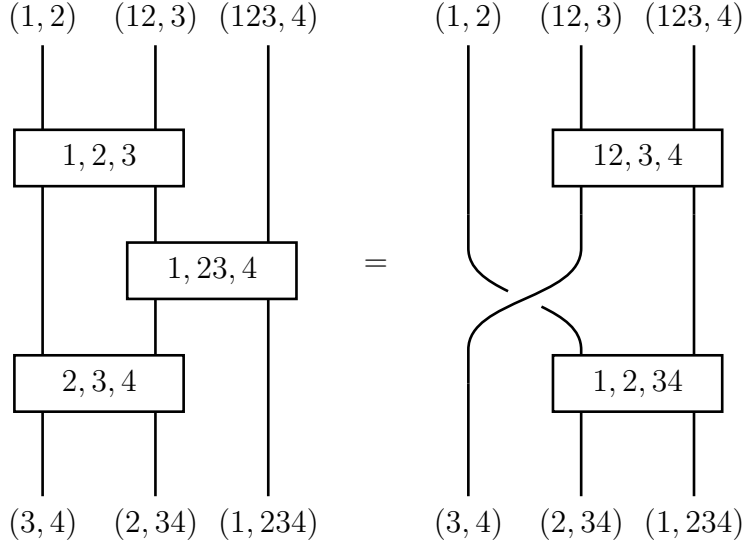
\begin{figure}[h]
    $$\begin{tikzpicture}[line width=1,scale=1.5,baseline=3cm]
  \foreach \x in {0,1,2}{
  \draw(\x,0)--(\x,4);
  }
  \draw[fill=white] (-.25,.75) rectangle node {$2,3,4$} (1.25,1.25) ;
  \draw[fill=white] (.75,1.75) rectangle node {$1,23,4$} (2.25,2.25);
  \draw[fill=white] (-.25,2.75) rectangle node {$1,2,3$} (1.25,3.25) ;
  \draw (0,0) node[below] {$(3,4)$};
   \draw (1,0) node[below] {$(2,34)$};
    \draw (2,0) node[below] {$(1,234)$};
    \draw (0,4) node[above] {$(1,2)$};
      \draw (1,4) node[above] {$(12,3)$};    
      \draw (2,4) node[above] {$(123,4)$};
  \end{tikzpicture} \hspace{10pt}
  =
  \hspace{10pt}
  \begin{tikzpicture}[line width=1,scale=1.5, baseline=3cm]
   \foreach \x in {0,1,2}{
  \draw(\x,0)--(\x,1);
  \draw(\x,2.5)--(\x,4);
  }
   \begin{scope}[xshift=-1cm,yshift=2.5cm]
  \braid[number of strands=3] a_1^{-1};
  \end{scope}
   \draw[fill=white] (.75,.75) rectangle node {$1,2,34$} (2.25,1.25);
   \draw[fill=white] (.75,2.75) rectangle node {$12,3,4$} (2.25,3.25);
  \draw (0,0) node[below] {$(3,4)$};
   \draw (1,0) node[below] {$(2,34)$};
    \draw (2,0) node[below] {$(1,234)$};
    \draw (0,4) node[above] {$(1,2)$};
      \draw (1,4) node[above] {$(12,3)$};    
      \draw (2,4) node[above] {$(123,4)$};
  \end{tikzpicture}
  $$
   \caption{Associative zesting constraint}
   \label{axiom:asso}
\end{figure}

Moreover, we impose the following normalization conditions:

\begin{align}
\lambda(e,g_1)&=\lambda(g_1,e)=\unit, \label{unit1}\\  
\lambda_{g_1,e,g_2}&=\id_{\lambda(g_1,g_2)}.\label{unit2}
%\\ \lambda(e,g_1,g_2)&=c_{\unit,\lambda(g_1,g_2)},\label{unit3}\\
%\lambda(g_1,e,g_2)&=c_{\lambda(g_1,g_2),\unit} \label{unit3}.
\end{align}

 \begin{remark}\label{rmk: notation}
 \begin{enumerate}
     \item The label $1,2,3$ of the box on the right-hand side of Definition \ref{def:assoczesting}(2) has enough information to recover the target and source of the isomorphism $\lambda_{g_1,g_2,g_3}$, so we suppress the labels on the strands.  For further notational convenience we identify $g_i$ with the index $i$ so that, for example $g_1g_2$ becomes $12$ in Figure \ref{axiom:asso} and in subsequent figures. 
  
     \item Condition (2) of Definition \ref{def:assoczesting} implies that $\lambda$ in (1) is a 2-cocycle in the sense that $(g_1,g_2)\mapsto[\lambda(g_1,g_2)]$ satisfies the $2$-cocycle condition, where we interpret $[\lambda(g_1,g_2)]$ as an element of the group of isomorphism classes of invertible objects.
     \item It follows from the assumptions (\ref{unit1}),(\ref{unit2}), and the associative zesting condition that we also have $\lambda_{e,g_1,g_2}=c_{\unit,\lambda(g_1,g_2)}=\id_{\lambda(g_1,g_2)}$, and $\lambda_{g_1,e,g_2}=c_{\lambda(g_1,g_2),\unit}=\id_{\lambda(g_1,g_2)}$.
     
 \end{enumerate}
 
 \end{remark}

\begin{proposition}\label{prop:zesting-cat}
Let $G$ be a finite group and $\cC$ a faithfully $G$-graded fusion category. Given an associative $G$-zesting  $\lambda$,  we can define a new faithfully $G$-graded fusion category $\cC^\lambda:=(\cC,\otlam, \boldsymbol{a}^\lambda)$, where the tensor product $\otlam$ is defined as
\[V_{g_1}\,\otlam\, W_{g_2} :=  V_{g_1}\,\otimes W_{g_2}\, \otimes\, \lambda(g_1,g_2), \]   the associativity constraint $\boldsymbol{a}^\lambda_{V_{g_1}, W_{g_2}, Z_{g_3}}$ by 
   
   \begin{align}
     \hspace{-30pt}\assoc{1}{2}{3}{.85} &:=&
 \assocfull{g_1}{g_2}{g_3}{.85}
   \end{align}

and the same unit object and unit constraint as $\cC$.

\end{proposition}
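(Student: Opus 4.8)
The plan is to verify directly that the data $(\cC,\otlam,\boldsymbol{a}^\lambda,\unit)$ satisfies all the axioms of a fusion category: that $\otlam$ is a well-defined bifunctor, that $\boldsymbol{a}^\lambda$ is a natural isomorphism, that it satisfies the pentagon, that the unit constraints still work, and finally that rigidity and semisimplicity are inherited (the latter being essentially automatic since the underlying abelian category is unchanged and only the tensor product and associator are modified). The grading is faithful because the underlying category already has that grading and $\lambda(g_1,g_2)\in\cC_e$, so $V_{g_1}\otlam W_{g_2}\in\cC_{g_1g_2}$.

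First I would check that $\otlam$ is a bifunctor: on objects it is defined by the stated formula (extended additively to non-homogeneous objects), and on morphisms $f:V_{g_1}\to V_{g_1}'$, $h:W_{g_2}\to W_{g_2}'$ one sets $f\otlam h := f\otimes h\otimes \id_{\lambda(g_1,g_2)}$; functoriality (compatibility with composition and identities) is immediate from that of $\otimes$. Next, naturality of $\boldsymbol{a}^\lambda$ in all three variables: since $\boldsymbol{a}^\lambda$ is built from the ordinary associator $\alpha$ of $\cC$ (suppressed by strictness), the relative half-braidings $\sigma$ of the objects $\lambda(g_i,g_j)$, and the fixed isomorphisms $\lambda_{g_1,g_2,g_3}$, naturality reduces to naturality of $\alpha$, naturality of each half-braiding $\sigma_{\lambda(g,h),-}$ in the second slot (part of the definition of $R_{\cC_e}(\cC)$), and the fact that $\lambda_{g_1,g_2,g_3}$ is a fixed morphism independent of the objects $V,W,Z$. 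This is where the graphical calculus of Section~\ref{sec:relcen} does the bookkeeping: a morphism $V_{g_1}\to V_{g_1}'$ slides past the crossings because objects of the relative centralizer have half-braidings natural in the ambient category.

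The heart of the proof — and the step I expect to be the main obstacle — is the pentagon identity for $\boldsymbol{a}^\lambda$. The pentagon for $\cC^\lambda$ involves five instances of $\boldsymbol{a}^\lambda$ on a fourfold product $V_{g_1}\otlam W_{g_2}\otlam Z_{g_3}\otlam T_{g_4}$. Unwinding the definition, each $\boldsymbol{a}^\lambda$ contributes an ordinary associator of $\cC$, some half-braiding crossings $\sigma$ involving the $\lambda(g_i,g_j)$, and one box $\lambda_{-,-,-}$. The ordinary associators and the hexagon-type compatibilities of the half-braidings (Equation~\eqref{relcen} and its consequences) collapse by the coherence theorem and by the axioms defining $R_{\cC_e}(\cC)$, leaving exactly the equation of Figure~\ref{axiom:asso} among the boxes $\lambda_{g_1,g_2,g_3}$ — which is precisely the associative zesting constraint we assumed. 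So the strategy is: draw the pentagon diagram in the graphical calculus, isotope all strands and use the relative-centralizer relations to separate the ``$\lambda$-box part'' from the ``ambient coherence part,'' observe the ambient part commutes by MacLane coherence, and identify the remaining $\lambda$-box part with Figure~\ref{axiom:asso}.

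Finally, the triangle/unit axioms follow from the normalization conditions~\eqref{unit1} and~\eqref{unit2}: since $\lambda(e,g)=\lambda(g,e)=\unit$ the tensor product $\otlam$ agrees with $\otimes$ when one factor is in $\cC_e$ up to the trivial object, and $\lambda_{g_1,e,g_2}=\id$ (together with Remark~\ref{rmk: notation}(3)) makes $\boldsymbol{a}^\lambda$ reduce to $\boldsymbol{a}$ in the presence of a unit, so the old unit and unit constraints still satisfy the triangle axiom. For rigidity one notes that the duals in $\cC^\lambda$ can be constructed from the duals in $\cC$ twisted by suitable $\lambda$-objects — but since the excerpt does not require us to exhibit explicit evaluation/coevaluation here, I would either defer the precise duality morphisms to a subsequent lemma or remark that semisimplicity plus the existence of a compatible grading guarantees rigidity by a standard argument. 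The upshot is that the only genuinely new content is the pentagon, and that reduces, by the graphical calculus, exactly to Figure~\ref{axiom:asso}.
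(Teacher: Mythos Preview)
Your proposal is correct and follows essentially the same approach as the paper: the pentagon for $\boldsymbol{a}^\lambda$ is reduced via the graphical calculus (isotoping strands and using the half-braiding relations from $R_{\cC_e}(\cC)$) to the associative zesting constraint of Figure~\ref{axiom:asso}, and the unit axioms follow from the normalizations~\eqref{unit1},~\eqref{unit2}. The paper is somewhat terser---it cites \cite[Section~8]{ENO3} for the general framework and only spells out the pentagon reduction (your bifunctoriality and naturality checks are implicit there), and it likewise defers explicit rigidity to the subsequent Section~\ref{section: rigidity}, exactly as you suggest.
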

\begin{proof}
An associative zesting  is a particular case of the construction of a faithfully graded fusion category given in \cite[Section 8]{ENO3}.  For the convenience of the reader, we will check the pentagon axiom. The pentagon axiom is equivalent to the equality in Figure \ref{fig:proof3}. 
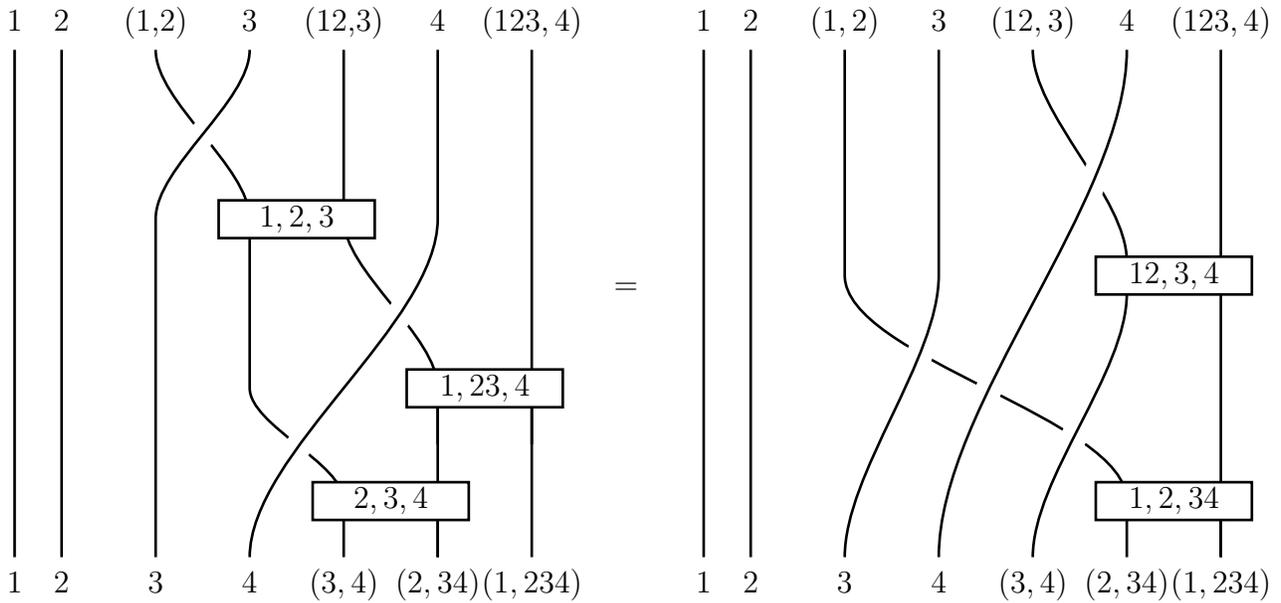
\begin{figure}[h]
  $$\hspace{-35pt}\begin{tikzpicture}[line width=1, looseness=.75, xscale=1.25,yscale=.75,baseline=3.5cm]
    \draw (.5,0) node[below] {$\parcen{1}$}--(.5,9)node[above] {$\parcen{1}$};
    \draw (1,0)node[below] {$\parcen{2}$}--(1,9) node[above] {$\parcen{2}$};
    \draw (6,0) node[below] {$(1,234)$}--(6,9) node[above] {$(123,4)$};
    \draw (5,0) --(5,3);
    \draw (5,3) \br (4,6)--(4,9) node[above] {(12,3)};
    \draw (4,1) \br (3,3) --(3,6) \br (2,9) node[above] {(1,2)};
    \draw[white, line width=10] (3,0)  \br (5,6) -- (5,9);
    \draw (3,0)node[below] {$\parcen{4}$}  \br (5,6) -- (5,9) node[above] {$\parcen{4}$};
     \begin{scope}[xshift=4cm]
     \twoboxnotop{(3,4)}{(2,34)}{2,3,4}
 \end{scope}
    \begin{scope}[xshift=5cm, yshift=2cm]
     \twoboxnotop{}{}{1,23,4}
 \end{scope}
 \begin{scope}[xshift=3cm,yshift=5cm]
     \twoboxnostrands{1,2,3}
 \end{scope}
    \draw[white, line width=10] (2,6) \br (3,9);
    \draw (2,0)node[below] {$\parcen{3}$} --(2,6) \br (3,9) node[above] {$\parcen{3}$};
    \end{tikzpicture}\hspace{5pt}=\hspace{10pt}
    \begin{tikzpicture}[line width=1, looseness=.75, xscale=1.25,yscale=.75,baseline=3.5cm]
   \draw (.5,0) node[below] {$\parcen{1}$}--(.5,9) node[above] {$\parcen{1}$};
    \draw (1,0)node[below] {$\parcen{2}$}--(1,9) node[above] {$\parcen{2}$};
    \draw (6,0) node[below] {$(1,234)$}--(6,9) node[above] {$(123,4)$};
    \draw (5,1) \br (2,5)--(2,9) node[above] {$(1,2)$};
    \draw (5,5.25) \br (4,9) node[above] {$(12,3)$};
  \draw[white, line width=10] (2,0) \br (3,5);
  \draw (2,0) node[below] {$\parcen{3}$}\br (3,5)--(3,9) node[above] {$\parcen{3}$};
  \draw[white, line width=10] (3,0)   \br (5,9);
  \draw (3,0) node[below] {$\parcen{4}$} \br (5,9) node[above] {$\parcen{4}$};
  \draw[white, line width=10] (4,0) \br (5,4.675);
  \draw (4,0) node[below] {$(3,4)$} \br (5,4.675);
   \begin{scope}[xshift=5cm]
     \twoboxnotop{(2,34)}{}{1,2,34}
    \begin{scope}[yshift=4cm]
     \twoboxnostrands{12,3,4}
 \end{scope}
  \end{scope}
    \end{tikzpicture}
    $$
   \caption{Pentagon axiom that must be satisfied by the zested associators.}
    \label{fig:proof3}
\end{figure}

Using the graphical calculus is easy to check that the equality depicted in Figure \ref{fig:proof3} is equivalent to the one in Figure \ref{fig:proof1}.  Now, the associative zesting condition of Figure \ref{axiom:asso} implies the equality in Figure \ref{fig:proof1} and therefore the pentagon axiom in the zested category. 
 
Finally, the fact that $\unit$ is the unit object with the same unit constraints follows directly from the definition of the tensor product of $\cC^\lambda$ and the conditions \eqref{unit1} and \eqref{unit2}.

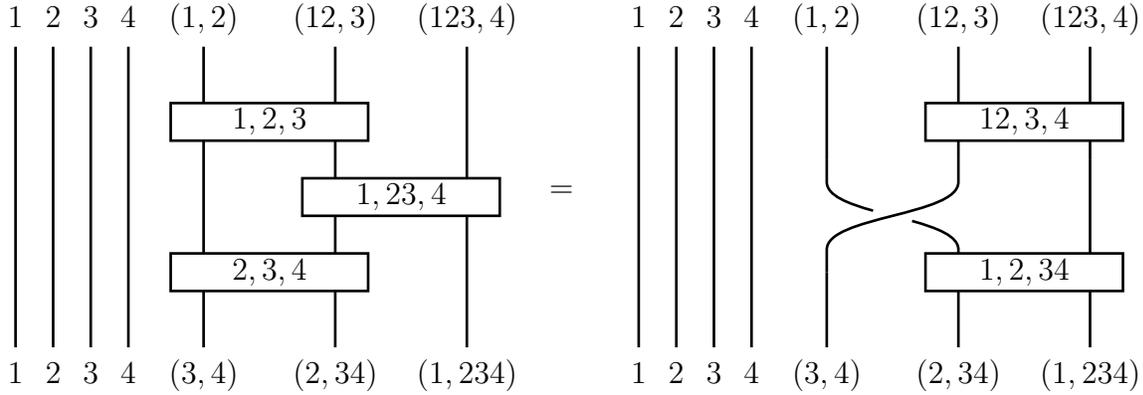
\begin{figure}[h]
 $$\hspace{-25pt}\begin{tikzpicture}[line width=1,scale=1.0,baseline=2cm]
    \begin{scope}[xscale=1.75]
  \foreach \x in {0,1,2}{
  \draw(\x,0)--(\x,4);
  }
  \draw[fill=white] (-.25,.75) rectangle node {$2,3,4$} (1.25,1.25) ;
  \draw[fill=white] (.75,1.75) rectangle node {$1,23,4$} (2.25,2.25);
  \draw[fill=white] (-.25,2.75) rectangle node {$1,2,3$} (1.25,3.25) ;
  % Labels
  \draw (0,4) node [above] {$(1,2)$};
  \draw (1,4) node [above] {$(12,3)$};
  \draw (2,4) node [above] {$(123,4)$};
   \draw (0,0) node [below] {$(3,4)$};
  \draw (1,0) node [below] {$(2,34)$};
  \draw (2,0) node [below] {$(1,234)$};
  \end{scope}
  % Strands, Labels
  \foreach \z in {1,2,3,4}{
  \draw (1/2*\z-3,0) node[below] {$\phantom{(}\z\phantom{)}$}--(1/2*\z-3,4) node[above] {$\phantom{(}\z\phantom{)}$};
  }
  \end{tikzpicture} \hspace{5pt}
  =
  \hspace{10pt}
  \begin{tikzpicture}[line width=1,baseline=2cm]
  \begin{scope}[xscale=1.75]
   \foreach \x in {0,1,2}{
  \draw(\x,0)--(\x,1);
  \draw(\x,2.5)--(\x,4);
  }
   \begin{scope}[xshift=-1cm,yshift=2.5cm]
  \braid[number of strands=3] a_1^{-1};
  \end{scope}
   \draw[fill=white] (.75,.75) rectangle node {$1,2,34$} (2.25,1.25);
   \draw[fill=white] (.75,2.75) rectangle node {$12,3,4$} (2.25,3.25);

  \draw (0,4) node [above] {$(1,2)$};
  \draw (1,4) node [above] {$(12,3)$};
  \draw (2,4) node [above] {$(123,4)$};
   \draw (0,0) node [below] {$(3,4)$};
  \draw (1,0) node [below] {$(2,34)$};
  \draw (2,0) node [below] {$(1,234)$};
\end{scope}
  % Strand Labels
  \foreach \z in {1,2,3,4}{
  \draw (1/2*\z-3,0) node[below] {$\phantom{(}\z\phantom{)}$}--(1/2*\z-3,4) node[above] {$\phantom{(}\z\phantom{)}$};
  }
  \end{tikzpicture}$$

    \caption{Equivalent formulation of the pentagon axiom from Figure 3.}
 \label{fig:proof1}
\end{figure}

    \begin{figure}[h]
    $$\hspace{20pt}\begin{tikzpicture}[line width=1, looseness=.75, xscale=1.75]
    \draw (0,0) node[below] {1} -- (0,3) node[above] {$\phantom{(}1\phantom{)}$};
    \draw (1,0) node[below] {2} -- (1,3) node[above] {$\phantom{(}2\phantom{)}$};
    \draw (6,0) node[below] {$(123,4)$} -- (6,3) node[above] {$(123,4)$};
    % Understands
    \draw (4,0)node[below] {$(1,2)$} to [out=90,in=-90] (2,3) node[above] {$(1,2)$};
    \draw (5,0) node[below] {$(12,3)$}to [out=90,in=-90] (4,3) node[above] {$(12,3)$};
    \draw[white,line width=10] (2,0) to [out=90,in=-90] (3,3);
    \draw (2,0) node[below] {3} to [out=90,in=-90] (3,3) node[above] {$\phantom{(}3\phantom{)}$};
    \draw[white,line width=10] (3,0) to [out=90,in=-90] (5,3);
    \draw (3,0) node[below] {4} to [out=90,in=-90] (5,3) node[above] {$\phantom{(}4\phantom{)}$};
    \end{tikzpicture}$$
    \label{fig:proof2}
\end{figure}

\end{proof}

\begin{remark}\label{rmk:solutions pentagon} 

    \item Given a map $\lambda: G\times G \to \left(R_{\cC_e}(\cC)\right)_{pt}$ there are at least three associated bifunctors, namely
    \begin{align*}
        V_g \,\ot^1\, W_h &:=  V_g\,\otimes\, W_h \,\otimes \,\lambda(g,h),\\  
        V_g \,\ot^2 \,W_h &:=  V_g\,\otimes \,\lambda(g,h)\,\otimes W_h \\
        V_g \,\ot^3 \,W_h &:=  \lambda(g,h)\,\otimes\, V_g\,\otimes \,W_h.
    \end{align*}

They are easily seen to be naturally isomorphic (proof supplied upon request), so our choice of $\ot^1$ is no loss of generality.

\end{remark}

\begin{example}
As a special case of our construction we can recover some examples found in \cite{KazWen} from the modular $\Z/N$-graded category $SU(N)_k$. In \emph{loc.~ cit.} they classify fusion categories with the same fusion rules as $SU(N)_k$, showing that any such category is obtained from $SU(N)_k$ by either changing the quantum parameter $q$ or twisting the associativity morphisms by a $3$-cocycle, or both. If we choose the \emph{trivial} $2$-cocycle $\lambda:\Z/N\times\Z/N\rightarrow \cC_{pt}\cap\cC_{0}$, i.e., $\lambda(a,b)=\unit$ then the second associative zesting constraint (Figure \ref{axiom:asso}) is simply the condition that $\lambda_{a,b,c}$ is a (normalized) $3$-cocycle on $\Z/N$.  Thus the associative zestings of $SU(N)_k$ with trivial $2$-cocycle are precisely the ones obtained in \cite{KazWen} by twisting the associativity morphisms.  We will study some cases with non-trivial $2$-cocycle below.
\end{example}

\subsection{Rigidity of associative zesting}\label{section: rigidity}

\begin{lemma}\label{Lemma: ridid pair}
Let $\cC$ be a fusion category and $X,Y \in \cC$  simple objects such that $\Hom(\unit, X\otimes Y)\neq 0$. If 
\begin{align*}
\phi: Y\otimes X\to \unit,&& \rho: \unit \to X\otimes Y    
\end{align*}
is a pair of non-zero morphisms, then the scalar $z(\phi,\rho)\in \ku$ defined by
\[
\begin{tikzcd}
X \arrow[d, "{\rho\otimes\id_X}"']\arrow[rr, "{z(\phi,\rho)\id_X}"]&& X\\ \arrow[rr, "{\alpha_{X,Y,X}}"]
(X\otimes Y)\otimes X & & X\otimes (Y \otimes X) \arrow[u, "{\id_X\otimes \phi}"']
\end{tikzcd}
\]is non-zero and the triple $(Y,\phi, z(\phi,\rho)\rho)$ is a dual of $X$. Moreover, given a non-zero map $\phi:Y\otimes X\to \unit$ the map $z(\phi,\rho)\rho$ does not depend in the choice of $\rho$, that is, if $\rho, \rho': \unit \to X\otimes Y$ are non-zero maps, then $z(\phi,\rho)\rho=z(\phi,\rho')\rho'$.
\end{lemma}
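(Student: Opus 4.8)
The plan is to reduce the whole statement to the genuine duality data that $\cC$ already carries for $X$, and then to track scalars. To begin, observe that since $X$ is simple we have $\End(X)=\ku\,\id_X$, so the composite $(\id_X\otimes\phi)\circ\alpha_{X,Y,X}\circ(\rho\otimes\id_X)$ is automatically a scalar multiple of $\id_X$ and $z(\phi,\rho)$ is well defined; the same remark will be applied to every endomorphism of $X$ or of $Y$ produced below.

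Because $\cC$ is rigid, $X$ admits a dual $(X^*,\epsilon_X,\delta_X)$ satisfying the two zig-zag axioms, and rigidity supplies a natural isomorphism $\Hom(\unit,X\otimes Y)\cong\Hom(X^*,Y)$ (from the adjunction $X^*\otimes(-)\dashv X\otimes(-)$). By hypothesis this space is nonzero, and $X^*$ and $Y$ are both simple, so any nonzero element of it is an isomorphism and $Y\cong X^*$; transporting along such an isomorphism I may assume $Y=X^*$. Then $\Hom(X^*\otimes X,\unit)$ and $\Hom(\unit,X\otimes X^*)$ are one-dimensional, spanned by $\epsilon_X$ and $\delta_X$ respectively (both nonzero, by the zig-zag axioms), so $\phi=\mu\,\epsilon_X$ and $\rho=\nu\,\delta_X$ for unique scalars $\mu,\nu$, which are nonzero because $\phi$ and $\rho$ are.

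With this reduction in hand the three assertions become short computations against the zig-zag axioms for $(X^*,\epsilon_X,\delta_X)$. For the nonvanishing of $z(\phi,\rho)$ I would substitute $\phi=\mu\,\epsilon_X$ and $\rho=\nu\,\delta_X$ into the composite defining $z(\phi,\rho)$ and apply the first zig-zag axiom; this expresses $z(\phi,\rho)$ through $\mu$ and $\nu$ alone, so it is nonzero. For the duality claim I would feed $\phi$ together with the rescaled coevaluation $z(\phi,\rho)\,\rho$ into both zig-zag axioms and simplify using the zig-zag axioms for $(X^*,\epsilon_X,\delta_X)$: the scalar $z(\phi,\rho)$ is exactly the correction that makes both identities hold, so $(Y,\phi,z(\phi,\rho)\,\rho)$ is a dual of $X$. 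Finally, for independence of the choice of $\rho$: if $\rho'\colon\unit\to X\otimes Y$ is any other nonzero morphism, then $\rho'=t\,\rho$ for some $t\in\ku^\times$ by one-dimensionality of $\Hom(\unit,X\otimes Y)$, and since the composite defining $z$ is linear in the coevaluation slot, tracking the factor $t$ through the definition of $z(\phi,\rho')$ and through $\rho'=t\,\rho$ gives $z(\phi,\rho')\,\rho'=z(\phi,\rho)\,\rho$.

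The step I expect to be the real obstacle, everything else being scalar bookkeeping, is the reduction $Y\cong X^*$, and in particular making sure the isomorphism $\Hom(\unit,X\otimes Y)\cong\Hom(X^*,Y)$ is obtained purely from the adjunctions coming from rigidity of $\cC$ and does not tacitly invoke the conclusion of the lemma. If one prefers not to transport $Y$ to $X^*$, one can instead fix an isomorphism $g\colon Y\to X^*$ once and for all, rewrite $\phi$ and $\rho$ in terms of $g$, $\epsilon_X$ and $\delta_X$, and carry the factors $g$ and $g^{-1}$ through the same computations; the argument is identical but notationally heavier.
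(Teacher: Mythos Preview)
Your proposal is correct and follows essentially the same route as the paper: reduce to $Y\cong X^*$, write $\phi$ and $\rho$ as scalar multiples of the genuine (co)evaluation maps, and then read everything off the zig-zag identities. The paper is terser (it simply asserts $Y\cong X^*$ and the one-dimensionality of the relevant $\Hom$-spaces as standard facts in a fusion category), but the logic is identical; your worry about circularity in the reduction $Y\cong X^*$ is unfounded, since the adjunction $\Hom(\unit,X\otimes Y)\cong\Hom(X^*,Y)$ uses only the existing duality data $(\epsilon_X,\delta_X)$ of $\cC$ and not the conclusion of the lemma.
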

\begin{proof}
In a fusion category we have that, for simple objects $X $ and $Y$,  $\Hom(\unit, X\otimes Y)\neq 0$ if and only if $Y\cong X^*$. Moreover, since in that case, $\Hom(\unit, X\otimes Y)$ is one dimensional, there are non-zero scalars $c_1, c_2 \in \ku^\times$ such that $\epsilon_X=c_1\phi$ and $\delta_X=c_2\rho$ where $(\delta_X,\epsilon_X)$ defines a dual for $X$. Clearly $z(\phi,\rho)=c_1c_2$, thus $z(\phi,\rho)$ is non-zero. Moreover, we have that  $(\phi, z(\phi,\rho)\rho)= (c_1^{-1}\epsilon_X, c_1\delta_X)$. Hence  $(\phi,z(\phi,\rho) \rho)$ also defines a dual for $X$.

For the uniqueness, note  that $z(\phi,\rho)\rho=c_1c_2\rho=c_1\delta_X$. Since $c_1$ only depends on $\phi$ then $z(\phi,\rho)\rho$ only depends on $\phi$.
\end{proof}

Let $\cC$ be a faithfully $G$-graded fusion category and $\lambda$ an associative zesting. Given a simple object $X_g\in \cC_g$ we will denote by $X^*_g\in  \cC_{g^{-1}}$ the dual object with respect to the tensor product $\otimes$. Then $$\overline{X}_g:=X^*_g\otimes \lambda(g,g^{-1})^*\in \cC_{g^{-1}}^{\lambda}$$ is also a simple object and $ X^*_g\otimes X_g \cong  \overline{X}_g \otlam X_g.$ Hence 
$\Hom(\unit,X^*_g\otlam X_g)\neq 0,$ and we will use Lemma \ref{Lemma: ridid pair} to find specific formulas for the evaluation and coevaluation maps  of $X_g$ in $\mathcal{C}^{\lambda}$.

We are assuming that $\lambda(e,g)=\lambda(g,e)=\unit$ for all $g\in G$. Hence we have isomorphisms \[\lambda^g:=\lambda_{g,g^{-1},g}:\lambda(g,g^{-1})\to \lambda(g^{-1},g), \quad \quad g\in G.\]

As we notice in Example \ref{Ex: pointed fusion cat} any pointed fusion category has a spherical structure. Then for any invertible objects $a\in \cC$ in addition to the maps
\begin{align*}
    \epsilon_a:a^*\otimes a\to \unit, && \delta_a:\unit \to a\otimes a^*,
\end{align*}we have maps 
\begin{align}
    \epsilon_a':a\otimes a^*\to \unit, && \delta_a':\unit \to a^*\otimes a,
\end{align}such that 
\begin{equation}\label{defi d_g}
\dim(a)=\epsilon'_a\circ\delta_a= \epsilon_a\circ\delta_a'\in \ku^\times.
\end{equation}
Using these maps, we define $\phi_{X_g}: \overline{X}_g\otlam X_g \to \unit$ via the following  pictures,

%\[ \begin{tikzcd} \overline{X}_g\otlam X_g  \arrow[rr, "{\phi_{X_g}}"] \arrow[d, equal]&& \unit\\ X_g^*  \lambda(g)^*  X_g   \lambda(g^{-1}) \arrow[rd, "{\id \otimes d_{\lambda(g)^*,X_g}\otimes \id}"'] &&  \lambda(g)^*   \lambda(g) \arrow[u, "{\epsilon_{\lambda(g)}}"]\\ &X_g^*   X_g    \lambda(g)^*   \lambda(g^{-1}) \arrow[ru, "{\epsilon_{X_g}\otimes \id_{\lambda(g)}\otimes \lambda_g^{-1} }"']& \end{tikzcd} \]

%and  \[ \rho_{X_g} := \delta_ {X_g}\otimes \delta_{\lambda(g)}':\unit\to   X_g \otimes X_g^*\otimes \lambda(g)^{*}\otimes \lambda(g)=X_g\otlam \overline{X}_g \] (where $\lambda(g):=\lambda(g,g^{-1})$ and the tensor product among objects have been omitted). 

\begin{eqnarray} 
\phi_{X_g} & = \begin{tikzpicture}[scale=1.5, line width=1,baseline=-10]

\draw[looseness=1.5] (1,0) node[above] {$\lambda(g)^{*}$} to [out=-90, in=-90] (3,0)  node[above] {$\phantom{\lambda(g)^{*}}\lambda(g^{-1})\phantom{\lambda(g)^{*}}$};
\draw[white, line width=10, looseness=1.5] (0,0) to [out=-90, in=-90] (2,0);
\draw[looseness=1.5] (0,0) node[above] {$X_g^{*}$} to [out=-90, in=-90] (2,0) node[above] {$\phantom{\overline{X_g}}X_g\phantom{\overline{X_g}}$};

\draw[fill=white] (2.7,-.1) rectangle node {\scriptsize $\lambda^g$} (3.1, -.5);
\draw (3.1, -.2) node[right]  {$^{-1}$};
\end{tikzpicture} \\
\rho_{X_g} & = \begin{tikzpicture}[scale=1.5,line width=1,baseline=10]
\draw[looseness=2] (0,0) node[below] {$\phantom{X_g^{*}}X_g\phantom{X_g^{*}}$} to [out=90, in=90] (1,0) node[below] {$X_g^{*}$};
\draw[looseness=2] (2,0) node[below] {$\lambda(g)^{*}$} to [out=90, in=90] (3,0)  node[below] {$\phantom{\lambda(g)^{*}}\lambda(g^{-1})\phantom{\lambda(g)^{*}}$};
\end{tikzpicture}
\end{eqnarray}

We obtain the scalar  \[z(\phi_{X_g},\rho_{X_g})=\dim(\lambda(g,g^{-1}))^{-1}.\]
Hence for any $X_g\in \cC_g^\lambda$ (not necessarily simple) the data 
\begin{align}\label{dual for zesting}
(\overline{X}_g=X^*_g\otimes \lambda(g,g^{-1})^*,  \phi_{X_g},  \dim(\lambda(g,g^{-1}))^{-1}\rho_{X_g})
\end{align}define a dual in $\cC^\lambda$, where
%\commentp{I think it would be good to have the graphical interpretation for later when we present the Drinfeld isomorphism, etc. I included an idea of the graphical interpretation}
%\includegraphics[width=0.5\textwidth]{scans/zesting_ev_coev.png}

\begin{eqnarray}
\dim(\lambda(g,g^{-1})) & = \hspace{10pt} \begin{tikzpicture}[line width=1, baseline=0]  \draw (0,0) circle (.5);
\draw (.35,.35) node[right] {\scriptsize $\lambda(g,g^{-1})$};
\end{tikzpicture} =& \epsilon_{\lambda(g,g^{-1})}\circ \delta'_{\lambda(g,g)}.
\end{eqnarray}

\subsection{Obstruction to associative zestings}

Let $G$ be a finite group and $\cC$ a $G$-graded fusion category. Recall that $R_{\cC_e}(\cC)$ is a braided fusion category. We will denote by $B$ the abelian group $\operatorname{Inv} (R_{\cC_e}(\cC))$ of isomorphism classes of invertible objects in $R_{\cC_e}(\cC)$.  Recall (see Remark \ref{rmk: notation}(2)) that $\lambda(g,h)$ is a simple object in $R_{\cC_e}(\cC)$ and $(g,h)\mapsto [\lambda(g,h)]$ is a $2$-cocycle.

\begin{definition}\label{def: liftings and equivalent zestings}
\begin{enumerate}
    \item We will say that a 2-cocycle $\beta \in Z^2(G,B)$ has a \emph{lifting} if there is a $G$-zesting  $\lambda$ of $\cC$ such that $\beta(g,h)=[\lambda(g,h)]$ for all $g,h \in G$.
    \item Two liftings $\lambda$ and $\lambda'$ are called equivalent if there are isomorphisms $f_{g_1,g_2}:\lambda(g_1,g_2)\to \lambda'(g_1,g_2)$ such that \[(f_{g_2,g_3}\otimes f_{g_1,g_2g_3})\circ \lambda_{g_1,g_2,g_3}= \lambda'_{g_1,g_2,g_3}\circ (f_{g_1,g_2}\otimes f_{g_1g_2,g_3}),\] for all $g_1,g_2,g_3\in G$.
\end{enumerate}
\end{definition}
Note that inequivalent liftings may yield equivalent fusion categories.  Moreover, cohomologically distinct $2$-cocycles can even give equivalent fusion categories, as we will see in the examples in Section \ref{section: applications}.

%The following result will be useful to translate the existence of lifting to the case of pointed fusion categories.
%\begin{lemma}
%Let $\cB=\oplus_{a\in A} \cB_a$ be an $A$-graded braided tensor category. A 2-cocycle  $\beta \in Z^2(A, \Inv(\cB_e))$ has a lifting over $\cB$ if and only if  has a lifting over $\big (\cB_{pt} \big )_A$.
%\end{lemma}
%\qed

Since every invertible object $X$ is simple, we have that $\Aut_\cB(X)=\{c\id_X:c\in \ku^\times\}$. Hence, we can canonically identify $\Aut_\cB(X)$ with $\ku^\times$ for any invertible object.

Let $\beta \in Z^2(G,B)$. Take  $\lambda(g_1,g_2)\in R_{\cC_e}(\cC)$  such that the isomorphism class of $\lambda(g_1,g_2)$ is $\beta(g_1,g_2)$ and isomorphisms \[\lambda_{g_1,g_2,g_3}:\lambda(g_1,g_2)\otimes \lambda(g_1g_2,g_3)\to \lambda(g_2,g_3)\otimes \lambda(g_1,g_2g_3),\] for all $g_1,g_2,g_3 \in G$.

Define a map $\nu_\lambda:G^{\times 4}\to \ku^\times$, where $\nu_\lambda(g_1,g_2,g_3,g_4)\in \ku^\times$ is given by the automorphism of $\lambda(g_1,g_2)\otimes \lambda(g_1g_2,g_3)\otimes\lambda(g_1g_2g_3, g_4)$
defined in Figure \ref{fig:obstr}.
\begin{figure}[h]
    \begin{center}
    \begin{tikzpicture}[line width=1,xscale=1.75,yscale=.75]
    \draw (1,1.25) to [out=90, in=-90] (0,3.25)--(0,4);
    \draw[white, line width=10] (0,0) -- (0,1) to [out=90,in=-90] (1,2.75);
    \draw (0,0) node[below] {$(1,2)$} -- (0,1) to [out=90,in=-90] (1,2.75);
    \draw (0,6)--(0,8);
    \draw (2,1.25)--(2,10) node[above] {$(123,4)$};
    
    \begin{scope}[xshift=1cm]
    \twoboxnotop{(12,3)}{(123,4)}{12,3,4}
    \draw (1.5,1.25) node[] {$^{-1}$};
    \end{scope}
     \begin{scope}[xshift=1cm,yshift=2cm]
    \twoboxnobottom{}{}{1,2,34}
    \draw (1.5,1.25) node[] {$^{-1}$};
    \end{scope}
     \begin{scope}[yshift=4cm]
     \twobox{}{}{}{}{2,3,4}
     \end{scope}
       \begin{scope}[xshift=1cm,yshift=6cm]
     \twobox{}{}{}{}{1,23,4}
     \end{scope}
       \begin{scope}[yshift=8cm]
     \twobox{}{(1,2)}{}{(12,3)}{1,2,3}
     \end{scope}
    \end{tikzpicture}
        \end{center}

   \caption{4-cocycle obstruction}
   \label{fig:obstr}
\end{figure}
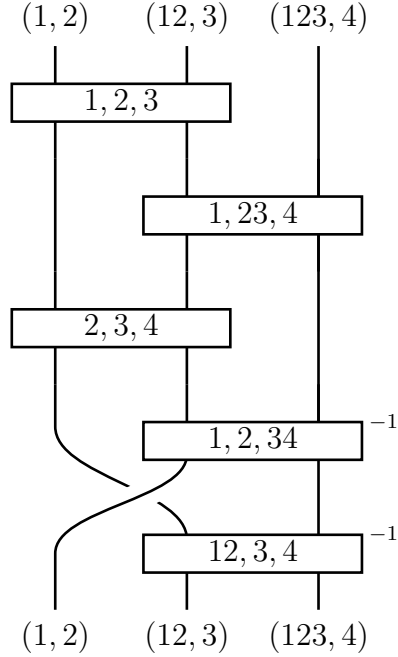

\begin{proposition}[\cite{ENO3}]\label{4-cocycle obstruction}
\begin{itemize}
    \item[(i)] $\nu_\lambda\in Z^4(G,\ku^\times)$.
    \item[(ii)] The cohomology class of $\nu_\lambda$ only depends on the cohomology class of $\beta$.
    \item[(iii)] The map $\nu$ induces a map $PW: H^2(G,\Inv(\cB))\to H^4(G,\ku^\times)$.
    \item[(iv)] The 2-cocycle $\beta$ admits a lifting if and only if $PW(\beta)=0$.
    \item[(v)] If $PW(\beta)=0$, the set of equivalence classes of liftings is a torsor over $H^3(G,\ku^\times)$.
\end{itemize}
\end{proposition}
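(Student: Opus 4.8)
The plan is to carry out the obstruction-theoretic computation that sits one categorical level above the familiar fact that a $3$-cochain twisting an associator must be a $3$-cocycle, everything taking place inside the genuinely braided category $R_{\cC_e}(\cC)$; as noted in the proof of Proposition~\ref{prop:zesting-cat}, an associative zesting is a special instance of the graded-extension data of \cite[Section 8]{ENO3}, so (i)--(v) may alternatively be deduced from there. For (i), I would fix $g_1,\dots,g_5\in A$ and consider the invertible object
\[
\Omega:=\lambda(g_1,g_2)\otimes\lambda(g_1g_2,g_3)\otimes\lambda(g_1g_2g_3,g_4)\otimes\lambda(g_1g_2g_3g_4,g_5)
\]
of $R_{\cC_e}(\cC)$. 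Iterated applications of the isomorphisms $\lambda_{-,-,-}$ together with the braiding of $R_{\cC_e}(\cC)$ assemble as the edges of the $3$-dimensional coherence polytope attached to the $\lambda_{-,-,-}$ (cf.\ \cite[Section 8]{ENO3}); each of its $2$-faces is either an instance of the defining relation of $\nu_\lambda$ in Figure~\ref{fig:obstr}, an instance of a naturality or hexagon identity that holds on the nose in the braided category $R_{\cC_e}(\cC)$, or a trivial commutativity. Traversing the boundary $2$-sphere and cancelling every face of the latter two kinds leaves precisely the product of the six relevant values of $\nu_\lambda$, which must therefore be $1$; this is exactly $(\delta^4\nu_\lambda)(g_1,\dots,g_5)=1$, so $\nu_\lambda\in Z^4(A,\ku^\times)$. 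Carrying out this diagram chase carefully is the main work.

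Parts (ii) and (iii) are cochain bookkeeping. Reading off Figure~\ref{fig:obstr}, three of the constituent isomorphisms of $\nu_\lambda$ enter positively and two inversely, in the pattern of $\delta^3$, so replacing $\lambda_{g,h,k}$ by $x(g,h,k)\lambda_{g,h,k}$ with $x\in C^3(A,\ku^\times)$ multiplies $\nu_\lambda$ by $(\delta^3 x)^{\pm1}$; hence $[\nu_\lambda]\in H^4(A,\ku^\times)$ does not depend on the chosen $\lambda_{-,-,-}$. Replacing each object $\lambda(g,h)$ by an isomorphic one, or replacing the $2$-cocycle $\beta$ by $\beta\cdot\delta^1\mu$ for a map $\mu:A\to B$ (and $\lambda(g,h)$ by its tensor product with the invertible objects built from $\mu$), changes $\nu_\lambda$ only by a $3$-coboundary, by a similar inspection using naturality of $\lambda_{-,-,-}$ and the coherence of $R_{\cC_e}(\cC)$. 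Therefore $[\nu_\lambda]$ depends only on $[\beta]\in H^2(A,B)$, and $PW([\beta]):=[\nu_\lambda]$ is the asserted map.

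For (iv), observe that by Definition~\ref{def:assoczesting} a lifting of $\beta$ is exactly a choice of objects $\lambda(g,h)$ representing $\beta$ together with isomorphisms $\lambda_{g,h,k}$ making the constraint of Figure~\ref{axiom:asso} hold identically, i.e.\ with $\nu_\lambda\equiv 1$. If $PW(\beta)=0$, start from an arbitrary such system; then $\nu_\lambda=\delta^3\kappa$ for some $\kappa\in C^3(A,\ku^\times)$, and rescaling $\lambda_{g,h,k}$ by $\kappa(g,h,k)^{\mp1}$ yields a system with $\nu_\lambda\equiv1$, hence a lifting. Conversely a lifting has $\nu_\lambda\equiv1$, so $PW(\beta)=[\nu_\lambda]=0$ by (ii).

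For (v), assume $PW(\beta)=0$ and fix a lifting $\lambda^0$. Given any lifting $\lambda$, choose isomorphisms $\lambda(g,h)\to\lambda^0(g,h)$ in $R_{\cC_e}(\cC)$; transporting $\lambda_{-,-,-}$ along them one may assume $\lambda(g,h)=\lambda^0(g,h)$ and $\lambda_{g,h,k}=t(g,h,k)\,\lambda^0_{g,h,k}$ for a unique $t:A^{\times 3}\to\ku^\times$. Since $\nu_\lambda=\nu_{\lambda^0}\equiv1$, the change-of-choices formula of (ii) forces $\delta^3 t=1$, so $t\in Z^3(A,\ku^\times)$; altering the transport isomorphisms by scalars changes $t$ by a $2$-coboundary, and $\lambda$ and $\lambda^0$ are equivalent in the sense of Definition~\ref{def: liftings and equivalent zestings} precisely when the resulting class $[t]\in H^3(A,\ku^\times)$ is trivial. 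Since every $3$-cocycle is realized by rescaling $\lambda^0_{-,-,-}$ (which preserves $\nu\equiv1$), the set of equivalence classes of liftings of $\beta$ is a torsor over $H^3(A,\ku^\times)$. In summary, the only substantive step is the polytope coherence argument for (i) (equivalently, its invocation from \cite[Section 8]{ENO3}); parts (ii)--(v) are formal once the cocycle identity and the rule $\nu_\lambda\mapsto\nu_\lambda\cdot\delta^3(\text{cochain})$ under rescaling of the $\lambda_{-,-,-}$ are in hand.
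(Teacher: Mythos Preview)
Your proposal is correct. The paper's own proof consists entirely of pinpoint citations to \cite{ENO3}: items (i)--(iii) are \cite[Proposition 8.10]{ENO3}, item (iv) is \cite[Proposition 8.9]{ENO3}, and item (v) is \cite[Proposition 8.15]{ENO3}. You reproduce this citation and additionally supply a self-contained sketch of the obstruction-theoretic argument, which is essentially what lies behind those results in \cite{ENO3}. In particular, your observation that rescaling $\lambda_{g,h,k}$ by a $3$-cochain $x$ multiplies $\nu_\lambda$ by $\delta^3 x$ can be read off directly from Figure~\ref{fig:obstr} (three boxes enter positively, two inversely, in exactly the pattern of $\delta^3$), so there is no need for the hedge ``$(\delta^3 x)^{\pm 1}$''; the sign is determined. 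The polytope/associahedron argument you outline for (i) is the standard one and is indeed the substantive step; your acknowledgment that the careful diagram chase is the main work, together with the fallback citation to \cite[Section 8]{ENO3}, is appropriate. Parts (ii)--(v) of your argument are complete as written.
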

\begin{proof}
The first three items of the proposition correspond to \cite[Proposition 8.10]{ENO3}. Item (iv) is consequence of \cite[Theorem 8.9]{ENO3} and item (v) is \cite[Proposition 8.15]{ENO3}
\end{proof}
\begin{remark}
If $G$ is cyclic then $H^4(G,\ku^\times)=0$ so any $\beta$ admits a lifting.
\end{remark}

%\begin{corollary}
%If a 2-cocycle takes values in a Tannakian subcategory of $\cB_e$ then it admits a canonical lifting.
%\end{corollary}

\section{Braided zesting}\label{section: braided zesting}

Recall that if $c$ is a braiding for a monoidal category $\cB$, then $c'_{X,Y}:=c_{Y,X}^{-1}$ is also a braiding for $\cB$. The category $\cB$ with the braiding $c'$ is denoted $\cB^{\operatorname{rev}}$. 

If $\cB$ is a braided monoidal category and $\cD\subset \cB$ is a monoidal subcategory, the functors 
\begin{align}
    F: \cD \to C_{\cD}(\cB), X&\mapsto (X, \{c_{X,V}\}_{V\in\cB}),\\
    G: \cD^{rev} \to C_{\cD}(\cB), X&\mapsto (X, \{c_{X,V}'\}_{V\in\cB})
\end{align}
are braided faithful functors.

\begin{definition}\label{def:braidedzesting}
Let $\cB$ be a braided faithfully $A$-graded fusion category, where $A$ is an abelian group.
 A \emph{braided zesting} consists of a triple $(\lambda, j, t)$, where
\begin{itemize}
    \item[(i)] $\lambda$ is an associative zesting  such that the relative half braiding of  $\lambda(a,b)$ is $\{c'_{\lambda(a,b),V} \}_{V\in \cB}$ for all $a,b \in A$.
    \item[(ii)] For any pair $g_1,g_2 \in A$ there is an isomorphism (see Figure \ref{fig:map-t}) $$t(g_1,g_2):\lambda(g_1,g_2)\to \lambda(g_2,g_1),$$ 
    \item[(iii)] A function $j: A\to \Aut_\otimes(\Id_{\cB})$, where $ \Aut_\otimes(\Id_{\cB})$ is the abelian group of all  tensor natural isomorphisms  of the identity.
\end{itemize}
The triple $(\lambda, j, t)$, must satisfy the following conditions:\\
\begin{itemize}
    \item[(BZ1)]\label{BZ1} For any $a, b \in A$,
    \[\omega(a,b):=\chi_{\lambda(a,b)}\circ j_{ab}\circ j_a^{-1}\circ j_b^{-1}\in \Aut_\ot^A(\Id_{\cB})\cong \widehat{A},\] where $\chi$ was defined in Figure \ref{fig:chi}. We will denote $\omega(a,b)(c):=\omega(a,b;c)\in\ku^\times$. 
    \item[(BZ2)]\label{BZ2}The equality in Figures \ref{fig:unnormalized-1} and \ref{fig:normalized-2} holds, for any $(g_1,g_2,g_3)\in A^3$ and objects $X_{g_1} \in \cB_{g_1}, Y_{g_2}\in \cB_{g_2}, Z_{g_3} \in \cB_{g_3}$.
\end{itemize}

Moreover, we impose the following normalization conditions:

\begin{itemize}
     \item[(a)] $t(e,g)=t(g,e)=\id_\unit$ (Recall that for a normalized associative zesting $\lambda(e,g)=\textbf{1}$.)
     \item[(b)] $j_e=\Id$ (The identity natural transformation.)
     \item[(c)] $j_g(\unit)=\id_\unit$ for all $g\in 
     A$.
 \end{itemize}
 \end{definition}
 
 \begin{figure}[h!]
    %\centering
  \begin{align*}\begin{tikzpicture}[line width=1,xscale=1.75,yscale=1.5, baseline=40]
    \draw(0,-.5) node[below] {$(2,1)$}--(0,0);
      \draw(0,2)--(0,2.5)node[above] {$(1,2)$};
    \onecirc{}{}{1,2}
    \end{tikzpicture}
    &\hspace{10pt}
    :=\hspace{10pt}
    \begin{tikzpicture}[line width=1,xscale=1.75,yscale=1.5, baseline=40]
    \draw(0,-.5) node[below] {$\lambda(g_2,g_1)$}--(0,0);
      \draw(0,2)--(0,2.5)node[above] {$\lambda(g_1,g_2)$};
    \onecirc{}{}{t(g_1,g_2)}
    \end{tikzpicture}
    \end{align*}
    \caption{Diagram for the isomorphisms $t(g_1,g_2)$.}
    \label{fig:map-t}
\end{figure}
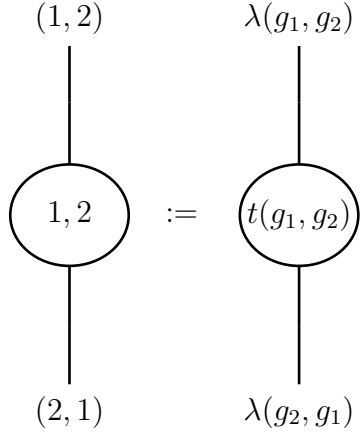

 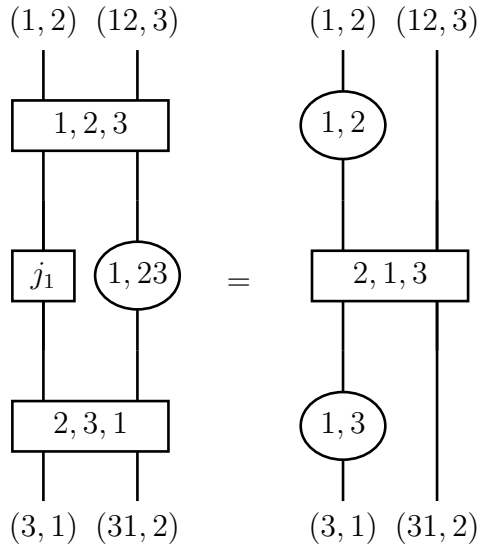
\begin{figure}[h!]
\begin{center}
\begin{tikzpicture}[line width=1,baseline=80,xscale=1.25]
\draw (0,2)--(0,4);
\twobox{(3,1)}{}{(31,2)}{}{2,3,1}
 \begin{scope}[yshift=2cm]
 \onebox{}{}{j_1}
 \begin{scope}[xshift=1cm]
 \onecirc{}{}{1,23}
 \end{scope}
 \end{scope}
  \begin{scope}[yshift=4cm]
  \twobox{}{(1,2)}{}{(12,3)}{1,2,3}
 \end{scope}
\end{tikzpicture}
\hspace{10pt}
=
\hspace{10pt}
\begin{tikzpicture}[line width=1,baseline=80,xscale=1.25]
\draw (1,0) node[below] {$(31,2)$}--(1,6) node[above] {$(12,3)$};
\onecirc{(3,1)}{}{1,3}
 \begin{scope}[yshift=2cm]
\twobox{}{}{}{}{2,1,3}
 \end{scope}
  \begin{scope}[yshift=4cm]
  \onecirc{}{(1,2)}{1,2}
 \end{scope}
\end{tikzpicture}
\end{center}
%\centering
  %\includegraphics[width=0.6\textwidth]{scans/normalizes-braid-zesting-condition-1.png}
   \caption{First  braided zesting condition.}
   \label{fig:unnormalized-1}
\end{figure}

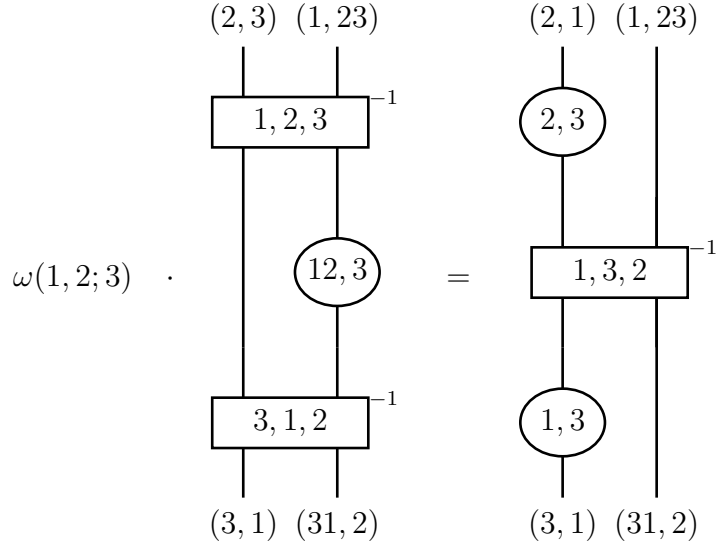
\begin{figure}[h!]
\begin{center}
\hspace{-2cm}
$\omega(1,2;3)$ \hspace{5pt} $\cdot$\hspace{5pt} 
\begin{tikzpicture}[line width=1,baseline=80,xscale=1.25]
\draw (0,2)--(0,4);
\twobox{(3,1)}{}{(31,2)}{}{3,1,2}
 \draw (1.5,1.25) node[] {$^{-1}$};
 \begin{scope}[xshift=1cm,yshift=2cm]
 \onecirc{}{}{12,3}
 \end{scope}
  \begin{scope}[yshift=4cm]
  \twobox{}{(2,3)}{}{(1,23)}{1,2,3}
 \draw (1.5,1.25) node[] {$^{-1}$};
 \end{scope}
\end{tikzpicture}
\hspace{10pt}
=
\hspace{10pt}
\begin{tikzpicture}[line width=1,baseline=80,xscale=1.25]
\draw (1,0) node[below] {$(31,2)$}--(1,6) node[above] {$(1,23)$};
\onecirc{(3,1)}{}{1,3}
 \begin{scope}[yshift=2cm]
\twobox{}{}{}{}{1,3,2}
 \draw (1.5,1.25) node[] {$^{-1}$};
 \end{scope}
  \begin{scope}[yshift=4cm]
  \onecirc{}{(2,1)}{2,3}
 \end{scope}
\end{tikzpicture}
\end{center}
\caption{Second braided zesting condition.}   \label{fig:normalized-2}
\end{figure}

\begin{remark}

\begin{enumerate}
    \item Condition (BZ1) is equivalent to the condition $$\chi_{\lambda(a,b)}|_{\cB_e}=( \frac{j_aj_b}{j_{ab}}\big )_{\cB_e}$$ for all $a, b \in \cB$. 
    \item The choice of the half braiding in braided zesting is compatible with the braidings used in the graphical calculus in the discussion of the relative centralizer from Section \ref{sec:relcen} and Definition \ref{def:assoczesting} of associative zesting.
    \item Since the two braided zesting conditions are isomorphisms of invertible simple objects, they can be expressed as scalar equations which take the form
\begin{eqnarray}
\label{eqn: braided zest scalar 1}
   j_1\left ( \lambda(g_2,g_3)\right )\,  \lambda_{g_1,g_2,g_3} \, t(g_1,g_2g_3) \, \lambda_{g_2,g_3,g_1} & = & t(g_1,g_2) \,\lambda_{g_2,g_1,g_3}\, t(g_1,g_3) \\  \label{eqn: braided zest scalar 2}
  \omega(1,2;3)\, \lambda_{g_1,g_2,g_3}^{-1}\, t(g_1g_2,g_3)\, \lambda_{g_3,g_1,g_2}^{-1} &=&  t(g_2,g_3)\, \lambda_{g_1,g_3,g_2}^{-1} \,t(g_1,g_3).
\end{eqnarray}
\end{enumerate}

\end{remark} 
\begin{lemma}
For a fixed $\lambda$ and $j$, both  $(\lambda,j,t)$ and $(\lambda,j,t^\prime)$ are braided $A$-zestings of $\cB$ if and only if $r(a,b):=\frac{t(a,b)}{t^\prime(a,b)}$ is a bicharacter on $A$.  In particular, such braided zestings form a torsor over the group of bicharacters of $A$.
\end{lemma}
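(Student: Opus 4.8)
The plan is to work with the scalar form of the braided zesting conditions, equations \eqref{eqn: braided zest scalar 1} and \eqref{eqn: braided zest scalar 2}, since for a fixed associative zesting $\lambda$ all the objects $\lambda(a,b)$ and all the structure isomorphisms $\lambda_{g_1,g_2,g_3}$ are already pinned down, and the only freedom left in condition (BZ2) lies in the scalars $t(a,b)\in\ku^\times$ attached to the isomorphisms $t(a,b)\colon\lambda(a,b)\to\lambda(b,a)$. First I would observe that since $\lambda$ and $j$ are held fixed, the function $\omega(a,b;c)$ appearing in (BZ1) and on the left-hand side of \eqref{eqn: braided zest scalar 2} is the same for $(\lambda,j,t)$ and for $(\lambda,j,t')$; condition (BZ1) only constrains $j$ relative to $\lambda$, so it imposes nothing new on $t$ versus $t'$.

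Next I would form the ratio. Writing $r(a,b):=t(a,b)/t'(a,b)$, I divide equation \eqref{eqn: braided zest scalar 1} for $t$ by the same equation for $t'$. The factors $j_{g_1}(\lambda(g_2,g_3))$, $\lambda_{g_1,g_2,g_3}$ and $\lambda_{g_2,g_3,g_1}$ on the left and the factor $\lambda_{g_2,g_1,g_3}$ on the right all cancel, leaving exactly
\[
r(g_1,g_2g_3)=r(g_1,g_2)\,r(g_1,g_3),
\]
i.e.\ $r$ is multiplicative in its second argument. Doing the same with equation \eqref{eqn: braided zest scalar 2}: the factor $\omega(1,2;3)$ and the $\lambda_{g_1,g_2,g_3}^{-1}$, $\lambda_{g_3,g_1,g_2}^{-1}$ terms on the left and $\lambda_{g_1,g_3,g_2}^{-1}$ on the right all cancel, giving
\[
r(g_1g_2,g_3)=r(g_2,g_3)\,r(g_1,g_3),
\]
multiplicativity in the first argument. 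Together these say precisely that $r\colon A\times A\to\ku^\times$ is a bicharacter. Conversely, if $r$ is a bicharacter and $t$ satisfies \eqref{eqn: braided zest scalar 1}–\eqref{eqn: braided zest scalar 2}, then $t':=t/r$ satisfies the same two equations, since multiplying through by the bicharacter identities reproduces them; the normalization conditions (a) for $t'$ follow from $r(e,g)=r(g,e)=1$, which holds for any bicharacter. Hence $(\lambda,j,t')$ is again a braided $A$-zesting. This proves the stated equivalence.

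For the torsor statement: the set $\mathcal{B}(\lambda,j)$ of all $t$ making $(\lambda,j,t)$ a braided zesting is nonempty by hypothesis (we are told one such exists), and the above shows that the group $\operatorname{Hom}(A\otimes_{\Z}A,\ku^\times)$ of bicharacters of $A$ acts on it by $(r\cdot t)(a,b):=r(a,b)\,t(a,b)$; the action is free because $r\cdot t=t$ forces $r\equiv 1$, and transitive because any two elements $t,t'$ differ by the bicharacter $t/t'$. Therefore $\mathcal{B}(\lambda,j)$ is a torsor over the group of bicharacters of $A$, as claimed. The only mild subtlety — and the one place I would be careful — is checking that the graphical/diagrammatic conditions in Figures \ref{fig:unnormalized-1} and \ref{fig:normalized-2} really do reduce to exactly the scalar equations \eqref{eqn: braided zest scalar 1}–\eqref{eqn: braided zest scalar 2} with all $\lambda$- and $j$-dependent factors appearing identically on both sides so that they cancel cleanly in the ratio; this is where one must trust the bookkeeping in the remark following Definition \ref{def:braidedzesting}, but it is routine once that reduction is in hand.
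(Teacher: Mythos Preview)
Your proof is correct and follows exactly the same approach as the paper: divide the two scalar equations \eqref{eqn: braided zest scalar 1} and \eqref{eqn: braided zest scalar 2} for $t$ by those for $t'$, cancel the common $\lambda$- and $j$-dependent factors, and read off the bicharacter identities for $r$. You have in fact been more thorough than the paper's one-sentence proof, explicitly treating the converse direction and spelling out the torsor statement (nonemptiness, freeness, transitivity), which the paper leaves implicit.
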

\begin{proof}
From the form of equations (\ref{eqn: braided zest scalar 1}) and (\ref{eqn: braided zest scalar 2}) we see that $r(a,b+c)=r(a,b)r(a,c)$ and $r(a+b,c)=r(a,c)r(b,c)$.
\end{proof}

\begin{proposition}\label{prop: braided zesting construction}
Let $A$ be an abelian group and $\cB$ a faithfully $A$-graded braided fusion category. Given a braided zesting $(\lambda,j, t)$, the fusion category $\cB^\lambda$ defined in Proposition \ref{prop:zesting-cat} is braided with braiding $c^{(\lambda,j, t)}_{V_g,W_h}$ given by the natural isomorphisms defined 
in Figure \ref{fig:braiding}.
\begin{figure}[h]
\begin{align}
    \begin{tikzpicture}[line width=1,xscale=1.5,baseline=60]
        \onebox{\parcen{2}}{}{j_1}
    \draw (2,0) node[below] {$(2,1)$} --(2,4) node[above] {$(1,2)$};
    \draw (0,2) to [out=90, in=-90] (1,4)node[above] {$\parcen{2}$};
    \draw[white, line width=10] (1,0)  -- (1,2) to [out=90,in=-90] (0,4);
             \draw (1,0) node[below] {$\parcen{1}$} -- (1,2) to [out=90,in=-90] (0,4) node[above] {$\parcen{1}$};
    \begin{scope}[xshift=2cm,yshift=2cm]
    \onecirc{}{}{1,2}
    \end{scope}
    \end{tikzpicture}
&\hspace{20pt}:=\hspace{20pt}
  \begin{tikzpicture}[line width=1,xscale=1.5,baseline=60]
        \onebox{W_{g_2}}{}{j_{g_1}}
    \draw (2,0) node[below] {$\lambda(g_2,g_1)$} --(2,4) node[above] {$\lambda(g_1,g_2)$};
    \draw (0,2) to [out=90, in=-90] (1,4)node[above] {$W_{g_2}$};
    \draw[white, line width=10] (1,0)  -- (1,2) to [out=90,in=-90] (0,4);
             \draw (1,0) node[below] {$V_{g_1}$} -- (1,2) to [out=90,in=-90] (0,4) node[above] {$V_{g_1}$};
    \begin{scope}[xshift=2cm,yshift=2cm]
    \onecirc{}{}{g,h}
    \end{scope}
    \end{tikzpicture}
\end{align}
    \caption{Braiding for $\cB^\lambda$}
    \label{fig:braiding}
\end{figure}

\end{proposition}
\begin{proof}
The two hexagon equations that must be satisfied by the zested braiding and associators take the form of the equations depicted in Figures \ref{fig:hex-1-cat} and \ref{fig:hex-2-cat}.
That the equations in Figure \ref{fig:hex-1-cat} and \ref{fig:hex-2-cat} are equivalent to the hexagons can be readily checked in the graphical calculus using that the $j_g$'s are monoidal and by applying the definition of $\chi_a$ from Figure \ref{fig:chi}, respectively. These equations are then satisfied by the conditions found in Figures \ref{fig:unnormalized-1} and \ref{fig:normalized-2}.
\end{proof}

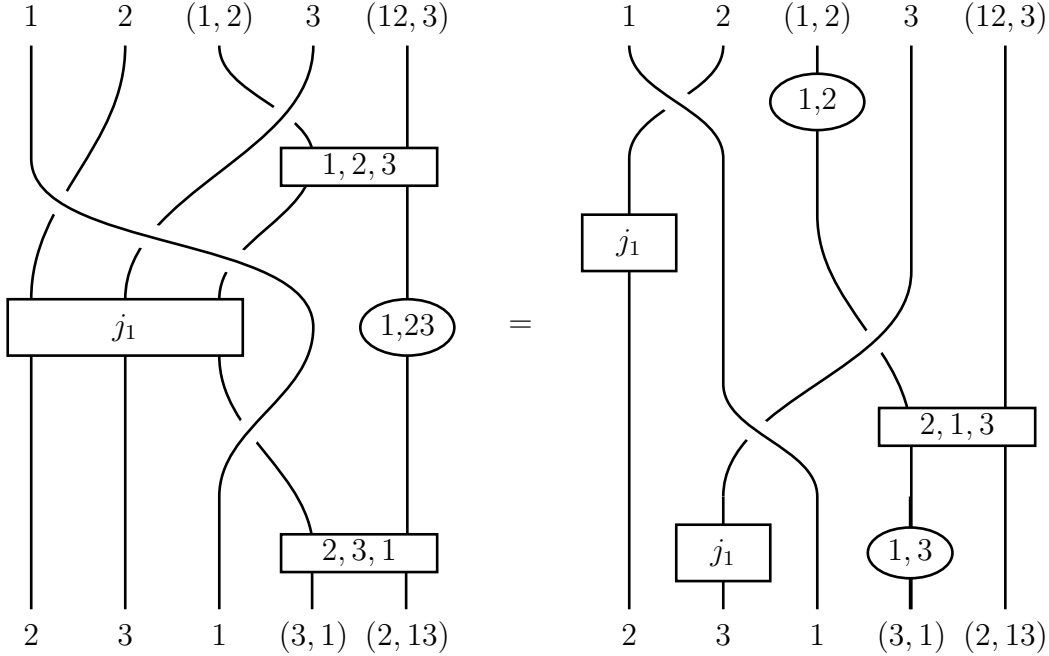
\begin{figure}[h]
$$\hspace{-20pt}\begin{tikzpicture}[line width=1,xscale=1.25,yscale=.75,baseline=105]
\draw (0,0) node[below] {$\parcen{2}$}--(0,4.5);
\draw (1,0) node[below] {$\parcen{3}$}--(1,4.5);
\draw (3,0) node[below] {$(3,1)$};
\draw (4,0) node[below] {$(2,13)$};
\draw (4,1) --(4,10) node[above] {$(12,3)$};
\draw (3,1) to [out=90,in=-90] (2,4.5);
\draw (0,5.5) to [out=90,in=-90] (1,10) node[above] {$\parcen{2}$};
\draw (2,5.5) to [out=90,in=-90] (3,8);
\draw (3,8) to [out=90,in=-90] (2,10) node[above] {$(1,2)$};
\draw[white,line width=10] (1,5.5) to [out=90,in=-90] (3,10);
\draw (1,5.5) to [out=90,in=-90] (3,10) node[above] {$\parcen{3}$};
\draw[white,line width=10] (2,0)--(2,2) to [out=90,in=-90] (3,5) to [out=90,in=-90] (0,8)--(0,10) ;
\draw (2,0) node[below] {$\parcen{1}$}--(2,2) to [out=90,in=-90] (3,5) to [out=90,in=-90] (0,8)--(0,10) node[above] {$\parcen{1}$};
\draw (-.25, 4.5) rectangle node {$j_1$} (2.25,5.5);
\draw[fill=white] (4,5) circle (.5) node {1,23} ;
\begin{scope}[xshift=85]
\twoboxnotop{}{}{2,3,1}
\begin{scope}[yshift=195]
\twoboxnostrands{1,2,3}
\end{scope}
\end{scope}
\end{tikzpicture} \hspace{15pt}=\hspace{15pt}\begin{tikzpicture}[line width=1,baseline=105,yscale=.75,xscale=1.25]
\draw (4,0)node[below] {$(2,13)$} --(4,10) node[above] {$(12,3)$};
\draw(0,0) node[below] {$\parcen{2}$}--(0,8) to [out=90,in=-90] (1,10)node[above] {$\parcen{2}$};
\draw(1,0)node[below] {$\parcen{3}$}--(1,2);
\draw(3,0)node[below] {$(3,1)$}--(3,3);
\draw[fill=white] (.5,.5) rectangle node {$j_1$} (1.5,1.5);
\draw (3,3) to [out=90,in=-90] (2,7) --(2,10) node[above] {$(1,2)$};
\draw[white, line width=10] (1,2) to [out=90,in=-90] (3,6)--(3,10);
\draw (1,2) to [out=90,in=-90] (3,6)--(3,10) node[above] {$\parcen{3}$};
\draw[fill=white] (-.5,6) rectangle node {$j_1$} (.5,7) ;
\begin{scope}[xshift=85]
\onecirc{}{}{1,3}
\begin{scope}[yshift=63.75]
\twoboxnostrands{2,1,3}
\end{scope}
\end{scope}
\draw[fill=white] (2,9) circle (.5) node[] {1,2};
\draw[white, line width=10] (2,0) --(2,2) to [out=90,in=-90] (1,4)--(1,8) to [out=90,in=-90] (0,10);
\draw (2,0) node[below] {$\parcen{1}$}--(2,2) to [out=90,in=-90] (1,4)--(1,8) to [out=90,in=-90] (0,10) node[above] {$\parcen{1}$};
\end{tikzpicture}
$$
    %\centering
    %\includegraphics[width=0.9\textwidth]{scans/hexagon-cat-1.png}
    \caption{First Hexagon condition}
    \label{fig:hex-1-cat}
\end{figure}
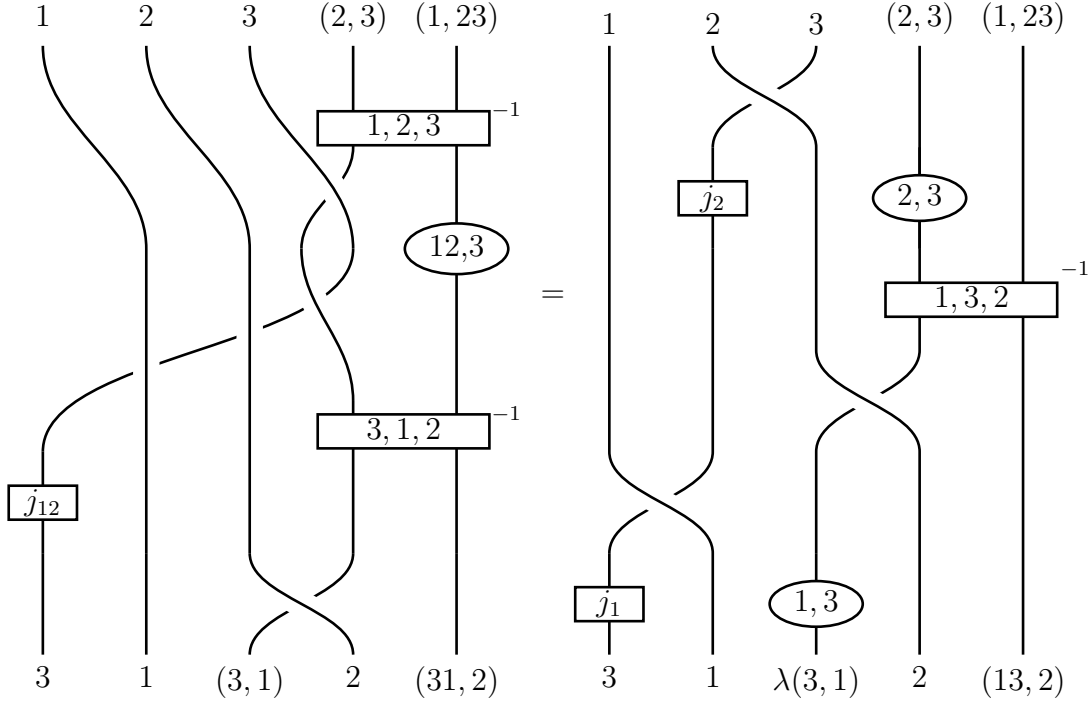
\begin{figure}[h]
$$\hspace{-15pt}\begin{tikzpicture}[line width=1,yscale=.675,xscale=1.375,baseline=95]
\draw (3,0) node[below] {}--(3,3);
\draw (2.5,6) \br (3,8)--(3,10) node[above] {$(2,3)$};
\draw (4,0) node[below] {}--(4,10) node[above] {$(1,23)$};
\draw[fill=white] (4,6) circle (.5) node {12,3};
\onebox{}{}{j_{12}}
\draw (0,2) \br (3,6);
\draw[white, line width=10] (3,6) \br (2,10);
\draw (3,6) \br (2,10)node[above] {$\parcen{3}$};
\draw[white, line width=10] (1,0) --(1,6) \br (0,10);
\draw (1,0) node[below] {} --(1,6) \br (0,10) node[above] {$\parcen{1}$};
\draw[white, line width=10] (2,0) --(2,6) \br (1,10);
\draw (2,0)node[below] {} --(2,6) \br (1,10) node[above] {$\parcen{2}$};
\begin{scope}[xshift=85,yshift=40]
\twoboxnostrands{3,1,2}
   \draw (1.5,1.25) node[] {$^{-1}$};
\end{scope}
\begin{scope}[xshift=85,yshift=210]
\twoboxnostrands{1,2,3}
   \draw (1.5,1.25) node[] {$^{-1}$};
\end{scope}
\draw[white, line width=10] (3,3) \br (2.5,6);
\draw (3,3) \br (2.5,6);
\foreach \x in {0,1,4}{
\draw (\x,-2) --(\x,0);}
\draw (2,-2) \br (3,0);
\draw[white, line width=10] (3,-2) \br (2,0);
\draw (3,-2) \br (2,0);
\draw (0,-2) node[below] {$3$};
\draw (1,-2) node[below] {$1$};
\draw (2,-2) node[below] {$(3,1)$};
\draw (3,-2) node[below] {$2$};
\draw (4,-2) node[below] {$(31,2)$};

\end{tikzpicture}
=
\begin{tikzpicture}[line width=1,yscale=.675,xscale=1.375,baseline=95]
\begin{scope}[yshift=-2cm]
\onebox{3}{}{j_1}
\end{scope}
\draw (0,0) \br (1,2) --(1,6);
\begin{scope}[xshift=1cm,yshift=6cm]
\onebox{}{}{j_2}
\end{scope}
\draw (1,8) \br (2,10) node[above] {$3$};
\draw[white, line width=10] (1,0) \br (0,2);
\draw (1,-2) node[below] {$1$}-- (1,0)\br (0,2)--(0,10) node[above] {$1$};
\begin{scope}[xshift=2cm,yshift=-2cm]
\onecirc{\lambda(3,1)}{}{1,3}
\end{scope}
\draw (2,0)--(2,2) \br (3,4)--(3,6);
\draw (4,-2) node[below] {$(13,2)$} --(4,10) node[above] {$(1,23)$} ;
\begin{scope}[xshift=3cm,yshift=4cm]
\twoboxnostrands{1,3,2}
\end{scope}
\draw[white, line width=10] (3,-2)--(3,2) \br (2,4) --(2,8) \br (1,10) ;
\draw (3,-2) node[below] {$2$}--(3,2) \br (2,4) --(2,8) \br (1,10)node[above] {$2$};
\draw (3,6)--(3,10) node[above] {$(2,3)$};
\begin{scope}[xshift=3cm,yshift=6cm]
\onecirc{}{}{2,3}
\end{scope}
 \draw (4.25,5.5) node[right] {$^{-1}$};
\end{tikzpicture}
$$
    %\centering
    %\includegraphics[width=1\textwidth]{scans/hexagon-cat-2.png}
    \caption{Second Hexagon condition}
   \label{fig:hex-2-cat}
\end{figure}

\begin{remark}
Notice that a braided zesting does not recover \emph{all} braidings that may exist on a given associative zesting.  For example with the trivial associative zesting (i.e. $\lambda(i,j)=\unit$ and $\lambda(i,j,k)=1$) on a braided fusion category we might not recover the reverse braiding, as in general this changes the braiding on the trivial component, while braided zesting does not. 
\end{remark}
\subsection{Equivalence of braided zestings}

Let $A$ be an abelian group and $\cB$ a faithfully $A$-graded fusion category. A bicharacter $\nu:A\times A\to \ku$ will be called alternating if $\nu(a,a)=1$ for all $a\in A$.
\begin{definition}\label{def: braided equiv. zestings}
\begin{itemize}
    \item[(i)] We will say that two braided zestings $(\lambda, j,t)$ and $(\lambda, j',t')$ are \emph{similar} if they define the same braiding on $\cB^\lambda$, that is \[c_{X_a,Y_b}^{(j,t)}=c_{X_a,Y_b}^{(j',t')},\] for all $a,b \in A, X_a\in \cB_a, Y_b\in \cB_b$.
    \item[(ii)] We will say that two braided zestings $(\lambda, j,t)$ and $(\lambda, j',t')$ are \emph{braided equivalent} if there an alternating character $\nu:A\times A\to \ku^\times$ such that $(\lambda, j,t)$ is similar to $(\lambda, j',\nu t')$.
\end{itemize}

\end{definition}
Note that again, braided inequivalent zestings may yield equivalent braided fusion categories--for example one could have a braided automorphism which permutes the simple objects. 
Let $\lambda$ be an associative $A$-zesting of a braided fusion category $\cB$. We define the abelian group

\begin{align}\label{H lambda}
H_\lambda=\left \{ (\kappa,l)\in C^1(A,\Aut_\ot (\Id_\cB))\times C^2(A,\ku^\times):    
\begin{aligned}
\delta(\kappa)_{a,b} \in \Aut_\ot^A(\Id_\cB)\\
\kappa(\lambda(a,b))=\frac{l(a,b)l(a,c)}{l(a,b+c)}\\
\delta(\kappa)_{a,b}(c)=\frac{l(a+b,c)}{l(a,c)l(b,c)}\\
\forall a,b, c\in A
\end{aligned}
\right \}
\end{align}
where $\delta$ is defined in equation \eqref{cochains G}, that is, $\delta(\kappa)_{a,b}=\kappa(b)-\kappa(a+b)-\kappa(a)$. We also define the abelian subgroups
\begin{align}
\label{subgroup H 1}
H_1&=\left \{(\kappa,l)\in H_\lambda: \kappa_a(X_b)=l(a,b)^{-1}, \forall a,b\in A, X_b\in \cB_b\right\}\\
H_2&=\left \{(\kappa,l)\in H_\lambda: \kappa_a=\
\id, \forall a\in A, \text{ and $l$ is an alternating bicharacter}\right\} \label{subgroup H 2}
\end{align}

\begin{proposition}\label{prop: equivalent zesting}
Let $\lambda$ be an associative $A$-zesting of a braided fusion category $\cB$.

\begin{itemize}
\item[(i)] The set of all braided $A$-zestings of the form $(\lambda,j,t)$ with $\lambda$ fixed is a torsor over the abelian group $H_\lambda$ defined in \eqref{H lambda}.

\item[(ii)] The set of all braided $A$-zestings similar to $(\lambda,j,t)$ is a torsor over the abelian group $H_1$ defined in \eqref{subgroup H 1}. 

\item[(iii)] The set of equivalence classes of braided $A$-zestings of the form $(\lambda,j,t)$ with $\lambda$ fixed, under the relation of being braided equivalent as in Definition \ref{def: braided equiv. zestings}(ii), is a torsor over $H_\lambda/H_1H_2$, where $H_2$ was defined in \eqref{subgroup H 2}.
\end{itemize}
\end{proposition}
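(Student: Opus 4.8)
The plan is to fix one braided $A$-zesting $(\lambda,j,t)$ with the prescribed associative zesting $\lambda$ — if no such triple exists, all three assertions are vacuous — and use it as a base point, encoding every other braided zesting $(\lambda,j',t')$ with the same $\lambda$ by the pair
\[
\kappa_a:=j'_a\circ j_a^{-1}\in\Aut_\ot(\Id_\cB),\qquad t'(a,b)=:l(a,b)\,t(a,b),\ \ l(a,b)\in\ku^\times,
\]
where the second equation makes sense because $t(a,b)$ and $t'(a,b)$ are isomorphisms between the same pair of simple objects $\lambda(a,b)$ and $\lambda(b,a)$. I claim that $(\lambda,j',t')\mapsto(\kappa,l)$ is a bijection onto $H_\lambda$ carrying the evident free transitive action $(\kappa,l)\cdot(\lambda,j,t):=(\lambda,\kappa j,\,l\,t)$ to multiplication in $H_\lambda$; this is precisely part (i).

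To establish the claim I would rewrite conditions (BZ1) and (BZ2) for $(\lambda,j',t')$ and divide them by the corresponding conditions for the base point $(\lambda,j,t)$. Since $\Aut_\ot(\Id_\cB)$ is abelian, replacing $j$ by $\kappa j$ multiplies $\omega(a,b)=\chi_{\lambda(a,b)}\circ j_{ab}\circ j_a^{-1}\circ j_b^{-1}$ by $\delta(\kappa)_{a,b}^{-1}$ and multiplies $\omega(a,b;c)$ by $\delta(\kappa)_{a,b}(c)^{-1}$; thus (BZ1) holds for $(\lambda,j',t')$ if and only if $\delta(\kappa)_{a,b}\in\Aut_\ot^A(\Id_\cB)$. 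Dividing the scalar identity \eqref{eqn: braided zest scalar 1} for $(\lambda,j',t')$ by the one for $(\lambda,j,t)$ — all the fixed associator scalars $\lambda_{\bullet,\bullet,\bullet}$ cancel — leaves exactly $\kappa_a(\lambda(b,c))=l(a,b)l(a,c)/l(a,b+c)$, and the same operation applied to \eqref{eqn: braided zest scalar 2} leaves exactly $\delta(\kappa)_{a,b}(c)=l(a+b,c)/(l(a,c)l(b,c))$. These are the three defining relations of $H_\lambda$ in \eqref{H lambda}. Reading the computation in the other direction shows that for any $(\kappa,l)\in H_\lambda$ the triple $(\lambda,\kappa j,\,l\,t)$ satisfies (BZ1), (BZ2) and the normalizations (a)--(c), hence is again a braided $A$-zesting; injectivity of the encoding and compatibility with the group laws are then immediate, and since each defining relation of $H_\lambda$ is multiplicative in $(\kappa,l)$ (with $\Aut_\ot^A(\Id_\cB)\le\Aut_\ot(\Id_\cB)$) the set $H_\lambda$ is indeed a group. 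This proves (i).

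For (ii) I would read the braiding $c^{(\lambda,j,t)}_{V_a,W_b}$ off Figure \ref{fig:braiding}: as a morphism of $\cB^\lambda$ it is assembled from the underlying braiding of $\cB$, the automorphism $j_a$ applied to the $W_b$-strand, and the isomorphism $t(a,b)$ on the $\lambda$-strand. Hence on simple objects, passing from $(j,t)$ to $(\kappa j,\,l\,t)$ rescales the braiding by the scalar $\kappa_a(W_b)\,l(a,b)$, so $(\lambda,j',t')$ is similar to $(\lambda,j,t)$ if and only if $\kappa_a(X_b)=l(a,b)^{-1}$ for all $a,b\in A$ and all simple $X_b\in\cB_b$. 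This is a genuine constraint because a priori $\kappa_a\in\Aut_\ot(\Id_\cB)\cong\widehat{U(\cB)}$ need only be constant on $U(\cB)$-components, not on the coarser $A$-components. Together with (i), the braided zestings similar to $(\lambda,j,t)$ are exactly those whose associated pair lies in the subgroup $H_1$ of \eqref{subgroup H 1} (and re-basing shows that similarity of an arbitrary pair of braided zestings is translation by $H_1$), giving (ii).

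Finally, for (iii) I would first note that putting $\kappa=\id$ in \eqref{H lambda} forces $l$ to be a bicharacter, so $(\id,\nu)\in H_\lambda$ iff $\nu$ is a bicharacter of $A$, and $(\id,\nu)\in H_2$ iff $\nu$ is moreover alternating; as $H_\lambda$ is abelian, $H_1H_2$ is a subgroup. Unwinding the definition of braided equivalence through the base-point identification of (i): two braided zestings are braided equivalent iff, after twisting one of the maps $t$ by an alternating bicharacter $\nu$, the resulting pairs become similar, i.e. iff their associated elements of $H_\lambda$ differ by an element of $H_1H_2$. Hence braided equivalence is exactly the coset relation for $H_1H_2$, the equivalence classes biject with $H_\lambda/H_1H_2$, and they form a torsor over this group. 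The only genuinely laborious step in all of this is the diagrammatic bookkeeping that extracts the three scalar relations from (BZ1)--(BZ2) and the scalar $\kappa_a(W_b)l(a,b)$ from Figure \ref{fig:braiding}; once those are in hand, parts (i)--(iii) are formal consequences.
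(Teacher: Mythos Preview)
Your proposal is correct and follows essentially the same approach as the paper: encode a second braided zesting $(\lambda,j',t')$ by the ratio pair $(\kappa,l)=(j'/j,\,t'/t)$, divide the scalar forms of (BZ1)--(BZ2) to obtain exactly the three relations defining $H_\lambda$, read off from the zested braiding that similarity is governed by $\kappa_a(X_b)l(a,b)=1$, and conclude (iii) formally. Your write-up is in fact more careful than the paper's in verifying the converse direction, the group structure on $H_\lambda$, and the vacuous case.
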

\begin{proof}
Let $(\lambda, j,t)$ and $(\lambda,j',t')$ be braided $A$-zestings of $\cB$. Then $\kappa_a:=j_a'/j_a$ and $l(a,b)=t'(a,b)/t(a,b)$ for all $a,b\in A$ define an element in $(\kappa,l)\in H_\lambda$. In fact, condition (BZ1) implies that $\delta(\kappa)_{a,b}=\kappa_a\kappa_{ab}^{-1}\kappa_b\in \Aut_\ot^A(\Id_\cB)$ for all $a,b \in A$, condition \ref{BZ2}{(BZ2)} implies $\kappa_a(\lambda(b,c))=\frac{l(a,b)l(a,c)}{l(a,b+c)}$, and $\delta(\kappa)_{a,b}(c)=\frac{l(ab,c)}{l(a,c)l(b,c)}$   for all $a,b,c \in A$.

For item (ii), note that if $c_{X_a,Y_b}^{(j,t)}=c_{X_a,Y_b}^{(j',t')}$ then $\left [ j_a\circ j_a'^{-1}(X_b)\right]\ot \left[ t'(a,b)^{-1}\circ t(a,b)\right]= \id_{X_b\ot \lambda(a,b)}$ for all $a,b\in A, X_b\in \cB_b$. Then if $(\kappa, l)\in H_\lambda$ such that $j=j'\kappa,\, t=tl$, then $\kappa_a(X_b)=l(a,b)^{-1}$ for all $a,b\in A$ and $X_b\in \cB_b$.

Item (iii) follows immediately from (i) and (ii).
\end{proof}

\begin{corollary}\label{corol: triviality of j}
Let $\cB$ be an $A$-graded braided fusion category. A braided $A$-zesting $(\lambda, j, t)$ is similar to an  $A$-braided zesting of the form $(\lambda, \Id_{\cB}, t')$ if and only if

\begin{align}\label{eq: condition j trivial}
\chi_{\lambda(a,b)}|_{\cB_e}=\Id_{\cB_e}, && \quad j_a|_{\cB_e}=\Id_{\cB_e}
\end{align}
for all $a,b \in A$. In particular, if $A$ is the universal grading group, every braided zesting is similar to a braided zesting of the form $(\lambda, \Id_{\cB}, t)$. 
\end{corollary}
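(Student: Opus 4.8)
The plan is to deduce the corollary directly from Proposition~\ref{prop: equivalent zesting}(ii). That proposition identifies the braided zestings similar to $(\lambda,j,t)$ with the triples $(\lambda,j',t')$ where $j'_a=j_a\kappa_a$ and $t'(a,b)=t(a,b)\,l(a,b)$ for some $(\kappa,l)\in H_1$. Consequently $(\lambda,j,t)$ is similar to one of the form $(\lambda,\Id_\cB,t')$ \emph{if and only if} there exists $(\kappa,l)\in H_1$ with $\kappa_a=j_a^{-1}$ for all $a\in A$, and the whole statement reduces to deciding when such a pair exists. The computational device I will use repeatedly is that $\delta(\kappa)_{a,b}=\kappa_a\kappa_{ab}^{-1}\kappa_b=j_{ab}j_a^{-1}j_b^{-1}$, which by condition (BZ1) equals $\chi_{\lambda(a,b)}^{-1}\,\omega(a,b)$ with $\omega(a,b)\in\Aut_\ot^A(\Id_\cB)$.

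For the forward implication I would take a pair $(\kappa,l)\in H_1$ with $\kappa_a=j_a^{-1}$. Evaluating the defining relation $\kappa_a(X_b)=l(a,b)^{-1}$ of $H_1$ at $b=e$, $X_b=\unit$ and using $j_a(\unit)=\id_\unit$ (normalization~(c)) gives $l(a,e)=1$, hence $\kappa_a$ — and so $j_a$ — restricts to the identity on $\cB_e$; and the $H_\lambda$-relation $\delta(\kappa)_{a,b}\in\Aut_\ot^A(\Id_\cB)$, combined with the device above, forces $\chi_{\lambda(a,b)}^{-1}\in\Aut_\ot^A(\Id_\cB)$, i.e.\ $\chi_{\lambda(a,b)}|_{\cB_e}=\Id_{\cB_e}$. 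That is precisely \eqref{eq: condition j trivial}. For the converse I would start from \eqref{eq: condition j trivial}: since $j_a|_{\cB_e}=\Id_{\cB_e}$, Proposition~\ref{prop: iso id-g} furnishes a unique $\gamma_a\in\widehat A$ with $j_a(X_b)=\gamma_a(b)\id_{X_b}$; set $\kappa_a:=j_a^{-1}$ and $l(a,b):=\gamma_a(b)$. Then $\kappa_a(X_b)=l(a,b)^{-1}$ by construction (so the extra $H_1$-clause holds), the middle $H_\lambda$-clause $\kappa_a(\lambda(b,c))=l(a,b)l(a,c)/l(a,b+c)$ holds because both sides are $1$ (as $\lambda(b,c)\in\cB_e$ and $\gamma_a$ is a character), and the remaining clause $\delta(\kappa)_{a,b}(c)=l(a+b,c)/\bigl(l(a,c)l(b,c)\bigr)$ follows by rewriting $\delta(\kappa)_{a,b}=\chi_{\lambda(a,b)}^{-1}\omega(a,b)$, which lies in $\Aut_\ot^A(\Id_\cB)$ by \eqref{eq: condition j trivial}, and evaluating on $\cB_c$. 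Thus $(\kappa,l)\in H_1$ with $\kappa_a=j_a^{-1}$, as needed. The ``in particular'' clause is then immediate: when $A=U(\cB)$ we have $\cB_e=\cB_{ad}$, and by Proposition~\ref{prop: iso id-g} every element of $\Aut_\ot(\Id_\cB)$ — in particular each $j_a$ and, by Proposition~\ref{prop:chi-properties}, each $\chi_{\lambda(a,b)}$ — is the identity on $\cB_{ad}$, so \eqref{eq: condition j trivial} holds automatically.

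The only point that I expect to require care, rather than routine substitution, is the equivalence between ``$j_a$ is trivial on $\cB_e$'' and ``$j_a$ is constant on each component $\cB_b$ with constants assembling into a character $\gamma_a$ of $A$'' — this is exactly what allows the definition $l(a,b)=\gamma_a(b)$ in the converse, and it uses Proposition~\ref{prop: iso id-g} together with faithfulness of the $A$-grading (so that $\Aut_\ot^A(\Id_\cB)$ is the full subgroup of $\Aut_\ot(\Id_\cB)$ of automorphisms trivial on $\cB_e$, identified with $\widehat A$). Everything else is bookkeeping: substitute $\kappa_a=j_a^{-1}$ into the three defining relations of $H_\lambda$ and the extra relation of $H_1$, invoke (BZ1) to replace $j_{ab}j_a^{-1}j_b^{-1}$ by $\chi_{\lambda(a,b)}^{-1}\omega(a,b)$, and read off \eqref{eq: condition j trivial}.
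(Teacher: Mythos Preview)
Your proposal is correct and follows essentially the same approach as the paper: both reduce the question via Proposition~\ref{prop: equivalent zesting}(ii) to the existence of a pair $(\kappa,l)\in H_1$ with $\kappa_a=j_a^{-1}$, then verify that this is equivalent to \eqref{eq: condition j trivial}, and invoke Propositions~\ref{prop: iso id-g} and~\ref{prop:chi-properties} for the universal-grading case. Your write-up is in fact more explicit than the paper's, which simply states that $(j^{-1},l_j)\in H_1$ (with $l_j(a,b)$ built from $j_a(X_b)$) is the relevant criterion and leaves the verification of the $H_\lambda$ conditions as ``easy to see''; your checking of each clause, and your use of the identification $\Aut_\ot^A(\Id_\cB)\cong\widehat A$ to define $l(a,b)=\gamma_a(b)$, fills in exactly those details.
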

\begin{proof}
A braided $A$-zesting $(\lambda, j, t)$  is similar to one of the form $(\lambda, \id, t')$ if and only if $(j^{-1},l_j)\in H_1$, where $l_j(a,b)=j_a(X_b)^{-1}$ with $X_b\in \cB_b$. Hence \begin{align}\label{eq :j in A}
 j_a\in \Aut_\ot^A(\Id_\cB), && \forall a\in A,   
\end{align} and \eqref{eq :j in A} imply that $\chi_{\lambda(a,b)}\in \Aut_\ot^A(\Id_\cB)$ or equivalently $\chi_{\lambda(a,b)}|_{\cB_e}=\Id_{\cB_e}$. Conversely, if conditions \eqref{eq :j in A} holds, then it is is easy to see that $(j^{-1},l_j)\in H_1$.

Now, if $A$ is the universal grading group it follows from Proposition \ref{prop: iso id-g} and Proposition \ref{prop:chi-properties} that any  braided zesting satisfies the condition in \eqref{eq: condition j trivial}.
\end{proof}

\subsection{Obstructions to braided zestings}

%Braided zesting conditions correspond to Figures \ref{fig:unnormalized-1} and Figure \ref{fig:unnormalized-2}.  The second condition is that \[\omega(a,b):=\chi_{\lambda(a,b)}\circ j_{ab}\circ j_a^{-1}\circ j_b^{-1}\in \Aut_\ot^A(\Id_{\cB})\cong \widehat{A},\].  Notice that for arbitrary $j:G\rightarrow \Aut_\ot(\cB)$ the above definition gives $\omega(a,b)\in\Aut_\ot(\cB)$ since it is a composition of such.  The second condition is that $\omega(a,b)$ actually defines a character of $A$--in particular it is constant on graded components so that   $\omega(a,b)(c)=:\omega(a,b;c)\in\ku$ is well-defined.

\emph{Let $\cB$ be a braided fusion category. From now on, using Proposition \ref{prop: iso id-g} we will identify the abelian groups $\Aut_{\ot}^A(\Id_\cB)$ and $\widehat{A}$. Recall that in particular, $\Aut_{\ot}(\Id_\cB)\cong \widehat{U(\cB)}$}.  

\smallbreak
First, we will describe obstructions to the existence of a function $j: A\to \Aut_\otimes(\Id_{\cB})$ satisfying (BZ1).

Let $\cB$ be a braided fusion category graded by a finite abelian group $A$. Let $U(\cB)$ be the universal grading group of $\cB$ and $\pi_1:U(\cB)\to A$ the group epimorphism that defines the $A$-grading on $\cB$. By restriction of the $U(\cB)$-grading, the fusion subcategory $\cB_e$ is $\ker(\pi_1)$-graded with $(\cB_e)_e=\cB_{\operatorname{ad}}$. Then this grading defines a group epimorphism
\begin{equation}\label{eq: definition pi2}
\pi_2:U(\cB_e)\to \ker(\pi_1).
\end{equation}
 
\begin{proposition}\label{prop:condition extension monoidal auto identity}
A tensor natural isomorphism $j\in\Aut_\ot(\Id_{\cB_e})\cong \widehat{U(\cB_e)}$ has an extension to an element in $\Aut_\ot(\Id_\cB)\cong \widehat{U(\cB)}$ if and only if $j \in \Aut_\ot^{\ker(\pi_1)}(\Id_{\cB_e})$. The set of extensions of $j$ is a torsor over $\widehat{A}$.
\end{proposition}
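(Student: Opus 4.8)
The plan is to translate the statement entirely into the language of characters of grading groups, using Proposition \ref{prop: iso id-g}, and then read everything off from the short exact sequence $1\to\ker(\pi_1)\to U(\cB)\xrightarrow{\pi_1}A\to 1$ by applying $\Hom(-,\ku^\times)$. So no diagram chases in the category itself are needed beyond the scalar-on-components description of tensor automorphisms of the identity.

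First I would record that the relevant gradings are faithful: the $A$-grading on $\cB$ is faithful since $\pi_1$ is surjective and the universal grading is faithful, and the induced $\ker(\pi_1)$-grading on $\cB_e$ is faithful since each universal homogeneous component $\cB_g$ with $g\in\ker(\pi_1)$ is nonzero. Recalling that this $\ker(\pi_1)$-grading has trivial component $(\cB_e)_e=\cB_{\operatorname{ad}}$, I would invoke Proposition \ref{prop: iso id-g} three times to get canonical identifications
\[
\Aut_\ot(\Id_\cB)\cong\widehat{U(\cB)},\qquad \Aut_\ot^{\ker(\pi_1)}(\Id_{\cB_e})\cong\widehat{\ker(\pi_1)},\qquad \Aut_\ot^A(\Id_\cB)\cong\widehat{A},
\]
each sending a character $\chi$ to the natural automorphism that acts on an object $X$ by the scalar $\chi(\deg X)$.

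Next I would check the one compatibility that makes the argument run: under the first two identifications, restriction of natural automorphisms $\psi\mapsto\psi|_{\cB_e}$ corresponds to restriction of characters $\widehat{U(\cB)}\to\widehat{\ker(\pi_1)}$, i.e.\ to the dual of the inclusion $\ker(\pi_1)\hookrightarrow U(\cB)$. This is immediate from the scalar-on-components picture: if $\psi$ acts by $\chi(g)$ on $\cB_g$, then $\psi|_{\cB_e}$ acts by $\chi(g)$ on the degree-$g$ part of $\cB_e$ for $g\in\ker(\pi_1)$, and in particular is the identity on $\cB_{\operatorname{ad}}$, so it automatically lands in $\Aut_\ot^{\ker(\pi_1)}(\Id_{\cB_e})$. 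This already yields the ``only if'' direction, since by Proposition \ref{prop: iso id-g} every element of $\Aut_\ot(\Id_\cB)$ is of this scalar form; conversely, any $j$ in that subgroup is scalar on $\ker(\pi_1)$-components and hence comes from a character of $\ker(\pi_1)$.

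Finally I would apply $\Hom(-,\ku^\times)$ to $1\to\ker(\pi_1)\to U(\cB)\xrightarrow{\pi_1}A\to 1$. Because $\ku^\times$ is a divisible, hence injective, abelian group, this functor is exact and gives
\[
1\longrightarrow\widehat{A}\xrightarrow{\ \pi_1^{*}\ }\widehat{U(\cB)}\xrightarrow{\ \operatorname{res}\ }\widehat{\ker(\pi_1)}\longrightarrow1 .
\]
Surjectivity of $\operatorname{res}$ is precisely the ``if'' direction: every $j\in\Aut_\ot^{\ker(\pi_1)}(\Id_{\cB_e})$ admits an extension. Exactness in the middle then identifies the set of extensions of a fixed $j$ with a coset of $\ker(\operatorname{res})=\pi_1^{*}(\widehat{A})$, which under the third identification above is $\Aut_\ot^A(\Id_\cB)\cong\widehat{A}$; thus the extensions form a torsor over $\widehat{A}$. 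The only delicate point — where I would take the most care — is lining up the three applications of Proposition \ref{prop: iso id-g} with the correct grading and trivial component (in particular the identification $(\cB_e)_e=\cB_{\operatorname{ad}}$, which is what makes the hypothesis $j\in\Aut_\ot^{\ker(\pi_1)}(\Id_{\cB_e})$ match the statement of that proposition) and confirming that the restriction map on natural automorphisms is genuinely dual to the inclusion of grading groups; everything past that is formal homological algebra over $\ku^\times$.
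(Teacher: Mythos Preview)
Your proof is correct and follows the same idea as the paper's: reduce to characters via Proposition~\ref{prop: iso id-g} and dualize an exact sequence of grading groups, using divisibility of $\ku^\times$. The only cosmetic difference is that the paper dualizes the longer sequence $0\to\ker(\pi_2)\to U(\cB_e)\xrightarrow{\pi_2}\ker(\pi_1)\hookrightarrow U(\cB)\xrightarrow{\pi_1}A\to 0$, thereby making explicit the embedding $\pi_2^*:\widehat{\ker(\pi_1)}\hookrightarrow\widehat{U(\cB_e)}$ that identifies $\Aut_\ot^{\ker(\pi_1)}(\Id_{\cB_e})$ as a subgroup of $\Aut_\ot(\Id_{\cB_e})$; you accomplish the same thing by your direct observation that any $\psi|_{\cB_e}$ is scalar on $\ker(\pi_1)$-components and trivial on $\cB_{\operatorname{ad}}$, and then work only with the short exact sequence $1\to\ker(\pi_1)\to U(\cB)\to A\to 1$.
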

\begin{proof}
We have the exact sequence of abelian groups 
\[0\to \ker(\pi_2)\to U(\cB_e)\overset{\pi_2}{\to} \ker(\pi_1)\to U(\cB) \overset{\pi_1}{\to} A\to 0\] and dualizing 
\[0\to \widehat{A} \overset{\pi_1^*}{\to} \widehat{U(\cB)} \to \widehat{\ker(\pi_1)} \overset{\pi_2^*}{\to} \widehat{U(\cB_e)} \to \widehat{\ker(\pi_2)}\to 0.\]
Hence the image of the restriction map  $\widehat{U(\cB)}\to \widehat{U(\cB_e)}$ is exactly $\widehat{\ker(\pi_1)}$, or equivalently all $\gamma \in U(\cB_e)$ such that $\gamma|_{\ker(\pi_2)}\equiv 1$.
\end{proof}

It follows from Proposition  \ref{prop:condition extension monoidal auto identity} that a \textbf{first partial obstruction} to the existence of a function $j: A\to \Aut_\otimes(\Id_{\cB})$ satisfying (BZ1) is that 
\begin{align}\label{eq:fisrt obstruction}
\chi_{\lambda(a,b)}|_{\cB_e}\in \Aut_{\ot}^{\ker(\pi_1)}(\Id_{\cB_e})\cong \widehat{\ker(\pi_1)}, && \forall a, b \in A.    
\end{align}Proposition \ref{prop: iso id-g} says that condition \eqref{eq:fisrt obstruction} is equivalent to 
\begin{align}
    \chi_{\lambda(a,b)}(X)=\id_X, && \text{ for all  $X\in (\cB_e)_g$ where $g\in \ker(\pi_2)\subset U(\cB_e)$,} 
\end{align}where 
$\pi_2$ was defined in \eqref{eq: definition pi2}.

It follows by condition (BZ1) that $$\chi_{\lambda(a,b)}|_{\cB_e}=\Big (j_a\circ j_b\circ j_{ab}^{-1}\Big)_{\cB_e},$$
where $j$'s are in $\Aut_\ot^{\ker(\pi_1)}(\Id_{\cB_e})$. Hence a \textbf{second partial obstruction} for the existence of $j$ is that the cohomology class of 
\begin{equation}\label{eq:second braided zesting obstruction}
    \chi_{\lambda(-,-)}|_{\cB_e}\in H^2(A,\Aut_\ot^{\ker(\pi_1)}(\Id_{\cB_e}))= H^2(A,\widehat{\ker(\pi_1)}),
\end{equation} must be trivial.

\begin{remark} When $A$ is the universal grading the first and second partial obstructions automatically vanish. If $A$ is the universal grading and $\cB$ is modular then $\ker(\pi_2)=U(\cB_e)$.  Since $\pi_1$ is trivial, the grading on $\cB_e$ is trivial, so $(\cB_e)_g=\cB_e=\cB_{ad}$.  Therefore $\lambda(a,b)\in \cB_{pt}$ centralizes $\cB_e$, $\chi_{\lambda(a,b)}(X)=\id_X$, and the first partial obstruction vanishes. The triviality of $\ker(\pi_1)$ implies that the second partial obstruction vanishes.

\end{remark}

%\begin{lemma}

\subsubsection{Shuffle identities}\label{sec: shuffle hom}
In this section we collect some notation and identities that will be useful later.

We use the following notation, where $A$ is a group:

\begin{itemize}
\item[(a)] $A^p|A^q=\{\bm{x}|\bm{y}=(x_1,\ldots,x_p|y_1,\ldots,y_q), x_i, y_j \in A \},$ \  $p,q\geq 0$. 
\item[(b)] ${\rm{Shuff}}(p,q)$ is the set of  $(p,q)$-shuffles, i.e.  elements $\lambda$ in the symmetric group ${\mathbb{S}}_{p+q}$ such that $\lambda(i) <\lambda(j)$ whenever $1 \leq i < j \leq p $ or $p+1 \leq i < j \leq p+q$.
%\item Any $\pi \in {\rm{Shuff}}(p,q)$ defines a map \begin{align} \pi: X^{p+q}&\to X^{p+q}\\ (x_1,\ldots ,x_{p+q})&\mapsto (x_{\pi(1)}\ldots,x_{\pi(p+q)}) \end{align}
\end{itemize}

Now let $A$ and $N$ be abelian groups.
We define a double complex by $D^{p,q}(A,N)=0$ if $p$ or $q$ is zero and$$D^{p,q}(A ,N):= \Maps(A^p | A^q; N),\  \  p,q > 0$$
 with horizontal and vertical differentials the standard differentials, that is,
  $$\delta_h : D^{p,q}(A, N)=C^p(A,C^q(A,N)) \to
D^{p+1,q}(A, N)=C^{p+1}(A,C^q(A,N))$$ and 
 $$\delta_v : D^{p,q}(A, N)=C^q(A,C^p(A,N)) \to
D^{p,q+1}(A,N)=C^{q+1}(A,C^p(A,N))$$ 
defined by the equations
\begin{align*}(\delta_hF)(g_1,..., g_{p+1} | k_1,...,k_q)
= F(g_2,&...,g_{p+1}| k_1,...,k_q)\\ & + \sum_{i=1}^{p}(-1)^i F(g_1,...,g_ig_{i+1},
..,g_{p+1}| k_1,...,k_q )\\ &+ (-1)^{p+1} F(g_1,...,g_p|
k_1,...,k_q ) \\
(\delta_vF)(g_1,...,g_p| k_1, ..., k_{q+1} ) = 
F(g_1,&...,g_{p}| k_2 ,..., k_{q+1}) \\ &+
\sum_{j=1}^{q}(-1)^j F(g_1,...,g_p| k_1, ...,k_jk_{j+1},..., k_{q+1} )\\
&+ (-1)^{q+1} F(g_1,...,g_p| k_1, ..., k_{q}).
\end{align*}

For any $\alpha \in C^n(A,N)$, for any $1\leq p\leq n-1$ we define the $p$-th shuffle  $\alpha_p\in \Maps(A^{p}| A^{n-p},N)$ as
\begin{align*}
    \alpha_p(a_1,\ldots,a_p|a_{p+1},\ldots,a_n)= \sum_{\pi \in \rm{Shuff}(p,n-p)} (-1)^{\epsilon(\pi)}\alpha(a_{\pi(1)},\ldots, a_{\pi(n)}).
\end{align*}

\begin{proposition}\label{prop: HS}\cite[Proposition 2, page 123]{hochschild1953cohomology}
Let $\alpha\in C^n(A,N)$ then 
\begin{align}
    (\delta \alpha)_p= \delta_h(\alpha_{p-1})+(-1)^p \delta_v(\alpha_{p}).
\end{align}for all $1\leq p\leq n$, where by notation $\alpha_0=\alpha_n=0$, and $\delta$ is the standard differential \eqref{cochains G}.
\end{proposition}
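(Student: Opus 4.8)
The statement is a purely combinatorial identity, and the plan is to prove it by expanding both sides as signed sums over shuffles and producing an explicit term-by-term matching. Since $A$ and $N$ are abelian, the coefficient module carries the trivial action and the differential \eqref{cochains G} reduces to its action-free form, so the only genuine difficulty is sign bookkeeping. Fix $\alpha\in C^n(A,N)$ and $1\le p\le n$, set $q=n+1-p$, and evaluate everything at $(a_1,\dots,a_p\mid a_{p+1},\dots,a_{n+1})$. First I would expand
\[
(\delta\alpha)_p=\sum_{\pi\in\mathrm{Shuff}(p,q)}(-1)^{\epsilon(\pi)}\,(\delta\alpha)\bigl(a_{\pi(1)},\dots,a_{\pi(n+1)}\bigr),
\]
and then break each inner $(\delta\alpha)(b_1,\dots,b_{n+1})$ into its $n+2$ face summands: a head term $\alpha(b_2,\dots,b_{n+1})$, the $n$ interior terms $(-1)^i\alpha(b_1,\dots,b_ib_{i+1},\dots,b_{n+1})$, and a tail term $(-1)^{n+1}\alpha(b_1,\dots,b_n)$. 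Thus the left side becomes a signed sum indexed by pairs (a shuffle, a face index). I would expand $\delta_h(\alpha_{p-1})$ and $\delta_v(\alpha_p)$ the same way: the former indexed by (a face index on the horizontal block of length $p$, a shuffle in $\mathrm{Shuff}(p-1,q)$), the latter by (a shuffle in $\mathrm{Shuff}(p,q-1)$, a face index on the vertical block of length $q$).

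The key observation is that each summand of $(\delta\alpha)_p$ deletes or contracts one or two of the $a_i$, and these entries originate either in the first (``horizontal'') block $a_1,\dots,a_p$ or the second (``vertical'') block $a_{p+1},\dots,a_{n+1}$. I would argue case by case. An interior contraction $\alpha(\dots,b_ib_{i+1},\dots)$ in which $b_i,b_{i+1}$ both lie in the horizontal block corresponds to exactly one interior term of $\delta_h(\alpha_{p-1})$; if they both lie in the vertical block it corresponds to exactly one interior term of $\delta_v(\alpha_p)$; and if $b_i$ and $b_{i+1}$ come from different blocks, then the shuffle obtained by transposing those two adjacent positions is again a shuffle, gives the identical value $\alpha(\dots,b_ib_{i+1},\dots)=\alpha(\dots,b_{i+1}b_i,\dots)$ because $A$ is abelian, but has opposite sign, so these ``mixed'' interior terms cancel in pairs under a fixed-point-free involution. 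Finally the head and tail terms of $(\delta\alpha)$ split according to whether the deleted entry is the leading (resp.\ trailing) element of the horizontal or of the vertical block, matching respectively the head/tail faces of $\delta_h(\alpha_{p-1})$ and of $\delta_v(\alpha_p)$.

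What remains is to check that all the signs agree, and this is the main obstacle. Tracking a horizontal-block contraction at internal positions $r,r+1$, one must verify that $(-1)^i(-1)^{\epsilon(\pi)}$ equals $(-1)^r(-1)^{\epsilon(\rho)}$ for the induced shuffle $\rho$; tracking a vertical-block operation, one finds a uniform extra factor of $(-1)^p$, arising because any face applied to the vertical variables of an $n$-cochain ordered ``horizontal-then-vertical'' carries a sign equal to $(-1)^p$ times the sign of the corresponding face on a $q$-cochain (equivalently, pulling the first vertical entry past the $p$ untouched horizontal entries costs parity $p$) — this is precisely the coefficient of $\delta_v(\alpha_p)$ in the claimed formula. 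The boundary cases $p=1$ and $p=n$ follow from the same analysis, with the absent families of terms accounted for by the conventions $\alpha_0=\alpha_n=0$. Since this identity is classical, one may alternatively simply invoke \cite[Proposition~2, p.~123]{hochschild1953cohomology} as in the statement; the argument above is the self-contained route.
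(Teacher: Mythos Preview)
The paper provides no proof of this proposition at all; it simply states the identity and cites \cite[Proposition~2, p.~123]{hochschild1953cohomology}. Your self-contained combinatorial argument---splitting the face terms of $(\delta\alpha)_p$ according to whether the contracted or deleted entries come from the horizontal block, the vertical block, or one of each, and cancelling the mixed contractions by the adjacent-transposition involution---is the standard route (and is essentially what Hochschild--Serre do), so there is nothing to correct. Since you already note that one may simply invoke the citation, your proposal strictly contains the paper's ``proof''.
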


%For future reference it will be useful to describe some of the equations consequence of Proposition \ref{prop: HS}.

%Let $\gamma \in C^3(A,N)$ and $\beta \in C^2(A,N)$ then 

%\begin{align}    \delta_v(\gamma_1)(a_1,|a_2,a_3,a_4)&=-\sum_{\pi \in \rm{Shuff}(1,3)}  (-1)^{\epsilon(\pi)}\delta(\gamma)(a_{\pi(1)},\ldots, a_{\pi(4)}) \label{shuff 1 3}\\    \delta_h(\gamma_1)(a_1,a_2|a_3,a_4)+\delta_v(\gamma_2)(a_1,a_2|a_3,a_4)&=\sum_{\pi \in \rm{Shuff}(2,2)} (-1)^{\epsilon(\pi)}\delta(\gamma)(a_{\pi(1)},\ldots, a_{\pi(4)}) \label{shuff 2 2}\\    \delta_h(\gamma_2)(a_1,a_2,a_3|a_4)&=\sum_{\pi \in \rm{Shuff}(3,1)} (-1)^{\epsilon(\pi)}\delta(\gamma)(a_{\pi(1)},\ldots, a_{\pi(4)}) \label{shuff 3 1}\\    \delta_v(\beta_1)(a_1|a_2,a_3)&= \sum_{\pi \in \rm{Shuff}(1,2)} (-1)^{\epsilon(\pi)}\delta(\gamma)(a_{\pi(1)},a_{\pi(2)}, a_{\pi(3)}) \label{shuff 1 2}\\    \delta_h(\beta_1)(a_1,a_2|a_3)&= \sum_{\pi \in \rm{Shuff}(2,1)} (-1)^{\epsilon(\pi)}\delta(\gamma)(a_{\pi(1)},a_{\pi(2)}, a_{\pi(3)}) \label{shuff 2 1}\end{align}

\subsubsection{General obstructions}
Now let us consider the more general obstruction theory. 

Fix an associative zesting $\lambda$ of $\cB$ (i.e., a $2$-cocycle $\lambda\in H^2(A,\Inv(\cB_e))$ and isomorphisms $\lambda$ satisfying Definition \ref{def:assoczesting}) such that the first and second partial obstructions vanish.

In order to solve the equation in Figure \ref{fig:unnormalized-1} we fix an arbitrary family of isomorphisms $\left \{\nu(a,b):\lambda(a,b)\to \lambda(b,a) \right \}_{a,b\in A}$ (see Figure \ref{fig:map-nu}) and a map $\tilde{j}: A\to \Aut_{\ot}^{\ker(\pi_1)}(\Id_{\cB_e})$ such that $\chi_{\lambda(a,b)}|_{\cB_e}=\tilde{j}_a\circ \tilde{j}_b\circ \tilde{j}_{ab}^{-1}.$ 
 \begin{figure}[h!]
    %\centering
  \begin{align*}
  \begin{tikzpicture}[line width=1,scale=1.75,baseline=35]
    \draw(0,-.5) node[below] {$\lambda(h,g)$}--(0,0);
      \draw(0,2)--(0,2.5)node[above] {$\lambda(g,h)$};
    \onetri{}{}{\nu(g,h)}
    \end{tikzpicture}
    &\hspace{10pt}
    :=\hspace{10pt}
  \begin{tikzpicture}[line width=1,scale=1.75,baseline=35]
    \draw(0,-.5) node[below] {$(h,g)$}--(0,0);
      \draw(0,2)--(0,2.5)node[above] {$(g,h)$};
    \onetri{}{}{g,h}
    \end{tikzpicture}
    \end{align*}
    \caption{Diagram for the isomorphisms $\nu(g,h)$.}
    \label{fig:map-nu}
\end{figure}
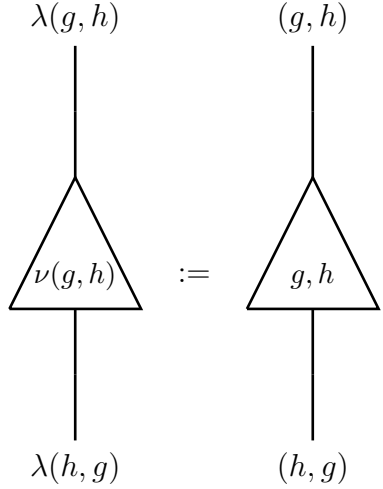
Then we define a map $O_1(\lambda,\nu,\tilde{j})\in \Maps(A^2|A,\ku^\times)$,
\begin{align*}
O_1(\lambda,\nu,\tilde{j}):A\times A\times A &\to \ku^\times\\
 (a_1,a_2,a_3) &\mapsto O_1(a_1|a_2,a_3)
\end{align*}
by  Figure \ref{fig:obs hex 1}.
\begin{figure}[h]
\begin{center}
 $O_1(1;2,3)\id$
  \hspace{10pt}
    :=
    \hspace{10pt}
    \begin{tikzpicture}[yscale=.75,xscale=1.25, line width=1,baseline=135]
    \foreach \x in {0,1}{
    \draw (\x,0)--(\x,12);}
    \draw (1,0) node[below] {$(12,3)$};
    \onetri{(1,2)}{}{1,2}
    \draw (.5,1.5) node[] {$^{-1}$};
    \begin{scope}[yshift=2cm]
    \twoboxnostrands{2,1,3}
      \draw (1.5,1.25) node[] {$^{-1}$};
    \end{scope}
      \begin{scope}[yshift=4cm]
    \onetri{}{}{1,3}
     \draw (.5,1.5) node[] {$^{-1}$};
    \end{scope}
        \begin{scope}[yshift=6cm]
    \twoboxnostrands{2,3,1}
    \end{scope}
       \begin{scope}[yshift=8cm]
       \onebox{}{}{\tilde{j}_1}
       \begin{scope}[xshift=1cm]
   \onetri{}{}{1,23}
       \end{scope}
      \end{scope}
   \begin{scope}[yshift=10cm]
       \twoboxnobottom{(1,2)}{(12,3)}{1,2,3}
    \end{scope}
    \end{tikzpicture}
\end{center}
   \caption{Obstruction to Figure \ref{fig:unnormalized-1}.}
   \label{fig:obs hex 1}
\end{figure}
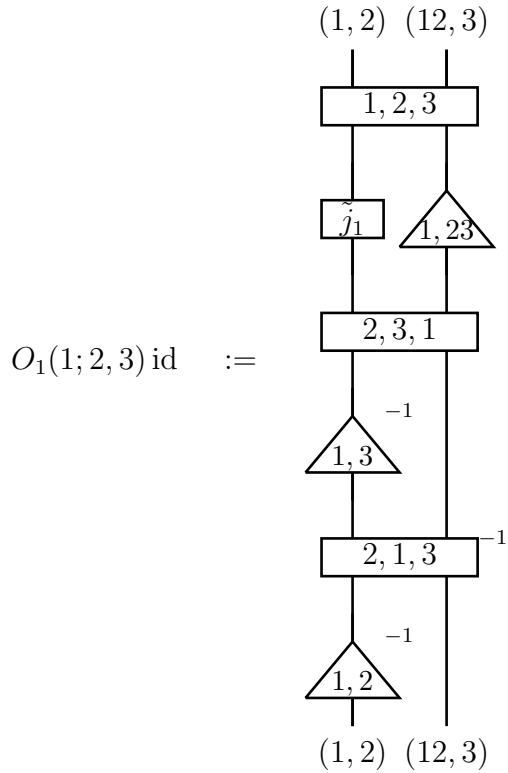

\begin{lemma}\label{prop:obst O1}
\begin{itemize}
    \item[(i)] For any $a\in A$ the function $O_1(\lambda,\nu, \tilde{j})(a|-,-):A\times A\to \ku^\times$ defines a 2-cocycle. 
    \item[(ii)] The cohomology class $O_1(\lambda,\nu,\tilde{j})(a|-,-)$ does not depend on the choice of the family $\nu$ or $\tilde{j}$, and we will be denoted by $O_1(\lambda)(a|-,-)$.
    \item[(iii)] There is a choice of isomorphisms $\nu$ that satisfies the equation in Figure \ref{fig:unnormalized-1} if and only if the cohomology class of $O_1(\lambda)(a|-,-)\in H^2(A,\ku^\times)$ vanishes for each $a\in A$.
\end{itemize}
\end{lemma}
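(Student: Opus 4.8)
The starting point is the observation that $O_1(\lambda,\nu,\tilde{j})(a|b,c)$ is, by the very definition in Figure~\ref{fig:obs hex 1}, exactly the scalar measuring the failure of the first braided zesting equation in the scalar form \eqref{eqn: braided zest scalar 1} for the data $t=\nu$, $j=\tilde{j}$; in other words, $(\lambda,\tilde{j},\nu)$ satisfies the equation of Figure~\ref{fig:unnormalized-1} if and only if $O_1(\lambda,\nu,\tilde{j})\equiv 1$. Granting (i) and (ii), part (iii) follows formally: the forward implication is immediate, and for the converse, if $[O_1(\lambda)(a|-,-)]=0$ for every $a$ then, by (i) and (ii), $O_1(\lambda,\nu,\tilde{j})(a|-,-)=\delta(\mu_a)$ for some normalized $1$-cochains $\mu_a\in C^1(A,\ku^\times)$; replacing $\nu$ by the rescaling $\nu'(a,b)=\mu_a(b)^{\mp 1}\,\nu(a,b)$ (the sign dictated by the rescaling computation in (ii)) produces an associative zesting datum with $O_1(\lambda,\nu',\tilde{j})\equiv 1$, hence a solution of Figure~\ref{fig:unnormalized-1}. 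So everything reduces to (i) and (ii).

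To prove (i), I would fix $a$ and verify $\delta_v\bigl(O_1(\lambda,\nu,\tilde{j})(a|-,-)\bigr)=1$, i.e. $O_1(a|c,d)\,O_1(a|b,cd)=O_1(a|b,c)\,O_1(a|bc,d)$ for all $b,c,d$. The approach is to expand all four factors using Figure~\ref{fig:obs hex 1} and to transform the left-hand side into the right-hand side by a sequence of moves, each an instance of the associative zesting constraint of Figure~\ref{axiom:asso} (the pentagon for the isomorphisms $\lambda_{-,-,-}$), supplemented by the monoidality and naturality of each $\tilde{j}_a$ and the cancellation of the $\nu$'s in matching pairs; geometrically, stacking the four copies of $O_1(a|\bullet,\bullet)$ produces a closed loop of $\lambda$-associators (the two ways of sliding $a$ past $b,c,d$) whose interior is tiled by pentagons, hence contracts. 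Rather than doing this by hand, I would set it inside the Hochschild--Serre double complex of Section~\ref{sec: shuffle hom}: the family $\{\lambda_{-,-,-}\}$, together with $\{\nu(a,b)\}$ and $\tilde{j}$, assembles into a cochain of $D^{\bullet,\bullet}(A,\ku^\times)$, the associative zesting constraint is (the shuffle form of) a $\delta_h$-cocycle condition, $O_1(a|-,-)$ is a component of $\delta_h$ of an explicit cochain, and Proposition~\ref{prop: HS} then yields $\delta_v O_1(a|-,-)=1$. I expect the honest verification of this cocycle identity --- organizing the large diagram so the pentagons cancel --- to be the main obstacle in the whole lemma.

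For (ii) I would compare the two sources of ambiguity. Any two families of isomorphisms $\lambda(a,b)\to\lambda(b,a)$ differ by scalars $r(a,b)\in\ku^\times$, and inspection of Figure~\ref{fig:obs hex 1} shows the $\nu$-dependence of $O_1$ enters only through the combination $\nu(a,bc)\,\nu(a,b)^{-1}\,\nu(a,c)^{-1}$; hence passing from $\nu$ to $r\nu$ multiplies $O_1(a|b,c)$ by $\delta(r_a)(b,c)^{\pm 1}$ with $r_a(b):=r(a,b)$, a coboundary, so the class is unchanged. If $\tilde{j}'$ is a second map with $\tilde{j}'_a\tilde{j}'_b(\tilde{j}'_{ab})^{-1}=\chi_{\lambda(a,b)}|_{\cB_e}$, then $\kappa_a:=\tilde{j}'_a(\tilde{j}_a)^{-1}$ is necessarily a homomorphism $\kappa\colon A\to\Aut_\ot^{\ker(\pi_1)}(\Id_{\cB_e})$, and since $\tilde{j}$ enters $O_1$ only through the scalar $\tilde{j}_a(\lambda(b,c))$, replacing $\tilde{j}$ by $\tilde{j}'$ multiplies $O_1(a|b,c)$ by $\kappa_a(\lambda(b,c))$; because $\kappa_a$ is a monoidal natural automorphism and $[\lambda(-,-)]$ is an $\Inv(\cB_e)$-valued $2$-cocycle, $(b,c)\mapsto\kappa_a(\lambda(b,c))$ is a $2$-cocycle, and I would check it is a coboundary using the vanishing of the first partial obstruction --- which pins down the $\ker(\pi_1)$-degrees of the $\lambda(b,c)$ (and is vacuous when $A=U(\cB)$, where $\tilde{j}$ is forced to be trivial and there is nothing to check). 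This yields $\tilde{j}$-independence, and $O_1(\lambda)(a|-,-)$ denotes the common class; the $\tilde{j}$-independence is the more delicate of the two verifications in (ii).
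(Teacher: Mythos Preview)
Your outline for (i) and (iii) matches the paper's approach: the paper also reduces (after a harmless strictification so that $\lambda_{a_1,a_2,a_3}$ and $\nu(a,b)$ become honest cochains) to the shuffle identity of Proposition~\ref{prop: HS}, computing $\delta_v(\lambda_1)(a_1|a_2,a_3,a_4)$ as a signed product of $\delta(\lambda)$'s, rewriting each via the associative zesting constraint $\delta(\lambda)=\lambda\cup_{c'}\lambda$, and observing that the resulting terms cancel because $\lambda(a,b)\cong\lambda(b,a)$. Your description of (iii) is exactly the paper's argument.

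For (ii), however, there is both a difference in strategy and a gap in your plan. Rather than tracking how $O_1$ changes under $\nu\mapsto r\nu$ and $\tilde{j}\mapsto\tilde{j}'$, the paper observes that the $\nu$-contribution $\nu(a_1,b)\nu(a_1,c)\nu(a_1,bc)^{-1}$ and the $\tilde{j}$-contribution $\tilde{j}_{a_1}(\lambda(b,c))$ are each \emph{symmetric} $2$-cocycles in $(b,c)$ --- the first trivially, the second because $\lambda(b,c)\cong\lambda(c,b)$ --- and then uses the elementary fact that every symmetric $2$-cocycle with values in the divisible group $\ku^\times$ is a coboundary. This shows, in one stroke, that the cohomology class of $O_1(a_1|-,-)$ equals that of $\lambda_1(a_1|-,-)$ alone, which is visibly independent of $\nu$ and $\tilde{j}$. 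Your $\nu$-argument is fine (if slightly less efficient), but your proposed tool for the $\tilde{j}$-independence --- ``the vanishing of the first partial obstruction, which pins down the $\ker(\pi_1)$-degrees of the $\lambda(b,c)$'' --- does not do the job: knowing the grading of $\lambda(b,c)$ does not by itself force $(b,c)\mapsto\kappa_a(\lambda(b,c))$ to be trivial in $H^2(A,\ku^\times)$, which can be nonzero. What you actually need is precisely the symmetry $\lambda(b,c)\cong\lambda(c,b)$ (guaranteed by the existence of $\nu$) together with divisibility of $\ku^\times$; once you invoke that, your argument goes through.
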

\begin{proof}
To simplify the arguments and notation  we will  assume without serious loss of generality that 
\begin{align*}
\lambda(a_1,a_2)\ot \lambda(a_1a_2,a_3)=\lambda(a_2,a_3)\ot \lambda(a_2a_3,a_1)\\
\lambda(a_1,a_2)=\lambda(a_2,a_1), \quad \lambda(a_1,a_2)\otimes\lambda(a_3,a_4)=\lambda(a_3,a_4)\otimes\lambda(a_1,a_2)
\end{align*}
for all $a_1, a_2, a_3, a_4 \in A.$
Hence the isomorphisms $\lambda(a_1,a_2,a_3)$ and $\nu(a_1,a_2)$ are defined by cochains $\lambda\in C^3(A,\ku^\times)$,  $\nu\in C^2(A,\ku^\times)$ and the associative zesting constraint can be written as 
\begin{align}\label{eq: associative zesting cochain condition}
\delta(\lambda)
=(\lambda \cup_{c'} \lambda),
\end{align} where $(\lambda \cup_{c'} \lambda)(a_1,a_2,a_3,a_4)\in \ku^\times$ is defined by $$(\lambda \cup_{c'} \lambda)(a_1,a_2,a_3,a_4)\id_{\lambda(a_1,a_2)\ot \lambda(a_3,a_4)}=c_{\lambda(a_3,a_4),\lambda(a_1,a_2)}^{-1}.$$
(i) \ \ We have that
 \begin{align*}
O_1(a_1;a_2,a_3)&= \tilde{j}_{a_1}(\lambda(a_2,a_3))\times \frac{\nu(a_1,a_2+a_3)}{\nu(a_1,a_2)\nu(a_1,a_3)}\times \frac{\lambda(a_1,a_2,a_3)\lambda(a_2,a_3,a_1)}{\lambda(a_2,a_1,a_3)}\\
&= \tilde{j}_{a_1}(\lambda(a_2,a_3))\times\delta_v(\nu^{-1})(a_1|a_2,a_3)\lambda_1(a_1|a_2,a_3)
\end{align*}for all $a_1,a_2,a_3 \in A$, where we have been using the notation introduced in Section \ref{sec: shuffle hom}. Note that the 2-cochains $(a_2,a_2)\mapsto \tilde{j}_{a_1}(\lambda(a_2,a_3))$  and $(a_2,a_3)\mapsto  \delta_h(\nu)(a_1|a_2,a_3)$ are  2-cocycles.  Hence, to prove that $O_1(a_1;-,-)$ is a $2$-cocycle we only need to check that

\begin{align*}
\delta_v(\lambda_1)(a_1|a_2,a_3,a_4)=1 && \forall a_1,a_2,a_3,a_4\in A.
\end{align*}
It follows from Proposition \ref{prop: HS} that
\begin{align*}
\delta_v(\lambda_1)(a_1|a_2,a_3,a_4)^{-1}=&\prod_{\pi \in \rm{Shuff}(1,3)}  \delta(\lambda)(a_{\pi(1)},\ldots, a_{\pi(4)})^{\epsilon(\pi)}\\
=&\frac{(\lambda \cup_{c'} \lambda)(a_1,a_2,a_3,a_4)(\lambda \cup_{c'} \lambda)(a_2,a_3,a_4,a_1)}{(\lambda \cup_{c'} \lambda)(a_2,a_1,a_3,a_4)(\lambda \cup_{c'} \lambda)(a_2,a_3,a_1,a_4)}\\
&=1,
\end{align*}where the last equality follows because $\lambda(a,b)=\lambda(b,a)$ for all $a,b\in A$.

(ii)  \ \ Recall that every symmetric 2-cocycle in $Z^2(A,\ku^\times)$ is a coboundary (since $\ku^\times$ is divisible). Then   $\delta_v(\nu^{-1})(a_1|-,-)$  and $\tilde{j}_{a_1}(\lambda(-,-))$ contributes with a 2-coboundary to $O_1(\lambda,\nu,t)(a_1|-,-)$. Hence the cohomology class  $O_1(a_1|-,-)$ only depends on $\lambda$. 

(iii)\   If there is $h\in C^2(A,\ku^\times)$ such that $\delta_v(h)=O_1(\lambda,\nu,\tilde{j})$, then taking $\nu'=\nu h^{-1}$  we have  that $O_1(\lambda,\nu',\tilde{j})=1$.  
\end{proof}

\begin{remark}\label{remark about the t's}
In practice we can take $\lambda(a,b)=\lambda(b,a)$ for all $a,b \in A$. Since $\lambda(a,b)$ is invertible we have that every isomorphism $\nu:\lambda(a,b)\to \lambda(b,a)$ is a multiple of the identity. Hence in order to compute $O_1(\lambda)$, we can start with $\nu(a,b)=\id_{\lambda(a,b)}$ for all $a,b\in A$.
\end{remark}

Assuming that the cohomology classes of $O_1(\lambda)$ vanish, we can find isomorphisms $\nu:\lambda(a,b)\to \lambda(b,a)$ such that $O_1(\lambda,\nu,j)(a_1|a_2,a_3)=1$ for all $a_1,a_2,a_3\in A$. We define the map $O_2(\lambda,\mu,j)(a|b,c)\in \ku^\times $ by  Figure \ref{fig:second-obs}.

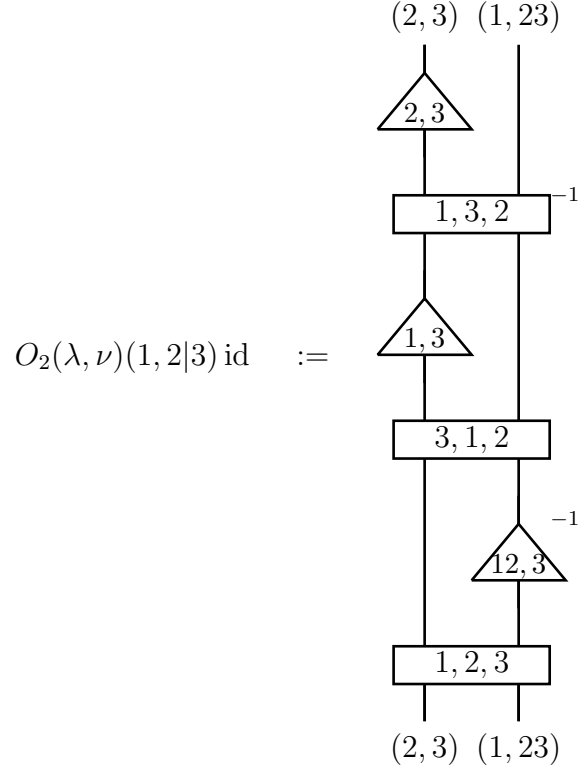
\begin{figure}[h]
\begin{center}  $O_2(\lambda,\nu)(1,2|3)\id$
    \hspace{10pt}
    :=
    \hspace{10pt} 
    \begin{tikzpicture}[yscale=.75,xscale=1.25, line width=1,baseline=135]
    \foreach \x in {0,1}{
    \draw (\x,0)--(\x,12);}
    \draw (1,12) node[above] {$(1,23)$};
    \draw (1,0) node[below] {$(1,23)$};
    \begin{scope}[yshift=10cm]
    \onetri{}{(2,3)}{2,3}
    \end{scope}
    \begin{scope}[yshift=8cm]
    \twoboxnostrands{1,3,2}
       \draw (1.5,1.25) node[] {$^{-1}$};
    \end{scope}
    \begin{scope}[yshift=6cm]
        \onetri{}{}{1,3}
    \end{scope}
      \begin{scope}[yshift=4cm]
        \twoboxnostrands{3,1,2}
    \end{scope}
       \begin{scope}[xshift=1cm,yshift=2cm]
   \onetri{}{}{12,3}
     \draw (.5,1.5) node[] {$^{-1}$};
      \end{scope}
       \twoboxnotop{(2,3)}{}{1,2,3}
    \end{tikzpicture}
   \end{center}
    \caption{Diagrammatic definition of the second obstruction to braided zesting.}
    \label{fig:second-obs}
\end{figure}

\begin{lemma}\label{prop:obst O2}
Let $\tilde{j}:A\to \Aut_\ot^{\ker(\pi_1)}(\Id_\cB)$ be map such that $\delta(\tilde{j})(a,b)=\chi_{\lambda(a,b)}|_{\cB_e}$ for all $a,b\in A$ and let $\nu:\lambda(a,b)\to \lambda(b,a)$ be a family of isomorphisms such that 
\begin{align*}
O_1(\lambda,\nu,\tilde{j})(a_1|a_2,a_3)=1,&& \forall a_1,a_2,a_3\in A.
\end{align*}
Then
\begin{itemize}
\item[(i)]  $O_2(\lambda,\nu)(-|a,b)\in \widehat{A}$ for all $a,b\in A$.

\item[(ii)] The  2-cochain 
    \begin{align*}
        O_2(\lambda,\nu):A\times A &\to \widehat{A}\\
        (a,b)&\mapsto [c\mapsto O_2(\lambda)(c|a,b)]
    \end{align*}
     defines a 2-cocycle $O_2(\lambda,\nu)\in Z^2(A,\widehat{A})$. The cohomology class of $O_2(\lambda,\nu)$ does not depend on the choice of the $\nu$  (under the hypothesis that the $O_1(\lambda,\nu)=1$), and will be denoted by $O_2(\lambda)$.
\end{itemize}
\end{lemma}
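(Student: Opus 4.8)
The plan is to run the argument in the proof of Lemma~\ref{prop:obst O1} one bidegree higher. First I would pass to the simplified situation used there: after replacing $\lambda$ by an equivalent associative zesting I may assume $\lambda(a,b)=\lambda(b,a)$ and that the relevant tensor products of the objects $\lambda(\cdot,\cdot)$ coincide on the nose, so that $\lambda_{a,b,c}$ and $\nu(a,b)$ are scalars, i.e.\ cochains $\lambda\in C^{3}(A,\ku^{\times})$ and $\nu\in C^{2}(A,\ku^{\times})$; the associative zesting constraint is then \eqref{eq: associative zesting cochain condition}, $\delta\lambda=\lambda\cup_{c'}\lambda$, and the standing hypothesis $O_{1}(\lambda,\nu,\tilde{j})=1$ becomes the explicit identity $\delta_{v}(\nu)(a\,|\,b,c)=\tilde{j}_{a}(\lambda(b,c))\,\lambda_{1}(a\,|\,b,c)$, where $\lambda_{1}$ is the first shuffle from \S\ref{sec: shuffle hom}. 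Reading Figure~\ref{fig:second-obs} as a scalar equation, one finds that, in this setting, $O_{2}(\lambda,\nu)(c\,|\,a,b)$ depends on the cocycle pair $(a,b)$ (with $c$ the spectator) through $\delta_{h}(\nu^{-1})(a,b\,|\,c)\cdot\lambda_{2}(a,b\,|\,c)^{-1}$, in the bicomplex notation of \S\ref{sec: shuffle hom} with left block $(a,b)$, up to the placement of inverses.

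For part (i) I would show the spectator variable enters multiplicatively by computing the defect $O_{2}(cc'\,|\,a,b)\,O_{2}(c\,|\,a,b)^{-1}\,O_{2}(c'\,|\,a,b)^{-1}$ and checking it is $1$. The $\lambda_{2}$-factor contributes $\delta_{v}(\lambda_{2})(a,b\,|\,c,c')^{-1}$ and the $\delta_{h}(\nu^{-1})$-factor contributes $\delta_{v}\delta_{h}(\nu^{-1})(a,b\,|\,c,c')^{\pm 1}$; substituting the hypothesis $O_{1}=1$ for $\delta_{v}(\nu)$ and using $\delta(\tilde{j})(a,b)|_{\cB_{e}}=\chi_{\lambda(a,b)}|_{\cB_{e}}$ (condition (BZ1)), this last factor splits as $\chi_{\lambda(a,b)}(\lambda(c,c'))^{-1}\,\delta_{h}(\lambda_{1})(a,b\,|\,c,c')^{-1}$. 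Now the shuffle identity of Proposition~\ref{prop: HS} applied to $\lambda\in C^{3}$ at $p=2$ gives $\delta_{h}(\lambda_{1})\,\delta_{v}(\lambda_{2})=(\delta\lambda)_{2}=(\lambda\cup_{c'}\lambda)_{2}$, and a short direct computation with the definition of $\cup_{c'}$ (using $\lambda(x,y)=\lambda(y,x)$) identifies $(\lambda\cup_{c'}\lambda)_{2}(a,b\,|\,c,c')$ with $\chi_{\lambda(a,b)}(\lambda(c,c'))^{-1}$ --- precisely because the relative half braiding of each $\lambda(\cdot,\cdot)$ is $c'$ and $\chi$ records the double braiding (Definition~\ref{def:braidedzesting}(i), Figure~\ref{fig:chi}, Proposition~\ref{prop:chi-properties}). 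Collecting the three factors, the defect telescopes to $1$, so $O_{2}(\lambda,\nu)(-\,|\,a,b)\in\widehat{A}$.

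For part (ii) I would compute $\delta O_{2}(\lambda,\nu)$ as a group $2$-cochain valued in $\widehat{A}$ with trivial $A$-action, i.e.\ evaluate $O_{2}(d\,|\,b,c)\,O_{2}(d\,|\,ab,c)^{-1}\,O_{2}(d\,|\,a,bc)\,O_{2}(d\,|\,a,b)^{-1}$ and show it is $1$. The $\delta_{h}(\nu^{-1})$-factor is a horizontal coboundary in $(a,b)$, hence killed because $\delta_{h}^{2}=1$; the $\lambda_{2}$-factor contributes $\delta_{h}(\lambda_{2})(a,b,c\,|\,d)^{-1}$, which by Proposition~\ref{prop: HS} at $p=3$ (with the boundary convention $\lambda_{3}=0$) equals $(\delta\lambda)_{3}(a,b,c\,|\,d)^{-1}=(\lambda\cup_{c'}\lambda)_{3}(a,b,c\,|\,d)^{-1}$, and this is $1$ by the same symmetry cancellation ($\lambda(x,y)=\lambda(y,x)$) used in Lemma~\ref{prop:obst O1}(i). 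Hence $O_{2}(\lambda,\nu)\in Z^{2}(A,\widehat{A})$. For independence of $\nu$: if $\nu'$ is another family with $O_{1}(\lambda,\nu',\tilde{j})=1$, then $\mu:=\nu'\nu^{-1}\in C^{2}(A,\ku^{\times})$ satisfies $\delta_{v}(\mu)=1$, so $a\mapsto\mu(a,-)^{-1}$ defines a $1$-cochain in $C^{1}(A,\widehat{A})$, and replacing $\nu$ by $\nu'$ multiplies $O_{2}$ by the coboundary of this $1$-cochain; therefore the class of $O_{2}(\lambda,\nu)$ in $H^{2}(A,\widehat{A})$ is independent of $\nu$, and it is this class that we denote $O_{2}(\lambda)$.

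The step I expect to be the main obstacle is bookkeeping rather than conceptual: transcribing Figure~\ref{fig:second-obs} into a scalar formula with all inverses and orderings correct, and then keeping the horizontal and vertical differentials $\delta_{h},\delta_{v}$, the shuffles $\lambda_{1},\lambda_{2}$, the cup product $\cup_{c'}$, and the normalization by $\tilde{j}$ straight so that the telescoping in (i) and the cocycle identity in (ii) close up exactly. Every ingredient --- the shuffle identity of Proposition~\ref{prop: HS}, the associative zesting equation $\delta\lambda=\lambda\cup_{c'}\lambda$, the symmetry of $\lambda$, and the identification of $\cup_{c'}$-shuffles with values of $\chi$ --- already appears in the proof of Lemma~\ref{prop:obst O1}, so no genuinely new idea should be required.
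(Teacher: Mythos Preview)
Your proposal is correct and follows essentially the same route as the paper's proof: reduce to scalar cochains, write $O_{2}$ as $\delta_{h}(\nu)\cdot\lambda_{2}$ (up to inverses), use $\delta_{h}\delta_{v}=\delta_{v}\delta_{h}$ together with the hypothesis $O_{1}=1$ and $\delta(\tilde{j})=\chi_{\lambda}|_{\cB_{e}}$ to handle the $\nu$-contribution, and invoke Proposition~\ref{prop: HS} at $p=2$ and $p=3$ on $\delta\lambda=\lambda\cup_{c'}\lambda$ to close up the $\lambda$-contribution, with the independence-of-$\nu$ argument exactly as you describe. The only point to watch is the one you already flag: the paper's convention places the two ``cocycle'' variables on the left (writing $O_{2}(a_{1},a_{2}\,|\,a_{3})$ as in Figure~\ref{fig:second-obs}) while the statement of the lemma writes $O_{2}(-\,|\,a,b)$ with the character slot first, so when you transcribe the diagram and match signs you should fix one convention and stick to it.
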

\begin{proof}
As in the proof of Lemma \ref{prop:obst O1}, we will assume that $\lambda(a,b,c)$ and $\nu(a,b)$ are defined by cochains $\lambda\in C^3(A,\ku^\times), \nu \in C^2(A,\ku^\times)$.

The condition $O_1(\lambda,\nu,\tilde{j})=1$ can be written as
\begin{align}\label{eq: O_1 trivial coho}
    \delta_v(\nu)(a_1|a_2,a_3)=\lambda_1(a_1|a_2,a_3)\tilde{j}_{a_1}(\lambda(a_2,a_3)), && \forall a_1, a_2, a_3 \in A.
\end{align}

(i) \ \  
We need to check that $\delta_v(O_2(\lambda,\nu,\tilde{j}))(a_1,a_2|a_3,a_4)=1$, where 
\begin{align}
O_2(\lambda,\nu,\tilde{j})(a_1,a_2|a_3)=\delta_h(\nu)(a_1,a_2|a_3)\lambda_2(a_1,a_2|a_3).
\end{align}
First we have that

\begin{align*}
    \delta_v(\delta_h(v))(a_1,a_2|a_3,a_4)=&\delta_h(\delta_v(v))(a_1,a_2|a_3,a_4)\\
    =&\delta_h(\lambda_1)(a_1,a_2|a_3,a_4) \tilde{j}_{a_1}\tilde{j}_{a_1+a_2}^{-1}\tilde{j}_{a_2}(\lambda(a_3,a_4))\\
    =&\delta_h(\lambda_1)(a_1,a_2|a_3,a_4)\chi_{\lambda(a_1,a_2)}(\lambda(a_3,a_4)).
\end{align*}
and
\begin{align*}
\delta_v(O_2(\lambda,\nu,\tilde{j}))(a_1,a_2|a_3,a_4)&= \delta_v(\delta_h(v))\delta_v(\lambda_2)(a_1,a_2|a_3,a_4)\\
&=\big (\delta_h(\lambda_1)\delta_v(\lambda_2)\big )(a_1,a_2|a_3,a_4)\chi_{\lambda(a_1,a_2)}(\lambda(a_3,a_4)).
\end{align*}Using Proposition \ref{prop: HS}   we have

\begin{align*}
  \delta_h(\lambda_1)(a_1,a_2|a_3,a_4)\delta_v(\lambda_2)(a_1,a_2|a_3,a_4)=& \prod_{\pi \in \rm{Shuff}(2,2)}  \delta(\lambda)(a_{\pi(1)},\ldots, a_{\pi(4)})^{\epsilon(\pi)}\\
  =& \lambda\cup_{c'}\lambda(a_1,a_2,a_3,a_4)\lambda\cup_{c'}\lambda(a_3,a_4,a_1,a_2)\\ &\lambda\cup_{c'}\lambda(a_3,a_1,a_2,a_4)\lambda\cup_{c'}\lambda(a_1,a_3,a_2,a_4)\\
  &\lambda\cup_{c'}\lambda(a_3,a_1,a_4,a_2)^{-1}\lambda\cup_{c'}\lambda(a_1,a_3,a_4,a_2)^{-1}\\
  =&\lambda\cup_{c'}\lambda(a_1,a_2,a_3,a_4)\lambda\cup_{c'}\lambda(a_3,a_4,a_1,a_2)\\
  =& \chi_{\lambda(a_1,a_2)}^{-1}(\lambda(a_3,a_4)).
\end{align*}
Then 
\[\delta_v(O_2(\lambda,\nu,\tilde{j}))(a_1,a_2|a_3,a_4)=1,\]as we wanted to check.

(ii) The 2-cocycle condition in this case is 
\[\delta_h(O_2(\lambda,\nu,\tilde{j}))=\delta_h(\lambda_2)=1.\]
Using again Proposition \ref{prop: HS} we have 

\begin{align*}
\delta_h(\lambda_2)=& \prod_{\pi \in \rm{Shuff}(3,1)}  \delta(\gamma)(a_{\pi(1)},\ldots, a_{\pi(4)})^{\epsilon(\pi)}\\
=&\lambda\cup_{c'} \lambda(a_1,a_2,a_3,a_4)\lambda\cup_{c'} \lambda(a_4,a_1,a_2,a_3)\\
&\lambda\cup_{c'}\lambda(a_1,a_4,a_2,a_3)^{-1}\lambda\cup_{c'}\lambda(a_1,a_2,a_4,a_3)^{-1}\\
=&1.
\end{align*}
Finally, if $\nu'$ is another 2-cochain such that $O_1(\lambda,\nu',\tilde{j})=1$, then $\delta_v(\nu/\nu')=1$, that is $\nu/\nu'\in C^1(A,\widehat{A})$, and then $O_2(\lambda,\nu',\tilde{j},)=\delta_h(\nu/\nu')O_2(\lambda,\nu',\tilde{j},)$, that is  $O_2(\lambda,\nu',\tilde{j},)$ and $O_2(\lambda,\nu,\tilde{j},)$ are cohomologous in $H^2(A,\widehat{A})$.
\end{proof}

The short exact sequence \[0\to \widehat{A}\to \widehat{U(\cB)}\to \widehat{\ker(\pi_1)}\to 0,\]induces a long exact sequence in cohomology

\begin{align}\label{eq: long exact sequence}
\cdots\to \Hom(A,\widehat{\ker(\pi_1)})\overset{d_1}{\to} H^2(A,\widehat{A})\to H^2(A,\widehat{U(\cB)})\to  \cdots 
\end{align}
The set  $$S_{\chi}:=\{ \tilde{j}: A\to \Aut_\ot^{\ker(\pi_1)}(\Id_{\cB_e})| \delta(\tilde{j})=\chi_{\lambda}|_{\cB}\}$$ is a torsor over the abelian group $\Hom(A,\widehat{\ker(\pi_1)})$. It follows from Proposition \ref{prop:condition extension monoidal auto identity}  that  for each $\tilde{j}\in S_\chi$ there is $j:A\to \Aut_\ot(\Id_{\cB})$ such that  $\tilde{j}_a=(j_a)|_{\cB_e}$ for each $a\in A$. Hence the natural isomorphisms \[\omega_j(a,b):=\chi_{\lambda(a,b)}\circ j_a^{-1}\circ j_b^{-1}\circ j_{ab}\in \Aut_\ot^{A}(\Id_{\cB})\]
define a 2-cocycle $\omega_j\in Z^2(A,\widehat{A}),$ and again by Proposition \ref{prop:condition extension monoidal auto identity} the cohomology class of $\omega_j$ only depend on $\tilde{j}$, and we will denote by 
\begin{equation}
    \omega_{\tilde{j}}\in H^2(A,\widehat{A}).
\end{equation} Note that if $j', j'' \in S_\chi$, then $\omega_{\tilde{j}/\tilde{j}''}=d_1(\tilde{j}/\tilde{j}'')$, where $d_1$ is defined in \eqref{eq: long exact sequence}.

As a result of the Lemmas \ref{prop:obst O1}, \ref{prop:obst O2} and the previous discussion, we obtain the following result.
\begin{theorem}\label{th: obstruction}
Let $\lambda$ be an associative zesting. Then there is a braided zesting associated if and only if the cohomology classes of $O_1(\lambda)(a,-,-)\in H^2(A,\ku^\times)$  vanish for all $a\in A$,  and there exist $\tilde{j}\in S_\lambda$ such that $O_2(\lambda)=\omega_{\tilde{j}}\in H^2(A,\widehat{A})$. 
\qed
\end{theorem}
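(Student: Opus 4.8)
The plan is to unpack Definition \ref{def:braidedzesting} so that, for a fixed associative zesting $\lambda$ whose half-braidings are the reverse braiding $c'$ (as required by Definition \ref{def:braidedzesting}(i)), giving a braided zesting $(\lambda,j,t)$ amounts to choosing $j:A\to\Aut_\ot(\Id_\cB)$ and a family $t(a,b)\colon\lambda(a,b)\to\lambda(b,a)$ satisfying (BZ1) together with the two scalar identities \eqref{eqn: braided zest scalar 1}, \eqref{eqn: braided zest scalar 2} (and the normalizations (a)--(c), which are arranged in the standard way using \eqref{unit1}, \eqref{unit2}). Setting $\tilde j_a:=j_a|_{\cB_e}$ and $\nu:=t$, a direct comparison of \eqref{eqn: braided zest scalar 1} with the formula for $O_1(\lambda,\nu,\tilde j)$ obtained in the proof of Lemma \ref{prop:obst O1} shows that \eqref{eqn: braided zest scalar 1} holds for all arguments if and only if $O_1(\lambda,t,\tilde j)\equiv 1$; granting this, a comparison of \eqref{eqn: braided zest scalar 2} with the formula for $O_2(\lambda,\nu)$ from the proof of Lemma \ref{prop:obst O2} shows that \eqref{eqn: braided zest scalar 2} holds for all arguments if and only if $O_2(\lambda,t)=\omega_j$ in $Z^2(A,\widehat A)$, where $\omega_j(a,b)=\chi_{\lambda(a,b)}j_a^{-1}j_b^{-1}j_{ab}$.

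($\Rightarrow$) If $(\lambda,j,t)$ is a braided zesting, then (BZ1) says $\chi_{\lambda(a,b)}j_{ab}j_a^{-1}j_b^{-1}$ is trivial on $\cB_e$, so $\delta(\tilde j)=\chi_\lambda|_{\cB_e}$ and $\tilde j:=j|_{\cB_e}\in X_\chi$. By the translation above, \eqref{eqn: braided zest scalar 1} gives $O_1(\lambda,t,\tilde j)\equiv 1$, whence by Lemma \ref{prop:obst O1}(i)--(ii) the class $O_1(\lambda)(a|-,-)$ vanishes in $H^2(A,\ku^\times)$ for every $a\in A$; in particular $O_2(\lambda)$ is defined, and \eqref{eqn: braided zest scalar 2} gives $O_2(\lambda,t)=\omega_j$, hence $O_2(\lambda)=[\omega_j]=\omega_{\tilde j}$.

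($\Leftarrow$) Conversely, assume every $O_1(\lambda)(a|-,-)$ vanishes and fix $\tilde j\in X_\chi$ with $O_2(\lambda)=\omega_{\tilde j}$. By Lemma \ref{prop:obst O1}(iii) there is a family $\nu(a,b)\colon\lambda(a,b)\to\lambda(b,a)$ with $O_1(\lambda,\nu,\tilde j)\equiv 1$, and by Proposition \ref{prop:condition extension monoidal auto identity} an extension $j$ of $\tilde j$ to $\Aut_\ot(\Id_\cB)$, with cocycle $\omega_j\in Z^2(A,\widehat A)$ representing $\omega_{\tilde j}$. Since $[O_2(\lambda,\nu)]=O_2(\lambda)=[\omega_j]$, the cocycles $O_2(\lambda,\nu)$ and $\omega_j$ differ by a coboundary $\delta_h(\xi)$ with $\xi\in C^1(A,\widehat A)$; modifying $\nu$ by $\xi$ as in the proof of Lemma \ref{prop:obst O2}(ii) replaces $O_2(\lambda,\nu)$ by $\omega_j$ and leaves $O_1(\lambda,\nu,\tilde j)\equiv 1$ unchanged, because elements of $C^1(A,\widehat A)$ are $\delta_v$-closed and so do not affect $O_1$. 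Renaming the modified family $t$, the triple $(\lambda,j,t)$ satisfies (BZ1) because $j$ extends $\tilde j\in X_\chi$, satisfies \eqref{eqn: braided zest scalar 1} because $O_1(\lambda,t,\tilde j)\equiv 1$, and satisfies \eqref{eqn: braided zest scalar 2} because $O_2(\lambda,t)=\omega_j$; the normalizations follow from \eqref{unit1}, \eqref{unit2} and the normalization of $\tilde j$. Hence $(\lambda,j,t)$ is a braided zesting.

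The point requiring care is that the braided-zesting constraints are strict scalar equalities, so the cohomological hypothesis $O_2(\lambda)=\omega_{\tilde j}$ must be upgraded to an on-the-nose equality $O_2(\lambda,t)=\omega_j$; this is exactly what the gauge freedom in $\nu$ supplies — a modification of $\nu$ within $C^1(A,\widehat A)$ shifts $O_2$ by an arbitrary $\delta_h$-coboundary while fixing $O_1$ — so the argument hinges on having first reduced everything to the $O_1$ obstruction being genuinely \emph{zero} (not merely controlled up to cohomology), and on the fact, recorded in the discussion preceding the theorem, that $\omega_{\tilde j}$ depends only on $\tilde j$ and not on its chosen extension $j$.
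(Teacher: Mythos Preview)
Your proof is correct and follows the same approach as the paper: the paper simply states that the theorem follows from Lemmas \ref{prop:obst O1} and \ref{prop:obst O2} together with the preceding discussion, while you have carefully unpacked this by identifying \eqref{eqn: braided zest scalar 1} with $O_1(\lambda,t,\tilde j)\equiv 1$ and \eqref{eqn: braided zest scalar 2} with $O_2(\lambda,t)=\omega_j$, and then using the gauge freedom in $\nu$ by an element of $C^1(A,\widehat A)$ to upgrade the cohomological equality $[O_2(\lambda,\nu)]=[\omega_j]$ to an on-the-nose one without disturbing $O_1$. The only minor point is notational: what you (and the theorem statement) call $X_\lambda$ is the set denoted $X_\chi$ just before the theorem.
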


\begin{corollary}\label{cor: obstruction zesting sin j}
Let $\lambda$ be an associative zesting such that $\chi_{\lambda(a,b)}|_{\cB_e}=\Id_{\cB_e}$ for all $a,b\in A$. Then there is a braided zesting of the form $(\Id,t)$  if and only if the cohomology classes of $O_1(\lambda)(a,-,-)\in H^2(A,\ku^\times)$  vanish for all $a\in A$,  and $O_2(\lambda)=[\chi_{\lambda}]\in H^2(A,\widehat{A})$.\qed
\end{corollary}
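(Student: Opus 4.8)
The plan is to prove this as the $j=\Id_{\cB}$ specialization of Theorem~\ref{th: obstruction}, by unwinding the braided zesting equations in that case and matching them against the obstruction cochains from Lemmas~\ref{prop:obst O1} and~\ref{prop:obst O2}. As a preliminary observation, the hypothesis $\chi_{\lambda(a,b)}|_{\cB_e}=\Id_{\cB_e}$ forces $\chi_{\lambda(a,b)}\in\Aut_{\ot}^{A}(\Id_{\cB})\cong\widehat A$, so that with the constant choice $j\equiv\Id_{\cB}$ condition (BZ1) holds automatically and the relevant $2$-cocycle $\omega_{j}(a,b)=\chi_{\lambda(a,b)}\circ j_{ab}\circ j_a^{-1}\circ j_b^{-1}$ is precisely $(a,b)\mapsto\chi_{\lambda(a,b)}$, whose class in $H^2(A,\widehat A)$ is $[\chi_\chi]$. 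As in the proofs of Lemmas~\ref{prop:obst O1} and~\ref{prop:obst O2} I will assume without loss of generality that $\lambda$ is ``symmetric'' (so $\lambda(a,b)=\lambda(b,a)$ and the pertinent morphisms are scalars $\lambda_{a,b,c},t(a,b)\in\ku^\times$), and I will use $\tilde j=\Id$ as the reference datum defining $O_2(\lambda)$.

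For the forward implication I would argue as follows. If $(\lambda,\Id_{\cB},t)$ is a braided zesting, then the scalar forms \eqref{eqn: braided zest scalar 1} and \eqref{eqn: braided zest scalar 2} of (BZ2) hold with $j_{g_1}(\lambda(g_2,g_3))=1$ and $\omega(g_1,g_2;g_3)=\chi_{\lambda(g_1,g_2)}(g_3)$. Comparing \eqref{eqn: braided zest scalar 1} with the definition of $O_1$ in Figure~\ref{fig:obs hex 1}, it reads exactly $O_1(\lambda,t,\Id)(g_1|g_2,g_3)=1$; hence $t$ is an admissible family of isomorphisms for Lemmas~\ref{prop:obst O1} and~\ref{prop:obst O2}, and by Lemma~\ref{prop:obst O1}(iii) the classes $O_1(\lambda)(a,-,-)\in H^2(A,\ku^\times)$ all vanish. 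Comparing \eqref{eqn: braided zest scalar 2} with Figure~\ref{fig:second-obs}, it reads exactly $O_2(\lambda,t)(g_1,g_2|g_3)=\chi_{\lambda(g_1,g_2)}(g_3)$, i.e.\ the cocycle $O_2(\lambda,t)$ equals $(a,b)\mapsto\chi_{\lambda(a,b)}$ on the nose; by Lemma~\ref{prop:obst O2}(ii) we get $O_2(\lambda)=[O_2(\lambda,t)]=[\chi_\chi]$.

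For the converse, assuming the $O_1(\lambda)(a,-,-)$ vanish and $O_2(\lambda)=[\chi_\chi]$, I would first use Lemma~\ref{prop:obst O1}(iii) (starting from $\nu=\id$ as in Remark~\ref{remark about the t's}) to choose normalized isomorphisms $\nu(a,b)\colon\lambda(a,b)\to\lambda(b,a)$ with $O_1(\lambda,\nu,\Id)=1$. Then $O_2(\lambda,\nu)\in Z^2(A,\widehat A)$ is cohomologous to $(a,b)\mapsto\chi_{\lambda(a,b)}$, say $\chi_{\lambda(-,-)}=O_2(\lambda,\nu)\cdot\delta(\rho)$ with $\rho\in C^1(A,\widehat A)=\Maps(A,\widehat A)$ normalized. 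I would then set $t:=\nu\cdot\rho$: since each $\rho(a,-)\in\widehat A$ is a character, one has $\delta_v(\rho)=1$ and $\delta_h(\rho)=\delta(\rho)$, so $O_1(\lambda,t,\Id)=O_1(\lambda,\nu,\Id)\,\delta_v(\rho)^{-1}=1$ and $O_2(\lambda,t)=O_2(\lambda,\nu)\,\delta_h(\rho)=O_2(\lambda,\nu)\,\delta(\rho)=\chi_{\lambda(-,-)}$. Translating back through the comparisons of the previous paragraph, $t$ satisfies \eqref{eqn: braided zest scalar 1} and \eqref{eqn: braided zest scalar 2} with $j=\Id_{\cB}$; together with the automatic (BZ1) and the normalization of $\nu,\rho$, this exhibits $(\lambda,\Id_{\cB},t)$ as a braided zesting. (The same conclusion can also be extracted from Theorem~\ref{th: obstruction} together with Corollary~\ref{corol: triviality of j}, since under the hypothesis $\tilde j=\Id\in X_\chi$ with $\omega_{\tilde j}=[\chi_\chi]$, and any braided zesting with $\tilde j=\Id$ is similar to one of the form $(\lambda,\Id_{\cB},t)$.)

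The step I expect to be the main obstacle is the precise matching of the two scalar braided zesting equations with the conditions $O_1(\lambda,t,\Id)=1$ and $O_2(\lambda,t)=(a,b)\mapsto\chi_{\lambda(a,b)}$ under the normalization $j=\Id_{\cB}$ — in particular checking that $\omega(g_1,g_2;g_3)$ genuinely collapses to the scalar $\chi_{\lambda(g_1,g_2)}(g_3)$ and keeping the shuffle conventions of Section~\ref{sec: shuffle hom} straight — and the verification that correcting $\nu$ by a $\widehat A$-valued $1$-cochain $\rho$ preserves the first equation (because $\delta_v$ annihilates it) while moving $O_2$ exactly by the coboundary $\delta(\rho)$ (because $\delta_h$ on such cochains is the ordinary coboundary). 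Everything else follows formally from Lemmas~\ref{prop:obst O1} and~\ref{prop:obst O2}.
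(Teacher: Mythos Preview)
Your proposal is correct and follows essentially the same route as the paper, which simply records the corollary as immediate (the paper gives no argument beyond \qed). You have supplied the details that the paper leaves implicit: under the hypothesis one may take $\tilde j=\Id\in X_\chi$, so that $\omega_{\tilde j}=[\chi_{\lambda}]$, and then the two scalar braided zesting equations with $j=\Id_\cB$ are literally the conditions $O_1(\lambda,t,\Id)=1$ and $O_2(\lambda,t)=\chi_{\lambda(-,-)}$; the converse is handled by correcting $\nu$ by a $\widehat A$-valued $1$-cochain, exactly as in the proof of Lemma~\ref{prop:obst O2}.
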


\section{Twist Zesting and its Modular Data}\label{section:twist zesting}

Given a premodular tensor category $\cB$ and a braided zesting $(\lambda,j,t)$ we will denote by $(\cB^\lambda,t)$ the corresponding braided fusion category as constructed in Proposition \ref{prop: braided zesting construction}, suppressing the dependence on $j$. We would like to provide $(\cB^\lambda,t)$ with a ribbon structure. In a customary abuse of notation we denote by $\theta_X$ both the automorphism in $\Aut_\cB(X)$ and the scalar by which it acts when $X$ is a simple object.  Similarly, we will denote the scalar by which $\chi_{a}$ acts on a simple object $X \in \cB$ by $\chi_a(X)$ as well.

\begin{proposition}

Let $A$ be a finite abelian group, $\cB$ be a faithfully  $A$-graded braided tensor category with twist $\theta$ and $(\lambda, j,t)$ a braided zesting. We will denote by \[t^{(2)}:A\times A\to \ku^\times\]
the symmetric function defined by $t(b,a)\circ t(a,b)=:t^{(2)}(a,b)\id_{\lambda(a,b)}$ for all $a,b\in A$.
Let $f:A\to \ku^\times$ be a function and consider the natural isomorphism 
\begin{equation*}
\theta_{X_a}^f:=f(a)\theta_{X_a},\quad a\in A, X_a\in \cB_a,    
\end{equation*} then
\begin{itemize}
    \item[(i)] $\theta^f$ is a twist for $(\cB^\lambda,t)$ if and only if 
    \begin{align}\label{twist-condition}
    f(a+b)\chi_{\lambda(a,b)}(X_a)\chi_{\lambda(a,b)}(Y_b)\theta_{\lambda(a,b)}=f(a)f(b)j_a(Y_b)j_b(X_a)t^{(2)}(a,b), && f(0)=1,
    \end{align}for all $a,b \in A, X_a\in \cB_a, Y_b\in \cB_b$.
    \item[(ii)] If $\theta$ is a ribbon twist for $\cB$ then
   $\theta^f$ is a ribbon twist for  $(\cB^\lambda,t)$ if additionally to equation \ref{twist-condition} we have
    \begin{equation}\label{ribbon condition}
        f(a)=f(-a)\chi_{\lambda(a,-a)}(X_a)\theta_{\lambda(a,-a)}
    \end{equation}
for all $a \in A, X_a\in \cB_a$.

\item[(iii)] If $f',f:A\to \ku^\times$ is a pair of functions satisfying \eqref{twist-condition}, then $f/f':A\to \ku^\times$ is a character. Moreover, the set of all functions  satisfying \eqref{twist-condition} is a torsor over $\widehat{A}$ and the set of all functions satisfying \eqref{twist-condition} and \eqref{ribbon condition} is a torsor over $\widehat{A/2A}$.
\end{itemize}

\end{proposition}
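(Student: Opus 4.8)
The plan is to reduce (i) and (ii) to the stated scalar identities by unwinding the defining axioms of a (ribbon) twist inside $\cB^\lambda$ and comparing with the corresponding data of $\cB$, and then to obtain (iii) by a division argument.

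\emph{Part (i).} Since all structures are additive it suffices to test the twist axiom $\theta^f_{X_a\otlam Y_b}=c^{(\lambda,j,t)}_{Y_b,X_a}\circ c^{(\lambda,j,t)}_{X_a,Y_b}\circ(\theta^f_{X_a}\otlam\theta^f_{Y_b})$ on simple objects $X_a\in\cB_a$, $Y_b\in\cB_b$. As $X_a\otlam Y_b=X_a\otimes Y_b\otimes\lambda(a,b)$ sits in component $a+b$, the left-hand side equals $f(a+b)\,\theta_{X_a\otimes Y_b\otimes\lambda(a,b)}$, and I would expand this by applying the twist axiom of $\theta$ in $\cB$ twice: first to peel off the tail $\lambda(a,b)$, producing the double braiding $c_{\lambda(a,b),X_a\otimes Y_b}\circ c_{X_a\otimes Y_b,\lambda(a,b)}=\chi_{\lambda(a,b)}(X_a)\chi_{\lambda(a,b)}(Y_b)\,\id$ (monoidality of $\chi_{\lambda(a,b)}$, Proposition \ref{prop:chi-properties}) together with $\theta_{\lambda(a,b)}$, and then to expand $\theta_{X_a\otimes Y_b}$ as $c_{Y_b,X_a}c_{X_a,Y_b}(\theta_{X_a}\otimes\theta_{Y_b})$. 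For the right-hand side I would read off from Figure \ref{fig:braiding} that $c^{(\lambda,j,t)}_{Y_b,X_a}\circ c^{(\lambda,j,t)}_{X_a,Y_b}$, regarded as an endomorphism of $X_a\otimes Y_b\otimes\lambda(a,b)$, is the product of the scalar $j_a(Y_b)j_b(X_a)$ coming from the two $j$-boxes, the scalar $t(b,a)\circ t(a,b)=t^{(2)}(a,b)$ coming from the $\lambda$-tail (which passes straight through both crossings, hence picks up \emph{no} $\chi$-factor), and the double braiding $c_{Y_b,X_a}c_{X_a,Y_b}$ on the core $X_a\otimes Y_b$. Inserting $\theta^f_{X_a}\otlam\theta^f_{Y_b}=f(a)f(b)\,(\theta_{X_a}\otimes\theta_{Y_b}\otimes\id_{\lambda(a,b)})$, both sides become the common nonzero endomorphism $(c_{Y_b,X_a}c_{X_a,Y_b}\otimes\id_{\lambda(a,b)})\circ(\theta_{X_a}\otimes\theta_{Y_b}\otimes\id_{\lambda(a,b)})$ scaled by, respectively, $f(a+b)\chi_{\lambda(a,b)}(X_a)\chi_{\lambda(a,b)}(Y_b)\theta_{\lambda(a,b)}$ and $f(a)f(b)j_a(Y_b)j_b(X_a)t^{(2)}(a,b)$; equating these scalars gives \eqref{twist-condition}, the normalization $f(0)=1$ being forced by $\theta^f_{\unit}=\id$. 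The main obstacle here is the diagrammatic bookkeeping, in particular keeping the $\chi$-factors on the left-hand side only.

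\emph{Part (ii).} Assume $f$ satisfies \eqref{twist-condition}, so $\theta^f$ is a twist, and that $\theta$ is ribbon on $\cB$. I would check $\theta^f_{\overline{X_a}}=(\theta^f_{X_a})^{*}$ on a simple $X_a\in\cB_a$, where $\overline{X_a}=X_a^{*}\otimes\lambda(a,-a)^{*}\in\cB^\lambda_{-a}$ is the dual of $X_a$ in $\cB^\lambda$ (Section \ref{section: rigidity}). Because $\theta^f_{X_a}=f(a)\theta_{X_a}\,\id_{X_a}$ is a scalar multiple of the identity, its transpose in $\cB^\lambda$ is $f(a)\theta_{X_a}\,\id_{\overline{X_a}}$, while $\theta^f_{\overline{X_a}}=f(-a)\theta_{\overline{X_a}}\,\id_{\overline{X_a}}$. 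Expanding $\theta_{\overline{X_a}}=\theta_{X_a^{*}\otimes\lambda(a,-a)^{*}}$ by the twist axiom of $\cB$ gives $\chi_{\lambda(a,-a)^{*}}(X_a^{*})\,\theta_{X_a^{*}}\,\theta_{\lambda(a,-a)^{*}}$; since $\theta$ is ribbon, $\theta_{X_a^{*}}=\theta_{X_a}$ and $\theta_{\lambda(a,-a)^{*}}=\theta_{\lambda(a,-a)}$, and since $\chi$ is a group morphism on $\Inv(\cB)$ (Proposition \ref{prop:chi-properties}) and each $\chi_b$ acts as a character of $U(\cB)$ (Proposition \ref{prop: iso id-g}) we get $\chi_{\lambda(a,-a)^{*}}(X_a^{*})=\chi_{\lambda(a,-a)}(X_a)$. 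Hence $\theta_{\overline{X_a}}=\chi_{\lambda(a,-a)}(X_a)\theta_{\lambda(a,-a)}\theta_{X_a}$, and the ribbon identity for $\theta^f$ reduces to $f(a)=f(-a)\chi_{\lambda(a,-a)}(X_a)\theta_{\lambda(a,-a)}$, which is \eqref{ribbon condition}.

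\emph{Part (iii).} If $f,f'$ both satisfy \eqref{twist-condition}, then dividing the two instances (every factor not involving $f$ cancels) yields $f(a+b)/f'(a+b)=(f(a)/f'(a))(f(b)/f'(b))$ and $f(0)/f'(0)=1$, so $f/f'\in\widehat{A}$; conversely $\psi f$ satisfies \eqref{twist-condition} for any $\psi\in\widehat{A}$, so the solution set of \eqref{twist-condition} is a torsor over $\widehat{A}$. If moreover $f,f'$ both satisfy \eqref{ribbon condition}, dividing those instances gives $f(a)/f'(a)=f(-a)/f'(-a)$, i.e. the character $\psi=f/f'$ satisfies $\psi(a)=\psi(-a)=\psi(a)^{-1}$; thus $\psi^{2}=1$ and $\psi$ factors through $A/2A$. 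Conversely every $\psi\in\widehat{A/2A}$ preserves both \eqref{twist-condition} and \eqref{ribbon condition} (the latter because $\psi(a)=\psi(-a)$), so the set of $f$ satisfying both identities is a torsor over $\widehat{A/2A}$.
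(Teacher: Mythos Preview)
Your proof is correct and follows essentially the same approach as the paper's: in (i) you compute both sides of the twist axiom in $\cB^\lambda$ as scalar multiples of a common (nonzero) endomorphism and equate the scalars, in (ii) you expand $\theta^f_{\overline{X_a}}$ via the twist axiom of $\cB$ and the ribbon property of $\theta$, and in (iii) you divide. The only cosmetic difference is that in (i) you further unpack $\theta_{X_a\otimes Y_b}$ into $c_{Y_b,X_a}c_{X_a,Y_b}(\theta_{X_a}\otimes\theta_{Y_b})$ before comparing, whereas the paper simply leaves $\theta_{X_a\otimes Y_b}\otimes\id_{\lambda(a,b)}$ as the common factor; this extra unpacking is harmless but unnecessary.
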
\label{prop:twist prop general}
\begin{proof}
Let $X_a\in \cB_a$ and $Y_b\in \cB_b$ simple objects. Then we have that 
\begin{align*}
    \big (\theta^f_{X_a}\otlam \theta^f_{Y_b}\big ) \circ c_{Y_a,X_a}^\lambda \circ c_{X_a,Y_b}^\lambda= f(a)f(b)j_a(Y_b)j_b(X_a)t^{(2)}(a,b)\theta_{X_a\otimes Y_b}\otimes \id_{\lambda(a,b)}, 
\end{align*}
and
\[\theta^f_{X_a\otlam Y_b}= f(a+b)\chi_{\lambda(a,b)}(X_a)\chi_{\lambda(a,b)}(Y_b)\theta_{\lambda(a,b)}\theta_{X_a\otimes Y_b}\id_{\lambda(a,b)}.\]
Hence,  \eqref{twist-condition} holds if and only if \[\big (\theta^f_{X_a}\otlam \theta^f_{Y_b}\big ) \circ c_{Y_a,X_a}^\lambda \circ c^\lambda_{X_a,Y_b}= \theta^f_{X_a\otlam Y_b},\] that is, if $\theta^f$ is a twist.

For the ribbon condition, we have that
\begin{align*}
\theta^f_{\overline{X}_a}&=f(-a)\chi_{\lambda(a,-a)}(X_a)\theta_{X_a^*}\theta_{\lambda(a,-a)^*}\id_{X^*a\otimes \lambda(a,-a)^*} \\
&=f(-a)\chi_{\lambda(a,-a)}(X_a)\theta_{X_a}\theta_{\lambda(a,-a)}\id_{\overline{X}_a}
\end{align*}
and \[\theta^f_{X_a}=f(a)\theta_{X_a}\id_{X_a}\]
for all simple objects $X_a\in \cB_a$. Hence $(\theta_{X_a}^f)^*=\theta_{\overline{X}_a}^f$ if and only if \eqref{ribbon condition} holds.

For (iii), let $f$ and $f^\prime$ both satisfy the conditions in \eqref{twist-condition} and set $\eta(a)=f(a)/f^\prime(a)$.  Since $f(a)f(b)/f(a+b)=f^\prime(a)f^\prime (b)/f^\prime(a+b)$ we find that $\eta(a+b)=\eta(a)\eta(b)$.  A similar argument implies that if $f,f^\prime$ satisfy the condition \eqref{ribbon condition} then $\eta(a)=\eta(-a)$, so $\eta(2a)=1$, that is $\eta\in \widehat{A/2A}$.
\end{proof}
\begin{definition}
A quadruple $(\lambda,j,t,f)$ where $(\lambda,j,t)$ is a braided zesting and $f:A\to \ku^\times$ is a function satisfying equations \eqref{twist-condition}
 and \eqref{ribbon condition} is called a ribbon zesting.
 \end{definition}
 We will denote by $(\cB^\lambda,t,f)$ the twist (ribbon) zesting obtained from $(\lambda,j,t,f)$.
\begin{remark}
We do not know if twists or ribbons for braided zesting always exists. However, in practice, the following condition gives you an easy to check requirement for the existence of them. If a twist exists the symmetric function 
\begin{align*}
s: A\times A \to &\ku^\times\\
(a,b)\mapsto &\frac{\chi_{\lambda(a,b)}(X_a)\chi_{\lambda(a,b)}(Y_b)\theta_{\lambda(a,b)}}{j_a(Y_b)j_b(X_a)t^{(2)}(a,b)} ,
\end{align*}
should be independent of the choice of $X_a \in \cB_a, Y_b\in \cB_b$. If the function $s$ is a 2-cocycle, (an easy condition to check) since $s$ symmetric we can find a function $f$ satisfying condition \eqref{twist-condition}. 
\end{remark}

When zesting with respect to the universal grading, the scalars $\chi_{\lambda(i,j)}(X_k)$ only depend on the graded component of the simple object $X_k\in\cB_k$, so we may denote it $\chi_{\lambda(i,j)}(k)$, and take $j_a=\id$.  In this case the conditions (\ref{twist-condition}) and (\ref{ribbon condition}) for twist zesting reduce to a simpler form:
\begin{corollary}
\label{lem:twist} Suppose $\cB$ is a braided fusion category with a twist and $(\cB^\lambda,t)$ is a braided $A$-zesting where $A=U(\cB)$ is the universal grading group.   Then for $\theta^f\in\Aut(\Id_\cB)$ defined in Proposition \ref{prop:twist prop general} we have:
\begin{itemize}
    \item[(a)] if $f(a+b)\chi_{\lambda(a,b)}(a+b) \theta_{\lambda(a,b)}=f(a)f(b)\,t^{(2)}(a,b)$,

then ${\theta^f}$ defines a twist on $(\cB^\lambda,t)$ and 
    \item[(b)] if $\theta$ is a ribbon twist on $\cB$ and $f(a) = \chi_{\lambda(a,-a)}(a) \theta_{\lambda(a,-a)}f(-a)$ then ${\theta^f}$ defines a ribbon twist on $(\cB^\lambda,t)$.
%    \item[(c)] The set of twists of this form on $(\cB^\lambda,t)$ forms a torsor over $\hat{A}$,  and the set of ribbon twists of this form form a torsor over the subgroup $\Omega_2(\hat{A})$ of characters of $\eta\in\hat{A}$ such that $\eta(a)=\eta(-a)$. 
\end{itemize}
\end{corollary}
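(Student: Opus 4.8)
The plan is to deduce Corollary~\ref{lem:twist} as the specialization of Proposition~\ref{prop:twist prop general} to the case $A=U(\cB)$, after first putting the braided zesting in the normalized form in which the function $j$ is trivial. So the first step is to record that the corollary concerns a braided zesting of the shape $(\lambda,\Id_{\cB},t)$: by Corollary~\ref{corol: triviality of j}, when $A=U(\cB)$ every braided zesting is \emph{similar} to one with $j_a=\Id_{\cB}$ for all $a$, and passing to a similar braided zesting changes neither the category $\cB^{\lambda}$ nor its braiding, hence does not affect which $\theta^{f}$ are twists or ribbon twists. Thus it suffices to treat $j_a=\Id_{\cB}$, in which case the scalars $j_a(Y_b)$ and $j_b(X_a)$ occurring in \eqref{twist-condition} are all equal to $1$.

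The second step is to unwind the two facts about natural automorphisms that make the universal grading special. By Proposition~\ref{prop:chi-properties}(i) each $\chi_{\lambda(a,b)}$ lies in $\Aut_{\ot}(\Id_{\cB})$, and since $A=U(\cB)$, Proposition~\ref{prop: iso id-g} says $\Aut_{\ot}(\Id_{\cB})\cong\widehat{A}$ and that every such automorphism is constant on $A$-graded components; so $\chi_{\lambda(a,b)}$ acts on $\cB_k$ by a scalar $\chi_{\lambda(a,b)}(k)$ depending only on $k$, and as an element of $\widehat{A}$ it satisfies $\chi_{\lambda(a,b)}(a)\,\chi_{\lambda(a,b)}(b)=\chi_{\lambda(a,b)}(a+b)$. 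In particular $\chi_{\lambda(a,b)}(X_a)=\chi_{\lambda(a,b)}(a)$ and $\chi_{\lambda(a,b)}(Y_b)=\chi_{\lambda(a,b)}(b)$ are independent of the chosen simples $X_a\in\cB_a$, $Y_b\in\cB_b$, exactly as asserted in the remark preceding the corollary.

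The third step is the substitution. Plugging $j_a(Y_b)=j_b(X_a)=1$ and $\chi_{\lambda(a,b)}(X_a)\chi_{\lambda(a,b)}(Y_b)=\chi_{\lambda(a,b)}(a+b)$ into \eqref{twist-condition} collapses it to $f(a+b)\,\chi_{\lambda(a,b)}(a+b)\,\theta_{\lambda(a,b)}=f(a)f(b)\,t^{(2)}(a,b)$, which is precisely condition~(a); Proposition~\ref{prop:twist prop general}(i) then gives that $\theta^{f}$ is a twist on $(\cB^{\lambda},t)$. Similarly, substituting $\chi_{\lambda(a,-a)}(X_a)=\chi_{\lambda(a,-a)}(a)$ into \eqref{ribbon condition} collapses it to $f(a)=\chi_{\lambda(a,-a)}(a)\,\theta_{\lambda(a,-a)}\,f(-a)$, which is condition~(b); Proposition~\ref{prop:twist prop general}(ii) then gives that, when $\theta$ is a ribbon twist on $\cB$, $\theta^{f}$ is a ribbon twist on $(\cB^{\lambda},t)$.

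There is no genuine obstacle: all the substance lives in Proposition~\ref{prop:twist prop general}, Proposition~\ref{prop: iso id-g} and Corollary~\ref{corol: triviality of j}. The only point that deserves a sentence of care is the reduction to $j=\Id_{\cB}$ and the accompanying observation that, under the hypothesis $A=U(\cB)$, the scalars $\chi_{\lambda(a,b)}(X_a)$ depend only on the grading component of $X_a$ — this is exactly where the universality of the grading is used, and it is what lets conditions \eqref{twist-condition} and \eqref{ribbon condition} be rewritten without reference to the individual simple objects.
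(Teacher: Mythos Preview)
Your proposal is correct and follows essentially the same approach as the paper: the paper simply states the corollary as the immediate specialization of Proposition~\ref{prop:twist prop general} to $A=U(\cB)$, noting in the sentence preceding it that in this case $\chi_{\lambda(i,j)}(X_k)$ depends only on the graded component and one may take $j_a=\id$. Your write-up makes explicit the justifications (Corollary~\ref{corol: triviality of j} for $j=\Id$, Proposition~\ref{prop: iso id-g} for the character property of $\chi_{\lambda(a,b)}$) that the paper leaves implicit, but the argument is the same.
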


\subsection{Modular data of a ribbon zesting}

\begin{proposition}\label{zested trace}
Let $(\lambda,j,t,f)$ be a a ribbon zesting, then quantum trace of an endomorphism of $s:X_a\to X_a$ in $(\cB^\lambda,t,f)$, for $a\in A, X_a\in \cB_a$ is 
\[\operatorname{Tr}^f(s)=\frac{ f(a)}{\dim(\lambda(-a,a)) t(a,a)}\operatorname{Tr}(j_a^{-1}(X_a)\circ s).\]
\end{proposition}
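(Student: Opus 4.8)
The statement to prove is a formula for the categorical (quantum) trace in the ribbon zesting $\cB^{(\lambda,f)}$. My plan is to unwind the definition of $\operatorname{Tr}^f$ in terms of the evaluation/coevaluation maps for $\cB^\lambda$ that were pinned down in Section~\ref{section: rigidity}, together with the pivotal structure coming from the twist $\theta^f$ via the Drinfeld isomorphism, and then reduce everything to the ambient category $\cB$.

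First I would recall that for a ribbon category the quantum trace of $s\colon X\to X$ can be written as $\operatorname{Tr}(s)=\epsilon'_X\circ\bigl((\theta_X\circ s)\otimes\id_{X^*}\bigr)\circ\delta_X$ using a right duality datum $(\delta_X,\epsilon'_X)$, or equivalently as a ``closed loop'' diagram with a single $\theta$ inserted; the same formula holds in $\cB^{(\lambda,f)}$ with the zested twist $\theta^f$, the zested braiding $c^{(\lambda,j,t)}$, and the duality datum
\[
\bigl(\overline{X}_a=X_a^*\otimes\lambda(a,a^{-1})^*,\ \phi_{X_a},\ \dim(\lambda(a,a^{-1}))^{-1}\rho_{X_a}\bigr)
\]
from \eqref{dual for zesting}. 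So the first step is to write $\operatorname{Tr}^f(s)$ as the corresponding diagram: cup with $\dim(\lambda(a,a^{-1}))^{-1}\rho_{X_a}$ at the bottom, apply $s$ and the pivotal structure, and cap with $\phi_{X_a}$ at the top, all computed in $\cB^\lambda$ with the zested braiding where it appears in the pivotal/Drinfeld map.

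Second, I would expand the zested braiding in this diagram using Figure~\ref{fig:braiding}: the braiding $c^{(\lambda,j,t)}_{X_a,\overline{X}_a}$ (and its partner) contributes a factor $t(a,a^{-1})$ (from the circle labeled $\lambda_{a,a^{-1}}$-type data, i.e. $t(a,-a)$) and an insertion of $j_a$ acting on the object being braided past $\lambda(a,a^{-1})$; the half-braiding on $\lambda(a,a^{-1})$ is the \emph{reverse} braiding $c'$ by Definition~\ref{def:braidedzesting}(i), which is exactly what is needed for the $\lambda$-legs to cancel in a loop. Third, I would use the explicit formulas \eqref{dual for zesting} for $\phi_{X_a}$ and $\rho_{X_a}$: the $X_a$/$X_a^*$ strands close up into the ordinary trace in $\cB$, while the $\lambda(a,a^{-1})^*$/$\lambda(a^{-1},a)$ strands close up, through the isomorphism $\lambda_g^{-1}$, into the loop value $\dim(\lambda(a,a^{-1}))$. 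Combined with the normalization factor $\dim(\lambda(a,a^{-1}))^{-1}$ from \eqref{dual for zesting}, one $\dim$ factor survives in the denominator; I would be careful that it appears as $\dim(\lambda(-a,a))$ (equal to $\dim(\lambda(a,-a))$ up to the isomorphism $\lambda_g$, but I should track which one the statement wants). Collecting the scalars: $f(a)$ from $\theta^f_{X_a}=f(a)\theta_{X_a}$, $t(a,a)$ in the denominator (the ``$t(a,a)$'' appearing in the target formula — I will need to check whether this is literally $t(a,a)$ or $t(a,-a)$ and reconcile with how the loop is drawn), $\dim(\lambda(-a,a))^{-1}$, and the $j_a^{-1}(X_a)$ insertion that comes from pulling the $j_a$ from the braiding through the cap/cup to act on $s$, leaving $\operatorname{Tr}(j_a^{-1}(X_a)\circ s)$ in $\cB$.

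\textbf{Main obstacle.} The routine part is the graphical bookkeeping, but the genuinely delicate step is getting the scalars from the pivotal structure right: $\operatorname{Tr}^f$ is defined via the pivotal structure $\psi^f=u\circ\theta^f$ where $u$ is the Drinfeld isomorphism of $\cB^\lambda$, and $u$ itself is built from the zested braiding, so the $j$'s and $t$'s enter twice — once through $\theta^f$ and once through $u$. Disentangling these so that only $j_a^{-1}$ (not $j_a^{\pm 2}$) and only a single power of $t(a,a)$ and $f(a)$ survive is where the argument must be done carefully; I expect the key simplifications to come from the monoidality of $j_a$ (so it passes transparently through tensor products and duals up to the scalar $j_a(X_a)$ on a simple object) and from the braided-zesting constraints (BZ1), (BZ2), which are exactly the identities that make the loop consistent. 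I would also verify independently that the resulting $\operatorname{Tr}^f$ is cyclic and multiplicative under $\otlam$, as a consistency check on the normalization, before declaring the formula proved.
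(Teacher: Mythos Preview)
Your overall strategy coincides with the paper's: compute the pivotal structure of $\cB^{(\lambda,f)}$ as $\tilde\psi=\tilde u\circ\theta^f$ via the zested Drinfeld isomorphism $\tilde u$, then plug into the trace formula using the explicit duality data $(\overline{X}_a,\phi_{X_a},\dim(\lambda(a,-a))^{-1}\rho_{X_a})$ from Section~\ref{section: rigidity}. Two points, however, are misidentified in your plan and would block the computation from closing up.

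First, the factor $t(a,a)$ (not $t(a,-a)$) arises because the Drinfeld isomorphism involves $\tilde c^{-1}_{X_a^{**},X_a}$, and the double dual $\overline{\overline{X_a}}$ lives in grade $a$; the zested braiding of two grade-$a$ objects contributes exactly $t(a,a)$ and the insertion $j_a^{-1}$. This is where those two scalars come from, not from braiding $X_a$ with its dual. Second, and more importantly, the final simplification does \emph{not} use (BZ1) or (BZ2). After separating the $X_a$-loop (which gives $\operatorname{Tr}(j_a^{-1}(X_a)\circ s)$) you are left with an overall factor $\dim(\lambda(a,-a))^{-2}$ (one power from each of the two duality maps used) times a closed $\lambda$-loop carrying the isomorphisms $\lambda_a^{-1}=\lambda_{a,-a,a}^{-1}$ and $\lambda_{-a}^{-1}=\lambda_{-a,a,-a}^{-1}$ together with a crossing. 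The paper evaluates this loop by applying the \emph{associative} zesting constraint of Figure~\ref{axiom:asso} with $g_1=g_3=a$, $g_2=g_4=-a$, which collapses it to $\dim(\lambda(a,-a))$, leaving a single $\dim(\lambda(-a,a))^{-1}$ in the denominator. Your accounting (``one $\dim$ factor survives'') reaches the right answer but for the wrong reason; without invoking the associative constraint the loop does not obviously reduce, and the braided-zesting conditions you cite are not what is used.
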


\begin{proof}
In this proof, without loss of generality, we will assume that $\cB$ is a strict pivotal category, which means that the natural isomorphism between an object and its double dual is the identity morphism and also the pivotal structure is trivial.

The trace in $(\cB^\lambda,t,f)$ is given by the formula \[\operatorname{Tr}^f(s)= \frac{1}{\dim(\lambda(-a,a))}\rho_{X^*_a}\circ (\tilde{\psi}\otlam\id)\circ (s\otlam\id)
\circ \phi_{X_a},\] 
where $\tilde{\psi}$ is the pivotal structure in $(\cB^\lambda,t,f)$ and $\rho$ and $\phi$ denote the evaluation and coevaluation maps in $(\cB^\lambda,t,f)$ described in Subsection \ref{section: rigidity}. So, in order to compute $\operatorname{Tr}^f(s)$, we need to compute the pivotal structure $\tilde{\psi}$ first. To do this, we can consider the Drinfeld isomporhism $\tilde{u}$ in $(\cB^\lambda,t,f)$ which is related to the twist and pivotal structure by $\tilde{\psi} = \tilde{u}\circ \tilde{\theta}^f$.

The general formula of the Drinfeld isomorphism $\tilde{u}$ is given by \[\tilde{u} = \frac{1}{\dim(\lambda(-a,a))}(\rho_{X_a}\otlam \id_{X_a})\circ (\id_{X_a^*}\otlam\tilde{c}^{-1}_{X_a, X_a})
\circ (\phi_{X^*_a}\otlam\id_{X_a}).\] 

Applying this formula in our case, and assuming strict pivotality of $\cB$, we get that $\tilde{u}$ can be written in terms of the data of the original category $\cB$ and the zesting structure as

\begin{align}
\label{zdrinfeld1}
   \tilde{u}_x &=  \frac{1}{\dim(\lambda(-a,a))} \hspace{10pt} \begin{tikzpicture}[line width=1,scale=.8,baseline=80] 
       \draw (12.5,6) to [out=90, in=90] (6,6);
   \draw (12.5,2) to [out=-90,in=90] (6,0);
    \foreach \x in {8,9,10}{\draw[white,line width=10] (\x,0) \br (\x-4,6);}
   \foreach \x in {8,9,10}{\draw (\x,-2)--(\x,0) \br (\x-4,6);}
   \begin{scope}[xshift=12cm,yshift=1cm]
   \twoboxnobottom{}{}{-a,a,a}
   \draw (1,2) --(1,4);
   \begin{scope}[yshift=2cm]
     \onecirc{}{}{a,a}
      \draw (.25,1.33) node[right] {$^{-1}$};
     \begin{scope}[yshift=2cm]
        \twoboxnotop{}{}{-a,a,a}
     \end{scope}
   \end{scope}
   \end{scope} 
   \draw[looseness=2] (4,6) to [out=90, in=90] (2,6)--(2,2) to [out=-90, in=-90] (6,-0);
     \draw[looseness=1.5] (5,6) to [out=90, in=90] (1,6)--(1,-1);
      \draw[looseness=1.5,white, line width=10] (1,-1) to [out=-90, in=-90] (3.5,-1);
      \draw[looseness=1.5] (1,-1) to [out=-90, in=-90] (3.5,-1);
      \draw[looseness=.8,white, line width=10] (3.5,-1) \br (12.5,9);
      \draw[looseness=.8] (3.5,-1) \br (12.5,9) node[above] {$X_a$};
          \draw[fill=white] (3.75,5.66) rectangle node {$j_a$} (6.25,6.33);
               \draw (6.25,6.33) node[right] {$^{-1}$};
                 \draw (13.25,2.25) node[right] {$^{-1}$};
                 \draw (13.25,6.25) node[right] {$^{-1}$};
                 \draw (7.5,-2) node[below] {$\phantom{\overline{X_a}}\lambda(a,-a)\phantom{\overline{X_a}}$};
                   \draw (9,-2) node[below] {$\phantom{\overline{\lambda(-a,a)}}X_a\phantom{\overline{\lambda(-a,a)}}$};
                     \draw (10.5,-2) node[below] {$\phantom{\overline{X_a}}\overline{\lambda(-a,a)}\phantom{\overline{X_a}}$};
                      \draw[fill=white] (6-.33, -.66) rectangle node {$\lambda^a$} (6+.33, 0);
    \draw (6.33,0) node[right] {$^{-1}$};
   \end{tikzpicture} 
   \end{align}
 where $\lambda^a := \lambda_{a, -a, a}$ as was defined in Subsection \ref{section: rigidity},
   Applying the standard yoga of graphical calculus and using that  $j_g$ is a tensor autoequivalence and $\cB$ being strict pivotal, i.e. $u = \theta^{-1}$, we get the following expression for the Drinfeld isomorphism of $(\cB^\lambda,t,f)$. 
   \begin{align}
   \label{zdrinfeld2}
   \tilde{u}_x &= \frac{1}{\dim(\lambda(-a,a))t(a,a)}\hspace{10pt} \begin{tikzpicture}[line width=1,baseline=30]
    \draw[looseness=1.5] (-.5,-1) node[below] {$\phantom{\overline{X_a}}\lambda(a,-a)\phantom{\overline{X_a}}$}--(-.5,1) to [out=90,in=90] (2.5,1) --(2.5,-1) node[below] {$\phantom{\overline{X_a}}\overline{\lambda(-a,a)}\phantom{\overline{X_a}}$};
    \draw[fill=white] (-.5-.33, .66-1) rectangle node {$\lambda^a$} (-.5+.33, 1.33-1);
    \draw (-.25,1.25-1) node[right] {$^{-1}$};
    \draw[white, line width=10] (1,-1)--(1,3);
    \begin{scope}[xshift=1cm,yshift=-1cm]
     \onebox{\phantom{\overline{X_a}}X_a\phantom{\overline{X_a}}}{}{j_a}
     \draw (0,2)--(0,4) node[above] {$X_a$};
    \end{scope} 
     \begin{scope}[xshift=1cm,yshift=.5cm]
     \onebox{}{}{\theta_a}
    \end{scope} 
    \draw (1.25,1.25-1) node[right] {$^{-1}$};
      \draw (1.25,2.75-1) node[right] {$^{-1}$};
    \end{tikzpicture} \\
     &= \frac{((\lambda^a)^{-1}\otlam(j^{-1}_a\circ \theta^{-1}_{X_a})\otlam\id_{\lambda(-a,a)^*})\circ (\tilde{c}_{X_a, \lambda(-a,a)}\otlam\id_{\lambda(-a,a)^*})\circ (\id_{X_a}\otlam\rho_{\lambda(-a,a)})}{\dim(\lambda(-a,a))t(a,a)} 
\end{align}
Notice that here, if we weren't assuming $\cB$ is strict pivotal, we would get the Drinfeld isomorphism $u$ of $\cB$ instead of $\theta^{-1}$ in Equality \eqref{zdrinfeld2}.
   
From this, assuming strict pivotality of $\cB$ we get that the pivotal structure $\tilde{\psi}$ in $(\cB^\lambda,t,f)$ is 
\begin{align}
\begin{split}
    \tilde{\psi} & = \frac{f(a)}{\dim(\lambda(-a,a)) t(a,a)} \hspace{10pt} \begin{tikzpicture}[line width=1,baseline=30]
    \draw[looseness=1.5] (-.5,0)--(-.5,1) to [out=90,in=90] (2.5,1) --(2.5,0);
    \draw[fill=white] (-.5-.33, .66) rectangle node {$\lambda^a$} (-.5+.33, 1.33);
    \draw (-.25,1.25) node[right] {$^{-1}$};
    \draw[white, line width=10] (1,0)--(1,3);
    \begin{scope}[xshift=1cm]
     \onebox{}{}{j_a}
     \draw (0,2)--(0,3);
    \end{scope} 
    \draw (1.25,1.25) node[right] {$^{-1}$};
    \end{tikzpicture} \\
    & = \frac{f(a)}{\dim(\lambda(-a,a))t(a,a)} ((\lambda^a)^{-1}\otlam j^{-1}_a\otlam\id_{\lambda(-a,a)^*})\circ (\tilde{c}_{X_a, \lambda(-a,a)}\otlam\id_{\lambda(-a,a)^*})\circ (\id_{X_a}\otlam\rho_{\lambda(-a,a)})
    \end{split}
\end{align}
In this way, we get that the trace $\operatorname{Tr}^f$ in $(\cB^\lambda,t,f)$ of a morphism $s \in \Hom(X_a,X_a)$ can be expressed in the graphical calculus by

\begin{align}
  \operatorname{Tr}^f(s)  &= \frac{f(a)}{\dim(\lambda(a,-a))^{2}\, t(a,a)} \hspace{10pt}  \begin{tikzpicture}[line width=1,baseline=30,scale=.9]
    \draw[looseness=1.5] (-.5,0)--(-.5,1) to [out=90,in=90] (2.5,1) --(2.5,0);
  \draw[fill=white] (-1, .5) rectangle node {$\lambda^{a}$} (0, 1.5);
    \draw (0,1.5) node[right] {$^{-1}$};
    \draw[white, line width=10] (1,0)--(1,3);
    \begin{scope}[xshift=1cm]
     \onebox{}{}{j_a}
     \draw (0,2)--(0,3);
    \end{scope} 
    \draw (1.25,1.25) node[right] {$^{-1}$};
        \begin{scope}[xshift=-.5cm,scale=3]
\draw[looseness=1.5] (1,0) to [out=-90, in=-90] (3,0)  ;
\draw[white, line width=10, looseness=1.5] (0,0) to [out=-90, in=-90] (2,0);
\draw[looseness=1.5] (0,0)to [out=-90, in=-90] (2,0);
    \end{scope}
    \draw[looseness=1.5, white, line width=10] (1,4) to [out=90, in=90] (3,4) -- (3,0) to [out=-90,in=-90] (1,0);
    \draw[looseness=1.5] (1,4) to [out=90, in=90] (3,4) -- (3,0) to [out=-90,in=-90] (1,0);
    \draw[looseness=1.5] (5.5,0) -- (5.5,4) to [out=90,in=90] (8.5,4)--(8.5,0);
    \begin{scope}[xshift=9cm]
    \draw[fill=white] (-1, .5) rectangle node {$\lambda^{-a}$} (0, 1.5);
    \draw (0,1.5) node[right] {$^{-1}$};
    \end{scope}
     \begin{scope}[xshift=1.5cm,yshift=2.5cm]
      \draw[fill=white] (-1, .5) rectangle node {$s$} (0, 1.5);
    \end{scope}
    \end{tikzpicture} .
    \end{align}
    After expressing the closed loop involving morphisms on $X_a$ in terms of the trace in $\cB$, we get
    \begin{align}
   \operatorname{Tr}^f(s) &=  \frac{f(a)}{\dim(\lambda(a,-a))^{2}\, t(a,a)} \operatorname{Tr}(j_a^{-1}(X_a) \circ s)\hspace{10pt} \begin{tikzpicture}[line width=1, baseline=52.5,scale=.75] 
    \draw (0,0)--(0,3) \br (2,5);
    \draw [looseness=1.5] (2,5) to [out=90, in=90] (0,5);
    \draw[white,line width=10] (0,5) to [out=-90, in=90] (2,3);
    \draw (0,5) to [out=-90, in=90] (2,3);
    \draw (2,3)--(2,0);
    \draw[looseness=1.5] (2,0) to [out=-90,in=-90] (0,0);
      \draw[fill=white] (-.5, .25) rectangle node {$\lambda^{a}$} (.5, 1.25);
    \draw (.5,1.25) node[right] {$^{-1}$};
    \begin{scope}[yshift=1.25cm]
        \draw[fill=white] (-.5, .5) rectangle node {$\lambda^{-a}$} (.5, 1.5);
    \draw (.5,1.5) node[right] {$^{-1}$};
    \end{scope}.\end{tikzpicture}
    \end{align}
    Finally, we apply the associative zesting condition of Figure \ref{axiom:asso} with $1=3=a$ and $2=4=-a$ and standard graphical calculus to get the equation
    \begin{align}
    \begin{split}
    \operatorname{Tr}^f(s) &= \frac{f(a)}{\dim(\lambda(a,-a))^{2}\, t(a,a)} \operatorname{Tr}(j_a^{-1}(X_a)  \circ s)\hspace{10pt} \begin{tikzpicture}[line width=1, baseline=52.5,scale=.75] 
    \draw  (2,1) \br (0,3) \br (2,5);
    \draw [looseness=1.5] (2,5) to [out=90, in=90] (0,5);
    \draw[white,line width=10] (0,5) to [out=-90, in=90] (2,3)to [out=-90, in =90] (0,1) ;
    \draw (0,5) to [out=-90, in=90] (2,3) to [out=-90, in =90] (0,1);
    \draw[looseness=1.5] (2,1) to [out=-90,in=-90] (0,1);
 \end{tikzpicture} \\
 &= \frac{f(a)}{\dim(\lambda(a,-a))\, t(a,a)} \operatorname{Tr}(j_a^{-1}(X_a) \circ s)
    \end{split}.
\end{align}
\end{proof}

\begin{lemma}\label{lemma: s tilde}
Let $\lambda=(\lambda,j,t,f)$ be a a ribbon zesting. Then  
\begin{align}
    \label{eq: formula new S tilde}\operatorname{Tr}^f(c_{Y_b,X_a}^\lambda\circ c_{X_a,Y_b}^\lambda)=&\frac{ \dim(\lambda(a+b,-a-b))\dim(\lambda(a,b))t^{(2)}(a,b)f(a+b)}{t(a+b,a+b)}\\ &\times m(j,X_A,Y_b) \operatorname{Tr}(c_{Y,X}\circ c_{X,Y})\notag
\end{align}where 
\begin{equation}\label{ m(j,x,y)}
    m(j,X_a,Y_b)=\frac{j_a(Y_b)j_b(X_a)}{j_{ab}(X_a)j_{ab}(Y_b)j_{ab}(\lambda(a,b))},
\end{equation}for all $ a,b\in A, X_a\in \Irr(\cB_a), Y_b\in \Irr(\cB_b)$.
\end{lemma}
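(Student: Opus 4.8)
The plan is to reduce the whole computation to Proposition~\ref{zested trace}. The composite $c^\lambda_{Y_b,X_a}\circ c^\lambda_{X_a,Y_b}$ is an endomorphism of $X_a\otlam Y_b=X_a\otimes Y_b\otimes\lambda(a,b)$, which lies in the graded component $\cB^\lambda_{a+b}$. Hence, applying Proposition~\ref{zested trace} with $a+b$ in place of $a$ and $X_a\otlam Y_b$ in place of $X_a$, one gets
\[
\operatorname{Tr}^f\big(c^\lambda_{Y_b,X_a}\circ c^\lambda_{X_a,Y_b}\big)=\frac{f(a+b)}{\dim(\lambda(-a-b,a+b))\,t(a+b,a+b)}\;\operatorname{Tr}\big(j_{a+b}^{-1}(X_a\otlam Y_b)\circ c^\lambda_{Y_b,X_a}\circ c^\lambda_{X_a,Y_b}\big),
\]
so it only remains to evaluate the ordinary categorical trace in $\cB$ on the right.

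First I would unwind the zested braiding using its definition in Figure~\ref{fig:braiding}. In the composite $c^\lambda_{Y_b,X_a}\circ c^\lambda_{X_a,Y_b}$ the $\lambda(a,b)$-strand is not braided with the $X_a$- or $Y_b$-strands; it only passes through $t(a,b)$ and then through $t(b,a)$, whose composite is $t^{(2)}(a,b)\,\id_{\lambda(a,b)}$ by definition of $t^{(2)}$. Since the $j$'s are tensor natural automorphisms of the identity they are central, so the corrections by $j_a$ on the $Y_b$-strand and $j_b$ on the $X_a$-strand contribute only the scalars $j_a(Y_b)$ and $j_b(X_a)$ (using simplicity of $X_a$ and $Y_b$). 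Thus, as an endomorphism of $X_a\otimes Y_b\otimes\lambda(a,b)$ in $\cB$,
\[
c^\lambda_{Y_b,X_a}\circ c^\lambda_{X_a,Y_b}=j_a(Y_b)\,j_b(X_a)\,t^{(2)}(a,b)\,\big((c_{Y_b,X_a}\circ c_{X_a,Y_b})\otimes\id_{\lambda(a,b)}\big).
\]
Likewise $j_{a+b}$ is monoidal and each tensor factor of $X_a\otlam Y_b$ is simple, so $j_{a+b}^{-1}(X_a\otlam Y_b)=j_{a+b}(X_a)^{-1}j_{a+b}(Y_b)^{-1}j_{a+b}(\lambda(a,b))^{-1}$. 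Collecting these scalars produces exactly the quantity $m(j,X_a,Y_b)$ of \eqref{ m(j,x,y)} together with one factor $t^{(2)}(a,b)$.

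Next, using that the categorical trace in $\cB$ is multiplicative with respect to $\otimes$, we have $\operatorname{Tr}\big((c_{Y_b,X_a}\circ c_{X_a,Y_b})\otimes\id_{\lambda(a,b)}\big)=\dim(\lambda(a,b))\,\operatorname{Tr}\big(c_{Y_b,X_a}\circ c_{X_a,Y_b}\big)$. Unfolding the closure supplied by Proposition~\ref{zested trace} — whose dual datum for $X_a\otlam Y_b$ is $(X_a\otlam Y_b)^*\otimes\lambda(a+b,-a-b)^*$, so that in the residual $\cB$-trace the $Y_b$-strand is closed up against $Y_b^*$ — this monodromy trace is the $S$-matrix entry $\operatorname{Tr}(c_{Y^*,X}\circ c_{X,Y^*})$ of $\cB$. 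Assembling all factors, and using the isomorphism $\lambda_{a+b}\colon\lambda(a+b,-a-b)\to\lambda(-a-b,a+b)$ together with sphericality of $\cB$ (so that $\lambda(a+b,-a-b)$ has categorical dimension $\pm1$ and $\dim(\lambda(-a-b,a+b))^{-1}=\dim(\lambda(a+b,-a-b))$), gives the claimed formula \eqref{eq: formula new S tilde}.

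The main obstacle is the careful bookkeeping in the two middle steps. One must verify directly from Figure~\ref{fig:braiding} that in this particular composite the $\lambda(a,b)$-strand really does pass through the $j$'s and braidings trivially, so that it only contributes $t^{(2)}(a,b)\dim(\lambda(a,b))$, and one must keep precise track of which of $j_a,j_b,j_{a+b}$ acts on which simple object, so that the accumulated scalars reorganize into $m(j,X_a,Y_b)$. The other delicate point is tracking the dualities introduced by the trace closure of Proposition~\ref{zested trace} carefully enough to see that the residual $\cB$-trace of the monodromy is $\operatorname{Tr}(c_{Y^*,X}\circ c_{X,Y^*})$ with the correct placement of the dual — rather than, say, $\operatorname{Tr}(c_{Y,X}\circ c_{X,Y})$. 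Both are routine graphical-calculus verifications once the set-up is fixed, and they are precisely where every scalar appearing in \eqref{eq: formula new S tilde} is generated.
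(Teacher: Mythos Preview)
Your approach is essentially the same as the paper's: apply Proposition~\ref{zested trace} to the endomorphism $c^\lambda_{Y_b,X_a}\circ c^\lambda_{X_a,Y_b}$ of $X_a\otlam Y_b\in\cB^\lambda_{a+b}$, then compute $j_{a+b}^{-1}\circ c^\lambda_{Y_b,X_a}\circ c^\lambda_{X_a,Y_b}$ explicitly from Figure~\ref{fig:braiding} to extract the scalar $m(j,X_a,Y_b)\,t^{(2)}(a,b)$ in front of $(c_{Y_b,X_a}\circ c_{X_a,Y_b})\otimes\id_{\lambda(a,b)}$, and finally use multiplicativity of the $\cB$-trace to split off $\dim(\lambda(a,b))$. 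Up to this point your argument and the paper's coincide.

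The gap is in your last step. Once Proposition~\ref{zested trace} has been applied, the remaining quantity is the \emph{ordinary} spherical trace in $\cB$ of $(c_{Y_b,X_a}\circ c_{X_a,Y_b})\otimes\id_{\lambda(a,b)}$; there is nothing further to ``unfold''. That trace equals $\dim(\lambda(a,b))\,\operatorname{Tr}(c_{Y_b,X_a}\circ c_{X_a,Y_b})$, full stop. Your claim that closing the $Y_b$-strand against $Y_b^*$ converts this into $\operatorname{Tr}(c_{Y^*,X}\circ c_{X,Y^*})$ is not correct: closing against the dual is exactly what the $\cB$-trace already does, and it yields $\operatorname{Tr}(c_{Y,X}\circ c_{X,Y})$, not $\operatorname{Tr}(c_{Y^*,X}\circ c_{X,Y^*})$. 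These two numbers are in general different (they are $S'_{X,Y}$ and $S_{X,Y}$ in the notation of the preliminaries, related by inversion of the $S$-matrix, not by equality). Indeed, the paper's own proof stops at $\operatorname{Tr}(c_{Y_b,X_a}\circ c_{X_a,Y_b})$, and if you trace how the lemma is used in Theorem~\ref{Thm: modular data of zesting} (where it is applied with $\overline{Y_b}=Y_b^*\otimes\lambda(b,-b)^*$ in place of $Y_b$ to produce $S_{X_a,\lambda(b,-b)\otimes Y_b}$), you will see that the version without the dual is what is actually needed. So the appearance of $Y^*$ in the displayed statement is a typo, and your attempt to justify it is where your argument goes wrong.
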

\begin{proof}
Let $X_a\in \Irr(\cB_a), Y_b\in \Irr(\cB_b)$, where
$ a,b\in A$. We have that
\[j_{ab}^{-1}\circ  c_{Y_b,X_a}^\lambda\circ  c_{X_a,Y_b}^\lambda= m(j,X_a,Y_b)t^{(2)}(a,b) \big (c_{Y_b,X_a} \circ  c_{X_a,Y_b} \big ) \otimes \id_{\lambda(a,b)}\]
for all pairs of simple objects $X_a, Y_b$. 

Then using Proposition \ref{zested trace} we have
\begin{align*}
     \operatorname{Tr}^f( c_{Y_b,X_a}^\lambda\circ  c_{X_a,Y_b}^\lambda)
    =&\frac{\dim(\lambda(a+b,-a-b))f(a+b)}{t(a+b,a+b)}m(j,X_A,Y_b)\\
    &\times t^{(2)}(a,b) \operatorname{Tr}\Big ( \big (c_{Y_a,X_a} \circ  c_{X_a,Y_a} \big ) \otimes \id_{\lambda(a,b)}\Big )\\
    =&\frac{\lambda(a+a,-a-a) \dim(\lambda(a,b))f(a+b)}{t(a+b,a+b)}m(j,X_A,Y_b)\\
    &\times t^{(2)}(a,b)  \operatorname{Tr}^f( c_{Y_a,X_a}^\lambda\circ  c_{X_a,Y_a}).
\end{align*}
\end{proof}

Recall that  the \emph{modular data} of a premodular category $\cB$ is the following pair of matrices indexed over $\Irr(\cB)$:
\begin{itemize}
    \item[(i)] \emph{$S$-matrix}. $S_{X,Y}=\operatorname{Tr}(c_{Y^*,X}\circ c_{X,Y^*})$,
    \item[(ii)]  \emph{$T$-matrix.} $T_{X,Y}=\theta_X\delta_{X,Y}$.
\end{itemize}

\begin{theorem}\label{Thm: modular data of zesting}
Let $\lambda=(\lambda,j,t,f)$ be a a ribbon zesting. The $T$-matrix and $S$-matrix of $(\cB^\lambda,t,f)$ are given by the following formulas

\begin{align}
    T_{X_a,X_a}^\lambda&= f(a) T_{X_a,X_a}\label{formula T, zesting}\\
    S^\lambda_{X_a,Y_b}&=\frac{ \dim(\lambda(a-b,-a+b))\dim(\lambda(a,-b))t^{(2)}(a,-b)f(a-b)}{t(a-b,a-b)}\label{formula S, zesting}\\
    &\times m(j,X_a,Y_b^*\otimes \lambda(b,-b)^*)\dim(X_a)^{-1}S_{X_a, Y_b}S_{X_a,\lambda(b,-b)} \notag
\end{align}
for all $ a,b\in A, X_a\in \cB_a, Y_b\in \cB_b$.
\end{theorem}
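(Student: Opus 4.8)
The proof is mostly a matter of assembling the two technical results that immediately precede it, Proposition~\ref{zested trace} and Lemma~\ref{lemma: s tilde}; since $f$ satisfies \eqref{ribbon condition} the twist $\tilde\theta^f$ is a ribbon twist (Proposition~\ref{prop:twist prop general}), so $\cB^{(\lambda,f)}$ is premodular and its modular data is defined. The plan is: (a) read off the $T$-matrix from the twist zesting formula; (b) rewrite $S^\lambda_{X_a,Y_b}$ as a twisted trace of a double braiding on the $\cB^\lambda$-dual of $Y_b$, apply Lemma~\ref{lemma: s tilde}, and (c) reduce the leftover $\cB$-trace using invertibility of $\lambda(b,-b)$.

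For \eqref{formula T, zesting}: by definition the $T$-matrix of $\cB^{(\lambda,f)}$ is diagonal with entries the twists $\tilde\theta^f_{X_a}$. By Proposition~\ref{prop:twist prop general} (equivalently Corollary~\ref{lem:twist}) the ribbon twist of $\cB^{(\lambda,f)}$ is $\tilde\theta^f_{X_a}=f(a)\theta_{X_a}$, so $T^\lambda_{X_a,X_a}=f(a)\theta_{X_a}=f(a)T_{X_a,X_a}$ and the off-diagonal entries vanish, which is \eqref{formula T, zesting}.

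For \eqref{formula S, zesting}, write $A$ additively. By definition $S^\lambda_{X_a,Y_b}=\operatorname{Tr}^f\!\big(c^\lambda_{\,\overline{Y}_b,X_a}\circ c^\lambda_{X_a,\overline{Y}_b}\big)$, where $\overline{Y}_b$ is the dual of $Y_b$ in $\cB^\lambda$; by the rigidity structure of Subsection~\ref{section: rigidity}, eq.~\eqref{dual for zesting}, this dual is $\overline{Y}_b=Y_b^*\otimes\lambda(b,-b)^*\in\cB_{-b}$. Now apply Lemma~\ref{lemma: s tilde} with its second object taken to be $\overline{Y}_b$, i.e.\ with ``$b$'' replaced by $-b$ and ``$Y_b$'' replaced by $\overline{Y}_b$. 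The prefactor specializes to
\[
\frac{\dim(\lambda(a-b,-a+b))\,\dim(\lambda(a,-b))\,t^{(2)}(a,-b)\,f(a-b)}{t(a-b,a-b)},
\]
the factor $m(j,X_a,\overline{Y}_b)$ is exactly $m(j,X_a,Y_b^*\otimes\lambda(b,-b)^*)$ as in \eqref{formula S, zesting}, and there remains the residual $\cB$-trace $\operatorname{Tr}\!\big(c_{\overline{Y}_b^{\,*},X_a}\circ c_{X_a,\overline{Y}_b^{\,*}}\big)$.

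It remains to evaluate this residual trace. Using $\overline{Y}_b^{\,*}\cong\lambda(b,-b)\otimes Y_b$ (double duals cancel via the spherical structure) and the hexagon axioms, the double braiding of $X_a$ with $\lambda(b,-b)\otimes Y_b$ factors through the double braidings of $X_a$ with $\lambda(b,-b)$ and with $Y_b$ separately; since $\lambda(b,-b)$ is invertible, $c_{\lambda(b,-b),X_a}\circ c_{X_a,\lambda(b,-b)}=\chi_{\lambda(b,-b)}(X_a)\,\id$ by the definition of $\chi$ (Figure~\ref{fig:chi}). Taking the trace and using multiplicativity of the $S$-matrix over invertible objects together with sphericity and the standard symmetry/duality identities for $S$, this gives $\operatorname{Tr}\!\big(c_{\overline{Y}_b^{\,*},X_a}\circ c_{X_a,\overline{Y}_b^{\,*}}\big)=\dim(X_a)^{-1}S_{X_a,Y_b}S_{X_a,\lambda(b,-b)}$; substituting back yields \eqref{formula S, zesting}. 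The main obstacle is not conceptual — all the genuinely diagrammatic work is already done in Proposition~\ref{zested trace} and Lemma~\ref{lemma: s tilde} — but rather the careful bookkeeping of the dual object $\overline{Y}_b$, its grade $-b$, and the duals occurring inside $m(j,-,-)$ and inside the residual $S$-entries, which is the step most prone to sign and index errors.
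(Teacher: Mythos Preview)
Your proof is correct and follows essentially the same route as the paper: read off $T^\lambda$ from the definition of $\theta^f$, apply Lemma~\ref{lemma: s tilde} with the zested dual $\overline{Y}_b=Y_b^*\otimes\lambda(b,-b)^*\in\cB_{-b}$ in place of $Y_b$, and then simplify the residual $\cB$-trace using multiplicativity of the $S$-matrix with respect to tensoring by an invertible object. The only cosmetic difference is that where you unpack the last step via the hexagon axioms and the character $\chi_{\lambda(b,-b)}$, the paper simply invokes the identity $\dim(X)\,S_{X,Y\otimes a}=S_{X,Y}\,S_{X,a}$ for invertible $a$ directly from \cite[Proposition~8.13.10]{EGNO}.
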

\begin{proof}
The formula for $T^\lambda$ is a direct consequence of the definition of $\theta^f$. 

For the $S$-matrix of $(\cB^\lambda,t,f)$ we use Lemma \ref{lemma: s tilde} and the fact that $\overline{X}_a=X_a^*\otimes \lambda(a,-a)^*$

\begin{align*}
    S^\lambda_{X_a,Y_b}&=\frac{ \dim(\lambda(a-b,-a+b))\dim(\lambda(a,-b))t^{(2)}(a,-b)f(a-b)}{t(a-b,a-b)}\\
    &\times m(j,X_a,Y_b^*\otimes \lambda(b,-b)^*) S_{X_a,\lambda(b,-b)\otimes Y_b}\\
    =&\frac{ \dim(\lambda(a-b,-a+b))\dim(\lambda(a,-b))t^{(2)}(a,-b)f(a-b)}{t(a-b,a-b)}m(j,X_a,Y_b^*\otimes \lambda(b,-b)^*)\\
    &\times \dim(X_a)^{-1}S_{X_a, Y_b}S_{X_a,\lambda(b,-b)},
\end{align*}where we used in the last equality that $\dim(X)S_{X,Y\otimes a}=S_{X,Y}S_{X,a}$ for every invertible object, see \cite[Proposition 8.13.10]{EGNO}.
\end{proof}

\begin{remark}
The formula \eqref{formula S, zesting} of the $S$-matrix does not look symmetric immediately. For a clearly symmetric formula, we can take the matrix $\tilde{S}_{X_a,Y_a}=\operatorname{Tr}^f(c_{Y_b,X_a}^\lambda\circ c_{X_a,Y_b}^\lambda)$ with formula given in equation \eqref{eq: formula new S tilde} of Lemma \ref{lemma: s tilde}. Now, the $S$-matrix and the $\tilde{S}$-matrix are related in the sense that $S$ is invertible if and only if $\tilde{S}$ is invertible and in that case $S=\tilde{S}^{-1}$, see \cite[Proposition 8.14.2.]{EGNO}.
\end{remark}

%\begin{remark}
%\[\tilde{S}_{X_a,Y_b}=\frac{f(a-b)t^2(a, -b)\dim(\lambda(-b,b)^*)\dim(\lambda(a,-b))\chi_{\lambda(-b,b)^*}(X_a) j_{-b}(X_a) j_{a-b}(X_a\otimes\lambda(a,-b))}{\dim(\lambda(b-a,a-b))t(a-b,a-b)j_{a}(Y_b\otimes \lambda(-b,b))j_{a-b}(Y_b\otimes \lambda(-b,b))}S_{X_a,Y_b}.\]
%\end{remark}
\subsection{M\"uger center}

We want to describe the M\"uger center $\mathcal{Z}_2((\cB^\lambda,t)) = \{X_a\in \cB_a |\, \tilde{c}_{Y_b, X_a}\circ \tilde{c}_{X_a, Y_b} = \id_{X_a\otlam Y_b}\forall\, Y_b\in\cB_b,\, b\in A\}$ of the zesting $(\cB^\lambda,t)$ of $\cB$.
We have that
\[\mathcal{Z}_2((\cB^\lambda,t)) = \{X_a\in \cB_a |\,  c_{Y_b, X_a}\circ c_{X_a, Y_b} =  j_a(Y_b)^{-1} j_b(X_a)^{-1} t^{-2}(a,b)\id_{X_a\otimes Y_b},  \forall\, Y_b\in\cB_b,\, b\in A\}.\]
This means that $X_a\in \cC_a$ is in the M\"uger center of $(\cB^\lambda,t)$ if $X_a$ projectively centralizes $Y_b$ (and the corresponding scalar is $j_a(Y_b)^{-1} j_b(X_a)^{-1} t^{-2}(a,b)$), for all $Y_b\in \cB_b$, $b\in A$.  Recall from \cite{EGNO} that for $\cD\subset\cC$ the centralizer of $\cD$ in $\cC$ is denoted $C_{\cC}(\cD)$.
\begin{lemma}\label{muger center}
Assume that $j$ is trivial. Consider a premodular $A$-graded fusion category $\cB$ and a braided zesting $(\cB^{\lambda}, t)$. If
$C_{\cB}(\cB_{ad})\subseteq\cB_{pt}$ then the M\"uger center of the zested category
\[\mathcal{Z}_2((\cB^\lambda,t)) = \{X_a\in \cB_a\cap \cB_{pt} |\,   t^{2}(a,b)= \frac{\theta_{X_a}\theta_{Y_b}}{\theta_{X_a\otimes Y_b}}, \forall\, Y_b\in\cB_b,\, b\in A\}.\]
\end{lemma}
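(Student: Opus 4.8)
The plan is to compute the M\"uger center directly from the description of the zested double braiding and then use the structural hypothesis $C_{\cB}(\cB_{ad})\subseteq\cB_{pt}$ to force the relevant objects to be invertible. Recall that the zested double braiding is
\[
\tilde c^2_{X_a,Y_b} = j_a(Y_b)\,j_b(X_a)\,t^{(2)}(a,b)\,\bigl(c^2_{X_a,Y_b}\bigr)\otimes\id_{\lambda(a,b)},
\]
so with $j$ trivial the condition $X_a\in\cZ_2(\cB^{(\lambda,t)})$ becomes
\[
c^2_{X_a,Y_b} = t^{(2)}(a,b)^{-1}\,\id_{X_a\otimes Y_b}\qquad\text{for all }Y_b\in\cB_b,\ b\in A,
\]
i.e.\ $X_a$ \emph{projectively} centralizes every object of $\cB$. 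I would first observe that this implies in particular that $X_a$ projectively centralizes $\cB_{ad}=\cB_e$; but $\cB_e$ contains the unit, so testing against $Y_e\in\cB_e$ forces the scalar $t^{(2)}(a,e)^{-1}=1$ (using the normalization $t(e,g)=t(g,e)=\id$), hence $c^2_{X_a,Y_e}=\id$ genuinely, i.e.\ $X_a$ actually centralizes $\cB_{ad}$.

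The key step is then to invoke the hypothesis $C_{\cB}(\cB_{ad})\subseteq\cB_{pt}$: since $X_a$ centralizes $\cB_{ad}$, every simple summand of $X_a$ lies in $\cB_{pt}$, so without loss of generality $X_a$ is invertible. For an invertible object the endomorphism $c^2_{X_a,Y_b}$ acts as the scalar $\chi_{X_a}(Y_b)^{-1}$ in the notation of Figure \ref{fig:chi} (more precisely $c_{Y_b,X_a}\circ c_{X_a,Y_b}$ is multiplication by a scalar since $X_a$ is invertible), and for invertible $X_a$ this scalar is computed by the standard balancing identity as $\theta_{X_a\otimes Y_b}/(\theta_{X_a}\theta_{Y_b})$ — here I use that $\cB$ is premodular with ribbon twist $\theta$, so $\theta_{X_a\otimes Y_b}=c_{Y_b,X_a}c_{X_a,Y_b}(\theta_{X_a}\otimes\theta_{Y_b})$ and both $X_a$, $Y_b$ are simple enough for the double braiding to be scalar (invertibility of $X_a$ suffices). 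Substituting this into the projective-centralizing equation $c^2_{X_a,Y_b}=t^{(2)}(a,b)^{-1}\id$ and inverting gives exactly
\[
t^{(2)}(a,b) = \frac{\theta_{X_a}\theta_{Y_b}}{\theta_{X_a\otimes Y_b}}\qquad\text{for all }Y_b\in\cB_b,\ b\in A,
\]
which, together with $X_a\in\cB_a\cap\cB_{pt}$, is precisely the claimed description. Conversely, any $X_a\in\cB_a\cap\cB_{pt}$ satisfying this identity plainly satisfies $\tilde c^2_{X_a,Y_b}=\id$ by reversing the computation, so the two sets coincide.

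The main obstacle I anticipate is bookkeeping the difference between \emph{projectively} centralizing and \emph{genuinely} centralizing: the hypothesis $C_{\cB}(\cB_{ad})\subseteq\cB_{pt}$ is about honest centralizers in $\cB$, so I must be careful to extract from the zested condition an honest centralizing statement before applying it. The clean way is the unit-object trick above (test against $Y_e$ to kill the scalar on $\cB_e$), after which the argument is routine. A secondary point to handle with care is that $c^2_{X_a,Y_b}$ need not be a scalar for general $Y_b$ until we know $X_a$ is invertible — so the logical order matters: first centralize $\cB_{ad}$, then deduce invertibility from the hypothesis, and only then pass to scalars and the balancing identity. I would also note in passing that the formula $c^2_{a,Y}=\chi_a(Y)^{-1}\id$ for invertible $a$ is exactly the content of Figure \ref{fig:chi} and Proposition \ref{prop:chi-properties}, and that $\chi_a(Y)=\theta_a\theta_Y/\theta_{a\otimes Y}$ for $a$ invertible is the elementary consequence of the twist axiom applied to $a\otimes Y$.
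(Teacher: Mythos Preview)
Your argument is correct and follows essentially the same route as the paper: test the zested double-braiding condition against the trivial component to obtain genuine centralizing of $\cB_e\supseteq\cB_{ad}$, invoke the hypothesis to force invertibility, then rewrite the scalar condition via the balancing identity. Two small slips to clean up: $\cB_{ad}\subseteq\cB_e$ rather than $\cB_{ad}=\cB_e$ in general (this does not affect the logic), and the double braiding for invertible $X_a$ is $\chi_{X_a}(Y_b)$, not its inverse---your subsequent identification with $\theta_{X_a\otimes Y_b}/(\theta_{X_a}\theta_{Y_b})$ and the final formula are nonetheless correct.
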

\begin{proof}
Recall that $t$ is trivial when one of the inputs lives in the trivial component of the $A$-grading. Now, since $j$ is trivial, if $X_a\in \mathcal{Z}_2((\cB^\lambda,t))$ then $c_{Y_0, X_a}\circ c_{X_a, Y_0} = \id_{X_a\otimes Y_0} $ for all $Y_0\in \cB_0$. This means that $X_a\in C_{\cB}(\cB_0)\subseteq C_{\cB}(\cB_{ad})\subseteq \cB_{pt}$. Therefore $\mathcal{Z}_2((\cB^\lambda,t)) \subseteq \cB_{pt}$.

Moreover, since  $X_a\in \mathcal{Z}_2((\cB^\lambda,t)) \subseteq \cB_{pt}$, we have that $\chi_{X_a}(Y_b) = t^{-2}(a,b)$. From the fact that $\chi_{X_a}(Y_b) = \frac{\theta_{X_a\otimes Y_b}}{\theta_{X_a}\theta_{Y_b}}$, we get the desired characterization of the M\"uger center $\mathcal{Z}_2((\cB^\lambda,t))$ of the zested category.
\end{proof}
\begin{remark}
Notice that if $\cB$ is modular or super-modular (i.e. $\mathcal{Z}_2(\cB) = \Vec$ or $\operatorname{sVec}$), we have that $\mathcal{Z}_2((\cB^\lambda,t)) \subseteq C_{\cB}(\cB_{ad}) \subseteq \cB_{pt}$. If $\cB$ is pointed the condition is trivially satisfied too. 
\end{remark}
The following example shows that zesting a non-degenerate category may yield a degenerate one, and conversely, so that M\"uger centers are not zesting invariant.
\begin{example}
The pointed modular category $\cC(\Z/3,Q)$ with quadratic form $Q(a)=e^{2a^2\pi i /3}=\theta_a$ is naturally $\Z/3$-graded.  If we take the trivial associative $\Z/3$-zesting (i.e. trivial $2$- and $3$-cocycles $\lambda(a,b)=\unit$ and $\lambda(a,b,c)=1$) then the braided zestings correspond to a choice of a bicharacter $t:\Z/3\times \Z/3\rightarrow \ku^\times$. We can take $t(a,b)=e^{2\pi iab/3}$ so that $t^2(a,b)=Q(a)Q(b)/Q(a+b)=e^{-2\pi i ab/3}$, which implies $(\cC(\Z/3,Q)^{\lambda},t)$ is symmetric.

The converse is also possible: for the symmetric pointed category $\cC(\Z/3,P)$ with $P(a)=1$ again take the trivial associative zesting.  Now a non-trivial braided zesting corresponds to a non-trivial bicharacter $t$, which yields a non-degenerate braiding.
\end{example}

On the other hand, if we zest with respect to a group $A$ that generates a symmetric pointed subcategory and $A$ does not contain any transparent objects then the M\"uger center does not change:
\begin{proposition}\label{prop: same center}
Let $\cB$ a braided fusion category and $A\subset \Inv(\cB)$ a subgroup such that $\chi_a\neq \id$ for all $a \in A-\{0\}$. Consider the $\widehat{A}$-grading 
\[\cB_\gamma =\{X\in \cB: \chi_a(X)=\gamma(a)\id_X, \quad \forall a\in A\}, \quad \quad \gamma \in \widehat{A}.\]
For any $\widehat{A}$-braided zesting $(\lambda, j, t)$, such that

\begin{itemize}
    \item[(i)] the category generated by $A$ is symmetric,
    \item[(ii)] $j_\gamma=\id$ for all $\gamma \in \widehat{A}$,
\end{itemize}
the Müger center of $\cB$ and $(\cB^\lambda,t)$ coincide. In particular, $\cB$ is non-degenerate if and only if $(\cB^\lambda,t)$ is non-degenerate.
\end{proposition}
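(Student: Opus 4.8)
The plan is to compare the double braiding in $\cB^{(\lambda,t)}$ with that of $\cB$ and show that, under the stated hypotheses, an object is transparent in one category if and only if it is transparent in the other. First I would use the description of the M\"uger center obtained just before the statement: since $j_\gamma=\id$ for all $\gamma$, an object $X_\gamma\in\cB_\gamma$ lies in $\mathcal{Z}_2(\cB^{(\lambda,t)})$ exactly when $c^2_{X_\gamma,Y_\delta}=t^{-2}(\gamma,\delta)\id_{X_\gamma\otimes Y_\delta}$ for all $Y_\delta\in\cB_\delta$, $\delta\in\widehat A$. The key point is that the $\widehat A$-grading here is the one coming from $\chi$ (Corollary \ref{corol grading} applied to the subgroup $A$), so $c^2_{X_\gamma,Y_\delta}=\chi_{Y_\delta}(X_\gamma)^{-1}\cdots$ is itself controlled by the grading: more precisely, for $X_\gamma\in\cB_\gamma$ and an invertible $a\in A$ with image $\delta=\hat a\in\widehat A$ under $\chi$, one has $\chi_a(X_\gamma)=\gamma(a)$, a scalar depending only on the graded components.

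Next I would show $\mathcal{Z}_2(\cB)\subseteq\mathcal{Z}_2(\cB^{(\lambda,t)})$ and the reverse inclusion separately. For the forward inclusion: if $X\in\mathcal{Z}_2(\cB)$ then $X$ centralizes everything, in particular $\chi_a(X)=\id_X$ for all $a\in A$, so $X\in\cB_{\text{triv}}$, i.e. $X$ lies in the trivial $\widehat A$-component; but on the trivial component $t$ is normalized to be trivial (the normalization $t(e,g)=t(g,e)=\id$ together with $t$ being a bicharacter-torsor quantity forces $t(\gamma,\delta)=1$ whenever $\gamma$ or $\delta$ is trivial), and $j$ is trivial by hypothesis, so by the braiding formula in Figure \ref{fig:braiding} the zested double braiding of $X$ with anything equals the original one, hence $X\in\mathcal{Z}_2(\cB^{(\lambda,t)})$. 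For the reverse inclusion, take $X_\gamma\in\mathcal{Z}_2(\cB^{(\lambda,t)})$. Testing against objects $Y_\delta$ in the trivial $\widehat A$-component first shows (as in Lemma \ref{muger center}, using $j$ trivial and $t$ trivial on that component) that $c^2_{X_\gamma,Y_0}=\id$; but in a premodular-type setting with the $\chi$-grading, the trivial component is the centralizer $C_\cB(A)$, and since hypothesis (i) says $A$ generates a symmetric category while $\chi_a\neq\id$ for $a\neq 0$ means $A$ itself injects into $\Aut_\otimes(\Id)$, one deduces $\gamma$ must be such that $X_\gamma$ already centralizes $A$, forcing $\gamma$ trivial — so again $X_\gamma$ is in the trivial component where zesting changes nothing, giving $X_\gamma\in\mathcal{Z}_2(\cB)$.

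The step I expect to be the main obstacle is pinning down why an object in $\mathcal{Z}_2(\cB^{(\lambda,t)})$ must live in the trivial $\widehat A$-component. The naive argument "it centralizes $\cB_0$ hence lies in $\cB_{pt}$" from Lemma \ref{muger center} is not directly available since we are not assuming $C_\cB(\cB_{ad})\subseteq\cB_{pt}$; instead one must exploit that the grading is the $\chi$-grading by the \emph{symmetric} subcategory generated by $A$. Concretely: if $X_\gamma$ is transparent in $\cB^{(\lambda,t)}$ then in particular $c^2_{X_\gamma,a}=t^{-2}(\gamma,\hat a)\id$ for each $a\in A$; since the category generated by $A$ is symmetric, $c^2_{a,b}=\id$ for $a,b\in A$, and combining this with $t$ being a bicharacter one computes that transparency of $X_\gamma$ against all of $A$ forces $\chi_a(X_\gamma)$ to be a fixed bicharacter value that must actually be $1$ for all $a$ — here one uses $\chi_a\neq\id$ to rule out a nontrivial $\gamma$ by a counting/nondegeneracy argument on $A$. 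Once $\gamma=0$ is established the rest is the routine verification above that zesting is inert on the trivial component, so the two M\"uger centers literally coincide as subcategories, and the non-degeneracy statement follows immediately.

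\begin{proof}
By Corollary \ref{corol grading}, the $\widehat A$-grading in the statement is the one induced by $\chi$, so $\cB_0=C_\cB(A)$ and for $X_\gamma\in\cB_\gamma$ and $a\in A$ we have $\chi_a(X_\gamma)=\gamma(a)\id_{X_\gamma}$. From the description of the M\"uger center preceding Lemma \ref{muger center} and hypothesis (ii) ($j$ trivial),
\[
\mathcal{Z}_2(\cB^{(\lambda,t)})=\{X_\gamma\in\cB_\gamma\mid c^2_{X_\gamma,Y_\delta}=t^{-2}(\gamma,\delta)\,\id_{X_\gamma\otimes Y_\delta}\ \forall\,Y_\delta\in\cB_\delta,\ \delta\in\widehat A\}.
\]
Also note $t(\gamma,\delta)=1$ whenever $\gamma$ or $\delta$ is the trivial character: indeed $t$ restricted to a braided zesting with $\lambda$ fixed differs from the normalized choice (with $t(e,-)=t(-,e)=\id$) by a bicharacter, and any bicharacter is trivial on the identity; hence $t$ is trivial on the trivial component.

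\textbf{Step 1: $\mathcal{Z}_2(\cB)\subseteq\mathcal{Z}_2(\cB^{(\lambda,t)})$.} If $X\in\mathcal{Z}_2(\cB)$ then $c^2_{X,a}=\id$ for all $a\in A$, so $\chi_a(X)=\id_X$ for all $a\in A$, i.e. $X\in\cB_0$. On $\cB_0$ both $j$ and $t$ are trivial, so by the braiding formula of Figure \ref{fig:braiding} the double braiding $\tilde c^2_{X,Y_\delta}$ in $\cB^{(\lambda,t)}$ equals $c^2_{X,Y_\delta}\otimes\id_{\lambda(0,\delta)}=\id$ (using $t(0,\delta)^{-2}=1$ and transparency of $X$ in $\cB$). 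Hence $X\in\mathcal{Z}_2(\cB^{(\lambda,t)})$.

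\textbf{Step 2: $\mathcal{Z}_2(\cB^{(\lambda,t)})\subseteq\mathcal{Z}_2(\cB)$.} Let $X_\gamma\in\mathcal{Z}_2(\cB^{(\lambda,t)})$. Since the category generated by $A$ is symmetric, $c^2_{a,b}=\id$ for all $a,b\in A$, and since each $a\in A$ is invertible, $c^2_{X_\gamma,a}=\chi_a(X_\gamma)\,\id=\gamma(a)\,\id$. The transparency condition against $a\in A$ therefore reads $\gamma(a)=t^{-2}(\gamma,\hat a)$, where $\hat a\in\widehat A$ is the class of $a$. The right-hand side, as a function of $a$, is a character of $A$; the left-hand side is the character $\gamma$. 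But transparency against \emph{all} $Y_\delta$ (not just those in $A$) and the fact that $t$ is a bicharacter force, upon varying $a$ through a generating set of $A$ and using $\chi_a\neq\id$ for $a\neq0$, that $\gamma$ is the trivial character: otherwise $X_\gamma$ would fail to centralize $\cB_0$ in $\cB^{(\lambda,t)}$, since on $\cB_0$ the zested double braiding coincides with $c^2_{X_\gamma,-}$ (as $j,t$ are trivial there), and $X_\gamma$ with $\gamma\neq0$ does not centralize all of $\cB_0=C_\cB(A)$. Hence $\gamma=0$, $X_\gamma\in\cB_0$, and then the transparency condition in $\cB^{(\lambda,t)}$ against arbitrary $Y_\delta$ becomes $c^2_{X_\gamma,Y_\delta}\otimes\id_{\lambda(0,\delta)}=t^{-2}(0,\delta)\id=\id$, so $X_\gamma\in\mathcal{Z}_2(\cB)$.

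Combining the two inclusions, $\mathcal{Z}_2(\cB)=\mathcal{Z}_2(\cB^{(\lambda,t)})$ as fusion subcategories. In particular $\mathcal{Z}_2(\cB)=\Vec$ if and only if $\mathcal{Z}_2(\cB^{(\lambda,t)})=\Vec$, i.e. $\cB$ is non-degenerate if and only if $\cB^{(\lambda,t)}$ is non-degenerate.
\end{proof}
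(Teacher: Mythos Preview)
Your proof is correct and follows essentially the same two-inclusion strategy as the paper. One remark: in Step 2 you take a detour through the unproven claim that ``$t$ is a bicharacter'' and the quantity $t^{-2}(\gamma,\hat a)$, when the cleaner observation (which is exactly the paper's argument, and which you in fact have all the ingredients for) is that hypothesis (i) gives $A\subset C_\cB(A)=\cB_0$, so each $a\in A$ has trivial grade $\hat a=0$; then the zested double braiding of $X_\gamma$ with $a$ equals the original one, yielding $\gamma(a)=1$ for all $a$ and hence $\gamma=0$ immediately.
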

\begin{proof}
Since $\cB_1=\{X: c_{X,a}\circ c_{a,X}=\id_{a\otimes X}, \forall a\in A\}$, we have that $\mathcal{Z}_2(\cB)\subset \cB_1$. Hence, if  $X\in \mathcal{Z}_2(\cB)$ and $Y_\gamma\in \cB_\gamma$, then $X\otlam Y_\gamma= X\otimes Y_\gamma$ and $c^\lambda_{X,Y_\gamma}=c_{X,Y_\gamma}$. Hence $\mathcal{Z}_2(\cB)\subset \mathcal{Z}_2((\cB^\lambda,t))$.

Conversely, since the category generated by $A$ is symmetric, we have that $a\in \cB_1$ for all $a\in A$. If
\begin{align*}
    X_\gamma \in \mathcal{Z}_2((\cB^\lambda,t))\cap \cB_\gamma,&& \gamma \in \widehat{A}
\end{align*}then 
\begin{align*}
   \id_{X_{\gamma}\otimes a}=c_{a,X_{\gamma}}^\lambda \circ c_{X_{\gamma},a}^\lambda=c_{a,X_\gamma}\circ c_{X_{\gamma},a}, && \forall a\in A
\end{align*}hence $\gamma=1$. Then,
\begin{align*}
  c_{X,Y}^\gamma=c_{X,Y}, && c_{Y,X}^\lambda=c_{Y,X}  
\end{align*}
 for all $X\in \mathcal{Z}_2((\cB^\lambda,t))$ and $Y\in \cB$. This implies that $\mathcal{Z}_2((\cB^\lambda,t))\subset\mathcal{Z}_2(\cB)$, and then $\mathcal{Z}_2((\cB^\lambda,t))=\mathcal{Z}_2(\cB)$.
\end{proof}

\begin{corollary}\label{cor: same muger}
Let $\cB$ be a non-degenerate braided fusion category such that $\cB_{pt}$ is symmetric.  Then any braided $U(\cB)$-zesting of $\cB$ is non-degenerate. \qed
\end{corollary}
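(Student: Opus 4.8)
The plan is to derive Corollary~\ref{cor: same muger} as an immediate instance of Proposition~\ref{prop: same center}, so the work is really in checking that the hypotheses of that proposition are met when $A$ is chosen so that the $\widehat A$-grading is the universal grading. First I would recall, via Corollary~\ref{corol grading}, that $\cB$ is faithfully graded by $\widehat{\Inv(\cB)/\ker(\chi)}$; when $\cB$ is non-degenerate the only transparent object is the unit, so $\ker(\chi)$ restricted to $\Inv(\cB)$ is trivial and we may take $A=\Inv(\cB)$ with $\chi_a\neq\id$ for all $a\in A\setminus\{0\}$. Moreover this $\widehat{A}$-grading is then (up to identification) the universal grading $U(\cB)$, because $\cB_{ad}=C_\cB(\cB_{pt})$ for a non-degenerate category and the universal grading group is $\widehat{\Inv(\cB)}$ precisely when $\cB$ is non-degenerate --- here one invokes the fact (from Proposition~\ref{prop: iso id-g} together with the non-degeneracy) identifying $U(\cB)$ with $\widehat{\Inv(\cB)}$ via $\chi$.

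Next I would verify the two bulleted hypotheses of Proposition~\ref{prop: same center}. Hypothesis~(i), that the subcategory generated by $A=\Inv(\cB)=\cB_{pt}$ is symmetric, is exactly the standing assumption of the corollary. Hypothesis~(ii), that $j_\gamma=\id$ for all $\gamma$ in the grading group, is supplied by Corollary~\ref{corol: triviality of j}: since $A=U(\cB)$ is the universal grading, every braided zesting is similar to one of the form $(\lambda,\Id_\cB,t)$, and replacing $(\lambda,j,t)$ by a similar zesting does not change the braiding on $\cB^\lambda$, hence does not change its M\"uger center. With both hypotheses in place, Proposition~\ref{prop: same center} gives $\mathcal{Z}_2(\cB^{(\lambda,t)})=\mathcal{Z}_2(\cB)=\Vec$, so $\cB^{(\lambda,t)}$ is non-degenerate.

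The only genuinely delicate point is the identification of the universal grading group of a non-degenerate $\cB$ with $\widehat{\Inv(\cB)}$ and the verification that $\chi_a\neq\id$ for every nontrivial $a\in\Inv(\cB)$; this is where non-degeneracy is used essentially, via Proposition~\ref{prop:chi-properties}(iii) (the kernel of $\chi$ is the transparent invertibles, which is trivial here) and the standard fact that $\cB_{ad}=C_\cB(\cB_{pt})$ for non-degenerate categories, so that the $\chi$-grading is faithful and universal. Once that bookkeeping is done, everything else is a direct citation. I would therefore organize the proof as: (1) non-degeneracy $\Rightarrow$ $\ker\chi|_{\Inv(\cB)}$ trivial, so take $A=\Inv(\cB)$ and note the induced grading is the universal one; (2) apply Corollary~\ref{corol: triviality of j} to reduce to $j=\id$; (3) apply Proposition~\ref{prop: same center} with this $A$, using the symmetry hypothesis on $\cB_{pt}$, to conclude $\mathcal{Z}_2(\cB^{(\lambda,t)})=\mathcal{Z}_2(\cB)=\Vec$.
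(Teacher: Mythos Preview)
Your proposal is correct and matches the paper's intended argument: the paper simply writes \qed\ after the statement, treating it as immediate from Proposition~\ref{prop: same center}, and you have correctly supplied the implicit verifications---non-degeneracy forces $\ker(\chi)$ trivial so $A=\Inv(\cB)$ works, the resulting $\widehat{A}$-grading is the universal one, Corollary~\ref{corol: triviality of j} lets you assume $j=\id$, and the symmetry hypothesis on $\cB_{pt}$ gives condition~(i).
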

\begin{remark}
The pointed subcategory $\cB_{pt}$ is symmetric for any non-degenerate braided fusion category with $\cB_{pt}\subseteq \cB_{ad}$.
\end{remark}

\subsection{Braid group image}
Let $\mathbb{B}_n$ denote the braid group on $n$ strands and $\sigma_i$ its generators.  Given an object $X$ in a braided fusion category $\cB$ we will denote the associated braid group representation by

\begin{align*}
    \rho_n^X: \mathbb{B}_n& \to \operatorname{Aut}_\cB(X^{\otimes n})\\
    \sigma_i &\mapsto  \boldsymbol{a}^{-1}\circ(\id_{X}^{\otimes i-1}\otimes c_{X,X} \otimes \id^{\otimes n-i-1}_
{X})\circ \boldsymbol{a},
\end{align*} 
where $\boldsymbol{a}$ denotes the appropriate composition of associativity constraints in $\cB$. Note that the fact that this morphism is group homomorphism follows from the hexagon axioms of $\cB$ and Mac Lane's coherence theorem.
The category $\cB$ is said to have \emph{property F} if $\rho_n^X$ has finite image for
all $n$ and all objects $X$ \cite{NRo}.

Consider a braided fusion category $\cB$ with an $A$-grading and $\lambda=(\lambda, j, t)$ a braided $A$-zesting. Here we study how the image of the braid group is modified under the zesting operation.

For any $X_a\in \cB_a, Y_b\in \cB_b, Z_c\in \cB_c$ and $a,b, c\in A$, we have an algebra isomorphism 
\begin{align*}
\psi_{X_a,Y_b,Z_c}:\End_{\cB}(X_a\ot Y_b\ot Z_c)&\to \End_{\cB^\lambda}((X_a\otlam Y_b)\otlam Z_c)\\
f&\mapsto w^{-1} \big (f\ot \id_{\lambda(a,b)\ot \lambda(a+b,c)} \big )w
\end{align*}where $w= \id_{X_a\ot Y_b}\ot c_{\lambda(a,b),Z_c}\ot \id_{\lambda(a+b,c)}$.
For  $X_a\in \cB_a^\lambda$ and $n\in \Z_{>0}$, we define inductively the algebra isomorphism
\begin{align}\label{psi n}
    \psi_n^{X_a}: \End_\cB(X_a^{\otimes n}) \to \End_{\cB^\lambda}(X_a^{\otlam n})
\end{align}
by $\psi^{X_a}_{n}=\big (\psi^{X_a}_{n-1}\otlam\id_{X_a}\big )\circ \psi_{X_a^{\otlam n-2},X_a,X_a}$, where $X^{\otlam n}= X^{\otlam n-1}\otlam X$.
%$X^{\otlam n}=X^{\otlam n-1}\otlam X$, where $X^{\otlam 1}=X$. For any $n\in \Z^{>0}$ we have an algebra isomorphism
%\begin{align}\label{psi n}    \psi_n: \End_\cB(X^{\otimes n}) \to \End_{\cB^\lambda}(X^{\otlam n})\end{align} given by
Graphically, given for $f\in \End_\cB(X_a^{\otimes n})$, we have that $\psi_n^{X_a}(f)$ is given by
$$
\begin{tikzpicture}[line width=1,yscale=.5, xscale=1.5]
\draw (0,0) node[below] {$\parcen{a}$} --(0,12) node[above] {$\parcen{a}$};
\draw (1,0) node[below] {$\parcen{a}$} --(1,12) node[above] {$\parcen{a}$};
\draw (2,0) node[below] {$ (a,a)$}  \br  (5,4) -- (5,8)  \br (2,12) node[above] {$ (a,a)$};
\draw (4,0) node[below] {$ (2a,a)$} \br (6,4) --(6,8) \br (4,12) node[above] {$ (2a,a)$};
\draw[white, line width=10] (3,0) \br (2,4) -- (2,8) \br (3,12) ;
\draw (3,0) node[below] {$\parcen{a}$} \br (2,4) -- (2,8) \br (3,12) node[above] {$\parcen{a}$};
\draw[white, line width=10] (5,0) \br (3,4)--(3,8) \br (5,12);
\draw (5,0) node[below] {$\parcen{a}$} \br (3,4)--(3,8) \br (5,12) node[above] {$\parcen{a}$};
\draw (7,0) node[below] {$ ((n-2)a,a)$} \br (8,4) --(8,8) \br (7,12) node[above] {$ ((n-2)a,a)$};
\draw[white, line width=10] (8,0) \br (4,4)-- (4,8) \br (8,12) ;
\draw (8,0) node[below] {$\parcen{a}$} \br (4,4) --(4,8) \br (8,12) node[above] {$\parcen{a}$};
\draw[fill=white] (-.5,5) rectangle node {$f$} (4.5,7); 
\draw (3.5,4.5) node {\Large $\cdots$};
\draw (3.5,7.5) node {\Large $\cdots$};
\draw (9,0) node[below] {$ ((n-1)a,a)$} --(9,12) node[above] {$ ((n-1)a,a)$};
\draw (5.875,0) node[below] {\Large $\cdots$};
\draw (5.875,12) node[above] {\Large $\cdots$};
\end{tikzpicture}$$
%, that is how is the braid group image under a similar homomorphism using the braid $\tilde{c}$ on $\cB^{(\lambda, j, t)}$ for a zesting of $\cB$. In particular, we prove that $\cB$ has property F if and only if any/every braided zesting $\cB^{(\lambda, j, t)}$ has property F.

\begin{theorem}
Let $\cB$ be a braided fusion category and  $\lambda=(\lambda, j, t)$ a braided $A$-zesting. Then for all $n>0$ and $X_a\in \cB_a$  simple homogeneous object, we have
\begin{align*}
    \rho^\lambda_n(\sigma_i)=j_a(X_a)t(a,a)\psi_n(\rho_n(\sigma_i)),&& 1\leq i<n,
\end{align*}
where $\rho^\lambda$ is the braid group representation associated to $(\cB^\lambda, t)$, $\psi_n$ was defined in \ref{psi n}, and $j_a(X_a), t(a,a)\in \ku^\times$ are the scalars associated with the braid $c^\lambda_{X_a,X_a}.$
%with $j_a(X_a)$ and $t(a,a)$.
%Let $\cB^{(\lambda, j, t)}$ be a braided zesting of the braided category $\cB$. The image of the braidgroup representation $\tilde{\rho}: \mathbb{C} B_n\to\operatorname{End}_{\cB^{(\lambda, j, t)}}(X_a^{\tilde{\otimes} n})$ is projectively equivalent to $\rho: \mathbb{C} B_n\to\operatorname{End}_{\cB}(X_a^{\otimes n})$. Moreover, the scalar is given by $j_a(X_a) t(a,a)$, for all $a\in A$.
\end{theorem}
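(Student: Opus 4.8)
\emph{Proof strategy.} The plan is to compute $\rho^\lambda_n(\sigma_i)$ directly in the graphical calculus and match it term by term against $j_a(X_a)\,t(a,a)\,\psi_n(\rho_n(\sigma_i))$. The first observation is that, since $X_a$ is simple, both $j_a(X_a)\in\Aut_\cB(X_a)=\ku^\times$ and $t(a,a)\in\ku^\times$ are scalars, so the zested braiding of Figure \ref{fig:braiding} specialized to the diagonal case reads
\[
c^\lambda_{X_a,X_a}=j_a(X_a)\,t(a,a)\,\bigl(c_{X_a,X_a}\otimes\id_{\lambda(a,a)}\bigr)\colon\ X_a\ot X_a\ot\lambda(a,a)\to X_a\ot X_a\ot\lambda(a,a).
\]
Thus the only zesting data entering a single crossing is the global scalar $j_a(X_a)t(a,a)$ together with the fact that the crossing now drags along an extra $\lambda(a,a)$-strand that it does not touch. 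The content of the theorem is therefore entirely about the associators: one has $\rho^\lambda_n(\sigma_i)=(\boldsymbol{a}^\lambda)^{-1}\circ(\id^{\otlam\, i-1}\otlam c^\lambda_{X_a,X_a}\otlam\id^{\otlam\, n-i-1})\circ\boldsymbol{a}^\lambda$, and I must understand the composite of zested associators $\boldsymbol{a}^\lambda$ that brings $X_a^{\otlam n}$ into the standard left-parenthesized form.

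\emph{Key steps.} First I would expand $\boldsymbol{a}^\lambda$ as a composition of the elementary associators $\boldsymbol{a}^\lambda_{V,W,Z}$ of Proposition \ref{prop:zesting-cat}. Graphically, each such elementary associator is the ordinary $\cB$-associator acting on the object-strands (invisible after strictness of $\cB$) together with the isomorphism $\lambda_{g_1,g_2,g_3}$ on the $\lambda$-strands and a single crossing of $\lambda(g_1,g_2)$ past $Z_{g_3}$ coming from the half-braiding $\{c'_{\lambda(\cdot,\cdot),-}\}$. Consequently the $X_a$-strands in $\rho^\lambda_n(\sigma_i)$ form exactly the same picture as in $\rho_n(\sigma_i)\in\End_\cB(X_a^{\otimes n})$, while the $\lambda$-strands carry a word in the $\lambda_{g_1,g_2,g_3}$'s together with the single $t(a,a)$-crossing from $c^\lambda_{X_a,X_a}$. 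The claim reduces to showing that this $\lambda$-strand word collapses to the identity once the scalar $j_a(X_a)t(a,a)$ is pulled out in front, leaving on the object-strands precisely $\psi_n$ applied to $(\boldsymbol{a})^{-1}\circ(\id^{\otimes i-1}\otimes c_{X_a,X_a}\otimes\id^{\otimes n-i-1})\circ\boldsymbol{a}=\rho_n(\sigma_i)$. I would prove the collapse by induction on $n$ using the inductive definition of $\psi_n$: the base case $n=2$ is exactly the displayed formula for $c^\lambda_{X_a,X_a}$ above (here $\psi_2(f)=f\otimes\id_{\lambda(a,a)}$ and no associator is needed), and the inductive step amounts to checking that the two extra elementary associators introduced on passing from $X_a^{\otlam\, n-1}$ to $X_a^{\otlam\, n}$ conjugate the already-known expression correctly — which is precisely the associative zesting constraint of Figure \ref{axiom:asso} with several of the four group elements set equal to $a$, i.e.\ the same diagrammatic manipulation already used to verify the hexagons in the proof of the braiding proposition.

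\emph{Main obstacle.} The hard part will be the bookkeeping of the $\lambda$-strands: verifying that the accumulated $\lambda_{g_1,g_2,g_3}$-maps coming from $\boldsymbol{a}^\lambda$, together with the $t(a,a)$-crossing in the single nontrivial factor $c^\lambda_{X_a,X_a}$, really do cancel against the $\lambda_{g_1,g_2,g_3}$-maps coming from $(\boldsymbol{a}^\lambda)^{-1}$ on the other side. This is where the pentagon-type identity of Figure \ref{axiom:asso} and the naturality of the half-braiding $\{c'_{\lambda(\cdot,\cdot),-}\}$ must be invoked carefully, and where one must track the direction of the crossings (our convention that the $(i+1)$-st strand passes over the $i$-th); this is essentially the only place where a genuine computation, rather than a formal argument, is required. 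Once this cancellation is established the theorem follows. As a consistency check, the assignment $\sigma_i\mapsto j_a(X_a)t(a,a)\,\psi_n(\rho_n(\sigma_i))$ is automatically a homomorphism of $\mathbb{B}_n$ because $\psi_n$ is an algebra isomorphism and the scalar $j_a(X_a)t(a,a)$ is independent of $i$, so no further relations need to be checked.
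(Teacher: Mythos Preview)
Your approach is correct and close in spirit to the paper's, but the paper organizes the argument so that the ``main obstacle'' you anticipate never materializes. Rather than inducting on $n$ uniformly over all $i$, the paper first treats the single generator $\sigma_{n-1}$ and then reduces every other $\sigma_i$ to that case. For $\sigma_{n-1}$, because $X_a^{\otlam n}=X_a^{\otlam n-1}\otlam X_a$ is left-parenthesized, only the \emph{single} elementary associator $\boldsymbol{a}^\lambda_{X_a^{\otlam n-2},X_a,X_a}$ is involved; the $\lambda_{(n-2)a,a,a}$-map it contributes is immediately undone by its inverse on the other side (the middle morphism $c_{X_a,X_a}\otimes\id$ does not touch the $\lambda$-strands), and the surviving half-braiding crossing of $\lambda((n{-}2)a,a)$ past the last $X_a$ is exactly the one crossing in $\psi_n(\rho_n(\sigma_{n-1}))$ that does not cancel by naturality. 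No appeal to the associative zesting constraint of Figure~\ref{axiom:asso} is needed. For $i<n-1$ the paper simply uses $\rho_n^\lambda(\sigma_i)=\rho_{i+1}^\lambda(\sigma_i)\otlam\id_{X_a^{\otlam(n-i-1)}}$, applies the already-established $\sigma_{n-1}$ case inside $\mathbb{B}_{i+1}$, and then invokes the recursive definition of $\psi_n$ to pass back to $n$ strands.

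Your uniform induction on $n$ would also work, but note that the inductive step you describe (two extra associators on passing from $n-1$ to $n$) only handles the generators $\sigma_i$ with $i<n-1$; the generator $\sigma_{n-1}$ is new and still requires the direct computation above. Once you separate that case out you recover the paper's argument, and you will find that the pentagon-type cancellation you worried about collapses to the trivial observation that $\lambda_{(n-2)a,a,a}$ meets its own inverse.
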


\begin{proof}
If $X_a\in \cB_a$ is a simple object, we have that $c^\lambda_{X_a,X_a}=j_a(X_a)t(a,a)c_{X_a,X_a}\ot \id_{\lambda(a,a)}$. A simple graphical computation shows that 

\begin{align*}
    \rho^\lambda_n(\sigma_{n-1})=&(\boldsymbol{a}^{\lambda}_{X_a^{\otlam n-2},X_a,X_a})^{-1}\circ (\id_{X^{\otlam n-2}}\otlam c^\lambda_{X_a,X_a})\circ \boldsymbol{a}^{\lambda}_{X_a^{\otlam n-2},X_a,X_a}\\
    =& j_a(X_a)t(a,a)(\boldsymbol{a}^{\lambda}_{X_a^{\otlam n-2},X_a,X_a})^{-1}\circ (\id_{X^{\otlam n-2}}\ot c_{X_a,X_a}\ot \id_{\lambda(a^{n -1},a)})\circ \boldsymbol{a}^{\lambda}_{X_a^{\otlam n-2},X_a,X_a}\\
    =&j_a(X_a)t(a,a)\psi_n(\rho_n(\sigma_{n-1})).
\end{align*}

%\begin{align*}
%    \rho^\lambda_n(\sigma_{n-1})=j_a(X_a)t(a,a)\psi_n(\rho_n(\sigma_{n-1})),&& n\in \Z^{>0},
%\end{align*}
For $i<n-1$, we have

\begin{align*}
\rho_{n}^\lambda(\sigma_i)&=\rho_{i+1}^\lambda(\sigma_i)\otlam \id_{X^{\otlam (n-(i+1))}}\\
&= j(X_a)t(a,a)\psi_{i+1}(\rho_{i+1}(\sigma_i))\otlam \id_{X^{\otlam (n-1-i)}}\\
&=j_{a}(X_a)t(a,a)\psi_n(\rho_{n}(\sigma_i)).
\end{align*}
\end{proof}

\begin{corollary}
A premodular category $\cB$ has property F if and only if any of its ribbon zestings $(\cB^{\lambda},t)$ has property F.
\end{corollary}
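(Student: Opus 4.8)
The plan is to bootstrap from the Theorem just proved, which expresses the zested braid group representation in terms of the original one after applying the algebra isomorphism $\psi_n$ and rescaling the generators by the fixed scalar $\mu:=j_a(X_a)\,t(a,a)$.

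First I would reduce to simple homogeneous objects. A zesting changes only the monoidal structure, so $\cB$ and $\cB^{(\lambda,j,t)}$ share the same underlying abelian category, hence the same simple objects, and each simple object is homogeneous for the $A$-grading. By the standard reduction for property F (see \cite{NRo}), a braided fusion category has property F if and only if $\rho_n^X(\mathbb{B}_n)$ is finite for all $n$ and all \emph{simple} $X$: naturality of the braiding makes $\iota^{\otimes n}$ a $\mathbb{B}_n$-equivariant morphism for every $\iota$, so an arbitrary object is a direct summand of a finite sum of simples and its braid image is contained in a wreath-type extension of the braid images of the simple objects. Thus it suffices to compare $\rho_n$ and $\rho_n^\lambda$ on a simple homogeneous $X_a\in\cB_a$.

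Fixing such an $X_a$ and $n>0$, the Theorem gives $\rho_n^\lambda(\sigma_i)=\mu\,\psi_n(\rho_n(\sigma_i))$ for $1\le i<n$, with $\mu$ independent of $i$ and $\psi_n$ an algebra isomorphism. Since $\mu$ is a scalar, hence central, and $\psi_n$ is multiplicative, this yields $\rho_n^\lambda(w)=\mu^{\,e(w)}\,\psi_n(\rho_n(w))$ for every braid word $w$, where $e\colon\mathbb{B}_n\to\Z$ is the exponent-sum homomorphism; equivalently $\rho_n^\lambda=\eta\otimes(\psi_n\circ\rho_n)$ for the linear character $\eta(w)=\mu^{\,e(w)}$ of $\mathbb{B}_n$. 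The crucial observation is that $\mu$ is a root of unity. Indeed $j_a(X_a)$ is a root of unity since $\Aut_\ot(\Id_\cB)\cong\widehat{U(\cB)}$ is finite; and since $\cB$ and $\cB^{(\lambda,f)}$ are both ribbon fusion categories (the latter by Proposition~\ref{prop:twist prop general}), Vafa's theorem (cf.\ \cite{BNRW}) makes the twists of their simple objects roots of unity, so reading the balancing axiom $\theta_{Y\ot Y}=c_{Y,Y}^{2}(\theta_Y\ot\theta_Y)$ off on each isotypic summand of $X_a\ot X_a$, respectively $X_a\otlam X_a$, shows that $c_{X_a,X_a}$ and $c^\lambda_{X_a,X_a}$ have finite order and hence root-of-unity eigenvalues. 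As $c^\lambda_{X_a,X_a}=\mu\,(c_{X_a,X_a}\ot\id_{\lambda(a,a)})$ with $\lambda(a,a)$ invertible, comparing eigenvalues forces $\mu$ to be a root of unity.

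The conclusion is then formal: $\eta$ has finite cyclic image $\langle\mu\rangle$, so $\rho_n^\lambda(\mathbb{B}_n)$ and $\psi_n(\rho_n(\mathbb{B}_n))$ each lie in the product of the other with the finite group of scalars $\langle\mu\rangle$; hence one is finite if and only if the other is, and since $\psi_n$ is an isomorphism this happens if and only if $\rho_n(\mathbb{B}_n)$ is finite. Letting $n$ and $X_a$ range over all positive integers and all simple homogeneous objects yields the stated equivalence. I expect the only genuine difficulty to be the verification that $\mu$ is a root of unity; once the Theorem is available, everything else is bookkeeping with the exponent-sum character and the reduction to simple objects.
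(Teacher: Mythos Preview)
Your proposal is correct and follows essentially the same route as the paper: use the preceding Theorem to write $\rho_n^\lambda(\sigma_i)=\mu\,\psi_n(\rho_n(\sigma_i))$, argue that $\mu=j_a(X_a)t(a,a)$ is a root of unity via Vafa's theorem, and conclude that the two braid images differ by a finite central factor. The paper's proof is much terser---it simply says the double braiding in any premodular category has finite order (after embedding in the Drinfeld center) and deduces that $\mu$ is a root of unity---whereas you spell out the reduction to simple homogeneous objects, the exponent-sum character $\eta$, and a separate argument that $j_a(X_a)$ is a root of unity via finiteness of $\Aut_\ot(\Id_\cB)$; these extra details are fine but not a genuinely different strategy.
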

\begin{proof}
Any premodular category can be included in a modular category, e.g., in its Drinfeld center.
Then,  it follows from Vafa's theorem \cite[Theorem 3.1.19]{BK}, \cite{VAFA1988421} that the double braiding in any premodular category has finite order. Then $j_a(X_a)t(a,a)$ is a root of unity, and $\rho^\lambda_n$ has finite image if and only if $\rho_n$ has finite image. 
\end{proof}

\section{Applications}\label{section: applications}

We now apply the theory of zesting to a number of familiar examples.

\subsection{Braided zesting of modular tensor categories}\label{braided zesting section}
We will apply the obstruction developed in the last section to our main case of interest. Let $\cB$ be a modular category and $A=U(\cB)$ the universal grading group. 
The maximal pointed fusion category in $\cB_{ad}=\cB_e$ is  centralized by the subcategory $\cB_{pt}$ generated by invertible objects \cite[Corollary 8.22.7]{EGNO}. In particular $\cB_{ad}\cap\cB_{pt}$ is a  symmetric pointed fusion category.

Recall that every symmetric pointed fusion category  has the form  $\Vec_S^\nu$, where $S$ is an abelian group, $\nu:S\to \Z/2$ is an additive group homomorphism and the braiding is given by $$c(a,b)=(-1)^{\nu(a)\nu(b)}\id_{a\otimes b}.$$  In particular, $\Vec_S^\nu$ is super-Tannakian in general, and Tannakian if $\nu$ is trivial.

\begin{proposition}
Let $\cB$ be a braided fusion category graded by $A$. If $\Vec_S^{\nu}\subset (\cB_e)_{pt}$, then for every $\lambda\in Z^2(A,S)$ the obstruction to the existence of an associative zesting is given by \[O_4(a_1,a_2,a_3,a_4)=(\lambda\cup_\nu\lambda) (a_1,a_2,a_3,a_4)=(-1)^{\nu(\lambda(a_1,a_2)\nu(\lambda(a_3,a_4))}.\]
In particular, if $S$ has odd order the obstruction automatically vanishes.
\end{proposition}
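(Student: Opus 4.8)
The plan is to compute the obstruction $4$-cocycle $\nu_\lambda$ from Figure \ref{fig:obstr} explicitly in the special case where the invertible objects $\lambda(a,b)$ all live in the symmetric subcategory $\Vec_S^\nu \subset (\cB_e)_{pt}$. By the Eilenberg--MacLane / Etingof--Nikshych--Ostrik machinery recalled in Proposition \ref{def: liftings and equivalent zestings} (and the preceding discussion), the obstruction class $PW(\beta)\in H^4(A,\ku^\times)$ to lifting a $2$-cocycle $\beta=[\lambda]\in Z^2(A,S)$ is represented by the scalar automorphism that measures the failure of the naive associator $\lambda_{g_1,g_2,g_3}$ (built out of the symmetric braiding of $\Vec_S^\nu$) to satisfy the pentagon-type identity of Figure \ref{axiom:asso}. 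Since in $\Vec_S^\nu$ the only nontrivial structural datum beyond the group law on $S$ is the symmetric braiding $c(x,y)=(-1)^{\nu(x)\nu(y)}\id_{x\otimes y}$, the whole computation reduces to bookkeeping of sign factors.

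First I would choose the canonical associator on $\Vec_S^\nu$ to be trivial (this is legitimate since $\Vec_S^\nu$ has trivial $3$-cocycle as a fusion category), so that $\lambda_{g_1,g_2,g_3}$ is literally the identity morphism $\lambda(g_1,g_2)\otimes\lambda(g_1g_2,g_3)\to\lambda(g_2,g_3)\otimes\lambda(g_1,g_2g_3)$ composed with whatever permutation isomorphism is needed to reorder the tensor factors --- and that permutation isomorphism is exactly a product of braidings $c(-,-)$ in $\Vec_S^\nu$. Tracing through Figure \ref{fig:obstr} (equivalently, applying the cup-product description $\delta(\lambda)=\lambda\cup_{c'}\lambda$ from \eqref{eq: associative zesting cochain condition} already used in the proof of Lemma \ref{prop:obst O1}), the obstruction evaluated on $(a_1,a_2,a_3,a_4)$ is precisely the scalar by which the braiding $c_{\lambda(a_3,a_4),\lambda(a_1,a_2)}$ acts, namely $(\lambda\cup_{c'}\lambda)(a_1,a_2,a_3,a_4)^{-1}$. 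Substituting $c(x,y)=(-1)^{\nu(x)\nu(y)}$ and using that $\nu$ is a group homomorphism (so $\nu$ of a product of objects is the sum of the $\nu$'s, working additively in $\Z/2$), this scalar becomes $(-1)^{\nu(\lambda(a_1,a_2))\nu(\lambda(a_3,a_4))}$, which is the claimed formula $O_4 = \lambda\cup_\nu\lambda$. The identification of this with the cup product $\lambda\cup_\nu\lambda$ in the sense of the pairing $S\times S\to\Z/2\to\ku^\times$ induced by $\nu$ is then formal.

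The final sentence is immediate: if $|S|$ is odd then $\nu:S\to\Z/2$ must be the zero homomorphism (a homomorphism from a group of odd order to a $2$-group is trivial), hence $\nu(\lambda(a_1,a_2))=0$ for all arguments and $O_4\equiv 1$, so the obstruction vanishes and every $\beta\in Z^2(A,S)$ lifts.

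The main obstacle I anticipate is not conceptual but organizational: correctly reading off from the braided graphical calculus in Figure \ref{fig:obstr} which \emph{specific} composite of braidings in $\Vec_S^\nu$ appears, keeping careful track of the fact that the relative half-braiding chosen for braided zesting is the \emph{reverse} braiding $c'$ (Definition \ref{def:braidedzesting}(i)), and checking that the sign contributions from all the other edges of the $4$-cocycle diagram (the ones not of the form $c_{\lambda(a_3,a_4),\lambda(a_1,a_2)}$) either cancel in pairs or are already absorbed into the chosen trivial associator. Once one commits to the trivialization used in Lemma \ref{prop:obst O1} --- $\lambda(a,b)=\lambda(b,a)$, all $\lambda$'s mutually commuting, $\nu(a,b)=\id$ --- this reduces to the single surviving cup-product term, and the computation is a short sign check.
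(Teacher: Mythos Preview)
The paper states this proposition without proof, treating it as a direct specialization of the obstruction $\nu_\lambda$ from Figure~\ref{fig:obstr}: once all the $\lambda(a,b)$ lie in the symmetric pointed subcategory $\Vec_S^\nu$ (with trivial associator) one may take every $\lambda_{g_1,g_2,g_3}$ to be the identity, and the diagram of Figure~\ref{fig:obstr} collapses to the single crossing, which contributes the braiding scalar $(-1)^{\nu(\lambda(a_1,a_2))\nu(\lambda(a_3,a_4))}$. Your proposal is exactly this computation spelled out, including the correct justification for the odd-order case (any homomorphism $S\to\Z/2$ is trivial), so it matches the paper's implicit argument and is correct.

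One small point worth tightening: you invoke the reverse half-braiding $c'$ from Definition~\ref{def:braidedzesting}(i), but that convention belongs to \emph{braided} zesting, whereas this proposition concerns the obstruction to \emph{associative} zesting. What is actually needed is just that the relative half-braiding on $\Vec_S^\nu\subset R_{\cB_e}(\cB)$, restricted to objects of $\Vec_S^\nu$, is the symmetric braiding $c(x,y)=(-1)^{\nu(x)\nu(y)}\id$; since $c$ is symmetric the distinction between $c$ and $c'$ is immaterial here and your computation goes through unchanged.
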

\begin{remark}
In practice, we overcome this obstruction by finding a $\lambda\in C^3(A,\ku^\times)$ so that 
$\delta(\lambda)(a_1,a_2,a_3,a_4)=O_4(a_1,a_2,a_3,a_4)$.  If $|S|$ is odd $O_4(a_1,a_2,a_3,a_4)\equiv 1$ and we may choose any $3$-cocycle $\lambda$ (e.g. $\lambda\equiv 1$), whereas if this obstruction does not vanish identically we must solve the linear system coming from $\delta(\lambda)(a_1,a_2,a_3,a_4)=O_4(a_1,a_2,a_3,a_4)$. Here the $3$-cochain $\lambda$ corresponds to the scalar associated with the map $\lambda_{a_1,a_2,a_3}:\lambda(a_1,a_2)\lambda(a_1a_2,a_3)\rightarrow\lambda(a_1,a_2a_3)\lambda(a_2,a_3)$.
\end{remark}

\subsection{Cyclic zesting}

\subsubsection{Cohomology of cyclic groups}

Let $C=\langle g\rangle$ be a cyclic  group of order $N$ and $M$ an abelian group. We will identify $C$ with $\Z/N$, via the isomorphism $\Z/N \to C, a \mapsto g^a$. We define some cochains associated with any $\nu \in M$ that will be useful later.
\begin{align}
\beta_\nu(i)&=i\nu, \label{def beta}\\
\gamma_\nu(i,j)&=\begin{cases} 0 & \text{ if } i+j< N\\ \nu & \text{ if } i+j\geq  N \end{cases},\label{def gamma mu}\\
\lambda_\nu(i,j,k)&=\begin{cases} 0 & \text{ if } i+j< N\\ k\nu & \text{ if } i+j\geq  N \end{cases}\label{def lambda mu},
\end{align}where $0\leq i,j,k \leq N$.  By a straightforward computation by cases, we have that

\begin{align}
\delta(\beta_\nu)&=\gamma_{N\nu}\label{delta beta}\\
\delta(\gamma_\nu)&=0\\
 \delta(\lambda_\nu)(i,j,k,l)&=\begin{cases} N\nu & \text{ if } i+j, k+l \geq N , \\ 0 & \text{ otherwise.} \end{cases} \label{delta lambda}
\end{align}

Hence, $\gamma_\nu \in Z^2(\Z/N,M)$, and  $\gamma_{N\nu}\in B^2(\Z/N,M)$.  Moreover for all $\nu \in M_N:=\{m\in M: Nm=0\}$ we have $\lambda_\nu\in Z^3(\Z/N,M)$.  It is well known that the induced group homomorphisms 
\begin{align}\label{coho cyclic}
\gamma: M/NM\to H^2(\Z/N,M), && \lambda:M_N\to H^3(\Z/N,M)
\end{align}are in fact group isomorphisms, see for more details \cite{weibel_1994}.

\subsubsection{Braided pointed fusion categories from cyclic groups}\label{subsec: cyclic braid }

Let $\cC(C,\Theta)$ be a braided pointed fusion category with $\Inv(\cB)=\langle g\rangle=C$ a cyclic group of order $N$ and ribbon structure $\Theta(g)$ such that $\dim(a)=1$ for all $a\in C$. 
We have that $\Inv(\mathcal{Z}_2(\cC(C,\Theta)))=\langle g^m\rangle$, where $m=\operatorname{Ord}(\Theta_g^2)$, and

\begin{itemize}
\item[(i)] $\cC(C,\Theta)$ is modular if and only if $\Theta_g^2 \in \ku^\times$ has order $N$.
\item[(ii)] $\cC(C,\Theta)$ is symmetric if and only if $\Theta_g^2=1$, and this case
\begin{itemize}
\item[(a)] $\cC(C,\Theta)$ is Tannakian if  and only if $\Theta_g=1$
\item[(b)] $\cC(C,\Theta)$ is super-Tannakian if and only if $\Theta_g=-1$.
\end{itemize}
\end{itemize}In the symmetric case the ribbon is a character (trivial in the Tannakian case), and the braiding can be described as 
\begin{equation}\label{eq: braid symmetric cat}
 c_{g_1,g_2}=\begin{cases} -\id_{g_1\otimes g_2}, & \text{ if } \Theta_{g_1}=\Theta_{g_2}=-1,\\ \id_{g_1\otimes g_2}, & \text{ otherwise}. \end{cases}  
\end{equation}

\subsubsection{Cyclic braided zestings}\label{section: notation cyclic zesting}

In this section we fix $\cB$ a braided fusion category and $C\subset \Inv(\cB)$ a cyclic group of order $N$ such that $\chi: C\to \Aut_\ot(\Id_\cB)$ is injective (equivalently, $C$ contains no non-trivial transparent objects). By Corollary \ref{corol grading}, $\cB$ has a faithful $\widehat{C}$-grading 
\begin{align}\label{equ: grading cyclic}
\cB_\gamma=\{X: \chi_{a}(X)=\gamma(a)\id_X, \quad \forall a\in C\}, && \gamma \in \widehat{C}.
\end{align}
Then 
\begin{equation}\label{eq: definition C perp}
\cB_1\cap C=\ker(\chi_{C,C})= C^{\perp},
\end{equation}
where $1\in \widehat{C}$ is the trivial character and $\ker(\chi_{C,C})=\{a\in C: \chi_a(b)=1, \forall b\in C\}$. Notice that while we usually denote the trivial component of our grading by $\cB_e$, here the grading is by the dual group $\widehat{C}$ of characters so we denote the trivial component by $\cB_1$ to emphasize this. We are interested in describing the braided $\widehat{C}$-zesting  induced by elements in $H^2(\widehat{C}, C^{\perp})$.

Let $g\in C$ be a generator. Thus $\widehat{C}$ can canonically be identified  with $\{\mu\in \ku^\times : \mu^N=1\}$, via $\gamma\mapsto \gamma (g)$. Hence, in order fix a generator of $\widehat{C}$, from now we fix a primitive $N$th root of unity $q\in \ku^\times$. Under the isomorphism $\Z/N \to \widehat{C}, a\mapsto [g^b\mapsto q^{ab}]$, we have that $\cB$ is $\Z/N$-graded with

\begin{align}\label{cyclic grading}
\cB_a=\{X: \chi_{g}(X)=q^a\id_X\}, &&  a\in \Z/N.
\end{align}

Let 
\begin{equation}\label{eq: generator Z_2}
h:=g^{[C:C^{\perp}]}
\end{equation}be a generator of $C^{\perp}$ and  $\Theta:C^{\perp}\to \ku^\times$ be the \emph{canonical} ribbon twist. 

\subsubsection{Associative zestings}

With the aim of unifying our results and formulas we define  $\epsilon\in \{1,0\}$ depending on an integer $a\in \Z$ and $\Theta_h \in \{\pm 1\}$ as
\begin{align}\label{def epsilon}
\epsilon= \begin{cases} 1 & \text{ if } \Theta_{h^a}=-1,\\ 0 & \text{ if } \Theta_{h^a}=1. \end{cases}
\end{align}
Additionally, we fix $\zeta \in \ku^\times$ a primitive root of unity of order $2N$ such that $\zeta^2=q$. 

In the next proposition we will follows the notation introduced in Section \ref{section: notation cyclic zesting}. 

\begin{proposition}\label{prop: assoc zestings cyclic}
Let $\cB$ a braided fusion category and $C=\langle g\rangle \subset \Inv(\cB)$ a cyclic group of order $N$ such that $\chi: C\to \Aut_\ot(\Id_\cB)$ is injective.  The equivalence classes of associative zestings of $\cB$ with respect of the grading given in \eqref{equ: grading cyclic} and associated  2-cocycle in $Z^2(\widehat{C},C^{\perp})$ are parametrized by  $\Z/m \times \Z/N$. The associative zesting corresponding to a pair $(a,b)\in \Z/m \times \Z/N$ is  given by 
\begin{align}
 \label{lambda 1} \lambda_a(i,j)=\begin{cases} \unit & \text{ if } i+j< N\\ h^a & \text{ if } i+j\geq  N \end{cases}\\ 
\label{lambda 2}
\lambda_b(i,j,k)=\begin{cases} 1 & \text{ if } i+j< N\\ \zeta^{k(\epsilon+2b)} & \text{ if } i+j\geq  N \end{cases}
\end{align}
where $0\leq i,j,k < N$, $C^{\perp}$ was defined in \eqref{eq: definition C perp},  $m=|C^{\perp}|$, $\zeta^2=q$, $h=g^{[C:C^{\perp}]}$, and $\epsilon$ is defined in \eqref{def epsilon}.
\end{proposition}
\begin{proof}
Since $C^{\perp}$ has order $m$, the 2-cocycles $\lambda_a$ with $a\in \Z/m$ form a set of representatives of $H^2(\Z/N,C^{\perp})$. 

If $\Theta_h=1$, we have that $C^{\perp}$ is Tannakian and then the 4-cocycle obstruction  $O_4(\lambda_a)$ automatically vanishes. Hence associative zestings are parametrized by pair $(a,b)\in \Z/m \times \Z/N$, with corresponding zesting

\begin{align}
\lambda_a(i,j)=\begin{cases} \unit & \text{ if } i+j< N\\ h^a & \text{ if } i+j\geq  N \end{cases}\\ 
\lambda_b(i,j,k)=\begin{cases} 1 & \text{ if } i+j< N\\ q^{kb} & \text{ if } i+j\geq  N \end{cases}
\end{align}
where $0\leq i,j,k < N$.

If $\Theta_h=-1$, we have that  $C^{\perp}$ is super-Tannakian. In particular $C^{\perp}$ has even order, and let $v:C^{\perp}\to \Z/2$ the non-trivial group homomorphism. The 4-cocycle obstruction $O_4(\lambda_a)$ 
is given by 
\begin{align*}
O_4(i,j,k,l)&=(-1)^{v(\lambda_a(i,j))v(\lambda_a(k,l))}\\
&=\begin{cases} (-1)^a & \text{ if } i+j, k+l \geq  N\\ 1 & \text{ otherwise. } \end{cases}
\end{align*}Note that the set of all solutions to the equation $Q^N=-1$ can be parametrized as $\zeta^{1+2b}$, with $b\in \Z/N$. Hence, for $a$ odd  using  \eqref{delta lambda} we have that the associative zestings is parametrized by pairs $(a,b)\in \Z/m\times \Z/N$, with 
\begin{align}
 \lambda_a(i,j)=\begin{cases} \unit & \text{ if } i+j< N\\ h^a & \text{ if } i+j\geq  N \end{cases}\\ 
\lambda_b(i,j,k)=\begin{cases} 1 & \text{ if } i+j< N\\ \zeta^{k(1+2b)} & \text{ if } i+j\geq  N \end{cases}
\end{align}
where $0\leq i,j,k < N$. 
\end{proof}

\subsubsection{Braided zestings}

\begin{proposition}\label{prop: braided cyclic zesting}
Let $(a,b)\in 	\Z/m\times \Z/N$ and  $(\lambda_a,\lambda_b)$ the associative zesting constructed in Proposition  \ref{prop: assoc zestings cyclic}. Then  
\begin{itemize}
\item[(i)] $(\lambda_a,\lambda_b)$ admits a braided zesting with $j=\id$ if and only if $a\frac{N}{m}=\epsilon+2b \mod N$. 
\item[(ii)] If $a,b$ satisfy the conditions in (i), there are $N$ different braided zestings, parametrized by a choice of an element in $\{ s\in \ku^\times: s^N=\zeta^{-(\epsilon+2b)} \}.$  
\item[(iii)] Explicitly, the braided zesting associated with $s$ as in (ii) is given by  $j_y=\Id_\cB$ for all $y \in \Z/N$ and 
\begin{align*}
t_s(i,j)=s^{-ij}\id_{\lambda_{a}(i,j)}, && 	0\leq i,j < N.
\end{align*}
\end{itemize}
\end{proposition}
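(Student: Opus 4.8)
The plan is to reduce the braided zesting conditions (BZ1) and (BZ2) to the scalar equations \eqref{eqn: braided zest scalar 1} and \eqref{eqn: braided zest scalar 2}, specialized to the cyclic associative zesting $(\lambda_a,\lambda_b)$ of Proposition \ref{prop: assoc zestings cyclic}, and then solve them directly by cases on whether the relevant indices wrap around $N$. Throughout I would take $j=\id$ at the outset; since $A=\widehat C$ but we are zesting by a subgroup that lives inside $\cB_{pt}$, Corollary \ref{corol: triviality of j} applies only when $\chi_{\lambda(a,b)}|_{\cB_e}=\Id$, which by \eqref{cyclic grading} is exactly the statement that $h^a$ is transparent, i.e. $h^a\in\mathcal{Z}_2(C)$ --- and that is automatic since $h$ generates $\mathcal{Z}_2(C)$. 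So the choice $j=\id$ is legitimate and condition (BZ1) becomes $\chi_{\lambda(i,j)}|_{\cB_e}=\Id_{\cB_e}$, giving $\omega\equiv 1$; this is what lets us use the normalized form \eqref{eqn: braided zest scalar 2} with $\omega(1,2;3)=1$.

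First I would write $t(i,j)=c(i,j)\,\id_{\lambda_a(i,j)}$ for an unknown $c:\Z/N\times\Z/N\to\ku^\times$ (legitimate because $\lambda_a(i,j)$ is invertible, so every iso $\lambda_a(i,j)\to\lambda_a(j,i)$ is a scalar, using $\lambda_a(i,j)\cong\lambda_a(j,i)$ as $h^a$ only depends on whether $i+j\geq N$). Plugging into \eqref{eqn: braided zest scalar 1} with $j=\id$, the left side is $\lambda_b(i_1,i_2,i_3)\,c(i_1,i_2+i_3)\,\lambda_b(i_2,i_3,i_1)$ and the right side is $c(i_1,i_2)\,\lambda_b(i_2,i_1,i_3)\,c(i_1,i_3)$. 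Using the explicit formula \eqref{lambda 2}, $\lambda_b(x,y,z)$ is trivial unless $x+y\geq N$, in which case it equals $\zeta^{z(\epsilon+2b)}$. Splitting into the cases determined by which of $i_1+i_2$, $i_2+i_3$, $i_1+(i_2+i_3)$, $i_2+i_1$ exceed $N$, the equation collapses to the single multiplicative constraint that forces $c$ to be ``$\zeta^{(\epsilon+2b)}$ times a bicharacter,'' and in particular $c(i,j)=s^{-ij}$ for a scalar $s$ with $s^N=\zeta^{-(\epsilon+2b)}$. The carry term $\zeta^{z(\epsilon+2b)}$ appearing whenever two indices sum past $N$ is precisely what produces the $N$-th root condition on $s$: comparing the coefficient of the ``wraparound'' contributions on both sides gives $c(i_1,i_2+i_3)/(c(i_1,i_2)c(i_1,i_3))$ equal to $\zeta$ to a power counting the number of carries, which after summing over a full period yields $s^N=\zeta^{-(\epsilon+2b)}$. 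One then checks \eqref{eqn: braided zest scalar 2} is satisfied by the same $c$ --- it should reduce, after the same case analysis, to the symmetry $c(i,j)=c(j,i)$ together with the already-established bicharacter relation, which $s^{-ij}$ clearly satisfies; I would verify the normalization $c(0,j)=c(i,0)=1$ matches $t(e,g)=t(g,e)=\id$ as well.

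For part (i), the existence criterion: a braided zesting with $j=\id$ exists iff the constraints above are consistent, i.e. iff there is an $s$ with $s^N=\zeta^{-(\epsilon+2b)}$ \emph{and} the resulting $t$ genuinely satisfies both (BZ1)-type scalar equations --- but (BZ1) imposes more. Tracking (BZ1): $\chi_{\lambda_a(i,j)}$ acting on a simple $X_k\in\cB_k$ is $q^{?}$ where the exponent records $\chi_{h^a}(X_k)$. By \eqref{cyclic grading} and the definition of $h$ in \eqref{eq: generator Z_2}, $\chi_g$ acts on $\cB_k$ by $q^k$, so $\chi_{h}=\chi_{g^{N/m}}$ acts by $q^{(N/m)k}$ and $\chi_{h^a}$ by $q^{a(N/m)k}$. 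On the other hand the ``self-consistency'' of $t$ forces, through the diagonal of \eqref{eqn: braided zest scalar 1}/\eqref{eqn: braided zest scalar 2}, a relation between $\chi_{\lambda_a}$ and the carry scalar $\zeta^{(\epsilon+2b)}$; matching $q^{a(N/m)k}=\zeta^{2\cdot(\text{something})k}$ and recalling $\zeta^2=q$ gives the congruence $a\tfrac{N}{m}\equiv \epsilon+2b\pmod N$. I would present this as: the equation system has a solution iff this congruence holds, and then part (ii) counts solutions ($s$ ranges over the $N$-element set $\{s:s^N=\zeta^{-(\epsilon+2b)}\}$, a torsor over $\mu_N$, consistent with the Lemma after Definition \ref{def:braidedzesting} that braided zestings with fixed $\lambda,j$ form a torsor over bicharacters of $\Z/N$, which here is $\mu_N$), and part (iii) just records the explicit $t_s(i,j)=s^{-ij}$.

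\textbf{Expected main obstacle.} The delicate point is the bookkeeping of ``carries'' (indices summing past $N$) across the four-fold case split in \eqref{eqn: braided zest scalar 1}, and especially extracting the congruence in (i) cleanly: one has to make sure the exponent of $\zeta$ collected from the wraparound terms of $\lambda_b$ matches the exponent of $q$ coming from $\chi_{\lambda_a}=\chi_{h^a}$ acting on graded components, using $\zeta^2=q$ and $h=g^{N/m}$ correctly --- an off-by-a-factor-of-two or a sign ($\epsilon$) error is easy here. I expect that once the correct normalization $\zeta$ (order $2N$, $\zeta^2=q$) and the correct generator $h$ are pinned down, the computation is mechanical, but getting the statement $a\tfrac Nm\equiv\epsilon+2b$ rather than some variant is the step that needs genuine care. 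The rest --- that $s^{-ij}$ is forced and that any such $s$ works --- follows from the torsor structure already established, so I would invoke that lemma rather than re-verifying associativity of the construction from scratch.
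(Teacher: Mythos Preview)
Your direct attack on the scalar equations \eqref{eqn: braided zest scalar 1}--\eqref{eqn: braided zest scalar 2} is a legitimate alternative to the paper's route through the obstructions $O_1$, $O_2$ and Theorem~\ref{th: obstruction}, but there is a genuine error in how you handle $\omega$, and it is exactly this error that hides the congruence in (i).

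You write that with $j=\id$ condition (BZ1) ``becomes $\chi_{\lambda(i,j)}|_{\cB_e}=\Id_{\cB_e}$, giving $\omega\equiv 1$''. The first clause is correct: (BZ1) only asks that $\omega(i,j)=\chi_{\lambda_a(i,j)}$ lie in $\Aut_\ot^A(\Id_\cB)\cong\widehat{A}$, i.e.\ be trivial on $\cB_e$, and this holds because $h^a\in\mathcal Z_2(C)\subset\cB_0$. But the conclusion $\omega\equiv 1$ is false. As an element of $\widehat{\Z/N}$ one has, for $i+j\geq N$,
\[
\omega(i,j)(k)=\chi_{h^a}(X_k)=q^{a\frac{N}{m}k},
\]
which is the nontrivial character unless $a\equiv 0$. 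This $\omega(i,j;k)$ is exactly the factor sitting in front of \eqref{eqn: braided zest scalar 2}, and it is \emph{the} source of the congruence: carrying out the case $i_1+i_2\geq N$ in \eqref{eqn: braided zest scalar 2} with $t(i,j)=s^{-ij}$ and $s^N=\zeta^{-(\epsilon+2b)}$ gives, after the $\lambda_b$ and carry terms cancel pairwise,
\[
q^{a\frac{N}{m}\,i_3}\,\zeta^{-2(\epsilon+2b)\,i_3}=1\quad\text{for all }i_3,
\]
i.e.\ $a\tfrac{N}{m}\equiv \epsilon+2b\pmod N$. If you set $\omega=1$ as you propose, \eqref{eqn: braided zest scalar 2} instead forces $\epsilon+2b\equiv 0$, which is wrong. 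Your later paragraph tries to recover the congruence ``through (BZ1)'', but (BZ1) imposes nothing further here---it is a membership condition, not an equation---so that paragraph has no content as written. The fix is simply to keep $\omega(i,j;k)=q^{a(N/m)k}$ in \eqref{eqn: braided zest scalar 2}; then your case analysis on carries goes through and yields (i)--(iii) directly. By contrast, the paper packages the same computation as $O_2(b,s)(i,j|k)=\lambda_b(i,j,k)^2$ and compares its class in $H^2(\Z/N,\widehat{\Z/N})$ with $[\chi_{\lambda_a}]$, which amounts to the same identity $q^{a(N/m)}=\zeta^{2(\epsilon+2b)}$.
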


\begin{proof}
In order to compute the 2-cocycles $O_1(\lambda_b)$, we can take initially $t(i,j)=\id_{\lambda_a(i,j)}$. Then
\[O_1(\lambda_b)(i|j,k)=\frac{\lambda_b(i,j,k)\lambda_b(j,k,i)}{\lambda_b(j,i,k)}=\lambda_b(j,k,i) \]
Since $H^2(\Z/N,\ku^\times)=0$, we can redefine the isomorphisms
\begin{align*}
t(i,j)=l(i,j)\id_{\lambda(i,j)}, && l(i,j)\in \ku^\times. && i,j \in\Z/N,    
\end{align*}
to that satisfy the equation in Figure \ref{fig:unnormalized-1}. We need to choose $s$ so that $s^N=\zeta^{-(\epsilon+2b)}$. Then define 
\begin{equation}
    l(i,j)= s^{-ij}
\end{equation}where $0\leq i,j,k < N$. In fact, by \eqref{delta beta}

\begin{align}
    \frac{l(i,k)l(j,k)}{l(i+j,k)}=\delta(\beta_s)^k(i,j)=\lambda_b(i,j,k)^{-1},
\end{align}
%\begin{align}   \frac{l(i+j,k)}{l(i,k)l(j,k)}=\frac{s^{-([i+j])k}}{s^{-ik}s^{-jk}}=s^{-k\big ( [i+j]-i-j\big )}=s^{kNu}=q^{-ku b}=\lambda_b(i,j,k)^{-1}\end{align}bwhere $[i+j]$ is the remainder of $i+j$ upon division  by $N$ so that $i+j=uN+[i+j]$ with $u =0$ if $i+j<N$ and $u=1$ if $i+j\geq N$.
and since $l(i,j)=l(j,i)$, we have that
\[\frac{l(i,k)l(i,j)}{l(i,j+k)}=\lambda_b(j,k,i)=O_1(\lambda_b)(i|j,k).\]
Now,

\begin{align*}
O_2(b,s)(i,j|k)=&\frac{l(i,k)l(j,k)}{l(i+j,k)}\lambda_b(i,j,k)\\
=&\lambda_b(i,j,k)^{2}
\end{align*}
%where $$hence $[O_2(b,s)]=-2b\in \Z/N\cong  H^2(\Z/N,\Z/N)$.
Finally, since $\chi_{h^a}(X_k)=q^{a\frac{N}{m}k}$, we get that

\begin{align}
 \chi_{\lambda_a(i,j)(X_1)}O_2(\lambda_a,\lambda_b)(i,j|1)=\begin{cases} 1 & \text{ if } i+j< N\\ q^{a\frac{N}{m}-(\epsilon+2b)} & \text{ if } i+j\geq  N, \end{cases}
\end{align}
that is $[\chi_{\lambda_a}/O_2]= a\frac{N}{m}-(\epsilon+2b) \in \Z/N\cong  H^2(\Z/N,\widehat{\Z/N})$. Then if $a\frac{N}{m}\neq \epsilon+2b \mod N$ the associative zesting $(\lambda_a,\lambda_b)$ does not admits a braided zesting with $j=\id$ and if $a\frac{N}{m}= \epsilon+2b \mod N$, $[O_2(\lambda)]=[\chi_{\lambda}]$, so $j_t=\id$ and $t_s(i,j)=s^{-ij}\id_{\lambda_a(i,j)}$ define a braided zesting. \end{proof}

\subsubsection{Ribbon zesting and its modular data}

\begin{proposition}\label{lemma: ribbon cyclic}
Let $(\lambda_a,\lambda_b,\id,t_s)$ be a braided zesting constructed in Proposition \ref{prop: braided cyclic zesting}.
If $\cB$ has a ribbon twist $\theta$ such that $\theta_{h^a}=\Theta_{h^a}$ then a ribbon zesting $f:\Z/N\to \ku^\times$ is defined by
\begin{align}
f(i)= & s^{-i^2},  && 0\leq i< N,
\end{align} 
and its  modular data is given by

\begin{align}\label{T-matrix cyclic}
T_{X_i,X_i}^\lambda= s^{-i^2}T_{X_i,X_i}, &&  0\leq i < N, X_i\in \cB_i,
\end{align}
\begin{align}\label{S-matrix cyclic}
S_{X_i,Y_j}^\lambda=s^{2ij}S_{X_i,Y_j}, &&  0\leq i ,j < N,  X_i\in \cB_i, Y_j\in \cB_j.
\end{align}
\end{proposition}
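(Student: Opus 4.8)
The plan is to verify directly that the proposed function $f(i) = s^{-i^2}$ satisfies the twist zesting condition from Corollary~\ref{lem:twist}(a), and then to plug $f$, $t^{(2)}$, and the associative zesting data $\lambda_a, \lambda_b$ into the modular data formulas of Theorem~\ref{Thm: modular data of zesting}. First I would record the relevant scalars for the braided zesting $(\lambda_a,\lambda_b,\id,t_s)$ built in Proposition~\ref{prop: braided cyclic zesting}: we have $j=\id$ (so all $j_a(X_b)=1$ and $m(j,X_a,Y_b)=1$), $t_s(i,j)=s^{-ij}\id$, hence $t^{(2)}(i,j)=t_s(j,i)t_s(i,j)=s^{-2ij}$, and $\chi_{\lambda_a(i,j)}(X_k)=q^{a\frac{N}{m}k}=q^{(\epsilon+2b)k}=\zeta^{2(\epsilon+2b)k}=s^{-2Nk}$ when $i+j\geq N$ (using $a\frac{N}{m}=\epsilon+2b\bmod N$ from Proposition~\ref{prop: braided cyclic zesting}(i) and $s^N=\zeta^{-(\epsilon+2b)}$), and $=1$ otherwise. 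Similarly $\theta_{\lambda_a(i,j)}=\theta_{h^a}=\Theta_{h^a}$ when $i+j\geq N$; here I would use $\Theta_{h^a}=(-1)^\epsilon$ together with the relation $s^N=\zeta^{-(\epsilon+2b)}$ and $\zeta^{2N}=1$ to rewrite $\Theta_{h^a}$ in terms of $s$: indeed $s^{2N}=\zeta^{-2(\epsilon+2b)}=\zeta^{-2\epsilon}=(-1)^{-\epsilon}$ up to the order-$2N$ constraint, so $\Theta_{h^a}=s^{2N}$ (I would double-check this sign bookkeeping carefully).

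Next I would check condition (a) of Corollary~\ref{lem:twist}, namely
\[
f(i+j)\,\chi_{\lambda_a(i,j)}(i+j)\,\theta_{\lambda_a(i,j)} = f(i)f(j)\,t^{(2)}(i,j).
\]
When $i+j<N$ (so $i+j$ means the honest sum, $\lambda_a(i,j)=\unit$) this reads $s^{-(i+j)^2}=s^{-i^2}s^{-j^2}s^{-2ij}$, which holds identically. When $i+j\geq N$, the left side index is $i+j-N$, so $f(i+j)=s^{-(i+j-N)^2}=s^{-(i+j)^2}s^{2N(i+j)}s^{-N^2}$, and combining with $\chi_{\lambda_a(i,j)}(i+j)=s^{-2N(i+j-N)}$ and $\theta_{\lambda_a(i,j)}=s^{2N}$ (from the paragraph above) one needs $s^{-(i+j)^2}\cdot s^{2N(i+j)}s^{-N^2}\cdot s^{-2N(i+j)+2N^2}\cdot s^{2N}=s^{-(i+j)^2}s^{-2ij}$, i.e. $s^{-N^2+2N^2+2N}=s^{N^2+2N}\stackrel{?}{=}1$; this is where I expect the main obstacle — it is a pure root-of-unity computation that must come out trivial using $s^N=\zeta^{-(\epsilon+2b)}$ and $\zeta^{2N}=1$, and getting the powers of $N$, the parity of $N$, and the $\epsilon$-dependence exactly right is the delicate point. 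I would organize this as the single nontrivial lemma-within-the-proof, possibly splitting into cases on the parity of $N$ and on $\Theta_h=\pm1$ as in Proposition~\ref{prop: assoc zestings cyclic}. The ribbon refinement, condition (b) of Corollary~\ref{lem:twist}, $f(i)=\chi_{\lambda_a(i,-i)}(i)\theta_{\lambda_a(i,-i)}f(-i)$, is then a similar but shorter check: $f(-i)=f(N-i)=s^{-(N-i)^2}$, and one compares with $f(i)=s^{-i^2}$ via the $\chi$ and $\theta$ factors for $\lambda_a(i,N-i)$ (note $i+(N-i)=N\geq N$ so these are the nontrivial values).

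Finally, with $f$ validated, the modular data formulas fall out by substitution. For the $T$-matrix, \eqref{formula T, zesting} gives immediately $T^\lambda_{X_i,X_i}=f(i)T_{X_i,X_i}=s^{-i^2}T_{X_i,X_i}$. For the $S$-matrix I would apply \eqref{formula S, zesting} with $a=i$, $b=j$: since $j=\id$ we get $m(j,-,-)=1$; the factor $\dim(\lambda_a(i-j,-i+j))\dim(\lambda_a(i,-j))$ equals $1$ because all objects $h^a$ of the symmetric pointed subcategory $\mathcal{Z}_2(C)$ have dimension $1$; the factor $S_{X_i,\lambda_a(j,-j)}$ likewise equals $\dim(X_i)\dim(\lambda_a(j,-j))=\dim(X_i)$ since $\lambda_a(j,-j)$ is invertible and transparent (it lies in $\mathcal{Z}_2(C)$), so $S_{X_i,\lambda_a(j,-j)}/\dim(X_i)=1$ — I would cite \cite[Prop.~8.13.10]{EGNO} or the fact that $S_{X,a}=\dim(X)$ for transparent invertible $a$. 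What remains is $\frac{t^{(2)}(i,-j)\,f(i-j)}{t_s(i-j,i-j)}$; here $t^{(2)}(i,-j)=s^{-2i(-j)}=s^{2ij}$, $t_s(i-j,i-j)=s^{-(i-j)^2}$, and $f(i-j)=s^{-(i-j)^2}$ (after the same index-reduction bookkeeping as above, which contributes another root-of-unity factor that must cancel against $t_s(i-j,i-j)$ and the $\chi$-terms hidden in $f$). Putting it together yields $S^\lambda_{X_i,Y_j}=s^{2ij}S_{X_i,Y_j}$ as claimed. I would present the $S$-matrix computation as a short displayed chain of equalities, flagging the cancellations that rely on the same numerical identity $s^{N^2+2N}=1$ proven for the twist condition, so that the root-of-unity lemma does double duty.
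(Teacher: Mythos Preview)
Your overall strategy matches the paper's: verify the twist condition of Corollary~\ref{lem:twist} by case analysis on $i+j<N$ versus $i+j\geq N$, check the ribbon condition similarly, then substitute into Theorem~\ref{Thm: modular data of zesting}. However, two of your concrete computations are wrong, and each would derail the proof.

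First, your identity $\Theta_{h^a}=s^{2N}$ is incorrect: $s^{2N}=(s^N)^2=\zeta^{-2(\epsilon+2b)}=q^{-(\epsilon+2b)}$, which is not $(-1)^\epsilon$ unless $N\mid(\epsilon+2b)$. The correct identity is $\Theta_{h^a}=s^{N^2}$, since $s^{N^2}=(s^N)^N=\zeta^{-N(\epsilon+2b)}=(\zeta^N)^{-(\epsilon+2b)}=(-1)^{\epsilon+2b}=(-1)^\epsilon$ (using that $\zeta$ is a primitive $2N$-th root, so $\zeta^N=-1$). With this correction your twist computation closes immediately: the residual factor is $s^{2N^2}=((-1)^\epsilon)^2=1$, not the $s^{N^2+2N}$ you obtained, which is not $1$ in general. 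The paper organizes the same calculation as $f(i)f(i+j)^{-1}f(j)=s^{2ij-2N(i+j)}s^{N^2}$ and then uses $s^{-2N(i+j)}=q^{(\epsilon+2b)(i+j)}$ together with $s^{N^2}=(-1)^\epsilon=\theta_{h^a}$ directly.

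Second, in the $S$-matrix step you claim $S_{X_i,\lambda_a(j,-j)}/\dim(X_i)=1$ because $\lambda_a(j,-j)=h^a$ is ``transparent.'' It is not: $h^a\in\mathcal{Z}_2(C)$ means $h^a$ centralizes the cyclic subgroup $C$, but the standing hypothesis is precisely that $\chi:C\to\Aut_\otimes(\Id_\cB)$ is injective, so no nontrivial element of $C$ is transparent in $\cB$. The correct value is $S_{X_i,\lambda_a(j,-j)}/\dim(X_i)=\chi_{h^a}(X_i^*)=q^{-a\frac{N}{m}i}=q^{-(\epsilon+2b)i}$ (for $0<j<N$), and this nontrivial factor is exactly what cancels against the extra $q^{(\epsilon+2b)i}$ coming from the index reduction in $f(i-j)t^{(2)}(i,-j)/t(i-j,i-j)$. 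Your parenthetical about ``another root-of-unity factor that must cancel'' is gesturing at the right phenomenon, but the cancellation only works once you keep the $S_{X_i,h^a}$ contribution rather than setting it to $1$.
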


\begin{proof}
Equation \eqref{twist-condition}  is 
\[f(i)f(i+j)^{-1}f(j)=s^{2ij}\chi_{\lambda(i,j)}(i+j)\theta_{\lambda(i,j)}=\begin{cases} s^{2ij} & \text{ if } i+j< N\\ s^{2ij}q^{a\frac{N}{m}(i+j)}\theta_{h^a} & \text{ if } i+j\geq  N. \end{cases}\]

Let $0\leq i,j<N-1$. If $i+j< N$, then 
\begin{align*}
f(i)f(i+j)^{-1}f(j)=s^{-i^2}s^{(i+j)^2}s^{-j^2}=s^{2ij}.
\end{align*}

If $i+j\geq N$, then $i+j=[i+j]+N$ where $0\leq[i+j] <N$, and $(i+j)^2=i^2+j^2+2ij-2N(i+j)+N^2$, thus

\begin{align*}
f(i)f(i+j)^{-1}f(j)&=s^{2ij-2N(i+j)}s^{N^2}\\
&=  s^{2ij} q^{(\epsilon+2b)(i+j)}(-1)^{\epsilon+2b} \\
&= (-1)^{\epsilon} q^{a\frac{N}{m}(i+j)}s^{2ij} \\
&= \theta_{h^a} q^{a\frac{N}{m}(i+j)}s^{2ij} .
\end{align*}

Equation \eqref{ribbon condition}  for $0<i<N-1<$ is
\[f(i)f(N-i)^{-1}=\chi_{h^a}(i)\theta_{h^i}=q^{a\frac{N}{m}i}\theta_{h^a},\] and 
\begin{align*}
f(i)f(N-i)=& s^{-i^2}s^{(N-i)^2}\\
 =& s^{-2Ni}s^{N^2}=(-1)^{\epsilon}q^{(\epsilon+2b)i}\\
 =& \theta_{h_a}q^{a\frac{N}{m}i}. 
\end{align*}

The formulas for modular data follow from Theorem \ref{Thm: modular data of zesting}. In fact, the $T$-matrix follows directly from the definition of $\theta^f$.  First note that since $\theta_h=\Theta_h$, then $\dim(h^i)=1$, in particular $\dim(\lambda_a(i,j))=1$ for all $i, j\in \Z/N$, second 
\begin{align*}
    \frac{S_{X_i,\lambda(j,-j)}}{\dim(X_i)}=\chi_{h^a}(X_i^*)=q^{-a\frac{N}{m}i}, && 0<i<N,
\end{align*}and third
\begin{align*}
    s^{-i(N-j)}=s^{2ij}q^{(\epsilon+2b)i},
\end{align*}
hence

\begin{align*}
    S_{X_i,Y_j}^{\lambda}&= \frac{f(i-j)t^{(2)}(i,-j)}{t(i-j,i-j)}\frac{S_{X_i,\lambda(j,-j)}}{\dim(X_a)}S_{X_i,Y_j}\\
    &=s^{-i(N-j)}\frac{S_{X_i,\lambda(j,-j)}}{\dim(X_a)}S_{X_i,Y_j}\\
    &= s^{2ij}S_{X_i,Y_j}.
\end{align*}
\end{proof}

\begin{remark}
If $N$ is odd $f(i)=s^{-i^2}$ is the unique ribbon zesting and if $N$ is even $f(i)$ and $(-1)^if(i)$ are all the ribbon zestings.  From the proof of the above we see that this changes the $S$-matrix by a factor of $(-1)^{i-j}$ on the $(i,j)$-graded block.
\end{remark}

As a particular case of Proposition \ref{prop: braided cyclic zesting} we obtain the following result on fermion zesting (cf. \cite{16-fold}):

\begin{corollary}
Let $\cB$ be a braided fusion category and $f\in \cB$ a simple object such that \begin{itemize}
\item[(i)] $f\otimes f\cong \unit$,
    \item[(ii)]$\Theta_f=-1$,
    \item[(ii)] $\chi_f$ is not the identity.
\end{itemize}Then $\cB$ has eight different braided $\Z/2$-settings parameterized  with modular data 
\begin{align}
T_{X_i,X_i}^\lambda= s^{-i^2}T_{X_i,X_i}, &&
S_{X_i,Y_j}^\lambda=s^{2ij}S_{X_i,Y_j}
\end{align}where $s$ is a root of unity of order eight.
\end{corollary}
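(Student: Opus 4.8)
The plan is to deduce this as the case $N=2$ of the cyclic zesting results, Propositions \ref{prop: assoc zestings cyclic}, \ref{prop: braided cyclic zesting} and \ref{lemma: ribbon cyclic}. I would begin by setting $C=\langle f\rangle$. By (i) this is a cyclic subgroup of $\Inv(\cB)$ of order $2$, and by (iii) the homomorphism $\chi\colon C\to\Aut_\ot(\Id_\cB)$ is injective, so Corollary \ref{corol grading} equips $\cB$ with a faithful $\widehat C\cong\Z/2$-grading whose components are $\cB_a=\{X:\chi_f(X)=(-1)^a\id_X\}$ (with $q=-1$ the chosen primitive root of unity). Since $\chi$ is a group homomorphism (Proposition \ref{prop:chi-properties}(ii)) and $f\otimes f\cong\unit$ we have $\chi_f^2=\chi_\unit=\id$, consistent with the $\Z/2$-grading; and using $\chi_a(X)=\theta_{X\otimes a}\theta_X^{-1}\theta_a^{-1}$ for invertible $a$ together with $\theta_f=-1$ and $\theta_\unit=1$, one finds $\chi_f(f)=\theta_\unit\theta_f^{-2}=1$. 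Hence $f\in\ker(\chi_{C,C})$, so $\mathcal{Z}_2(C)=C$, and in the notation of the cyclic zesting subsections $m=|\mathcal{Z}_2(C)|=2=N$, $h=f$, and the canonical ribbon twist satisfies $\Theta_h=\theta_f=-1$.

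Fix $\zeta$ a primitive $4$th root of unity with $\zeta^2=q=-1$. Proposition \ref{prop: assoc zestings cyclic} then produces four associative zestings $(\lambda_a,\lambda_b)$, one for each $(a,b)\in\Z/2\times\Z/2$, where the integer $\epsilon$ attached to $a$ satisfies $\epsilon\equiv a\pmod 2$ because $\Theta_{h^a}=(-1)^a$. Feeding these into Proposition \ref{prop: braided cyclic zesting} with $\tfrac Nm=1$, the existence criterion $a\tfrac Nm\equiv\epsilon+2b\pmod N$ becomes $a\equiv\epsilon\pmod 2$, which always holds; thus each of the four associative zestings admits exactly $N=2$ braided zestings with $j=\id$, namely $t_s(i,j)=s^{-ij}\id_{\lambda_a(i,j)}$ for the two solutions $s$ of $s^2=\zeta^{-(\epsilon+2b)}$. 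As $(a,b)$ ranges over $\Z/2\times\Z/2$ the exponent $\epsilon+2b$ ranges over $\{0,1,2,3\}$, so $\zeta^{-(\epsilon+2b)}$ ranges over all of $\mu_4$, and therefore the admissible scalars $s$ range over precisely the eight solutions of $s^8=1$. This yields the asserted eight braided $\Z/2$-zestings, naturally indexed by $s\in\mu_8$.

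For the modular data I would invoke Proposition \ref{lemma: ribbon cyclic}: $\cB$ is premodular, hence carries a ribbon twist $\theta$, and the required $\theta_{h^a}=\Theta_{h^a}$ holds trivially (it reads $1=1$ for $a=0$ and $-1=\theta_f$ for $a=1$). That proposition then supplies, for each of the eight zestings, the ribbon function $f(i)=s^{-i^2}$ and the modular data $T^\lambda_{X_i,X_i}=s^{-i^2}T_{X_i,X_i}$ and $S^\lambda_{X_i,Y_j}=s^{2ij}S_{X_i,Y_j}$, exactly as claimed.

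I do not anticipate any real difficulty; the whole statement is bookkeeping on top of the three cited propositions. The one place to be careful is the counting: one must verify that the assignment $\big((a,b),s\big)\mapsto s$, a priori from a set of size $4\cdot 2$, is a bijection onto $\mu_8$ --- i.e. that the exponents $\epsilon+2b$ genuinely exhaust $\Z/4$ and that distinct triples $(a,b,s)$ give inequivalent zesting data. A minor convention check is also worth spelling out: since $\chi_f(f)=1$, the object $f$ itself lies in the \emph{trivial} component $\cB_0$, and faithfulness of the grading comes not from $f$ but from hypothesis (iii), which together with $\chi_f^2=\id$ forces some simple object to have $\chi_f$-eigenvalue $-1$.
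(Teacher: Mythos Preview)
Your proposal is correct and follows exactly the approach the paper intends: the corollary is stated in the paper as ``a particular case of Proposition~\ref{prop: braided cyclic zesting}'' and is marked with an immediate \qed, and your derivation carries out precisely that specialization to $N=m=2$ via Propositions~\ref{prop: assoc zestings cyclic}, \ref{prop: braided cyclic zesting}, and \ref{lemma: ribbon cyclic}.

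Two small remarks. First, your computation $\chi_f(f)=1$ via the balancing formula presupposes a twist on $\cB$; you can obtain it instead directly from the hexagon, since $c_{f,f\otimes f}=c_{f,\unit}=\id$ forces $c_{f,f}^2=\id$. Second, when you invoke Proposition~\ref{lemma: ribbon cyclic} you assert that $\cB$ is premodular with $\theta_f=-1$; this is not literally among the stated hypotheses of the corollary, but it is implicit once one speaks of $T$- and $S$-matrices, and the condition $\theta_f=\Theta_f$ is exactly the standing hypothesis of that proposition. These are issues with how the corollary is stated in the paper rather than gaps in your argument.
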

\qed

%If  $C\subset \cB$ is non-degenerate, it follows from \cite{??} that $\cB=C\otimes \cB_1$. It follows from Proposition \ref{prop: braided cyclic zesting} that the braided zesting correspond to $(b,s)\in Z/N\times \ku^\ti,es$ such that $2b=-\epsilon \mod N$ and $s^N=1$. Hence, $(\theta^f_{g})^2=s^2\Theta_g^2$

\subsection{Quantum Group categories of type $A$}
A large class of examples of modular categories satisfying the hypotheses of section \ref{subsec: cyclic braid }  can be obtained from quantum groups (see \cite[Section 3.3]{BK}).
Of particular interest are the
 modular categories $SU(N)_k$ obtained from quantum groups $U_Q\mathfrak{sl}_N$ for $Q=e^{\pi i/(N+k)}$ (eschewing $q$ to avoid notation clashes).  Two references for this construction are \cite{brug2,KazWen}, where much of what follows can be found.  For any $N,k$, the category $SU(N)_k$ has a (maximally) pointed subcategory $\cP(N,k)$ with fusion rules like $\bZ/N$. In particular $SU(N)_k$ is (universally) $\Z/N$-graded with trivial component $PSU(N)_k:=[SU(N)_k]_e$. Labelling the fundamental weights of the root system of type $A_{N-1}$ by $\varpi_i$ for $i=1,\ldots,N-1$ (we follow  \cite[Planches, Chapters IV,V,VI]{Bourbaki} for notation), we find that the simple objects in $\cP(N,k)$ correspond to weights $0$ and $k\varpi_i$ for $i=1,\ldots,N-1$. For notational convenience we define $g=X_{k\varpi_1}$ so that $g^0=\unit$ and $g^t:=X_{k\varpi_t}$. In this notation we have $g^i\otimes g^j=g^{i+j}$. To determine the nature of this subcategory we must compute the twists $\theta_i:=\theta_{g^i}$, for which we employ standard techniques (see \cite{BK,Row-survey}, for example). The key computation is that the twist of the simple object labeled by highest weight $\mu$ is $\theta_{\mu}=Q^{\mathfrak{c}_\mu}$ where $\mathfrak{c}:=\langle \mu+2\rho,\mu\rangle$.   We find:

\begin{equation}\label{equation:twist A_{N-1}}
\theta_j=\zeta_{2N}^{kj(N-j)}, \quad \text{ where } \zeta_{2N}=e^{\pi i/N}.
\end{equation}  Thus we may identify $\cP(N,k)$ with the pointed ribbon fusion category $\cC(\Z/N,\eta)$ where $\eta$ is the quadratic form given by $\eta(j)=\theta_j$.

By the twist equation we obtain the formula for the double braiding in $\cP(N,k)$ as: $$c_{g^t,g^s}\circ c_{g^s,g^t}=
\frac{\theta_{s+t}}{\theta_s\theta_t}=
\zeta_{2N}^{-2stk}\id_{g^{s+t}}$$ so that the M\"uger center of $\cP(N,k)$ is generated by $g^{\frac{N}{(N,k)}}$. In particular, $\cP(N,k)$ is modular
if and only if $(k,N)=1$ in which case we have the factorization $SU(N)_k\cong PSU(N)_k\boxtimes \cP(N,k)$ as modular categories, and $\Inv(PSU(N)_k)$ is trivial.

On the other hand if $N\mid sk$ then $g^s$ centralizes $\cP(N,k)$ and hence lies in the trivial component $\cB_0=PSU(N)_k$ (under the universal $\Z/N$-grading). Thus $\cP(N,k)\cap PSU(N)_k$ is a non-trivial symmetric pointed subcategory whenever $(N,k)\neq 1$. Indeed, $\cP(N,k)$ is symmetric if and only if $N\mid k$.  Furthermore, by the form of the twists calculated above we can determine when $\cP(N,k)$ for $k=\alpha N$ is Tannakian or super-Tannakian. If $N$ is odd, we only have Tannakian categories, but if $N$ is even, we have that $\cP(N,\alpha N)$ is Tannakian if and only if $\alpha$ is even, and super-Tannakian otherwise.

%Now let us assume that $N\mid k$ so that $\cP(N,k) \subset PSU(N)_k$, and hence symmetric.
The object $X_1$ labelled by the highest weight $\varpi_1$ is a tensor generator for $SU(N)_k$.  We will assume that $X_1$ is in the $1$-graded component of the universal $\Z/N$-grading $\cB_1$. 
Applying Proposition \ref{prop:chi-properties}(iv) we first compute $\chi:U(\cB)=\Z/N\rightarrow \widehat{\Z/N}$. Now $\chi_{g^a}$ is determined by $\chi_{g^a}(1)$ since $\chi_{g^a}(m)=\chi_{g^a}(1)^m$, and $\chi_{g^a}(1)=(\chi_g(1))^a$ since the operation on $\widehat{\Z/N}$ is pointwise.  Thus we reduce to computing the scalar associated with the double braiding $c_{g,X_1}\circ c_{X_1,g}$ where $X_1=X_{\varpi_1}$. 
As $g\otimes X_1=X_{(k-1)\varpi_1+\varpi_{2}}$ is simple we need only compute $$c_{g,X_1}\circ c_{X_1,g}=\frac{\theta_{(k-1)\varpi_1+\varpi_2}}{\theta_1\theta_{\varpi_1}}=\zeta_N^{-1}$$ where $\zeta_N:=e^{2\pi i /N}$.
Thus we see that $\chi_{g^a}(m)=q^{am}$ where $q:=\zeta_N^{-1}$.  In this way the $\Z/N$-grading is given by $\cB_i=\{X: \chi_g(X)=q^i\id_X\}$ as in (\ref{cyclic grading}).  Notice that $q$ is determined once we declare that $X_1\in\cB_1$ and pick our generator $g$ of $U(\cB)$--fixing any two choices among the grading, $q$ and generator of $U(\cB)$ determine the third.

Now we may apply the results of subsection \ref{braided zesting section} to $SU(N)_{k}$.  We will consider several cases to illustrate the subtleties:

\begin{enumerate}
    \item For $SU(N)_{\alpha N}$ for $N$ odd the pointed subcategory $\cP(N,\alpha N)$ is Tannakian.  We will zest with respect to the universal $\Z/N$-grading so that $m=N$ and $\epsilon=0$ in the notation of Proposition \ref{prop: assoc zestings cyclic}.  Thus there are $N^2$  associative zestings $(\lambda_a,\lambda_b)$ where $(a,b)\in\Z/N\times\Z/N$. The $N$ associative zestings for pairs $(2b,b)$ each admit $N$ braided zestings, which in turn admit $N$ twist braided zestings.  Thus there are \emph{at most} a total of $N^2$ distinct ribbon twist braided zestings, all of which are modular by Proposition \ref{prop: same center}.
    \item For $SU(N)_{\alpha N}$ with $N$ even and $\alpha$ odd the pointed subcategory $\cP(N,\alpha N)$ is super-Tannakian. We will zest with respect to the universal $\Z/N$-grading so that $m=N$ in the notation of Proposition \ref{prop: assoc zestings cyclic}. Thus there are $N^2$ distinct associative zestings $(\lambda_a,\lambda_b)$ where $(a,b)\in\Z/N\times\Z/N$. For $a$ even we have $\epsilon=0$ and the situation is similar as above: the $N$ associative zestings $(\lambda_{a},\lambda_{a/2})$ and $(\lambda_a,\lambda_{(a+N)/2})$ each admit $N$ braided zestings and $N^2$ twist braided zestings. Among these $2N$ of them are ribbon twist zestings, all of which are modular by Proposition \ref{prop: same center}. Now for $a$ odd we have $\epsilon=1$ so that the $N$ pairs $(a,\frac{a-1}{2})$ and $(a,\frac{a-1+N}{2})$ admit $N$ braided zestings each.  All told there are \emph{at most} $4N^2$ ribbon braided zestings, each of which is modular.
\end{enumerate}

We hasten to point out that, in practice, there can be equivalences among braided zestings.  We will see some examples of this below.

\subsubsection{$SU(3)_3$}
Consider the non-group-theoretical \cite{NRo} integral modular category $SU(3)_3$ of rank 10 and dimension $36$ (this example inspired the notion of zesting in  \cite{AIM2012}).
We define $q=e^{-2\pi i/3}$ and $\zeta=e^{2\pi i/18}$, and order the simple objects as follows $[\unit, g, g^2, Y, X_1,X_2,X_3,Z_1,Z_2,Z_3]$.  In the correspondence with the $SU(3)$ highest weights we have $X_1$ labeled by $\varpi_1$ and $g$ by $3\varpi_1$ as above. Then $X_2=g\otimes X_1$ and $X_3=g^2\otimes X_1$, with $Z_i=X_i^{*}$ in $SU(3)_3$. We have $\dim(Y)=3$, $\dim(X_i)=\dim(Z_i)=2$ for all $i$, and the $\Z/3$-grading is given by $X_i\in \cB_1$ and $Z_i\in\cB_2$. The twists of the simple objects ordered as above are $[1,1,1,-1,\zeta^4,\zeta^{16},\zeta^{10},\zeta^4,\zeta^{16},\zeta^{10}].$  We note that there are two inequivalent sets of modular data $(S,T)$ (and hence, presumably, modular categories) with these fusion rules: the above and its complex conjugate.  Notice that the twists of the $X_i$ are primitive $9$th roots of unity, so that there are $6$ Galois conjugates.  However, the relabeling symmetry among the pairs $(X_i,Z_i)$ allows us to recognize these $6$ conjugates as belonging to just $2$ inequivalent classes.  The (unnormalized) modular $S$-matrix has the block $3\times 3$ form:
$S=\begin{pmatrix} A& B& \overline{B}\\ B^T & C &D\\\overline{B}^T& D^T& C\end{pmatrix} $

where $$A=\begin{pmatrix}
 1 & 1 & 1 & 3\\
 1 & 1 & 1 & 3\\
 1 & 1 & 1 & 3\\
 3 & 3 & 3 & -3 \end{pmatrix}
,\quad B=2\begin{pmatrix}1&1&1\\ \zeta ^6 &  \zeta ^6 &  \zeta ^6\\
- \zeta ^6 & - \zeta ^6 &  -\zeta ^6\\
0&0&0\end{pmatrix},$$

$$C=2\begin{pmatrix}\zeta&\zeta^7&-\zeta^4\\
\zeta^7& -\zeta^4&\zeta\\
-\zeta^4&\zeta&\zeta^7\end{pmatrix},\quad \textrm{and}\quad D=2\begin{pmatrix}\zeta^2-\zeta^5&\zeta^8-\zeta^5&\zeta^8+\zeta^2\\
\zeta^8-\zeta^5&\zeta^8+\zeta^2&\zeta^2-\zeta^5\\
\zeta^8+\zeta^2&\zeta^2-\zeta^5&\zeta^8-\zeta^5
\end{pmatrix}.$$

The $9$ associative zestings of $SU(3)_3$ are parameterized by $(a,b)\in\Z/3\times\Z/3$ where $$\lambda_a(i,j)=\begin{cases} \unit, & i+j<3\\ g^a &i+j\geq 3\end{cases}\quad\text{and}\quad\lambda_b(i,j,k)=\begin{cases} 1, & i+j<3\\ q^{bk}& i+j\geq 3\end{cases}.$$  The fusion rules for $a=2$ and $a=0$ are the isomorphic: reordering the simple objects as $[\unit,g^2,g,X_1,X_3,X_2,Z_3,Z_2,Z_1]$ gives us the fusion rule isomorphism.  By results of \cite{KazWen} the $6$ fusion categories corresponding to $a=0$ and $a=2$ are obtained from $SU(3)_3$ by changing the quantum parameter $Q$ and/or changing the associativity by a $3$-cocycle. 
On the other hand, for $a=1$ we find that $\unit\not\subset X_1^{\ot_1 3}$ yet $\unit\subset X^{\ot 3}$ for all simple objects $X \in SU(3)_3$ so that these fusion rules are not isomorphic to those of $SU(3)_3$.\footnote{It seems to be the case that the trivial representation appears in $X_\mu^{\ot N}$  for any object $X_\mu\in\Rep(\mathfrak{sl}_N)$, but we could not find a proof in the literature.}

Now for each pair $(a,b)\in\{(0,0),(1,2),(2,1)\}$ we obtain $3$ braided zestings by choosing an $s$ so that $s^3=q^{-b}$, by Proposition \ref{prop: braided cyclic zesting}.  Moreover, by Proposition \ref{lemma: ribbon cyclic} these each have a unique ribbon zesting, given by multiplying the $SU(3)_3$ twists in the component $\cB_i$ by $s^{-i^2}$, and all are modular with these choices.  Thus there are at most $9$ modular categories obtained from $\Z/3$-zesting of $SU(3)_3$.  In fact, we will see that there are only $3$ inequivalent sets of modular data, and presumably only $3$ inequivalent modular categories (this is not immediate as modular data is not a complete invariant, see \cite{mignard2017modular}.

For $b=0$ we have $s^3=1$ and for $b=1$ we have $s^3=q^{-1}=e^{2\pi i/3}$.  The twist on $\cB_1$ and $\cB_2$ are primitive 9th roots of unity $\{\zeta^4,\zeta^{10},\zeta^{16}\}$, so that rescaling by $s^{-1}=s^{-4}$ with $s=e^{2\pi i n/3}$ simply permutes them.  Similarly for the case $b=2$ we have $s=\zeta^{6x+2}$ for $0\leq x\leq 2$ so that rescaling these twists by $s^{-1}$ and $s^{-4}$ conjugates the set of values, and permutes them in a way consistent with the fusion rule isomorphism above.  Now since the fusion rules and dimensions are the same, the $S$-matrices are determined by the twist (via the balancing equation).  Thus we obtain two sets of modular data from the pairs $(0,0)$ and $(1,2)$: those of $SU(3)_3$ and the complex conjugate.  Indeed, it is easily checked that adjusting the $SU(3)_3$ $S$-matrix above by a factor of $s^{2ij}$ on the $(i+1,j+1)$ block has the effect of permuting the rows/columns and possibly complex conjugating the entries.

Now for $b=2$ we have $s^3=q^{-2}=e^{-2\pi i/3}$, with solutions $s=e^{-(6x+2)\pi i/9}=\zeta_9^{-3x-1}$ for $0\leq x\leq 2$, where $\zeta_9:=e^{2\pi i/9}$.  Rescaling $\{\zeta^4=\zeta_9^2,\zeta^{10}=\zeta_9^5,\zeta^{16}=\zeta_9^8\}$ by $s^{-1}=\zeta_9^{3x+1}$ and $s^{-4}=\zeta_9^{12x+4}$ both yield $\{1,q,q^{-1}\}$ for any $x$ which is invariant under complex conjugation.  Again, the $S$-matrix is determined by the twists and the fusion rules by the balancing equation so that we find that there is exactly one set of modular data $(S,T)$ corresponding to the modular zesting of $SU(3)_3$ when $a=1$.  For completeness we provide explicit modular data $(\tilde{S},\tilde{T})$ (cf. \cite[Section 4.2]{AIM2012}): taking $x=0$, the twists are given by 
$\tilde{T}:=[1,1,1,-1,q^{-1},1,q,q,q^{-1},1]$ and $\tilde{S}=\begin{pmatrix} A& B& \overline{B}\\ B^T & \tilde{C} &\tilde{D}\\\overline{B}^T& \tilde{D}^T& \tilde{C}\end{pmatrix} $ where $A$ and $B$ are the same as for $SU(3)_3$ above and $$\tilde{C}=2\begin{pmatrix}\zeta^{-3}&\zeta^3&-1\\
\zeta^3& -1&\zeta^{-3}\\
-1&\zeta^{-3}&\zeta^3\end{pmatrix}, \quad \text{and} \quad \tilde{D}=2\begin{pmatrix}-1&\zeta^3&\zeta^{-3}\\
\zeta^3&\zeta^{-3}&-1\\
\zeta^{-3}&-1&\zeta^3 \end{pmatrix}.$$
Note that $\zeta^{\pm 3}=-q^{\pm 1}$ so that the entries of $\tilde{S}$ lie in the field $\Q(q)$.

Let us compare the zesting of $SU(3)_3$ to gauging constructions. Clearly $SU(3)_3$, its complex conjugate and its Grothendieck inequivalent zesting each contain $\Rep(\Z/3)$ as a Tannakian subcategory.  If we take the corresponding $\Z/3$-condensation \cite{DGNO} we obtain a modular category $\mathcal{L}$ of dimension $4=36/3^2$ that has a gaugable symmetry $\phi:\Z/3\rightarrow \Aut_\ot^{br}(\mathcal{L})$ \cite{SCJZ}.  It is not difficult to see that $\mathcal{L}$ must be the so-called $3$ fermion modular category $3F$, with fusion rules  like $\Z/2\times\Z/2$, and the   $\Z/3$ action cyclically permutes the fermions. Thus we should be able to recover the three zestings of $SU(3)_3$ by gauging this symmetry.  The obstructions to gauging vanish as they lie in $H^3(\Z/3,\Z/2\times\Z/2)=0$ and $H^4(\Z/3,U(1))=0$.  The gaugings of $3F$ are parameterized by $H^2(\Z/3,\Z/2\times\Z/2)=0$ and $H^3(\Z/3,U(1))\cong\Z_3$.  Thus we obtain $3$ such gaugings, consistent with the zesting calculation above.

\subsubsection{$SU(4)_4$}
The rank $35$ modular category $\cB:=SU(4)_4$ has pointed subcategory $\cP(4,4)$ with fusion rules like $\Z/4$, but is non-Tannakian: the generator $g$ of the group $\Inv(\cB)$ has twist $\theta_g=-1$, and $c_{g,g}=-\id_{g^2}$.
 We write

$$SU(4)_4=\cC_0 \oplus \cC_1\oplus \cC_2 \oplus \cC_3$$
to decompose the category into its $\Z/4$ universally-graded components, which have the following ranks
\begin{center}
\begin{tabular}{c|cccc}
    Component &  $\cC_0$ & $\cC_1$ & $\cC_2$ & $\cC_3$\\
    \hline
    Rank & 10 & 8 & 9 & 8 
\end{tabular}.
\end{center} We set $q=e^{-2\pi i/4}$ so that $\chi_g(X_i)=q^i$ for $X_i\in\cC_i$. Table \ref{su44 data} summarizes the last few pages of analysis and records the parametrization of simple objects in $SU(4)_4$ along with their universal grading, quantum dimensions, and twists.

We first consider the $\Z/4$-zestings.   To conform with the notation of Propositions \ref{prop: assoc zestings cyclic} and \ref{prop: braided cyclic zesting}, we set $\zeta=e^{-2\pi i/8}$.  The associative $\Z/4$-zestings are parameterized by $(a,b)\in\Z/4\times\Z/4$ as above.  When $a$ is odd we have $\epsilon=1$ and otherwise $\epsilon=0$.  Braided zestings exist for the $8$ pairs $$(a,b)\in\{(0,0),(0,2),(1,0),(1,2),(2,1),(2,3),(3,1),(3,3)\},$$ and are parameterized by solutions to $s^4=\zeta^{-\epsilon(a)-2b}$.  Each of these, in turn have a unique ribbon structure that gives positive dimensions, and each of these are modular by Lemma \ref{cor: same muger}.  Thus there are at most $32$ distinct modular categories obtained as $\Z/4$-zestings of $SU(4)_4$. For any triple $(a,b,s)$ the  central charge of the corresponding modular categories are the same for any of the 4 choices of $s$, giving us (at least) $8$ distinct modular categories see Table \ref{table: su4 level 4}.  As can be seen from the data in Table \ref{su44 data}, $SU(4)_4$ has a high degree of symmetry there are many objects of the same dimension giving rise to labeling ambiguities. Moreover, \cite{cain2002.03220} shows that the group of (not necessarily braided) monoidal autoequivalences is isomorphic to $\Z/2\times\Z/4$.   In particular, distinguishing or identifying the the modular categories with the same underlying fusion category (i.e. the same $(a,b)$ but different $s$) is a subtle problem.  

\renewcommand{\arraystretch}{1.2}
\begin{table}[h!]
   
    \begin{tabular}{|c|c|c|c|}\hline
        $(a,b)$ & $X_1^{\ot 4}\supset\unit$? &$s$  & central charge \\
        \hline\hline
       $(0,0)$  & yes& $\gamma$ & $\zeta_{16}^{-1}$\\
         \hline
         $(0,2)$  & yes & $\gamma\cdot \left(\zeta_{32}\right)^4$ & $-\zeta_{16}^{-1}$\\
         \hline$(1,0)$  & no  &$\gamma\cdot \left(\zeta_{32}\right)$  & $-i\cdot\zeta_{16}$\\
         \hline
         $(1,2)$  & no & $\gamma\cdot \left(\zeta_{32}\right)^5$ &$i\cdot\zeta_{16}$\\
         \hline
         $(2,1)$  & no & $\gamma\cdot \left(\zeta_{32}\right)^2$ &$-i\cdot\zeta_{16}^{-1}$\\
         \hline
         $(2,3)$  & no & $\gamma\cdot \left(\zeta_{32}\right)^6$&$i\cdot\zeta_{16}^{-1}$\\
         \hline
         $(3,1)$  & yes & $\gamma\cdot \left(\zeta_{32}\right)^3$&$-\zeta_{16}$\\
         \hline
          $(3,3)$  & yes & $\gamma\cdot \left(\zeta_{32}\right)^7$ &  $\zeta_{16}$\\
         \hline
    \end{tabular}
    \caption{$SU(4)_4$ ribbon zesting data: $\zeta_{N}:=e^{2\pi i /N}$ and $\gamma^4=1$}\label{table: su4 level 4}
    \end{table}

We may also consider the $\Z/2$-zestings.  We can define a $\Z/2$-grading on $\cB:=SU(4)_4$ by $\cB_0=\cC_0\oplus\cC_2$ and $\cB_1=\cC_1\oplus\cC_3$ where $\cC_i$ are the components of the universal grading above.  This corresponds to the grading by the subgroup $\Z/2\cong\langle g^2\rangle<\Inv(\cB)$. If we choose a $2$-cocycle $\lambda_a\in H^2(\Z/2,\Inv(\cB))$ with values in $\langle g^2\rangle$ then $\cZ_2(\langle g^2\rangle)=\langle g^2\rangle$ so we may apply the results of Propositions \ref{prop: assoc zestings cyclic} and \ref{prop: braided cyclic zesting} to obtain braided zestings and ribbon twists as above.

On the other hand, we may define a $2$-cocycle $H^2(\Z/2, \Inv(\cB))$ by $\lambda(1,1)=g$ and $\lambda(0,1)=\lambda(1,0)=\unit$.  The normalized $3$-cochains $\lambda_{\pm}(1,1,1)=\pm i$ provide associative zestings.  We claim that these fusion categories do not admit braided zestings.

We have that $\chi_g|\cB_0=\Id_{\cC_0}-\Id_{\cC_2}$.  In particular $\chi_{g}$ is not in $\Aut^{\Z/2}_\otimes(\Id_\cB)$, so that $j_a=\id$ does not satisfy condition (BZ1).  What is required is a function $j:\Z/2\rightarrow \Aut_\ot(\Id_\cB)$ such that $$\chi_{\lambda(a,b)}\circ j_{ab}\circ j_a^{-1}\circ j_b^{-1}\in \Aut_\ot^{\Z/2}(\Id_{\cB}).$$  In particular taking $a=b=1\in\Z/2$ we seek a $j_1$ such that $\chi_{g}\circ j_1^{-2}\in\Aut_\ot^{\Z/2}(\Id_{\cB})$.  This means that $\chi_{g}\circ j_1^{-2}$ must be the identity on $\cB_0$, so that $j_1^{-2}|_{\cC_2}=-\Id_{\cC_2}$ and $j_1^{-2}|_{\cC_0}=\Id_{\cC_0}$.  But since $j_1\in\Aut_\ot(\Id_\cB)$ we see that $j_1(X)\otimes j_1(Y)=j_1(X\otimes Y)=\id_{X\otimes Y}$ for $X,Y\in\Irr(\cC_0)$, and $j_1(X)=k\id_X$ and $j_1(Y)=k\id_Y$ since $j_1$ must act by a constant scalar on the simple objects in the universally-graded components.  Thus $k=\pm 1$.  But now $j_1^{-2}(X)=k^2\id_X=\id_X$, contradicting $j_1^{-2}|_{\cC_2}=-\Id_{\cC_2}$.  Alternatively we see that the second obstruction (\ref{eq:second braided zesting obstruction}) implies that $(a,b)\mapsto\chi_{\lambda(a,b)}$ should define a coboundary $\Z/2\times\Z/2\rightarrow \widehat{\Z/2}$.  But in this case $(1,1)\mapsto \psi_{\Z/2}$ where $\psi_{\Z/2}(1)=-1$ is a non-trivial character, and hence the corresponding cocycle is non-trivial.
We conclude that this associative zesting does not admit a braided zesting. 
\subsubsection{$SU(4)_2$}
The metaplectic \cite{BGPR} $\Z/4$-graded modular category $\cB=SU(4)_2\cong SO(6)_2$ has rank $10$ and dimension $24$, with pointed subcategory $\cP(4,2)\cong \cC(\Z/4,\eta)$ where $\eta(j)=e^{2\pi ij^2/4}$.  As above we will denote the  generator of $\Inv(\cB)$ by $g$ and in this case $h=g^2$ generates $\Inv(\cB_{ad})$.   We set $q=e^{-2\pi i/4}$ so that the grading is of the form $\cB_0=\{\unit,g^2,Y_1\}$, $\cB_2=\{g,g^3,Y_2\}$, $\cB_1=\{X_1,X_2\}$ and $\cB_3=\{Z_1,Z_2\}$ where $X_1$ is labeled by $\varpi_1$ and $Z_i\cong X_i^{*}$.  The dimensions are $\dim(Y_i)=2$ and $\dim(X_i)=\dim(Z_i)=\sqrt{3}$.
 We have two choices of zesting $2$-cocycle $\lambda_a\in H^2(\Z/4,\Z/2)\cong\Z_2$ given by $\lambda_a(i,j)=h^a=g^{2a}$ for $i+j\geq 4$ and $\unit$ otherwise, where $a=0,1$.  In the notation of Propositions \ref{prop: assoc zestings cyclic} and \ref{prop: braided cyclic zesting} we have $m=2$ and $\epsilon=0$, and $N=4$ so that there are $8$ associative zestings, taking $\lambda_b\in H^3(\Z/4,\ku^\times)$ as in Eqn. (\ref{lambda 2}).  By Proposition \ref{prop: braided cyclic zesting}(i) the 4 pairs $(\lambda_a,\lambda_b)$ that admit braidings correspond to $\{(0,0),(0,2),(1,1),(1,3)\}$ so that we have \emph{at most} 16 braided zestings, depending on $a,b$ and $s$ with $s^4=q^{-b}$.  In fact, we have one such braided zesting for each $s=e^{2\pi ix/16}$, since each $16$th root of unity appears. Each of these braided zestings admits $2$ ribbon twists, one of which is unitary. We will spare the reader the full details, but there are a few interesting things to note:
 \begin{enumerate}
     \item When $a=0$ and $b=0,2$ the $8$ unitary ribbon braided zestings are modular and in fact remain metaplectic \cite{BGPR}.  Since zesting leaves the trivial component unchanged, and the $2$ dimension object in the trivial component has twist $e^{2\pi i/3}$, we cannot obtain the complex conjugate category by zesting.  Indeed, we can check directly that we get 2 distinct sets of twists among the 4 choices of $s^4=1$ (for $b=0$) and similarly for the 4 choices of $s^4=-1$ (for $b=2$).  The central charges for $b=0$ are all the same, as are the central charges for $b=2$, and they are complex conjugates of each other (the cases $b=0$ and $b=2$).  
     \item For $a=1$ and $b=1,3$ we see that $g\ot_1 g=g\ot g\ot g^2=\unit$, so that $g$ is self-dual in the zested theory $\cB(1,b)$.  Moreover, $s^{-4}=q^b=\pm i$ so that the twist of $g$ in $\cB(1,b)$ is $\pm 1$ and is thus a boson or fermion.  Lemma \ref{muger center} show that, in fact $g$ is in the M\"uger center of $\cB(1,b)$, so that $\cB(1,b)\cong\Rep(\Z/2,z)\boxtimes \mathcal{A}$ where either $z=0$ or $z=1$ and $\mathcal{A}$ is a Galois conjugate of $SU(2)_4$.  In particular, $\cB(1,b)$ is not modular.
 \end{enumerate}

\begin{table}[h!]
$$
\begin{array}{cccc}
 \text{Label} & \text{Grading} & \text{Dimension} & \text{Twist}\\
\hline
 1 & 0 & 1 & 1 \\
 g & 0 & 1 & -1 \\
 g^2 & 0 & 1 & 1 \\
 g^3 & 0 & 1 & -1 \\
 Y & 0 & 2d-1 & -1 \\
 gY & 0 & 2d-1 & 1 \\
 g^2 & 0 & 2d-1 & -1 \\
 g^3 Y & 0 & 2d-1 & 1 \\
 Z & 0 & 2d-3 & -i \\
 gZ & 0 & 2d-3 & i \\
\hline
 X & 1 & \sqrt{2d} & \zeta_{64}^{15} \\
 gX & 1 & \sqrt{2d} & \zeta_{64}^{31} \\
 g^2 X & 1 & \sqrt{2d} & -\zeta_{64}^{15} \\
 g^3 X & 1 & \sqrt{2d} & -\zeta_{64}^{31} \\
 \tilde{X} & 1 & \sqrt{14d-8} & -\zeta_{64}^{7} \\
 g \tilde{X} & 1 & \sqrt{14d-8} & -\zeta_{64}^{23} \\
 g^2 \tilde{X} & 1 & \sqrt{14d-8} & \zeta_{64}^{7} \\
 g^3 \tilde{X} & 1 & \sqrt{14d-8} & \zeta_{64}^{23} \\
\hline
 X' & 2 & d & \zeta_{16}^{5} \\
 gX' & 2 & d & \zeta_{16}^{5} \\
 g^2 X' & 2 & d & \zeta_{16}^{5} \\
 g^3 X' & 2 & d & \zeta_{16}^{5} \\
 X'' & 2 & d & -\zeta_{16} \\
 gX'' & 2 & d & -\zeta_{16} \\
 g^2 X'' & 2 & d & -\zeta_{16} \\
 g^3 X'' & 2 & d & -\zeta_{16} \\
 W & 2 & 4d-4 & -\zeta_{16}^7 \\
 \hline
 X^* & 3 & \sqrt{2d} & \zeta_{64}^{15} \\
 gX^* & 3 & \sqrt{2d} & -\zeta_{64}^{31} \\
 g^2 X^* & 3 & \sqrt{2d} & -\zeta_{64}^{15} \\
 g^3 X^* & 3 & \sqrt{2d} & \zeta_{64}^{31} \\
 \tilde{X}^* & 3 & \sqrt{14d-8} & -\zeta_{64}^{7} \\
 g \tilde{X}^* & 3 & \sqrt{14d-8} & \zeta_{64}^{23} \\
 g^2 \tilde{X}^* & 3 & \sqrt{14d-8} & \zeta_{64}^{7} \\
 g^3 \tilde{X}^* & 3 & \sqrt{14d-8} & -\zeta_{64}^{23} \\
\end{array}$$
\caption{Basic data for isomorphism classes of simple objects in $SU(4)_4$. Here $d=\sqrt{2}+2$ and $\zeta_{16}=e^{2\pi i/16}$, $\zeta_{64}=e^{2\pi i/64}$ are primitive 16th and 64th roots of unity, respectively.}\label{su44 data}
\end{table}

\section{Zesting obstructions and  Eilenberg-MacLane  cohomology}\label{section: davydov nikshych}

After we posted this paper A. Davydov and D. Nikshych posted  \cite{DN} containing related results. In \cite{DN} some particular braided zestings are interpreted as deformations of a braided monoidal 2-functor and its obstruction as an element in an Eilenberg-MacLane cohomology group.

In this section, we will briefly explain the connection between some of the results in \cite{DN} and some of ours. Primarily, we want to analyze the apparent differences between the obstructions in this paper and \cite{DN}.  Essentially the differences come down to this: in \cite{DN} the cohomology class in the Elinberg-MacLane  cohomology $H^2(K(A,2),\ku^\times)$ is the obstruction for a symmetric 2-cocycle $Z^2_{\operatorname{Sym}}(A,\Inv(\cB_e))$ to admit a braided zesting, while our cohomological  obstructions  in Theorem  \ref{th: obstruction} and Corollary \ref{cor: obstruction zesting sin j} are the obstructions that a fixed associative zesting admits a braided zesting.  In practice, to compute the EM-obstruction or to explicitly describe the braided zesting (assuming it exists) one would need to go though the step-by-step process we have presented: describe the associative zestings (a 3-cochain) and check that our braided zesting obstructions vanish.

In \cite{DN} it was proved that braided extensions of a braided fusion category $\cB_e$ by a finite abelian group $A$ correspond to braided monoidal 2-functors from $A$ (seen as discrete braided monoidal 2-category) to $\operatorname{Pic}_{br}(\cB_e)$ the braided 2-categorical Picard group of $\cB_e$ (consisting of invertible central $\cB_e$-module categories), see \textit{loc. cit.} for details.  

Given a faithfully $A$-graded fusion category $\cB=\bigoplus_{a\in A}\cB_a$ there is an associated group homomorphism $f:A\to \pi_0(\operatorname{Pic}_{br}(\cB_e)), a\mapsto [\cB_a]$. In \cite[Section 8.6]{DN} braided $A$-zestings of $\cB$ 
with $j=1$ are  interpreted as liftings of $f$ (called deformations of $f$ in \textit{loc. cit.}), that is   braided monoidal 2-functors  $\mathcal{F}:A\to  \operatorname{Pic}_{br}(\cB_e)$ 
such that  $[\mathcal{F}(a)]=[\cB_a]$ for all $a$. Since $\pi_1(\operatorname{Pic}_{br}(\cB_e))=\Inv(\cZ_2(\cB_e))$, liftings of $f:A\to \pi_0(\operatorname{Pic}_{br}(\cB_e))$ are associated with elements in $H^2_{\operatorname{sym}}(A,\Inv(\cZ_2(\cB_e)))=\operatorname{Ext}_{\Z}(A,\Inv(\cZ_2(\cB_e)))$ (cohomology classes of  symmetric 2-cocycles). Now, in \cite[Proposition 8.32]{DN} they proved that the obstruction to the existence of a braided zesting associated to an element in $\lambda \in H^2_{\operatorname{sym}}(A,\Inv(\cZ_2(\cB_e)))$ is given by an element $PW
^2_{\cB_e}(\lambda)\in H^5(K(A,2),\ku^\times)$.

In order to describe $PW
^2_{\cB_e}(\lambda)$ we recall the cocycle description of $H^5(K(A,2),\ku^\times)$ for abelian groups $A$ and $M$. Let $Z^5(K(A,2),\ku^\times)$ be the abelian subgroup of $C^4(A,M)\oplus C^3(A,M)\oplus C^3(A,M)=\{a(-,-,-,-),a(-,-|-),a(-|-,-)\}$ such that 

\begin{align}
   a(xy,z,w,u)+a(x,y,zw,u)+a(x,y,z,w) =&  a(y,z,w,u)   + a(x,yz,w,u)\\ \notag &  + a(x,y,z,wu),  \\
a(x|z,w) - a(x|yz,w) + a(x|y,zw)- a(x|y,z) =&  a(x,y,z,w) - a(y,x,z,w) \label{4-br-cocy2} \\ & \notag+ a(y,z,x,w) - a(y,z,w,x)\\
a(y,z|w) - a(xy,z|w) + a(x,yz|w)- a(x,y|w) =& a(x,y,z,w) - a(x,y,w,z) \label{4-br-cocy} \\ \notag & + a(x,w,y,z) - a(w,x,y,z)\\
\label{4-br-cocy-5} a(y|z, w)-a(xy|z, w) + a(x|z, w) \  \  \  \ \  \ \ &\\  +a(x, y|zw)-a(x, y|w)-a(x, y|z) 
=&-a(x, y, z, w)+a(x, z, y, w)\notag\\& +a(x, z, w, y)-a(z, x, y, w) \notag\\ & +a(z, w, x, y)-a(z, x, w, y)  \notag\\
a(x,y|z) - a(y,x|z)=  a(x|z,y)- a(x|y,z),\label{4-br-cocy-2}
\end{align}
for all $x,y,z,w,u \in A$. Let $B^5(K(A,2),M)\subset B^5(K(A,2),M)$ the subgroup of abelian cocycles of the form

\begin{align*}
a(x,y,z,w)=& b(y, z, w)-b(xy, z, w) + b(x, yz, w)-b(x, y, zw) +b(x, y, z),\\
a(x,y|z)=& b(x|y)-b(x|yz) + b(x|z)-b(x, y, z) + b(y, x, z)-b(y, z, x),\\
a(x|y,z)=& b(y|z)-b(xy|z) + b(x|z) + b(x, y, z)-b(x, z, y) + b(z, x, y),
\end{align*} 
for some
$(b(-,-,-),b(-,-))\in C^3(A,M)\oplus C^2(A,M)$. 
The group $H^5(K(A,2),M)$ is 
by definition $Z^5(K(A,2),M)/Z^5(K(A,2),M)$. The elements in $Z^5(K(A,2),M)$ are  called \emph{abelian} cocycles.

The obstruction $PW
^2_{\cB_e}(\lambda)\in H^5(K(A,2),\ku^\times)$ is described as the cohomology class of the abelian cocycle 
\begin{align*}
PW^2_{\cB_e}(\lambda)(x, y, z, w)=& c_{\lambda(x,y),\lambda(z,w)}\\
PW^2_{\cB_e}(\lambda)(x, y|z) =& 1\\
PW^2_{\cB_e}(\lambda)(x|y,z)=&\chi_{\lambda(y,z)}^{-1}(x),
\end{align*}
where the number $\chi_{\lambda(y,z)}(x)\in \ku^\times$ corresponds to $[\cB_x,\lambda(y,z)]^{-1}$ in \cite{DN}. 

Recall that for a braided fusion category $\cB$, we denote $\cB^{rev}$ the same fusion category but with braiding $c'_{X,Y}:=c_{Y,X}^{-1}$ for all $X,Y\in \cB$.

In the following proposition we prove that checking the vanishing of $PW
^2_{\cB_e}(\lambda)$ in $H^5(K(A,2),\ku^\times)$ is basically the same as checking the condition in Corollary \ref{cor: obstruction zesting sin j}.

\begin{proposition}
Let $\cB=\bigoplus_{a\in A}\cB_a$ be a graded fusion category and $\lambda \in Z^2(A,\Inv(\cZ_2(\cB_e)))$ a symmetric 2-cocycle. Then $PW
^2_{\cB_e}(\lambda)\in H^5(K(A,2),\ku^\times)$ vanishes if and only if there is an associative zesting of $\cB^{rev}$ corresponding to $\lambda$ for which the conditions in Corollary \ref{cor: obstruction zesting sin j} hold.
\end{proposition}
\begin{proof}
Since $\lambda$ takes values in the Müger center of $\cB_e$, the cocycle $\nu_\lambda$ defined in  Figure \ref{fig:obstr} for $\cB^{rev}$ is exactly the (standard) 4-cocycle $$\nu_\lambda(x,y,z,w)=PW^2_{\cB_e}(\lambda)(x,y,z,w)=c_{\lambda(x,y),\lambda(z,w)} \quad \forall x,y,z,w\in A.$$ If the cohomology class of  $PW^2_{\cB_e}(\lambda)(-, -, -, -)$ is trivial, it follows that there is $\lambda\in C^3(A,\ku^\times)$ such that  $PW^2_{\cB_e}(\lambda)\in Z^5(K(A,2),\ku^\times)$ is cohomologous to

\begin{align*}
a(x,y,z,w)=&1 \\
a(x,y|z)=&\frac{\lambda(y, x, z)}{\lambda(x, y, z)\lambda(y, z, x)}\\
a(x|y,z)=&  \frac{\lambda(x, y, z)\lambda(z, x, y)}{\lambda(x, z, y)}\chi_{\lambda(y,z)}^{-1}(x),
\end{align*}
and $(\lambda(-,-), \lambda(-,-,-))$ is an associative zesting for $\cB^{rev}$.

Now, if follows from the equation \eqref{4-br-cocy} that $a(-,-|z)\in Z^2(A,\ku^\times)$ for all $z\in A$. The cohomology classes of $a(-,-|z)^{-1}$ correspond to the obstructions $O_1(\lambda)(a,-,-)$ defined in Lemma \ref{prop:obst O1}. If this obstructions vanish there is $b(-|-)\in C^2(A,\ku^\times)$ such  $PW^2_{\cB_e}\in Z^5(K(A,2),\ku^\times)$ is cohomologous to

\begin{align}
a'(x,y,z,w)=&1 \\
a'(x,y|z)=&1\\
a'(x|y,z)=& \frac{b(y|z)b(x|z)}{b(xy|z)} \frac{\lambda(x, y, z)\lambda(z, x, y)}{\lambda(x, z, y)}\chi_{\lambda(y,z)}^{-1}(x).
\end{align}
It follows from equations \eqref{4-br-cocy-2}, \eqref{4-br-cocy-5}, and  \eqref{4-br-cocy2} that $a'(-|-,-)\in Z^2_{sym}(A,\widehat{A})$ (symmetric 2-cocycles). Hence the cohomology of $PW^2_{\cB_e} \in Z^5(K(A,2),\ku^\times)$ vanishes if and only if $[\chi_{\lambda(y,z)}(x)]=\left [\frac{b(y|z)b(x|z)}{b(xy|z)} \frac{\lambda(x, y, z)\lambda(z, x, y)}{\lambda(x, z, y)} \right ]$ in $H^2(A,\widehat{A})$. Since the cohomology class of  $\frac{b(y|z)b(x|z)}{b(xy|z)} \frac{\lambda(x, y, z)\lambda(z, x, y)}{\lambda(x, z, y)}$ agree with $O_2(\lambda,b)
^{-1}$ defined in Figure \ref{fig:second-obs}, the conditions in Corollary \ref{cor: obstruction zesting sin j} holds for some associative zesting of $\cB^{rev}$ if and only if $PW^2_{\cB_e}$ vanishes. 
\end{proof}
\section{Conclusions and Future Directions}
We have developed the general theory of associative zesting for fusion categories and a further theory of braided, twist and ribbon zestings for categories with these additional structures and properties.  We have illustrated their utility with a few examples, notably establishing the existence of a modular category of rank $10$ and dimension $36$ obtained by zesting $SU(3)_3$ that was conjectured in \cite{AIM2012}.  Moreover we have shown that braided zesting preserves property $F$, and given explicit computations of the modular data for braided zestings of modular categories.  While zesting shares some similarities with symmetry gauging, the explicit nature of zesting is a distinct advantage.  

This work suggests several interesting directions for future applications.  Note that we have mostly applied our theory to zesting modular categories with respect to the universal grading.  While these are perhaps the most interesting and most transparent examples, it would be interesting to apply associative zesting to fusion categories that do not admit a braiding and braided zesting with respect to non-universal grading groups and non-cyclic grading groups.  Finally we point out that symmetry gauging has a physical interpretation as phase transitions of topological phases of matter.  We do not know if zesting has a meaningful physical interpretation.

%\bibliographystyle{plain}
%\bibliography{biblio}

\end{document}